\newtheorem{prop}{Proposition}
\newtheorem{thm}[prop]{Theorem}
\newtheorem{cor}[prop]{Corollary}
\newtheorem{lem}[prop]{Lemma}
\theoremstyle{definition}
\newtheorem{example}[prop]{Example}
\theoremstyle{remark}
\newcommand{\Q}{\mathbb{Q}}
\newcommand{\Z}{\mathbb{Z}}
\newcommand{\F}{\mathbb{F}}
\newcommand{\GL}{\operatorname{GL}}
\newcommand{\AGL}{\operatorname{AGL}}
\newcommand{\PGL}{\operatorname{PGL}}
\newcommand{\SL}{\operatorname{SL}}
\newcommand{\PSL}{\operatorname{PSL}}
\newcommand{\Sp}{\operatorname{Sp}}
\newcommand{\PSp}{\operatorname{PSp}}
\newcommand{\PGU}{\operatorname{PGU}}
\newcommand{\PSU}{\operatorname{PSU}}
\newcommand{\SO}{\operatorname{SO}}
\newcommand{\POmega}{\operatorname{P\Omega}}
\newcommand{\Alt}{\operatorname{Alt}}
\newcommand{\Aut}{\operatorname{Aut}}
\newcommand{\Out}{\operatorname{Out}}
\newcommand{\Sym}{\operatorname{Sym}}
\newcommand{\Hom}{\operatorname{Hom}}
\newcommand{\Ext}{\operatorname{Ext}}
\newcommand{\soc}{\operatorname{soc}}
\newcommand{\gen}[1]{\langle #1\rangle}
\numberwithin{prop}{section}
\numberwithin{equation}{section}
\numberwithin{table}{section}
\newcommand{\mb}[1]{\mathbf{#1}}
\newcommand{\mbG}{\mathbf{G}}
\title{On the Maximal Subgroups of $E_7(q)$ and Related Almost Simple Groups}
\author{David A.\ Craven}
\date{\today}
\begin{document}
\maketitle

\setcounter{tocdepth}{3}
\tableofcontents

\section{Introduction}

The Classification of Finite Simple Groups (CFSG) is one of the crowning achievements of twentieth-century mathematics, allowing us to reduce many problems to (groups related to) simple groups. But on its own, CFSG just gives us a list of simple groups: to apply it to problems, we usually need a significant amount of structural information about simple groups. One of the major open questions in this area is to compute the maximal subgroups of the finite simple groups (and their almost simple versions). For classical groups one may expect lists in small dimensions and general results for arbitrary dimensions; for exceptional groups of Lie type, complete tables can in theory be produced. This was done decades ago for the small-rank cases of Suzuki and Ree groups, $G_2(q)$ and ${}^3\!D_4(q)$ (see, for example, \cite{wilsonrob}). The cases of $F_4(q)$, $E_6(q)$ and ${}^2\!E_6(q)$ were recently completed by the author \cite{craven2020un}. This leaves $E_7(q)$ and $E_8(q)$.

This paper is a contribution to tackling the case of $E_7(q)$, leaving only a small handful of ambiguities, at most two isomorphism types of subgroups (up to taking normalizers) for any given group (Table \ref{t:e7stillleft}). As a result of a number of papers by various authors (see Section \ref{sec:othermaximals} for full details, or later in this introduction for a shorter summary), all maximal subgroups $M$ of an almost simple group with socle $E_7(q)$ are known except for $M$ almost simple, in some sense the most difficult case.

\begin{thm}\label{thm:mainthm} Let $\bar G$ be an almost simple group with socle $G=E_7(q)$, for $q$ a power of a prime $p$. Let $M$ be a maximal subgroup of $\bar G$, and assume that $M$ does not contain $G$. One of the following holds:
\begin{enumerate}
\item $M=N_{\bar G}(M_0)$ is the normalizer of the intersection $M_0$ of $G$ with a positive-dimensional subgroup of an adjoint algebraic group of type $E_7$, and is given in Table \ref{t:othermaximals};
\item $M$ is an almost simple group and appears in Table \ref{t:themaximals};
\item $M=N_{\bar G}(M_0)$ is the normalizer of one of the groups $M_0=\PSL_2(r)$ for $r=7,8,9$, with possibilities for $p$ given in Table \ref{t:e7stillleft}.
\end{enumerate}
In cases (i) and (ii), all entries in the tables exist, and numbers of classes are given. In case (iii), no maximal example is known, but one cannot be ruled out.
\end{thm}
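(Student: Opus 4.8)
The plan is to start from the reduction theorem of Borovik and of Liebeck--Seitz: a maximal subgroup $M$ of $\bar G$ either is the normalizer of a proper positive-dimensional closed subgroup of the ambient adjoint algebraic group $\mbG$ of type $E_7$ — these being enumerated by the work recalled in Section~\ref{sec:othermaximals}, and constituting conclusion (i) — or else $M$ is almost simple. Thus from the outset I may assume $M$ is almost simple with socle $S$, and the goal becomes: show $S$ appears in Table~\ref{t:themaximals} or is one of $\PSL_2(r)$ for $r\in\{7,8,9,13\}$ (with $\PSL_2(9)\cong\Alt_6$), and in the first case determine $N_{\bar G}(S)$ and confirm its maximality.

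I would then divide according to whether $S$ is a group of Lie type in the defining characteristic $p$. If $S\in\mathrm{Lie}(p)$ has rank at least $2$, or is a rank-$1$ group over a field that is not very small, then the subgroup-structure theory of Liebeck--Seitz for finite subgroups of exceptional algebraic groups forces $S$ into a proper positive-dimensional subgroup of $\mbG$, contradicting maximality under hypothesis (ii); what survives is a short explicit list of small groups $\PSL_2(p^a)$, $\SL_2(p^a)$ and a few relatives such as ${}^2B_2(q)$, ${}^2G_2(q)$, $\PSL_3$ and $U_3$ over tiny fields, all to be treated by hand. If $S\notin\mathrm{Lie}(p)$ then $|S|$ is bounded absolutely (Liebeck--Seitz), yielding a finite list of candidate socles: alternating groups $\Alt_n$ for small $n$, a handful of sporadic groups, and cross-characteristic groups of Lie type of small order.

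For each surviving candidate I would control the embedding $S\hookrightarrow\bar G$ through the two smallest rational representations of $E_7$: the $56$-dimensional minuscule module $V_{56}=L(\omega_7)$ and the $133$-dimensional adjoint module $\mathcal L=\mathrm{Lie}(\mbG)$. Restricting these to $S$ gives (projective) $\F$-modules whose composition-factor multisets must form a feasible decomposition — arithmetically admissible, compatible with the relevant Frobenius--Schur structure and field of definition, and Lie-theoretically consistent, so that for instance $\mathcal L\downarrow S$ carries the correct trivial multiplicity (reflecting $\dim C_{\mathcal L}(S)$) and is compatible with the bracket. Cross-referencing against Lübeck's tables of small modules and the modular character tables eliminates the great majority of candidates. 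For those that remain I would decide, for each feasible restriction, whether $S$ is $\mbG$-completely reducible and hence lies in a proper reductive positive-dimensional subgroup — using the classification of reductive maximal subgroups together with complete-reducibility and pressure arguments, and explicit computation in Magma where the module theory is inconclusive — or whether $S$ can be Lie primitive. For each genuinely Lie primitive $S$ I would then compute $N_{\bar G}(S)$ (accounting for the diagonal and field automorphisms making up $\Out(E_7(q))$), check it is contained in no larger almost simple subgroup, and enter it into Table~\ref{t:themaximals}.

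The hard part is exactly this last analysis for the smallest socles, above all $\PSL_2(7)$, $\PSL_2(8)$, $\PSL_2(9)\cong\Alt_6$ and $\PSL_2(13)$: here several $\mbG$-classes may share the same composition factors, the modules are small with abundant self-extensions, and for a number of primes $p$ the group is simply too large relative to the available machinery to settle by direct computation whether an embedding exists at all, or whether a known embedding is contained in a positive-dimensional or in another almost simple subgroup. These stubborn cases — precisely those tabulated in Table~\ref{t:e7stillleft} — are responsible for conclusion (iii), while everything else is resolved by the above and falls into (i) or (ii).
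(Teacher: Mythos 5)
Your overall route is the same as the paper's: the Borovik/Liebeck--Seitz reduction to almost simple $M$, the split according to whether the socle is of Lie type in characteristic $p$ (with the defining-characteristic cases killed by earlier work except for a few rank-one groups over tiny fields), the enumeration of feasible composition factors on the $56$- and $133$-dimensional modules (the paper takes these ready-made from Litterick's memoir rather than recomputing from L\"ubeck-style data), the case-by-case conjugacy and normalizer analysis with Magma, and the residue of unresolved $\PSL_2(r)$, $r=7,8,9,13$, that becomes conclusion (iii).

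There is, however, one genuine gap in your logic: you discard a candidate socle $S$ as soon as it \emph{lies in} a proper positive-dimensional subgroup $\mb X$ of $\mbG$, i.e.\ as soon as it is Lie imprimitive. That inference is not valid. To conclude that $N_{\bar G}(S)$ is contained in a subgroup of type (i) you need $\mb X$ to be chosen stable under $N_{\Aut^+(\mbG)}(S)$ --- the paper's notion of \emph{strong} imprimitivity --- since otherwise $N_{\bar G}(S)$ need not normalize $\mb X^\sigma$ and could in principle be a novelty maximal subgroup arising from an imprimitive $S$. A substantial part of the paper's work (the line-stabilizer and subspace-stabilizer lemmas, the blueprint notion, and the repeated appeals to Proposition \ref{prop:intersectionsubspace}) exists precisely to upgrade ``contained in some positive-dimensional subgroup'' to ``contained in an automorphism-stable one''; without this your elimination step fails for exactly the borderline subgroups where several classes share composition factors. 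Relatedly, your maximality check at the end only tests containment in larger almost simple subgroups; you also need to rule out containment in the (stable) members of $\mathscr X$ and the exotic local subgroups, which is the content of the paper's final section.
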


Table \ref{t:e7stillleft} has no entry for $p=3$, yielding the following corollary to Theorem \ref{thm:mainthm}.

\begin{cor} If $q$ is a power of $3$ then all maximal subgroups of almost simple groups with socle $E_7(q)$ are known, and appear in Tables \ref{t:themaximals} and \ref{t:othermaximals}.
\end{cor}

For $p=2$, the only entry in Table \ref{t:e7stillleft} is $\PSL_2(8)$, and for this subgroup we are able to eliminate it for $E_7(2)$ (already done in \cite{bbr2015}, but our proofs here are independent of their paper), and for $E_7(4)$ (which is new). See Proposition \ref{prop:psl28q=4} for the case of $\PSL_2(8)$ and $q=2,4$.

\begin{prop} The maximal subgroups of $E_7(4)$ are known, and appear in Tables \ref{t:themaximals} and \ref{t:othermaximals}.
\end{prop}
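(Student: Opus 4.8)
The plan is to combine Theorem~\ref{thm:mainthm} with a characteristic~$2$ analysis of the residual cases. By Theorem~\ref{thm:mainthm} applied to an almost simple group $\bar G$ with socle $G=E_7(4)$, a maximal subgroup $M$ of $\bar G$ not containing $G$ appears in Table~\ref{t:themaximals} or Table~\ref{t:othermaximals} unless $M=N_{\bar G}(H)$ with $H\cong\PSL_2(r)$, $r\in\{7,8,9,13\}$, and $p=2$ one of the primes admitted for $r$ in Table~\ref{t:e7stillleft}. Thus it suffices to show that, for $q=4$, no such $H$ occurs as the socle of a maximal subgroup, i.e.\ to eliminate part~(iii) when $q=4$. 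For $r=8$ this is precisely Proposition~\ref{prop:psl28q=4}, which disposes of $\PSL_2(8)$ for $q\in\{2,4\}$; it remains to treat those $r\in\{7,9,13\}$ for which Table~\ref{t:e7stillleft} allows $p=2$.

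For each such $r$, let $\mbG$ be the adjoint algebraic group of type $E_7$ over $\overline{\F}_2$, with a Frobenius endomorphism realizing $\mbG^F$ so that $G\le\mbG^F\le\bar G$. Were $N_{\bar G}(H)$ maximal, $H$ would have to be Lie primitive: if $H$ lay in a proper $F$-stable positive-dimensional subgroup $X$ of $\mbG$, then $N_{\bar G}(H)$ would be contained in $N_{\bar G}(X\cap G)$, a subgroup of the type listed in Table~\ref{t:othermaximals}, and so would be non-maximal (or already accounted for). I would therefore enumerate the embeddings $H\hookrightarrow\mbG$ up to $\mbG$-conjugacy by restricting the minimal module $V_{56}$ and the adjoint module $L(\mbG)$ of dimension~$133$ to $H$ and decomposing them into irreducible $\overline{\F}_2H$-modules. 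Since the $2$-modular irreducibles of $\PSL_2(7)\cong\SL_3(2)$, of $\PSL_2(9)\cong\Alt(6)$, and of $\PSL_2(13)$ are few in number and of small dimension, the standard constraints on feasible characters --- compatibility with the symmetric and exterior squares of $V_{56}$ and $L(\mbG)$, the eigenvalue conditions imposed by the semisimple classes of $\mbG$ that meet $H$, and the bounds on the multiplicity of the trivial module coming from $1$-cohomology --- reduce the feasible decompositions to a short explicit list. For each survivor one either obtains a numerical or cohomological contradiction, or exhibits a proper positive-dimensional subgroup of $\mbG$ (a subsystem subgroup, or one of the reductive subgroups classified earlier) containing the image of $H$, and in both cases $H$ is eliminated.

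The main obstacle is the final step for $\PSL_2(13)$, and to a lesser degree for $\PSL_2(7)$: proving that every feasible embedding in characteristic~$2$ is in fact non-Lie-primitive. An element $t\in H$ of order~$13$ has eigenvalues on $V_{56}$ that are $13$th roots of unity, which pins down the $\mbG$-conjugacy class of $t$ and so places $t$ inside the positive-dimensional centralizer $C_{\mbG}(t)$; the work is to show that the unipotent elements of $H$ together with its remaining semisimple elements can be conjugated simultaneously into such an overgroup --- equivalently, that the embedding is blueprinted by a positive-dimensional subgroup --- rather than genuinely Lie primitive. If a uniform argument fails for some residual configuration, the fallback is a direct computation inside the $56$-dimensional $\F_4$-module for $\mbG^F$: one locates the subgroups isomorphic to the relevant $\PSL_2(r)$ and computes their normalizers to verify non-maximality, which is feasible since there are only boundedly many classes of such subgroups, even though $\mbG^F$ itself is far too large to handle directly.
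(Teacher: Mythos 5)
Your first paragraph is exactly the paper's argument: Theorem \ref{thm:mainthm} reduces the problem to eliminating, for $p=2$, the residual groups of Table \ref{t:e7stillleft}, and Proposition \ref{prop:psl28q=4} disposes of $\PSL_2(8)$ for $q=4$. What you have missed is that this already finishes the proof: for $p=2$ the only entry of Table \ref{t:e7stillleft} that applies is $\PSL_2(8)$. The groups $\Alt(6)$, $\PSL_2(7)$ and $\PSL_2(13)$ all have even order, so the row labelled $p\nmid|M_0|$ excludes $p=2$ for them, and the remaining rows list only odd primes. Indeed, the body of the paper proves these three groups are strongly imprimitive in characteristic $2$ as part of establishing Theorem \ref{thm:mainthm} ($\PSL_2(7)\cong\PSL_3(2)$ is a defining-characteristic case settled in an earlier paper of the series, $\Alt(6)$ stabilizes a line or $2$-space on $M(E_7)$, and $\PSL_2(13)$ is shown to stabilize a line or to lie in $G_2C_3$), which is precisely why they do not appear with $p=2$ in the table. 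Consequently your second and third paragraphs propose to re-derive work that is already subsumed in the main theorem. Two further cautions: your claim that containment in a proper $F$-stable positive-dimensional subgroup forces non-maximality needs the subgroup to be chosen $N_{\Aut^+(\mbG)}(H)$-stable (this is the paper's notion of strong imprimitivity, not mere Lie imprimitivity); and the fallback of directly enumerating all subgroups $\PSL_2(r)$ of $E_7(4)$ and computing normalizers is not feasible as described --- the paper cannot carry out such a computation even for $\PSL_2(8)$ at general $q$, which is why Proposition \ref{prop:psl28q=4} works instead with overgroups of a Borel subgroup $L\cong 7\rtimes 3$ inside parabolic subgroups. The logic of your reduction is sound; the proof simply terminates after your first paragraph.
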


%

\begin{table}
\begin{center}
\begin{tabular}{ccccc}
\hline Group & $p$ & $q$ & No. classes & Stabilizer
\\ \hline 
$M_{12}$ & $5$ & $5$ & $1$ & $\gen\delta$
\\ $HS$ & $5$ & $5$ & $2$ & $1$
\\ $Ru$ & $5$ & $5$ & $2$ & $1$
\\ $\PSU_3(3)$ & $p\equiv \pm 1\bmod 8$ & $p$ & $2$ & $1$
\\ $\PSU_3(3)$ & $p\neq 3$, $p\equiv \pm 3\bmod 8$ & $p^2$ & $2$ & $\gen\phi$
\\ $\PSU_3(8).6$ & $p\equiv \pm 1\bmod 8$ & $p$ & $2$ & $1$
\\ $\PSU_3(8).3_1$ & $p\equiv \pm 3\bmod 8$ & $p$ & $1$ & $\gen\delta$
\\ $\PSL_2(37)$ & $p\neq 37$ a square modulo $37$ & $p$ & $2$ & $1$
\\ $\PSL_2(37)$ & $p$ a non-square modulo $37$ & $p^2$ & $d$ & $\gen\phi$
\\ $\PSL_2(29)$ & $p\equiv \pm 1\bmod 5$, $p\neq 29$ a square modulo $29$ & $p$ & $4$ & $1$
\\ $\PSL_2(29)$ & $p\equiv \pm 1\bmod 5$, $p$ a non-square modulo $29$ & $p^2$ & $4$ & $\gen\phi$
\\ $\PSL_2(29)$ & $p\equiv \pm 2\bmod 5$ & $p^2$ & $2d$ & $1$
\\ $\PSL_2(27).3$ & $13$ & $13$ & $2$ & $1$
\\ $\PSL_2(27)$ & $p\equiv \pm 1\bmod 13$ & $p$ & $4$ & $1$
\\ $\PSL_2(27)$ & $p\equiv \pm 5\bmod 13$ & $p^2$ & $4$ & $1$
\\ $\PSL_2(27)$ & $p\equiv \pm 3,\pm 4\bmod 13$ & $p^3$ & $4$ & $\gen\phi$
\\ $\PSL_2(27)$ & $p\equiv \pm2,\pm6\bmod 13$ & $p^6$ & $2d$ & $\gen{\phi^2}$
\\ $\PSL_2(19)$ & $5$ & $5$ & $2$ & $1$
\\ $\PSL_2(19)$ & $p\neq 19$, $p\equiv \pm 1\bmod 5$, $p_{19}\equiv p_3\bmod 2$ & $p$ & $4$ & $1$
\\ $\PSL_2(19)$ & $p\equiv \pm 1\bmod 5$, $p_{19}\not\equiv p_3\bmod 2$ & $p^2$ & $4$ & $\gen\phi$
\\ $\PSL_2(19)$ & $p\neq 3$, $p\equiv \pm 2\bmod 5$ & $p^2$ & $2d$ & $1$
\\ $\PGL_2(19)$ & $5$ & $5$ & $2$ & $1$
\\ $\PGL_2(19)$ & $p\equiv \pm 1,\pm 9\bmod 40$ & $p$ & $4$ & $1$
\\ $\PSL_2(19)$ & $p\neq 19$, $p\equiv \pm 11,\pm 19\bmod 40$ & $p$ & $2$ & $\gen\delta$
\\ $\PGL_2(19)$ & $p\neq 2$, $p\equiv \pm 2\bmod 5$ & $p^2$ & $4$ & $1$
\\ $\PGL_2(13)$ & $p\equiv \pm 1\bmod 8$ & $p$ & $2$ & $1$
\\ $\PSL_2(13)$ & $p\neq 13$, $p\equiv \pm 5, \pm 11, \pm 19, \pm 21, \pm 37, \pm 45, \pm 59\bmod 104$ & $p$ & $1$ & $\gen\delta$

\\\hline $M_{22}$ & $5$ & $5$ & $1$ & $\gen\delta$
\\ $\PSL_2(13)$ & $p\neq 13$, $p\equiv \pm 3, \pm 27, \pm29, \pm 35, \pm 43, \pm 51, \pm 53\bmod 104$ & $p$ & $1$ & $\gen\delta$
\\ \hline
\end{tabular}
\end{center}
\caption{The known maximal subgroups of the simple group $G=E_7(q)$, for $q$ a power of $p$, other than those that are fixed points of a positive-dimensional subgroup of the algebraic group. $M_{22}.2$ and $\PSL_2(13).2$ at the bottom of the table are novelty maximal subgroups that occur in the group $E_7(p).2$. Here, $d=\gcd(2,p-1)$, $p_n$ is the order of $p$ modulo $n$, $\phi$ is a generator for the group of field automorphisms and $\delta$ is a generator for the group of outer diagonal automorphisms of $G$.}
\label{t:themaximals}
\end{table}

\begin{table}
\begin{center}
\begin{tabular}{cc}
\hline Prime & Group $M_0$
\\\hline $p=2$ & $\PSL_2(8)$
\\ $p\geq 5$ & $\PSL_2(7),\PSL_2(9)$
\\ \hline
\end{tabular}
\end{center}
\caption{Potential simple subgroups $M_0$ of $G=E_7(q)$ ($q$ a power of $p$) with $N_{\bar G}(M_0)$ maximal that have not been eliminated in this document.}
\label{t:e7stillleft}
\end{table}

Making headway with the remaining open cases is possible, but for each group there are significant issues with using the methods from this paper.

\bigskip

We now give a brief overview of the results prior to this paper, to set this work in the appropriate context. For simplicity we just consider the simple group, but all results are proved for almost simple groups. Let $G=E_7(q)$ be simple, where $q$ is a power of $p$. For odd primes $p$ this is not the fixed points of an algebraic group $\mbG$ under a Frobenius morphism, but the extension $G.2$ is. If $M$ is a maximal subgroup of $G.2$ other than $G$ then either $M$ is the fixed points of a maximal subgroup of $\mbG$ or it is not.  In \cite{borovik1989} and independently in \cite{liebeckseitz1990} (see Theorem \ref{thm:borliesei} below), if $M$ is not the fixed points then $M$ could normalize an $r$-subgroup, where $r$ is a prime different from $p$, or $M$ could be a `subfield subgroup' $E_7(q_0)\leq E_7(q)$, or $M$ is almost simple.\footnote{Notice that the fixed points are only given for the extended group $G.2$, not for $G$. Indeed, the intersection of the fixed points with $G$ is not known for the $A_2$ subgroup of $E_7$, see Section \ref{sec:othermaximals}.}

Since all simple groups are known thanks to CFSG, we can run through the list, determining if a simple group $X$ embeds in $G$, and if so if it can be maximal. This is not easy. If $M$ is a Lie type group also in characteristic $p$ then restrictions on $M$ were proved by Liebeck and Seitz in \cite{liebeckseitz1990,liebeckseitz1998,liebeckseitz2004}, but there were a number of possibilities, particularly for $M\cong \PSL_2(p^a)$. In \cite{craven2017} the author eliminated all possibilities for $M\cong \PSL_2(p^a)$ except for $p^a=7,8,25$, and in \cite{craven2019un} the author eliminated all possibilities for $M\not\cong\PSL_2(p^a)$, so only these three cases remain. For $p^a=7,8$, these remain unresolved here, but $p^a=25$ is proved not to yield maximal subgroups in Section \ref{sec:psl225}.

If $M$ is not a Lie type group in characteristic $p$ then the complete list of options can be found in \cite{liebeckseitz1999}, and considerable work was done on them in \cite{litterickmemoir}. Individual papers had constructed some subgroups, for example \cite{griessryba2002}, but there had been no concerted effort to determine \emph{all} maximal subgroups. Most papers proved that the subgroups exist, and sometimes counted the number of classes in $\mbG$, but not whether they exist in $G$ and how many classes there are.

This paper performs this task, for the first time systematically working through the list from \cite{liebeckseitz1999} and determining if $M$ is maximal, and if so how many classes there are. The result is Table \ref{t:themaximals}, which includes several never-before-seen maximal subgroups. The remaining open questions, which are the groups in Table \ref{t:e7stillleft} and one ambiguity in Table \ref{t:othermaximals}

\bigskip

It should also be noted that in the preparation of this manuscript an error in one of the author's previous papers came to light. A result on when subgroups can be placed in connected positive-dimensional subgroups, rather than arbitrary positive-dimensional subgroups, was given in \cite[Lemma 1.12]{craven2017}. The version there incorrectly rules out one possibility because the structure of a particular subgroup of $E_7$ was wrongly stated. In Section \ref{sec:intrinsicimp} below we rework the proof, and in Corollary \ref{cor:intrinsicimp} we give a corrected, and improved, version of the result. Note that it is not known whether the result from \cite{craven2017} is false, just that the proof was false.

\bigskip

We start with a few sections of preliminary results, and then in Section \ref{sec:methods} discuss the two computational methods that will be implemented in determining most of the subgroups in Table \ref{t:themaximals}. The determination of the subgroups in Table \ref{t:othermaximals} is known from previous papers (described in Section \ref{sec:othermaximals}), except for the technical question of whether the outer diagonal automorphism fuses two classes of maximal subgroups or normalizes a single class. This is also known for some of the groups already, and for others we compute the answer here. For one set of groups, $\PGL_3(q)$ and $\PGU_3(q)$ for $q$ an odd power of a prime, we cannot determine here whether this has index $1$ or $2$ in its normalizer in $E_7(q)$, (i.e., whether the diagonal automorphism normalizes these classes) and we leave this case open. We do this analysis for the other groups in Section \ref{sec:othermaximals}.

The determination of the groups in $\mathcal S$ (except for $\PSL_2(r)$, $r=7,8,9$, which we do not complete here) takes place in Section \ref{sec:detofgroups}, starting from the list in Table \ref{t:e7tocheck}. Information about the four remaining groups is given in Section \ref{sec:leftovers}. Finally, Section \ref{sec:proofofmaximality} proves that the subgroups in Table \ref{t:themaximals} are genuinely maximal by checking there are no proper subgroups of $E_7(q)$ containing them.

Note that in the supplementary materials we explicitly construct all of the groups in Table \ref{t:themaximals}, so this offers an independent verification of the existence.

\bigskip

The author would like to thank Alex Ryba profusely, who while at the Isaac Newton Institute in January 2020 explained his ideas, from \cite{griessryba2002}, which form the subalgebra method from Section \ref{sec:subalgmethod}. He would also like to thank Friedrich Knop for his proof of Proposition \ref{prop:abeliansubalgebra}. Finally, he would like to thank Ben Grossmann for discussions on the StackExchange website that led to producing a short algorithm for determining ranks of linear combinations of two matrices, which is used in several of the computer programs for this article.

\section{Preliminary results}

Readers of previous papers in this series \cite{craven2015un,craven2019un,craven2020un} will be familiar with this notation and setup. Many of the preliminary results in this section will also have been come across before. There are some simplifications in this paper versus previous ones because we no longer have to consider Steinberg endomorphisms that are not Frobenius, and will not need to consider the action of graph automorphisms on maximal subgroups. Chief among the new actors in this paper is the alternating form on the $56$-dimensional minimal module for $E_7$, which has not been needed before.

\subsection{Basic notation and definitions}

Let $p$ be a prime or $0$ and let $k$ be an algebraically closed field of characteristic $p$. Let $\mb G$ be a simple, simply connected algebraic group defined over $k$, and let $\sigma$ be a Frobenius endomorphism of $\mb G$. Write $G_{\mathrm{sc}}$ for the fixed points $\mb G^\sigma$, and $G$ for $G_{\mathrm{sc}}/Z(G_{\mathrm{sc}})$. Write $F_p$ for a generator of the semigroup of `standard' Frobenius endomorphisms, i.e., with trivial action on the Weyl group. Write $E_7(q)$ for the simple group of this type when $\mbG$ has type $E_7$ and $\sigma=F_p^a$, where $q=p^a$. (Thus if $\sigma=F_p$ for $p$ odd then $G_{\mathrm{sc}}$ is the simply connected group $2\cdot E_7(p)$.) Let $\bar G$ be an almost simple group with socle $G$. Write $G_{\mathrm{ad}}$ for the fixed points under a Frobenius morphism of the adjoint version of $\mbG$ compatible with $\sigma$. Thus $G_{\mathrm{ad}}$ has $G$ as a normal subgroup and the quotient by $G$ is the group of outer diagonal automorphisms.

Let $M(\mb G)$ denote the non-trivial Weyl module of smallest dimension, and $L(\mbG)$ denote the adjoint module of $\mbG$. We will endow $L(\mbG)$ with a Lie bracket (this is crucial in this article) that turns it into a Lie algebra. If $L(\mbG)$ has a single trivial composition factor and a single non-trivial one (so the important one for us is $\mbG$ of type $E_7$ and $p=2$) then write $L(\mbG)^\circ$ for the non-trivial composition factor.

If $k$ has characteristic $p\neq 0$ and $u\in \GL_n(k)$ has $p$-power order then $u$ is a unipotent element, and acts (up to conjugacy) as a sum of Jordan blocks of various sizes. We write $n_1^{a_1},\dots,n_r^{a_r}$ to describe the sizes, for example $5^2,4,1$ for an element of $\GL_{15}(k)$, as in \cite{lawther1995}. We also use the labelling of unipotent classes from that paper.

Following \cite{liebeckmartinshalev2005} (and in \cite{craven2015un,craven2019un,craven2020un}), write $\Aut^+(\mbG)$ for the set of inner, graph, and $p$-power field automorphisms of $\mbG$. Since we do not consider $G_2$ for $p=3$ and $F_4$ for $p=2$, we do not need to consider the very twisted groups. For $\mb G$ of type $E_7$, $\Aut^+(\mbG)$ induces the whole of $\Aut(G)$ on $G$. A subgroup $H$ of $\mbG$ is \emph{Lie primitive} if there is no closed, proper, positive-dimensional subgroup of $\mbG$ containing $H$, and Lie imprimitive otherwise. While Lie primitive subgroups often yield maximal subgroups of the finite groups $G$, there may be others. A subgroup $H$ of $\mbG$ is \emph{strongly imprimitive} if it is contained in a closed, proper, positive-dimensional subgroup $\mb X$, and in addition $\mb X$ may be chosen to be $N_{\Aut^+(\mbG)}(H)$-stable. In this case, the normalizer of $\bar H=HZ(\mbG)/Z(\mbG)$ in $\bar G$ is always contained in the normalizer of $\mb X^\sigma\; Z(\mbG)/Z(\mbG)$, and so the only strongly imprimitive maximal subgroups are (normalizers of) fixed points of Frobenius endomorphisms of positive-dimensional subgroups of $\mbG$, which are known (see Section \ref{sec:othermaximals}). Write $\mathscr X$ for the set of proper, closed, positive-dimensional subgroups of $\mbG$. We will write $\bar H$ for the image of $H\leq G_{\mathrm{sc}}$ in $G$. However, if $Z(\mbG)\cap H=1$ then $H\cong \bar H$ and in this case, to reduce notation, we identify $H$ with its image in $\mbG/Z(\mbG)$.

If $M$ is a $k\mbG$-module for $\mbG$ of type $E_7$, and $H$ is a finite subgroup of $\mbG$, then $H$ is a \emph{blueprint} for $M$ if there is a closed, positive-dimensional subgroup of $\mbG$ stabilizing exactly the same subspaces of $M$ stabilized by $H$. If $H$ is a blueprint for a module $M$ then $H$ is either strongly imprimitive or stabilizes the same subspaces of $M$ as $\mbG$ itself \cite[Corollary 3.3]{craven2019un}.

\medskip

Our notation for modules and their structures is as in previous papers in this series \cite{craven2015un,craven2019un,craven2020un}. Let $H$ be a finite group. We will generally label simple $kH$-modules by their dimension, for example $10$ for a $10$-dimensional module. If there are two $10$-dimensional modules that are dual to one another, write $10$ and $10^*$ for the two modules, and if they are self-dual write $10_1$ and $10_2$. If $M$ is a $kH$-module, write $P(M)$ for its projective cover. We use `$/$' to distinguish between socle layers of a module, so that a module $1,5/10$ has socle $10$ and second socle $1\oplus 5$. If $L$ is a subgroup of $H$, write $M{\downarrow_L}$ for the restriction of the $kH$-module $M$ to $L$. Write $S^i(M)$ and $\Lambda^i(M)$ for the $i$th symmetric and exterior power of $M$ respectively.

For algebraic groups we use the `standard' notation for simple highest-weight modules $L(\lambda)$ for $\lambda$ a dominant weight. The labelling of such modules follows \cite{bourbakilie2}, and is the one used by the references in our paper, and those in previous papers in this series. We often abbreviate $L(\lambda)$ to just $\lambda$, particularly if we have a product of simple groups, so $(10,\lambda_1)$ is the label for the product of the natural modules of $A_2A_5$, for example. The composition factors of maximal reductive subgroups and Levi subgroups of exceptional algebraic groups on $M(\mbG)$ and $L(\mbG)$ can be found in \cite{thomas2016}, for instance.

\medskip

The possible sets of composition factors for a simple group $H$, not of Lie type in characteristic $p$, on the modules $M(E_7)$ and $L(E_7)$, are collated in \cite[Section 6.3]{litterickmemoir}. One of the main results of \cite{litterickmemoir} states that, unless a row of a table is labelled with `\textbf{P}' (the first is in Table 6.114), every subgroup with those composition factors must be strongly imprimitive. This yields a significant reduction in the number of cases that need to be considered in this paper. In effect, we must go through the tables of \cite{litterickmemoir} and determine, for each row labelled `\textbf{P}', whether such a subgroup can be maximal in $G_\mathrm{sc}$ and, if so, how many conjugacy classes there are, and understand the action of $\Out(G)$ on these classes.

If $M_0$ is a subgroup of $G$ such that $N_G(M_0)$ is not maximal then it could be that $M_1=N_{\bar G}(M_0)$ is maximal in $\bar G$; such maximal subgroups are called \emph{novelty maximal subgroups}. Denoting by $A$ the set of outer automorphisms induced by $\bar G$ on $G$, $M_1$ is a novelty maximal subgroup if $H$ is $A$-stable but no intermediate group $H<X<G$ is $A$-stable. A thorough description of such maximal subgroups is given in \cite[Section 3.5]{craven2020un} (largely taken from \cite[Section 1.3.1]{bhrd}), but we do not need this here as, apart from $M_{22}$, all subgroups are either strongly imprimitive or produce `standard' maximal subgroups.

\subsection{Results about algebraic groups}

This section contains various results about algebraic groups that will be of use in what follows. We start with two results: the first, by Larsen \cite[Theorem A.12]{griessryba1998}, establishes that the number of conjugacy classes of Lie primitive subgroups isomorphic to $H$ of an algebraic group $\mb G$ is independent of the characteristic $p$ of $\mb G$, at least so long as $p\nmid |H|$. (This result is often referred to as `Larsen's $(0,p)$-correspondence'.)

\begin{thm}\label{thm:larsencorr} Let $H$ be a finite group such that $p\nmid |H|$. If $\mbG$ is a semisimple split group scheme, and $k$ and $k'$ are algebraically closed fields of characteristic $p$ and $0$ respectively, the number of $\mbG(k)$- and $\mbG(k')$-conjugacy classes of subgroups isomorphic to $H$ are the same finite quantity.
\end{thm}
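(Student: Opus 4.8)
The plan is to identify conjugacy classes of copies of $H$ in $\mbG$ with the $\mbG$-orbits on a representation scheme, and to move this orbit set between characteristics by exploiting the vanishing of the positive-degree group cohomology of $H$ --- which is exactly what the hypothesis $p\nmid|H|$ provides. Concretely, I would fix a finite presentation of $H$ on $n$ generators and introduce the affine scheme $R=\operatorname{Hom}(H,\mbG)$ of finite type over $T:=\operatorname{Spec}\Z[1/|H|]$ representing $A\mapsto\operatorname{Hom}(H,\mbG(A))$, realized as the closed subscheme of $\mbG^{n}$ cut out by the relators of $H$. The key input is that for $p\nmid|H|$ and any $\Z[1/|H|]$-module $M$ the groups $H^{i}(H,M)$ with $i\geq1$ are annihilated by the integer $|H|$, which is invertible on $M$, hence vanish. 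From this and standard deformation theory, $R$ is \emph{smooth} over $T$: the obstruction to lifting a homomorphism along a square-zero extension with kernel $I$ lies in $H^{2}(H,\mathfrak g\otimes I)=0$, where $\mathfrak g=\operatorname{Lie}(\mbG)$ is an $H$-module via $\operatorname{Ad}$ composed with the homomorphism, and $R$ is visibly of finite presentation. The same vanishing yields two further facts: each $\mbG$-orbit on $R$ is open, because the differential of the orbit map $\mbG\to R$, $g\mapsto{}^{g}\rho$, has cokernel $H^{1}(H,\mathfrak g)=0$; and the centralizer $C_{\mbG}(\rho(H))$ is smooth, being the centralizer of a linearly reductive finite subgroup scheme in a smooth group scheme.

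Next I would draw the counting consequences. On any geometric fibre of $R\to T$ the $\mbG$-orbits are open, hence finite in number (the fibre is affine, so quasi-compact), hence each is also closed, and each is connected since $\mbG$ is connected; thus the orbits are precisely the connected components of the fibre. Passing to the open subscheme $R^{\mathrm{inj}}\subseteq R$ of injective homomorphisms (a union of orbits in every fibre) and dividing by the finite group $\Aut(H)$, one sees that the number of conjugacy classes of subgroups of $\mbG$ isomorphic to $H$ is finite in each characteristic, and within a fixed characteristic it is independent of the algebraically closed field, since the number of geometric connected components of $R^{\mathrm{inj}}$ over the algebraic closure of the prime field is unchanged under further field extension. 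Write $c_{p}$ and $c_{0}$ for these numbers.

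It then remains to match $c_{p}$ with $c_{0}$ via a lifting-and-reduction bijection. Given $\rho_{0}\in R(k)$ with $\operatorname{char}k=p$, smoothness of $R$ over $T$ together with completeness of the Witt ring $W(k)$ lets me lift $\rho_{0}$ to $\tilde\rho\in R(W(k))$, and base change to $k':=\overline{\operatorname{Frac}W(k)}$ gives a homomorphism $H\to\mbG(k')$; the field-independence of $c_{0}$ makes this particular $k'$ harmless. Conversely, a homomorphism $H\to\mbG(k')$ has finite image, so is defined over the ring of integers $\mathcal O$ of some finite extension $L$ of $\operatorname{Frac}W(k)$ inside $k'$ --- and here $p\nmid|H|$ is used to conjugate the image into $\mbG(\mathcal O)$, via a fixed point of $H$ on the Bruhat--Tits building of $\mbG(L)$ --- and it reduces modulo the maximal ideal to a point of $R(\overline{\F}_{p})\subseteq R(k)$. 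Both operations respect conjugacy and the injectivity locus, and they are mutually inverse on conjugacy classes, because (i) smoothness of $R$ makes the lift of a given reduction unique up to $\mbG(W(k))$-conjugacy, and (ii) two homomorphisms over $W(k)$ with $\mbG(k)$-conjugate reductions are already $\mbG(W(k))$-conjugate: the transporter $\{g\in\mbG:{}^{g}\rho_{1}=\rho_{2}\}$ is a torsor under the smooth group $C_{\mbG}(\rho_{1}(H))$, hence a smooth $W(k)$-scheme, and having a $k$-point it acquires a $W(k)$-point by Hensel's lemma. This gives $c_{p}=c_{0}$.

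I expect the main obstacle to lie in the last step, and specifically in the reduction direction: producing an integral model of a characteristic-$0$ homomorphism whose special fibre is still isomorphic to $H$ rests on the Bruhat--Tits fixed-point argument and genuinely requires $p\nmid|H|$, as does the transporter-smoothness step that keeps the reduction map from collapsing distinct conjugacy classes. By contrast, the smoothness of $R$, the openness of the orbits, the finiteness of the counts, and the field-independence within a characteristic are all formal once the vanishing $H^{\geq1}(H,-)=0$ is in hand --- and that vanishing, hence the entire argument, fails precisely when $p\mid|H|$.
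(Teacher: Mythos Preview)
The paper does not give its own proof of this theorem: it is quoted as a known result of Larsen (with a reference to the appendix of Griess--Ryba), so there is no in-paper argument to compare against. Your sketch is, in outline, the standard proof that Larsen gives --- construct the affine scheme $R=\operatorname{Hom}(H,\mbG)$ over $\operatorname{Spec}\Z[1/|H|]$, use $H^{\geq 1}(H,-)=0$ to obtain smoothness of $R$ and of centralizers, deduce that $\mbG$-orbits on each geometric fibre are exactly the connected components, and then transport these components between characteristics via a Hensel/Witt lifting argument. So at the level of strategy you are reproducing the intended proof rather than finding an alternative.

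A few places deserve more care if you want this to stand on its own. First, the claim that the number of geometric connected components of a smooth $T$-scheme is constant across fibres is what actually carries the equality $c_p=c_0$; you should state and use this explicitly (Zariski's connectedness theorem, or the fact that for $R$ smooth and of finite type over a Dedekind base the Stein factorization is finite \'etale over the open locus), rather than routing everything through an explicit lift-and-reduce bijection. Second, in the reduction direction, ``conjugate into $\mbG(\mathcal O)$ via a Bruhat--Tits fixed point'' is correct for semisimple $\mbG$, but you should say why the resulting reduction is still injective (equivalently, why $R^{\mathrm{inj}}$ is open and closed in $R$, which follows from the orbit decomposition). With those two points tightened, the argument is complete and matches Larsen's.
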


The second extends this in one direction to all $p$, including $p\mid |H|$. It is no longer true that the number of classes stays the same; there are examples of two Lie primitive classes becoming one (Proposition \ref{prop:psl227} below), and also of Lie primitive subgroups becoming Lie imprimitive ($\PSL_2(8)$ in $F_4$, see \cite[Proposition 4.9]{craven2020un}). But by results of Serre \cite[Section 5]{serre1996} at least there is always an embedding with the reduction modulo $p$ of the character from characteristic $0$. We require that $H$ is an almost simple subgroup purely to avoid complications with normal $p$-subgroups, and it is no loss for us as all of our subgroups are almost simple. The version here is proved in \cite[Theorem 3.10]{craven2020un}.

\begin{prop}\label{prop:p=0impallp} Let $\mb G_0$ be the algebraic group defined over $\mathbb C$ of the same type as $\mb G$. Let $H_0$ be a subgroup of $\mb G_0$ such that $O_p(H_0)=1$, embedding with character $\chi_1,\dots,\chi_n$ on $p$-restricted highest weight modules $L_1,\dots,L_n$ respectively. There exists a subgroup $H$ of $\mb G$ such that $H\cong H_0$, and with Brauer character the reduction modulo $p$ of $\chi_i$ for each corresponding $L_i$.
\end{prop}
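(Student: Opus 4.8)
The plan is to follow the local--global strategy of Serre \cite[Proposition~8]{serre1996}: realise the given complex embedding over a $p$-adic field and then reduce it modulo a prime above $p$, keeping track of characters throughout.

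First I would pass to a $p$-adic field. Regard $\Hom(H,\mbG)$ as the affine $\Z$-scheme whose $R$-points are the tuples $(g_h)_{h\in H}$ in $\mbG(R)$ obeying the multiplication table of $H$, and let $Z\subseteq\Hom(H,\mbG)$ be the closed subscheme defined by $\operatorname{tr}(g_h\mid L_i)=\chi_i(h)$ for all $i$ and all $h\in H$ --- here I enlarge the list of modules by $M(\mbG)$, which only strengthens the hypothesis and ensures the list contains a faithful module. Then $Z$ is defined over the ring of integers of a cyclotomic field and is nonempty, since the given inclusion $H\hookrightarrow\mbG_0(\C)$ is one of its points; as $\overline\Q$ is algebraically closed, $Z$ has a point over a number field, and fixing a prime $v$ above $p$ there yields a homomorphism $\rho\colon H\to\mbG(F_v)$ with $\operatorname{tr}(\rho(h)\mid L_i)=\chi_i(h)$, where $F_v$ is $p$-adic with residue field $k$ of characteristic $p$. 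This $\rho$ is faithful, since its kernel is contained in $\{h\in H:\chi(h)=\chi(1)\}$ for $\chi$ the character of $H$ on the faithful module $M(\mbG)$, a set which is trivial because $H\hookrightarrow\mbG_0(\C)$ is.

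Next I would reduce modulo $v$. The image $\rho(H)$ is finite, hence bounded, so by the Bruhat--Tits fixed-point theorem it fixes a point $x$ of the Bruhat--Tits building of $\mbG$ over $F_v$; let $\mathcal P_x$ be the corresponding parahoric, with smooth $\mathcal O_v$-model whose special fibre has reductive quotient $\overline{\mb M}_x$ over $k$. Because $\mbG$ is split, $\overline{\mb M}_x$ is a maximal-rank subsystem subgroup of $\mbG_k$, read off by deleting a node of the extended Dynkin diagram of $E_7$, and so embeds in $\mbG(k)=\mbG$. Reduction modulo $v$ gives $\bar\rho\colon H\to\overline{\mb M}_x(k)\le\mbG$; its kernel lies in the pro-$p$ radical of $\mathcal P_x$, hence is a normal $p$-subgroup of $H$, and since $H\cdot Z(\mbG_0)/Z(\mbG_0)$ is almost simple it is contained in $H\cap Z(\mbG_0)$ --- trivial for $p$ odd, and trivial for $p=2$ once one observes that the relevant groups have odd Schur multiplier and so no central involution. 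Thus $\bar\rho$ is faithful and $\bar\rho(H)\cong H$. For the Brauer characters, take for each $i$ a $\mathcal P_x$-stable lattice $\Lambda_i$ in $L_i\otimes F_v$ adapted to $x$, so that $\Lambda_i/v\Lambda_i$ is the reduction of $L_i$ as a $\mbG_k$-module restricted through $\overline{\mb M}_x\le\mbG_k$; since the Brauer character of a reduction modulo $p$ of a characteristic-$0$ representation of a finite group is independent of the chosen invariant lattice, $\bar\rho(H)$ has Brauer character the reduction of $\chi_i$ on $L_i$, as required.

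The main obstacle is that the point $x$ produced by the Bruhat--Tits fixed-point theorem need not be hyperspecial, so a priori $\bar\rho$ lands only in a proper maximal-rank subgroup $\overline{\mb M}_x$ of $\mbG_k$ rather than in $\mbG_k$ itself; the crux is to see that this subgroup genuinely is a subgroup of $\mbG(k)$, compatibly with the module reductions --- this is precisely what gives the proposition content when $p\mid|H|$, a regime in which Larsen's $(0,p)$-correspondence of Theorem~\ref{thm:larsencorr} does not apply --- and, simultaneously, to control the kernel of $\bar\rho$, which is where the almost-simplicity hypothesis enters and where $p=2$ needs separate attention because of the central $\mu_2$ of $E_7$.
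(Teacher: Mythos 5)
The paper offers no proof of this proposition: it is imported wholesale from Serre \cite[Proposition 8]{serre1996}, with the almost-simplicity hypothesis added precisely to dispose of normal $p$-subgroups. What you have written is therefore a reconstruction of the cited argument rather than an alternative to anything in the text, and the architecture you chose --- spread the complex embedding out to a $p$-adic point of the representation scheme with prescribed traces, apply the Bruhat--Tits fixed-point theorem to land $\rho(H)$ in a parahoric, and reduce --- is the right one, as is your identification of where the almost-simplicity hypothesis is used.

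The difficulty is that you assert the pivotal step in the body and then, in your closing paragraph, concede that it is ``the crux'' without ever discharging it. Concretely, $\bar\rho$ takes values in $\overline{\mb M}_x(k)$, the reductive quotient of the special fibre of the parahoric, and you claim (i) that this group embeds in $\mbG_k$ because its root system is read off the extended Dynkin diagram, and (ii) that $\Lambda_i/v\Lambda_i$ ``is the reduction of $L_i$ as a $\mbG_k$-module restricted through $\overline{\mb M}_x\le\mbG_k$''. Neither is justified as written. For (i) one must match root \emph{data}, not just root systems: the point is that $\overline{\mb M}_x$ and the Borel--de Siebenthal subsystem subgroup of $\mbG_k$ contain the same split maximal torus $T$ with the same character lattice, which fixes the isogeny type and produces the embedding. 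Claim (ii) is false as literally stated --- the parahoric action on $\Lambda_i/v\Lambda_i$ does not factor through $\overline{\mb M}_x$, since the pro-unipotent radical generally acts nontrivially on the reduction of a lattice; what is true, and suffices because Brauer characters only see composition factors, is that $\Lambda_i/v\Lambda_i$ and $L_{i,k}{\downarrow_{\overline{\mb M}_x}}$ have the same formal character on $T$ and hence the same composition factors. Until these two points are supplied, the proof has a hole exactly where you yourself locate the content of the proposition. Finally, your $p=2$ patch via ``odd Schur multipliers'' appeals to the particular groups occurring in this paper rather than to the stated hypotheses; a clean general statement should assume $H\cap Z(\mbG_0)$ has order prime to $p$ (which every application in the paper satisfies, as the proposition is only invoked for odd $p$).
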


The next result is a generalization of an old result of Mal'cev \cite{malcev1945} to all primes due to Friedrich Knop.\footnote{This was posted in response to a question asked by the author on the MathOverflow website.} We only need the case of $\mathfrak e_7$ here, but it is of independent and general interest; for example, the author will use the result for $\mathfrak e_8$ in a future paper.
\begin{table}
\begin{center}
\begin{tabular}{cccc}
\hline Lie algebra & Dimension & Lie algebra & Dimension
\\ \hline $\mathfrak a_n$ ($n\geq 1$) & $\lfloor(n+1)^2/4\rfloor+\epsilon_n$ & $\mathfrak g_2$ & $3$
\\ $\mathfrak b_n$ ($n\geq 4$) & $n(n-1)/2+1$ & $\mathfrak f_4$ & $9$
\\ $\mathfrak b_3$ & $5$ & $\mathfrak e_6$ & $16+\delta_{p,3}$
\\ $\mathfrak c_n$ ($n\geq 2$) & $n(n+1)/2$ & $\mathfrak e_7$ & $27+\delta_{p,2}$
\\ $\mathfrak d_n$ ($n\geq 4$) & $n(n-1)/2+2\delta_{p,2}$ & $\mathfrak e_8$ & $36$
\\ \hline
\end{tabular}
\end{center}
\caption{Maximal dimensions of abelian subalgebras of simple Lie algebras in characteristic $p$. Here $\epsilon_n=0$ if $p\nmid (n+1)$ and $\epsilon_n=1$ if $p\mid (n+1)$. Also, $p\neq 2$ for types $\mathfrak b_n$, $\mathfrak c_n$ and $\mathfrak f_4$, and $p\neq 2,3$ for $\mathfrak g_2$.}\label{tab:maxdimension}
\end{table}
\begin{prop}\label{prop:abeliansubalgebra} Let $\mb G$ be a simple exceptional algebraic group in characteristic $p$, and let $\mathfrak g$ be its Lie algebra. Suppose that $p\neq 2$ for $\mb G$ of types $B_n$, $C_n$ and $F_4$, and $p\neq 2,3$ for $\mb G$ of type $G_2$. The maximal dimension of an abelian subalgebra is given in Table \ref{tab:maxdimension}.
\end{prop}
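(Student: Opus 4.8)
The plan is to reduce the statement to a finite combinatorial optimisation over the root system, handled one type at a time, with the characteristic entering only through the centre of the Lie algebra.

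Fix a maximal torus $\mb T$ and Borel subgroup $\mb B=\mb T\mb U$ of $\mb G$, with $\mathfrak g=\mathfrak t\oplus\bigoplus_{\alpha\in\Phi}\mathfrak g_\alpha$ and $\mathfrak b=\operatorname{Lie}(\mb B)=\mathfrak t\oplus\mathfrak n$. For a fixed $d$, the set of $d$-dimensional subspaces $\mathfrak a$ of $\mathfrak g$ with $[\mathfrak a,\mathfrak a]=0$ is a Zariski-closed, hence projective, subvariety of the Grassmannian $\operatorname{Gr}(d,\mathfrak g)$, on which $\mb G$ acts. If it is non-empty then by the Borel fixed-point theorem it contains a $\mb B$-stable point, so it suffices to bound the dimension of a $\mb B$-stable abelian subalgebra. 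Any such $\mathfrak a$ is in particular $\mb T$-stable, hence a sum of weight spaces, $\mathfrak a=\mathfrak a_0\oplus\bigoplus_{\alpha\in S}\mathfrak g_\alpha$ with $\mathfrak a_0\subseteq\mathfrak t$ and $S\subseteq\Phi$.

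Next I would extract the combinatorial constraints. Using $\mb U$-invariance together with the Chevalley structure constants — which are all nonzero modulo $p$ in the characteristics permitted by the statement (automatically so for the simply-laced types $E_6,E_7,E_8$, where they are $\pm1$, and this is precisely what forces the restrictions $p\neq2$ for $F_4$ and $p\neq2,3$ for $G_2$) — one shows that a maximal $\mb B$-stable abelian subalgebra may be taken with $S\subseteq\Phi^+$ an upper order ideal of the root poset that is \emph{abelian}, meaning $\alpha+\beta\notin\Phi$ for all $\alpha,\beta\in S$, and with $\mathrm d\alpha$ vanishing on $\mathfrak a_0$ for every $\alpha\in S$. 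If $S$ spans $\mathfrak t^\ast$, then $\mathfrak a_0$ lies in the centre $\mathfrak z=\mathfrak z(\mathfrak g)$, which is $0$ unless $(\mb G,p)\in\{(E_6,3),(E_7,2)\}$ and is one-dimensional in those two cases; conversely $\mathfrak z\oplus\bigoplus_{\alpha\in S}\mathfrak g_\alpha$ is always abelian, so the maximum over this regime is $a(\mb G)+\dim\mathfrak z$, where $a(\mb G)$ denotes the largest size of an abelian ideal of $\mathfrak b$. If $S$ does not span $\mathfrak t^\ast$, then $S$ is contained in a proper sub-root-system $\Phi\cap V$, and a short induction on the rank — comparing $a(\mb G)$ with $a(\mb H)+(\operatorname{rank}\mb G-\operatorname{rank}\mb H)$ over the semisimple subsystems $\mb H$ — shows this possibility never beats the previous one.

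It then remains to pin down $a(\mb G)$: for $E_7$ it is the nilradical of the maximal parabolic $P_7$ (dimension $27$), for $E_6$ the nilradical of $P_1$ (dimension $16$), for $G_2$ the set of roots of height $\geq3$ (dimension $3$), and for $F_4$ and $E_8$ it is a less transparent abelian ideal of dimension $9$ and $36$ respectively. The main obstacle is proving optimality in these last two cases, where the maximum is \emph{not} attained by taking all roots above a fixed height, so one must either invoke the classification of abelian ideals of a Borel subalgebra (Peterson's $2^n$ enumeration, and the explicit computations of Kostant and Suter, extending Mal'cev's list over $\mathbb C$) or carry out the finite check directly. Adding $\dim\mathfrak z$ then recovers exactly the entries $27+\delta_{p,2}$, $16+\delta_{p,3}$, $9$, $3$ and $36$ of Table \ref{tab:maxdimension} for the exceptional types; the classical entries follow from the same argument applied to Mal'cev's theorem.
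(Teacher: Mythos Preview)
Your argument is essentially correct and follows the same skeleton as the paper's: pass to a $\mb B$-fixed point on the Grassmannian, decompose $\mathfrak a$ into $\mb T$-weight spaces, and reduce the problem to a combinatorial one on roots that is independent of $p$, with the centre of $\mathfrak g$ contributing the $\delta$-terms. The Chevalley-constant observation and the appeal to Mal'cev are exactly what the paper does.

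Where the paper is tighter is in handling $\mathfrak a_0$. You split into the cases ``$S$ spans $\mathfrak t^\ast$'' versus not, and run an induction over subsystems for the latter. The paper avoids this entirely: it observes that $\mathfrak a_0$ is precisely the set of semisimple elements of $\mathfrak a$ (because $\mathfrak a$ is abelian, so Jordan decompositions are compatible), hence is itself $\mb B$-stable; but a $\mb B$-stable subspace of $\mathfrak t$ is automatically pointwise $\mb B$-fixed, and a $\mb B$-fixed vector in a $\mb G$-module is $\mb G$-fixed. Thus $\mathfrak a_0\subseteq\mathfrak z(\mathfrak g)$ in one stroke, with no case analysis. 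Similarly, the paper does not pass through abelian ideals of $\mathfrak b$ or the Peterson/Suter classification: once $\mathfrak a_0$ is central, one only needs that $\alpha+\beta$ is neither a root nor zero for $\alpha,\beta\in R$, and this is literally Mal'cev's combinatorial problem, so one just quotes his values. Your route through upper order ideals of $\Phi^+$ is valid but adds a layer (including the claim $S\subseteq\Phi^+$, which for $p=2$ is not as immediate as you suggest).
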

\begin{proof} In the Grassmannian of $\mathfrak g$, the condition of being abelian is a closed condition. Thus we have a closed subscheme $A_d\subset \mathrm{Gr}_d(\mathfrak g)$ classifying all abelian subalgebras of a given dimension $d$, and $\mb G$ acts on $A_d$ by conjugation. Let $d$ be such that $A_d$ is non-empty. As $A_d$ is a projective variety, any Borel subgroup $\mb B$ of $\mb G$ has a fixed point on $A_d$. Thus, if there exists an abelian subalgebra of $\mathfrak g$ of dimension $d$ then there exists a $\mb B$-stable such subalgebra $\mathfrak a$. If $\mb T$ is a maximal torus inside $\mb B$ then $\mathfrak a$ is also $\mb T$-stable. Thus $\mathfrak a$ has a root space decomposition
\[ \mathfrak a=\mathfrak a_0\oplus \bigoplus_{\alpha\in R} \mathfrak g_\alpha,\]
where $R$ is a set of roots and $\mathfrak a_0$ is a subalgebra of the corresponding maximal torus $\mathfrak t$.

Since $\mathfrak a_0$ is the set of semisimple elements in $\mathfrak a$, it is normalized by $\mb B$, so consists of $\mb B$-fixed, and thus $\mb G$-fixed, points. Thus $\mathfrak a_0$ is central, and is the contribution of $\delta_{p,2}$ and so on to the entries in Table \ref{tab:maxdimension}. Thus it suffices to determine the maximum size of $R$. We show that this is independent of $p$, and thus we may assume that $p=0$. Then these values coincide with those of Mal'cev in \cite{malcev1945}. (In the English summary of this paper---but not in the Russian original---it is incorrectly stated that the answer for $B_3$ is $6$, whereas it is $5$.)

Note that, for all $\alpha,\beta\in R$, we have that $[\mathfrak g_\alpha,\mathfrak g_\beta]=0$, since $\mathfrak a$ is abelian. If $\alpha+\beta$ is a root then, for Chevalley generators $e_\alpha,e_\beta$, we have that
\[ [e_\alpha,e_\beta]=\pm N_{\alpha\beta}e_{\alpha+\beta},\]
and this is non-zero because of the restrictions on $p$. Thus $\alpha+\beta$ cannot be a root for all $\alpha,\beta\in R$. If $\alpha+\beta=0$ then for $p>2$ we have that $[e_\alpha,e_{-\alpha}]=h_\alpha\neq 0$, another contradiction.  Thus for $p$ odd, $d$ is the sum of a maximal set of roots the sum of any two of which is neither a root nor zero, which is clearly independent of $p$.

If $p=2$ then our condition on $\mb G$ means that the Dynkin diagram is simply laced, and therefore the root spaces have dimension exactly $2$ (spanned by $\alpha$ and `$-\alpha$', which is of course equal to $\alpha$ in characteristic $2$). We cannot have that $g_\alpha$ has dimension $2$ though, since the $2$-dimensional root space is not abelian (it commutates into a Cartan subalgebra), and thus $g_\alpha$ is $1$-dimensional in this case as well, spanned by a root. Hence $|R|$ is bounded by the same value as for $p\neq 2$, and the proof is complete.
\end{proof}

The next result comes from \cite[Proposition 3.11]{litterickmemoir}. This version is good enough for our needs here (we only need it in Section \ref{sec:proofofmaximality} to reduce the number of potential overgroups of a putative maximal subgroup), but we give an improved version of it in Section \ref{sec:intrinsicimp}.

\begin{lem}\label{lem:inconnected} Let $\mbG$ be the group $E_7$, and let $H$ be a subgroup of $\mbG$ such that $HZ(\mbG)/Z(\mbG)$ is simple. Either $H$ is Lie primitive, $H$ is contained in a proper, closed, connected, positive-dimensional subgroup of $\mbG$, or $H$ is isomorphic to one of $\PSp_6(2)$, $\PSU_3(3)$, $\PSL_2(7)$ and $\PSL_2(8)$.
\end{lem}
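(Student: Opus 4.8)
The plan is to run a minimal-overgroup argument, analyse the resulting component group, and reduce to a short list of residual simple groups handled by hand. Passing to the adjoint quotient we may assume $Z(\mbG)=1$, so that $H$ is simple (the simply connected case is recovered by taking preimages, as $H\cap Z(\mbG)$ is central in $H$); and we may assume $H$ is not Lie primitive, so $H$ lies in some proper closed positive-dimensional subgroup of $\mbG$. Choose a minimal such $\mb X$. Since $H$ normalises $\mb X^\circ$, the set $H\mb X^\circ$ is a closed positive-dimensional subgroup lying between $H$ and $\mb X$, hence equals $\mb X$ by minimality; so $\mb X/\mb X^\circ\cong H/(H\cap\mb X^\circ)$, and $H\cap\mb X^\circ$ is a normal subgroup of the simple group $H$. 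If $H\cap\mb X^\circ=H$ then $H\le\mb X^\circ$, and minimality forces $\mb X=\mb X^\circ$, so $H$ lies in a proper closed \emph{connected} positive-dimensional subgroup and the second alternative of the statement holds. Hence we may assume $H\cap\mb X^\circ=1$ and $H\cong\mb X/\mb X^\circ$, a finite group.

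I would then split according to the structure of $\mb X^\circ$. If $\mb X^\circ$ is not reductive, its unipotent radical is a nontrivial closed connected unipotent subgroup normalised by $N_{\mbG}(\mb X^\circ)\ge H$, so by the Borel--Tits theorem $H$ lies in a proper parabolic subgroup of $\mbG$; parabolics are connected, and we are done. If $\mb X^\circ$ is reductive with nontrivial derived subgroup $\mb S=[\mb X^\circ,\mb X^\circ]$, then $H$ normalises the nontrivial semisimple subgroup $\mb S$, so $H\le N_{\mbG}(\mb S)$; here $N_{\mbG}(\mb S)^\circ$ is a proper closed connected positive-dimensional subgroup, and either $H$ lies in it (done), or $H$ embeds in the finite group $N_{\mbG}(\mb S)/N_{\mbG}(\mb S)^\circ$. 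The latter is built from graph automorphisms of the simple factors of $\mb S$ together with permutations of isomorphic factors, and by the classification of connected reductive subgroups of exceptional algebraic groups (Liebeck--Seitz) there are only finitely many pairs $(\mbG,\mb S)$ to examine; one checks that its simple sections are very small --- for $\mbG=E_7$ the largest is $\PSL_3(2)$, from permuting the seven $A_1$ factors of a subgroup $A_1^7$ --- so that either this option is impossible and $H$ is forced into $N_{\mbG}(\mb S)^\circ$, or $H$ lands among a short list of small simple groups to be treated separately.

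The decisive case is $\mb X^\circ$ a nontrivial torus $\mb T'$. If $\mb T'$ is not maximal then $C_{\mbG}(\mb T')$ is a proper closed connected reductive subgroup of positive dimension, and either it contains $H$ (done) or the simple group $H$ embeds in $N_{\mbG}(\mb T')/C_{\mbG}(\mb T')$, which acts faithfully on the character lattice of $\mb T'$ and so embeds in $\GL_r(\Z)$ with $r<\operatorname{rank}(\mbG)$; comparing with the rational representations of small degree removes most candidates. If $\mb T'$ is maximal then $C_{\mbG}(\mb T')=\mb T'$ and $H$ embeds in the Weyl group $W(\mbG)$. For $\mbG=E_7$, where $W(E_7)\cong\Z/2\times\Sp_6(2)$, this is genuinely realised by $H\cong\PSU_3(3)$ and by $H\cong\PSL_2(8)$, both of which occur as subgroups of $\Sp_6(2)$, and one must then decide whether such an $H$, lying inside $N_{\mbG}(\mb T')$, is nevertheless contained in a proper closed connected positive-dimensional subgroup. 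Running the available tests --- matching composition factors on $M(E_7)$ and $L(E_7)$ against those of reductive subgroups as in \cite{thomas2016} and \cite{litterickmemoir}, and applying the blueprint criterion \cite[Corollary 3.3]{craven2019un} --- settles every simple subgroup of $W(E_7)$ this way except $\PSU_3(3)$ (for all $p$) and $\PSL_2(8)$ when $p=2$, which are the exceptions asserted.

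The main obstacle is exactly this last step. For $\PSU_3(3)$, and for $\PSL_2(8)$ in its defining characteristic, the composition factors on the $56$- and $133$-dimensional modules are too unstructured, and these groups possess no $p'$-element with a large enough fixed space and are not blueprints for the minimal module, so none of the standard devices forces containment in a proper connected subgroup; resolving them requires the finer module- and subalgebra-theoretic analysis developed elsewhere in this paper. Everything preceding this step is routine once one has the finite classification of positive-dimensional reductive subgroups of the exceptional groups and the structure of their normalisers in hand.
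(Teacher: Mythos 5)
You should first note that the paper does not actually prove this lemma: it is quoted from \cite[Lemma 1.12]{craven2017} ("The next result comes from..."), so there is no in-paper argument to measure your proposal against. That said, your skeleton is the standard and correct reduction for statements of this kind: pass to a minimal positive-dimensional overgroup $\mb X$, observe that $H\cap\mb X^\circ$ is trivial or all of $H$, dispose of the non-reductive case by Borel--Tits, and otherwise push $H$ into the component group of the normalizer of a semisimple subgroup or of a torus, ending with $H\hookrightarrow W(\mbG)$ when $\mb X^\circ$ is a maximal torus. All of that is sound, and you correctly locate where the exceptions $\PSU_3(3)$ and $\PSL_2(8)$ ($p=2$) come from, namely $W(E_7)\cong 2\times\Sp_6(2)$.

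The genuine gap is that the decisive step is asserted rather than proved, and that step is the entire content of the lemma. Having reduced to ``$H$ embeds in the component group of some disconnected positive-dimensional subgroup, or in $W(\mbG)$'', you must show for \emph{every} simple subgroup of these finite groups, other than the two listed exceptions, that any copy of it arising this way lies in some proper closed connected subgroup of $\mbG$. ``Running the available tests \dots settles every simple subgroup of $W(E_7)$'' is a claim, not an argument: $\Sp_6(2)$ contains $\Alt(5),\dots,\Alt(8)$, $\PSL_2(7)$, $\PSL_2(13)$, $\PSU_4(2)$ and itself, and each needs a separate, characteristic-dependent elimination; your own intermediate case $N_{\mbG}(A_1^7)/A_1^7\cong\PSL_3(2)\cong\PSL_2(7)$ is explicitly flagged ``to be treated separately'' and then never treated, so your argument as written leaves open whether $\PSL_2(7)$ is a further exception. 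Moreover the lemma is stated for all exceptional $\mbG$, and the assertion that $E_6$ and $E_8$ contribute \emph{no} exceptions requires the same analysis for $W(E_6)\cong\PSp_4(3).2$ and $W(E_8)\cong 2.\Orth_8^+(2).2$ (the latter with a large supply of simple subgroups), which you do not address at all. Until those verifications are carried out --- which is precisely what \cite[Lemma 1.12]{craven2017}, building on \cite{liebeckseitz1998,liebeckseitz1999}, does --- your proposal establishes only the reduction, not the statement.
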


The point of this lemma is to move from $H$ being contained in a proper subgroup to being contained in a proper \emph{connected} subgroup. If $H$ is isomorphic to a subgroup of the Weyl group then that is not obviously always the case.

\subsection{Stabilizing subspaces}

We begin with \cite[(1.3)]{seitz1991}.

\begin{lem}\label{lem:paramaxl} Let $H$ be a subgroup of $\mbG$ and suppose that $H$ stabilizes a line on $L(E_7)^\circ$. Then $H$ is contained in a parabolic or maximal-rank subgroup.
\end{lem}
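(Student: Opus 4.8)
The plan is to run the whole argument on the (abstract, restricted) Jordan decomposition of a vector spanning the stabilized line. When $p\neq 2$ we have $L(E_7)^\circ=\mathfrak g:=L(E_7)$, which is simple; when $p=2$ a line in $L(E_7)^\circ=\mathfrak g/\mathfrak z$ (with $\mathfrak z$ the $1$-dimensional $\mbG$-fixed central ideal of $\mathfrak e_7$) lifts to an $H$-stable $2$-space $\langle v,z\rangle$ of $\mathfrak g$. In either case I would fix such a $v$ and write $v=s+n$ with $s$ semisimple, $n$ nilpotent and $[s,n]=0$. Since the Jordan decomposition is canonical — hence equivariant for automorphisms of $\mathfrak g$ and homogeneous under scaling — and since $\mathfrak z$ is central and $\mbG$-fixed, the relations $g\cdot v\in\langle v\rangle+\mathfrak z$ for $g\in H$ force $H$ to stabilize $\langle s\rangle+\mathfrak z$ and $\langle n\rangle+\mathfrak z$. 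As $v\notin\mathfrak z$, the semisimple part $s$ is either noncentral or central, and the proof splits accordingly.

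If $s$ is noncentral, I would use that $s$ is $\mbG$-conjugate into $\mathrm{Lie}(\mb T)$ for a maximal torus $\mb T$, so that $\mb C:=C_{\mbG}(s)^\circ$ is a connected reductive subgroup of maximal rank, and proper since $s$ is noncentral (the $\mbG$-fixed points on $\mathfrak g$ are $0$, resp.\ $\mathfrak z$). For $g\in H$ the element $g\cdot s$ is a nonzero multiple of $s$ modulo $\mathfrak z$, so $g\mb Cg^{-1}=C_{\mbG}(g\cdot s)^\circ=C_{\mbG}(s)^\circ=\mb C$, whence $H\leq N_{\mbG}(\mb C)$, a proper subgroup of maximal rank. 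If instead $s$ is central (so $s=0$ when $p\neq 2$ and $s\in\mathfrak z$ when $p=2$), then the image $\bar n$ of $v$ in $L(E_7)^\circ$ equals that of $n$ and is a nonzero nilpotent — equivalently, unstable — vector, and I would attach to it its canonical ``optimal destabilizing'' parabolic $\mb P(\bar n)$ in the sense of Kempf--Rousseau: this is a proper parabolic, it depends only on the line $\langle\bar n\rangle$, and it satisfies $g\mb P(\bar n)g^{-1}=\mb P(g\cdot\bar n)$ for $g\in\mbG$. Since $H$ stabilizes $\langle\bar n\rangle$, it follows that $H\leq\mb P(\bar n)$. (In good characteristic $\mb P(\bar n)$ is simply the parabolic of the non-negative $\mathrm{ad}(h)$-weight spaces for an $\mathfrak{sl}_2$-triple through $n$; for $E_7$ it is precisely the bad primes $2$ and $3$ that make it preferable to phrase the construction via Kempf--Rousseau.)

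The main obstacle is the nilpotent case: one needs that a nonzero unstable vector in the adjoint module has a \emph{genuinely canonical} associated parabolic — not merely one well-defined up to conjugacy — depending only on its line and produced uniformly in $p$, and, in characteristic $2$, that passing to the quotient $\mathfrak g/\mathfrak z$ does not disturb this. The latter is harmless since $\mbG$ fixes $\mathfrak z$ pointwise, so the unstable locus and the optimal cocharacters descend cleanly to $\mathfrak g/\mathfrak z$. Everything else — the equivariance of Jordan decomposition, the maximal-rank structure of $C_{\mbG}(s)^\circ$, and the bookkeeping with the central line when $p=2$ — is routine.
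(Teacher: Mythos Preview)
Your argument is correct. The paper does not supply its own proof of this lemma but simply quotes it from \cite[(1.3)]{seitz1991}; the proof there follows the same outline you give --- Jordan decomposition of a spanning vector for the stabilized line, passing to $N_\mbG(C_\mbG(s)^\circ)$ when the semisimple part is noncentral, and to the canonical (instability) parabolic attached to the nilpotent part otherwise.
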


This does not tell us exactly what we want to know though, because an almost simple group need not stabilize a line if the simple subgroup does. This lemma is \cite[Propositions 4.5 and 4.6]{craven2015un}, specialized to the case $\mb G=E_7$.

\begin{lem}\label{lem:fixedline} Suppose that $H$ is a finite subgroup of $\mbG=E_7$ possessing no subgroups of index $2$. If $H$ centralizes a line on either $M(E_7)$ or $L(E_7)^\circ$ then $H$ is strongly imprimitive.
\end{lem}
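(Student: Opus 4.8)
The plan is to reduce the claim for an arbitrary finite subgroup $H$ with no index-$2$ subgroup to a statement about the full stabilizer in $\mbG$ of a line, and then to invoke the structure theory already cited. First I would note that if $H$ centralizes a line $\ell$ on $M(E_7)$ or on $L(E_7)^\circ$, then $H$ lies in the stabilizer $\mb S=\mathrm{Stab}_{\mbG}(\ell)$, which is a closed (possibly non-reduced, but certainly positive-dimensional unless $\ell$ is one of finitely many lines fixed by all of $\mbG$) subgroup: indeed on $M(E_7)$ the group $\mbG$ has no trivial composition factor, so it fixes no line, and on $L(E_7)^\circ$ likewise (the trivial composition factor of $L(E_7)$ in characteristic $2$ is precisely the one quotiented out), so $\mb S$ is proper. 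Hence $H$ is Lie imprimitive and we are in the regime where the blueprint/strong-imprimitivity machinery applies.

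The heart of the argument, which I expect is exactly the content of Propositions 4.5 and 4.6 of \cite{craven2015un} being quoted, is to upgrade ``$H$ is contained in the proper positive-dimensional subgroup $\mb S$'' to ``$H$ is contained in an $N_{\Aut^+(\mbG)}(H)$-stable proper positive-dimensional subgroup.'' The natural candidate is $\mb S$ itself, or rather its connected component or a canonically-defined positive-dimensional piece of it. The point is that the line $\ell$ is canonical enough: any automorphism $\psi \in N_{\Aut^+(\mbG)}(H)$ sends $\ell$ to another $H$-centralized line in the appropriate module, and if one can show $H$ centralizes a \emph{unique} line (equivalently, the fixed space $C_M(H)$ is one-dimensional, or at least that $\psi$ must fix $\ell$), then $\psi$ normalizes $\mb S$ and we are done. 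This is where the hypothesis that $H$ has no subgroup of index $2$ enters: it guarantees that $H$ acts trivially (not just via a sign character) on any one-dimensional constituent it fixes, and more importantly it controls the action of graph and outer automorphisms — the potential ambiguity between $M(E_7)$ and its dual, or between two $H$-fixed lines swapped by an outer automorphism, is a two-element business, and a group with no index-$2$ quotient cannot realize such a swap in a way that destabilizes the configuration. So the argument runs: let $W = C_{M}(H)$ where $M$ is whichever of $M(E_7)$, $L(E_7)^\circ$ contains the centralized line; if $\dim W = 1$ then $\mb S = \mathrm{Stab}_{\mbG}(\ell)$ is visibly $N_{\Aut^+(\mbG)}(H)$-stable; if $\dim W \geq 2$ one works instead with the stabilizer of the whole subspace $W$, or picks out a characteristic line inside $W$ using the $H$-module structure, and again this is automorphism-stable because $W$ is.

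The main obstacle, and the reason this needs a cited lemma rather than a one-liner, is the case where $H$ fixes more than one line and no single line is canonically preferred — one must then argue with the stabilizer of a larger $H$-invariant subspace $W$ and check that $\mathrm{Stab}_{\mbG}(W)$ is still proper and positive-dimensional (properness can fail if $W$ is, say, all of $M$, but that cannot happen since $\mbG$ itself has a nonzero fixed space neither on $M(E_7)$ nor on $L(E_7)^\circ$), and that it is normalized by $N_{\Aut^+(\mbG)}(H)$ — which holds because $W$, being defined purely from the $H$-action, is sent to $C_M(H^\psi) = C_M(H)$ by any $\psi$ normalizing $H$. A secondary subtlety is making sure one is allowed to pass freely between $H$ and $H \cdot Z(\mbG)/Z(\mbG)$ when invoking strong imprimitivity; since $Z(\mbG)$ acts as scalars on $M(E_7)$ and trivially on $L(E_7)$, centralizing a line is insensitive to this, so there is no real issue. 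I would therefore present the proof as: (i) reduce to the line-stabilizer $\mb S$, noting $\mb S$ is proper and positive-dimensional; (ii) observe $\mb S$ (or the stabilizer of the canonical subspace $C_M(H)$) is $N_{\Aut^+(\mbG)}(H)$-stable, using the no-index-$2$ hypothesis to rule out the only obstruction, namely an outer element permuting the finitely many relevant $H$-fixed lines nontrivially; (iii) conclude $H$ is strongly imprimitive by definition. Most of the real work is already packaged in \cite[Propositions 4.5, 4.6]{craven2015un}, so here I would mainly be citing it and checking the $E_7$-specialization hypotheses (no trivial composition factor on $M(E_7)$ or $L(E_7)^\circ$) hold.
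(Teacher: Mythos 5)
The paper does not actually prove this lemma: it imports it wholesale as \cite[Propositions 4.5 and 4.6]{craven2015un}, specialized to $E_7$, and since you also end by deferring to that citation, at the level of what appears in the paper your proposal and the paper coincide. Your reconstruction of the underlying argument is right in outline and is correct in the easy case: if $\dim C_V(H)=1$ then the unique centralized line is canonical, its stabilizer is proper (because $V$ is a nontrivial irreducible $\mbG$-module) and positive-dimensional (because $\dim\mathbb{P}(V)<\dim\mbG$), and it is visibly $N_{\Aut^+(\mbG)}(H)$-stable, which is exactly the definition of strong imprimitivity.

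The genuine gap is in your fallback for $\dim C_V(H)\geq 2$. You propose to replace the line by the full fixed space $W=C_V(H)$ and work with $\mathrm{Stab}_\mbG(W)$, and you verify properness and $N_{\Aut^+(\mbG)}(H)$-stability but not positive-dimensionality --- which is the whole difficulty. The dimension count that works for lines fails here: for a $d$-space in a $56$- or $132$-dimensional module the Grassmannian can have dimension far exceeding $133$, so the setwise stabilizer of a subspace, even one of the canonical form $C_V(H)$, is not positive-dimensional for free, and nothing you write rules out $\mathrm{Stab}_\mbG(W)$ being finite. That step is precisely the content supplied by the cited propositions and cannot be waved through. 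Relatedly, your account of the ``no subgroups of index $2$'' hypothesis is off: it does not force $H$ to act trivially on a stabilized line (a character with image of odd order $>1$ is still nontrivial, and the hypothesis only excludes even-order abelian quotients), and since the lemma already assumes the line is \emph{centralized}, the hypothesis must be doing work elsewhere --- in the source it controls how overgroups in $N_{\Aut^+(\mbG)}(H)$ can act on the relevant lines and line stabilizers (for instance, the stabilizer of a $1$-space on $M(E_7)$ can have the form $E_6T_1.2$ with the outer involution negating the line, a configuration the hypothesis excludes for $H$ itself). As a citation your proof is fine; as a self-contained argument it is missing the key step.
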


(Of course, if $H$ is perfect then $H$ has no subgroups of index $2$, and any line stabilized by $H$ is centralized by it.) The next result is a little complicated in full generality, because it has to include both graph automorphisms and Steinberg endomorphisms, not just Frobenius endomorphisms. For $E_7$ it is much simpler. This may be found in \cite[Proposition 1.12]{liebeckseitz1998}, and in various guises in \cite{craven2015un,craven2019un,craven2020un,litterickmemoir}.

\begin{prop}\label{prop:intersectionsubspace} Let $H$ be a subgroup of $\mbG=E_7$. Suppose that $W$ is a subspace of $M(E_7)$ or $L(E_7)^\circ$ stabilized by $H$. If the intersection of the stabilizers of all subspaces in the $N_{\Aut^+(\mb G)}(H)$-orbit of $W$ is positive dimensional then $H$ is strongly imprimitive.
\end{prop}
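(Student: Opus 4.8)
The plan is to reduce the statement to an application of the "blueprint" machinery together with the definition of strong imprimitivity, using the fact that the intersection of a finite family of subspace stabilizers in $\mbG$ is again a closed subgroup of $\mbG$. First I would set $\mb Y = N_{\Aut^+(\mbG)}(H)$ and let $W_1, \dots, W_m$ be the (finite) $\mb Y$-orbit of $W$ under the action on subspaces of $M(E_7)$ (resp.\ $L(E_7)^\circ$); this orbit is finite because $\mb Y / \mbG$ is finite (recall for $E_7$ there are only inner and $p$-power field automorphisms, and the subspaces live in a fixed finite-dimensional module, so the relevant index is finite). Put $\mb X = \bigcap_{i=1}^m \mathrm{Stab}_{\mbG}(W_i)$, a closed subgroup of $\mbG$. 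By hypothesis $\mb X$ is positive-dimensional, and since $H$ stabilizes $W = W_1$ and each automorphism in $\mb Y$ permutes the $W_i$, we have $H \leq \mb X$; moreover $\mb X$ is $\mb Y$-stable by construction, since conjugating by an element of $\mb Y$ merely permutes the $W_i$ and hence fixes their common stabilizer.

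Next I would dispose of the case $\mb X = \mbG$: if $\mbG$ itself stabilizes $W$, then $W$ is a $\mbG$-submodule of $M(E_7)$ or $L(E_7)^\circ$; but $M(E_7) = L(\lambda_7)$ is irreducible for $\mbG$ of type $E_7$, and $L(E_7)^\circ$ is by definition the unique nontrivial composition factor of the adjoint module, which is irreducible, so $W$ is $0$ or the whole module and the statement of the proposition is vacuous (there is nothing to stabilize nontrivially). So we may assume $\mb X$ is a \emph{proper}, closed, positive-dimensional subgroup of $\mbG$ containing $H$ and stable under $N_{\Aut^+(\mbG)}(H)$ --- which is precisely the definition of $H$ being strongly imprimitive via the witnessing subgroup $\mb X$.

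The only genuinely delicate point --- and the step I expect to be the main obstacle in writing this out carefully --- is verifying that the $N_{\Aut^+(\mbG)}(H)$-orbit of $W$ really is finite and that $\mb X$ is a genuine closed subscheme, i.e.\ that "intersection of stabilizers" behaves well: field automorphisms act semilinearly rather than linearly on $M(E_7)$, so $\mathrm{Stab}_{\mbG}(W_i)$ must be interpreted with $W_i$ the appropriate $\phi$-twisted subspace, and one must check that the common stabilizer of a $\mb Y$-orbit of such subspaces is $\mb Y$-normalized in the scheme-theoretic sense. This is exactly the bookkeeping that \cite[Proposition 1.12]{liebeckseitz1998} handles in general; for $E_7$, since there are no graph automorphisms and no exotic Steinberg endomorphisms to worry about, the twisting is mild and the argument above goes through essentially verbatim. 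I would therefore present the proof as the specialization of \cite[Proposition 1.12]{liebeckseitz1998} to $\mbG = E_7$, spelling out the construction of $\mb X$ and the reduction to the definition of strong imprimitivity, and citing the general result for the semilinearity bookkeeping.
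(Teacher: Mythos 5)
The paper gives no proof of this proposition at all: it is quoted as a specialization to $E_7$ of \cite[Proposition 1.12]{liebeckseitz1998}, which is exactly the reference you fall back on for the semilinear bookkeeping. Your reconstruction is the standard argument underlying that citation and is correct: the common stabilizer $\mb X$ of the orbit of $W$ is closed, contains $H$, is $N_{\Aut^+(\mbG)}(H)$-stable by construction, and is proper because $M(E_7)$ and $L(E_7)^\circ$ are irreducible $\mbG$-modules, so this matches the paper's (implicit) approach. One small inaccuracy, which does not affect the argument: the $N_{\Aut^+(\mbG)}(H)$-orbit of $W$ need not be finite, since the $p$-power field automorphisms form an infinite family and $N_\mbG(H)$ may be infinite; but an arbitrary intersection of closed subgroups is still closed, so finiteness is not needed, and your exclusion of the degenerate case $W\in\{0,M\}$ correctly identifies where the implicit properness convention enters.
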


\begin{lem}[See, for example, {{\cite[Theorem 21.11]{malletesterman}}}]\label{lem:fixedpoints} Let $\mbG$ be a connected linear algebraic group with a Steinberg endomorphism $\sigma$, and write $G=\mbG^\sigma$. Let $X$ be a non-empty set such that $\mbG$ and $\sigma$ act on $X$, and such that $(xg\sigma)=(x\sigma)(g\sigma)$ for all $x\in X$ and $g\in \mbG$. Then $X^\sigma=\emptyset$, i.e., there are $\sigma$-fixed points in $X$.

Furthermore, if $\mbG$ is transitive on $X$ and for some $x\in X$ the stabilizer $\mbG_x$ of $x$ is closed, the number of $G$-orbits on the $\sigma$-fixed points of $X$ are in one-to-one correspondence with $\sigma$-conjugacy classes on $\mbG_x/\mbG_x^\circ$. In particular, if $\mbG_x$ is connected then all $\sigma$-fixed points of $X$ are $G$-conjugate.
\end{lem}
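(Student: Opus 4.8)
The plan is to deduce the whole statement from the Lang--Steinberg theorem, which applies because $\mbG$ is connected and $\sigma$ is a Steinberg endomorphism: the Lang map $\mathcal L\colon\mbG\to\mbG$, $g\mapsto g^{-1}\sigma(g)$, is surjective, and hence so are its variants $g\mapsto\sigma(g)g^{-1}$, $g\mapsto g\sigma(g)^{-1}$ and $g\mapsto h\sigma(h)^{-1}$ (each having the same image as $\mathcal L$ after substituting $g\mapsto g^{-1}$ and using $\sigma(g^{-1})=\sigma(g)^{-1}$). One works with a single $\mbG$-orbit, which is the only setting in which the assertions have content; so assume $\mbG$ is transitive on $X$, and note that the first conclusion to be proved is $X^\sigma\neq\emptyset$ (as the gloss in the statement makes clear).

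For existence I would fix $x\in X$, use transitivity to choose $g\in\mbG$ with $\sigma(x)=xg$, and use Lang--Steinberg to choose $h\in\mbG$ with $h\sigma(h)^{-1}=g$; then $y:=xh$ satisfies $\sigma(y)=\sigma(x)\sigma(h)=xg\sigma(h)=x\,h\sigma(h)^{-1}\sigma(h)=xh=y$, so $y\in X^\sigma$. That $G=\mbG^\sigma$ acts on $X^\sigma$ is immediate from the compatibility rule, since $\sigma(y\gamma)=\sigma(y)\sigma(\gamma)=y\gamma$ for $y\in X^\sigma$ and $\gamma\in G$.

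For the counting I would fix a base point $x_0\in X^\sigma$ and put $\mbG_0:=\mbG_{x_0}$, which is assumed closed; then $\mbG_0$ has only finitely many connected components, so $\mbG_0/\mbG_0^\circ$ is a finite group, and $\mbG_0$ (hence also $\mbG_0^\circ$) is $\sigma$-stable, because $x_0\sigma(h)=\sigma(x_0 h)=\sigma(x_0)=x_0$ for $h\in\mbG_0$. For $g\in\mbG$ one has $x_0 g\in X^\sigma$ exactly when $\sigma(g)g^{-1}\in\mbG_0$, and replacing $g$ by $hg$ with $h\in\mbG_0$ does not move $x_0 g$ but sends $\sigma(g)g^{-1}$ to $\sigma(h)\bigl(\sigma(g)g^{-1}\bigr)h^{-1}$; so, declaring $a,a'\in\mbG_0$ to be $\sigma$-conjugate when $a'=\sigma(h)ah^{-1}$ for some $h\in\mbG_0$, one obtains a well-defined map $\eta$ from $X^\sigma$ to the set of $\sigma$-conjugacy classes of $\mbG_0$ sending $x_0 g$ to the class of $\sigma(g)g^{-1}$. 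This $\eta$ is constant on $G$-orbits, since $\sigma(g\gamma)(g\gamma)^{-1}=\sigma(g)\sigma(\gamma)\gamma^{-1}g^{-1}=\sigma(g)g^{-1}$ for $\gamma\in G$; it is surjective by Lang--Steinberg, taking $g$ with $\sigma(g)g^{-1}$ a prescribed element of $\mbG_0$; and its fibres are single $G$-orbits, because if $\eta(x_0 g)=\eta(x_0 g')$ then, after adjusting $g$ by a left multiple from $\mbG_0$, one may assume $\sigma(g)g^{-1}=\sigma(g')g'^{-1}=:a$, whence $\gamma:=g^{-1}g'$ satisfies $\sigma(\gamma)=(ag)^{-1}(ag')=g^{-1}g'=\gamma$ and $x_0 g\gamma=x_0 g'$. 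Hence $|X^\sigma/G|$ equals the number of $\sigma$-conjugacy classes of $\mbG_0$, which in turn equals the number of $\sigma$-conjugacy classes of $\mbG_0/\mbG_0^\circ$: surjectivity of the induced map is clear, and injectivity follows, after replacing representatives by $\sigma$-conjugates so as to compare $a$ with $ca$ for $c\in\mbG_0^\circ$, from a further application of Lang--Steinberg, now to the connected group $\mbG_0^\circ$ with the Steinberg endomorphism obtained by twisting $\sigma|_{\mbG_0^\circ}$ by conjugation by $a$. If $\mbG_0$ is connected there is exactly one such class, hence one $G$-orbit.

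The one delicate point is this final descent from $\mbG_0$ to its component group $\mbG_0/\mbG_0^\circ$, together with keeping the sidedness of the twisted-conjugacy action matched to the (right) action of $\mbG$ on $X$; everything else is a routine unwinding of the Lang--Steinberg theorem. As this is classical, in the paper itself I would simply cite \cite[Theorem 21.11]{malletesterman} and the Lang--Steinberg theorem underlying it, rather than reproduce the argument in full.
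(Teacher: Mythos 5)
Your proof is correct and is essentially the standard Lang--Steinberg argument that the cited source gives: the paper offers no proof of its own, citing only \cite[Theorem 21.11]{malletesterman}, and your unwinding (a fixed point via solving $h\sigma(h)^{-1}=g$, the orbit count via $g\mapsto\sigma(g)g^{-1}$ landing in $\mbG_x$, and the descent to the component group by a twisted Lang map on $\mbG_x^\circ$) is precisely that reference's proof. You are also right to restore the transitivity hypothesis for the existence claim and to read ``$X^\sigma=\emptyset$'' as a typo for $X^\sigma\neq\emptyset$.
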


The next lemma can be found in, for example, \cite[Lemma 3.2.15]{stewart2013}.

\begin{lem}\label{lem:1cohom} Let $\mbG$ be a connected linear algebraic group, let $\mb P$ be a maximal parabolic subgroup of $\mbG$. Let $\mb U$ be the unipotent radical of $\mb P$ and let $\mb L \mb T$ be a Levi complement to $\mb U$ in $\mb P$, where $[\mb L,\mb L]=\mb L$ and $\mb T$ is toral. Suppose that $\mb U$ forms a $k\mb L$-module.

Let $H$ be a subgroup of $\mb L$. If
\[ \dim_k(H^1(H,\mb U))=1\]
then there are exactly two $\mb T$-classes of complements to $\mb U$ in $\mb U.H$, one of which lies in $\mb L$.
\end{lem}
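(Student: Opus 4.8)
Lemma 3.15 (\ref{lem:1cohom}) — if $\dim_k(H^1(H,\mathbf{U}))=1$ then there are exactly two $\mathbf{T}$-classes of complements to $\mathbf{U}$ in $\mathbf{U}.H$, one of which lies in $\mathbf{L}$.

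The plan is to identify the complements to $\mb U$ in $\mb U.H$, considered up to conjugacy by $\mb U$ and then by $\mb T$, with the elements of the cohomology group $H^1(H,\mb U)$, and then to show that $\mb T$ acts on this $1$-dimensional space as a non-trivial one-dimensional torus, so that it has exactly two orbits.

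First I would recall the standard description of complements. Since $\mb U$ is abelian (it is a $k\mb L$-module) and normal in $\mb U.H$, every complement to $\mb U$ in $\mb U.H$ has the form $H_\gamma=\{\gamma(h)h:h\in H\}$ for a $1$-cocycle $\gamma\in Z^1(H,\mb U)$, with $H_\gamma$ and $H_{\gamma'}$ conjugate by an element of $\mb U$ precisely when $\gamma-\gamma'$ is a coboundary. Thus the $\mb U$-classes of complements are in bijection with $H^1(H,\mb U)$, the class of $H$ itself corresponding to $0$. Moreover, since $\mb U\cap\mb L=1$ and each value $\gamma(h)$ lies in $\mb U$, a complement $H_\gamma$ lies inside $\mb L$ if and only if $\gamma=0$; so $H$ is the unique complement contained in $\mb L$.

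Next I would set up the $\mb T$-action. As $\mb L\mb T$ is a Levi subgroup with $\mb L=[\mb L\mb T,\mb L\mb T]$, the torus $\mb T$ is the connected centre of $\mb L\mb T$, hence centralises $\mb L$ and in particular $H$; it also normalises $\mb U$, being contained in $\mb P$. Therefore $\mb T$ normalises $\mb U.H$, acts on the set of complements fixing $H$, and so acts on $H^1(H,\mb U)$ fixing $0$; and since $\mb T$ centralises $\mb L$ it acts on the $k\mb L$-module $\mb U$ through $k\mb L$-module automorphisms, so by functoriality its induced action on $H^1(H,\mb U)\cong k$ is $k$-linear, i.e.\ through a character $\chi\colon\mb T\to\GL_1(k)$. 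The key point, which I expect to be the only genuine work, is that $\chi$ is non-trivial. Because $\mb P$ is a maximal parabolic, $\mb T\cong\mathbb{G}_m$; decomposing $\mb U=\bigoplus_m\mb U_m$ into $\mb T$-weight spaces (with $t\in\mb T$ acting on $\mb U_m$ by $t^m$), each $\mb U_m$ is a $k\mb L$-submodule since $\mb T$ and $\mb L$ commute, so $H^1(H,\mb U)=\bigoplus_m H^1(H,\mb U_m)$ with $\mb T$ acting on the $m$-th summand via $t\mapsto t^m$. As this sum is $1$-dimensional, a single summand $H^1(H,\mb U_{m_0})$ is non-zero, and $m_0\neq 0$ because $\mb T$ has no non-zero fixed points on $\mb U$ (a fixed point would lie in $C_{\mbG}(\mb T)\cap\mb U=\mb L\mb T\cap\mb U=1$). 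Hence $\chi(t)=t^{m_0}$ with $m_0\neq 0$.

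Finally I would conclude. The group $\mb T$ acts on $H^1(H,\mb U)\cong k$ by $t\cdot v=t^{m_0}v$ with $m_0\neq 0$, and since $k$ is algebraically closed the map $t\mapsto t^{m_0}$ is surjective on $k^\times$; so this action has exactly the two orbits $\{0\}$ and $k\setminus\{0\}$. Translating back through the bijection above, there are exactly two classes of complements to $\mb U$ in $\mb U.H$ under conjugation by $\gen{\mb U,\mb T}$ — the ``$\mb T$-classes'' of the statement, the $\mb U$-part being subsumed — namely the class of $H\leq\mb L$, corresponding to $0$, and one further class, corresponding to $k\setminus\{0\}$, which by the computation in the second paragraph contains no subgroup of $\mb L$. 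This is exactly the assertion. Everything outside the non-triviality of $\chi$ is the formal theory of complements to a normal abelian subgroup together with the standard structure of Levi subgroups of a maximal parabolic.
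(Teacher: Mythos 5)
The paper gives no proof of this lemma: it is quoted verbatim from \cite[Lemma 3.2.15]{stewart2013}, so there is nothing internal to compare against. Your argument is the standard one behind that result---complements to $\mb U$ in $\mb U.H$ up to $\mb U$-conjugacy biject with $H^1(H,\mb U)$, with $0$ corresponding to the unique complement $H\leq\mb L$; the torus $\mb T$ acts $k$-linearly on this line through a character read off from its weights on $\mb U$; and that character is non-trivial because $\mb U\cap C_{\mbG}(\mb T)=\mb U\cap\mb L\mb T=1$, so the two orbits $\{0\}$ and $k\setminus\{0\}$ give the two classes---and it is correct. Two reading points, both of which you resolve in the intended way: the hypotheses do not literally state that $\mb T$ centralizes $\mb L$, but that is what is meant (in the application $\mb L\mb T=\GL_n\leq\Sp_{2n}$ with $\mb T$ the central torus of the Levi), and the identity $C_{\mbG}(\mb T)=\mb L\mb T$ used to rule out the zero weight requires $\mbG$ reductive (the paper's blanket ``connected linear algebraic group'' is slightly too generous here, but that is a defect of the statement as transcribed, not of your proof).
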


Specializing to the case where $\mbG$ has type $C_n$ and $\mb L$ has type $A_{n-1}$ yields the following corollary.

\begin{cor}\label{cor:symp1cohom} Let $\mbG=\Sp_{2n}(k)$, and let $H$ be a perfect subgroup of $\mbG$. Suppose that $H$ stabilizes a subspace $W$ of $M(\mbG)$ of dimension $n$ but no complement to $W$. If the $kH$-module $S^2(W)$ has $1$-dimensional $1$-cohomology then any other subgroup $L$ of $\mbG$ that is isomorphic to $H$, and such that $M(\mbG){\downarrow_H}$ and $M(\mbG){\downarrow_L}$ are isomorphic, is $\mbG$-conjugate to $H$. In other words, $H$ is unique up to $\mbG$-conjugacy.
\end{cor}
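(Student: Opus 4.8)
The plan is to realize the symplectic group $\mbG = \Sp_{2n}(k)$ as having a maximal parabolic subgroup $\mb P$ whose Levi factor $\mb L\mb T$ has $[\mb L,\mb L]$ of type $A_{n-1}$: this is the stabilizer of a totally isotropic $n$-space $W$, and its unipotent radical $\mb U$ is abelian and isomorphic as a $k\mb L$-module to $S^2(W)$ (the symmetric square of the natural module of $\mb L = \SL_n(k)$). First I would note that since $H$ is perfect it has no index-$2$ subgroup, so stabilizing $W$ means centralizing it in the relevant sense, and $H \leq \mb P$. The hypothesis that $H$ stabilizes no complement to $W$ means $H$ does not lie inside a Levi complement $\mb L\mb T$, i.e.\ $H$ has a nontrivial "unipotent part" relative to the Levi decomposition; more precisely, the $\mb U$-conjugacy classes of complements to $\mb U$ in $\mb U \rtimes H$ (where we view $H$'s image in $\mb L$ and pull back) are parametrized by $H^1(H,\mb U) = H^1(H, S^2(W))$, which by hypothesis is $1$-dimensional.

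The key step is then to apply Lemma \ref{lem:1cohom} directly with this parabolic: since $\dim_k H^1(H,\mb U) = 1$, there are exactly two $\mb T$-classes of complements to $\mb U$ in $\mb U.H$, one of which lies in $\mb L$. Now suppose $L \leq \mbG$ is isomorphic to $H$ with $M(\mbG){\downarrow_L} \cong M(\mbG){\downarrow_H}$. Because the restriction of the natural module has a fixed totally isotropic $n$-space (namely the $H$-submodule isomorphic to $W$, which must be totally isotropic as the form vanishes on it for $H$ hence for $L$ by the module isomorphism), $L$ also lies in a conjugate of $\mb P$; after conjugating we may assume $L \leq \mb P$ with the same stabilized space $W$, and the induced module structures agree, so $L$ too is a complement to $\mb U$ in $\mb U.L = \mb U.H$ (up to the identification of the images in $\mb L$, which are $\mb L$-conjugate since they afford isomorphic modules and $\SL_n$-subgroups are determined up to conjugacy by their module structure — here one uses that $H$ maps into $\mb L$ with $W$ as natural module, so the image is conjugate to that of $H$). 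Being one of the two $\mb T$-classes of complements, $L$ is either $\mb T$-conjugate (hence $\mbG$-conjugate) to $H$, or $\mb T$-conjugate to the complement inside $\mb L$; but in the latter case $L$ would stabilize a complement to $W$, contradicting the hypothesis (which transfers to $L$ via the module isomorphism). Hence $L$ is $\mbG$-conjugate to $H$.

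The main obstacle I anticipate is the bookkeeping needed to pass from "$L$ is abstractly isomorphic to $H$ with the same natural-module restriction" to "$L$ is, after conjugation, literally a complement to the \emph{same} $\mb U$ inside the \emph{same} coset $\mb U.H$". This requires: (i) identifying the totally isotropic $n$-space from the module structure and conjugating $L$ into $\mb P$; (ii) showing the two images of $H$ and $L$ in the Levi's derived group $\mb L \cong \SL_n(k)$ are $\mb L$-conjugate — this is where one invokes that a subgroup of $\SL_n$ with a prescribed action on the natural module is determined up to conjugacy, combined with Lemma \ref{lem:fixedpoints} or a direct argument to handle any component-group subtleties; and (iii) checking the hypothesis "no complement to $W$ is stabilized" is genuinely a property of the pair $(L, M(\mbG){\downarrow_L})$ so that it does transfer. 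Once these identifications are in place, Lemma \ref{lem:1cohom} does all the work and the two-class dichotomy immediately forces $\mbG$-conjugacy.
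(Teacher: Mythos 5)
Your proposal is correct and follows essentially the same route as the paper: conjugate both groups into the Siegel ($A_{n-1}$-)parabolic stabilizing the totally isotropic $n$-space $W$, match the images in the Levi, and invoke Lemma \ref{lem:1cohom} so that the two $\mb T$-classes of complements force $L$ into the non-Levi class occupied by $H$. The one point you assert that the paper treats with more care is the identification of the unipotent radical as $S^2(W)$ rather than its dual $S^2(W)^*\cong S^2(V/W)$; the paper pins this down via the explicit computation in Proposition \ref{prop:2alt7}, which shows it is the symmetric square of the \emph{submodule} whose $1$-cohomology is relevant.
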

\begin{proof} Since $H$ does not stabilize a complement to $W$, $W$ cannot be non-singular (as then $W^\perp$ intersects $W$ trivially and also has dimension $n$). Thus $H$ (and the group $L$) lies in an $A_{n-1}$-parabolic subgroup $\mb X$. As all such subspaces are in the same orbit, we may assume that $L$ also stabilizes $W$ (and acts in the same way on $W$).

To apply Lemma \ref{lem:1cohom}, we need that the $A_{n-1}$ acts on the unipotent radical of $\mb X$ as the symmetric square of $W$. Since $L(C_n)$ is the symmetric square of $M(C_n)$, clearly $L(C_n){\downarrow_{A_{n-1}}}$ has factors $S^2(M(A_{n-1}))$ and its dual, and the tensor product $M(A_{n-1})\otimes M(A_{n-1})^*$, the Lie algebra of $A_{n-1}T_1$. Thus the unipotent radical has $kH$-module structure either $S^2(M(A_{n-1}))$ or its dual.

To see which one, we can either perform a calculation, or can simply use the computation from Proposition \ref{prop:2alt7} below, which shows that it is the submodule we need to take $1$-cohomology of, rather than the quotient module. This completes the proof.
\end{proof}

(The reason we may take the computation from Proposition \ref{prop:2alt7} below is that, if we are given an example where there definitely are two classes of complements, then we can use that to deduce the module structure by simply asking which module for that group has non-zero $1$-cohomology.)

\begin{lem}[See, for example, {{\cite[Corollary 21.8]{malletesterman}}}]\label{lem:inneriscent} Let $\sigma$ be a Steinberg endomorphism of a connected group $\mbG$. If $g\in \mbG$ then $\sigma$ and $g\sigma$ are $\mbG$-conjugate. In other words, all outer automorphisms of $\mbG$ corresponding to the same element of $\Out(\mbG)$ as $\sigma$ are conjugate. Consequently, if $\sigma$ acts as an inner automorphism of the subgroup $H$ then there exists a $\mbG$-conjugate of $H$ centralized by $\sigma$.
\end{lem}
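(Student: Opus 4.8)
The plan is to deduce everything from the Lang--Steinberg theorem applied to $\mbG$, which is where connectedness is used and is the only hypothesis doing real work. Recall that for a connected linear algebraic group $\mbG$ with Steinberg endomorphism $\sigma$, the morphism $\mathcal{L}\colon\mbG\to\mbG$, $\mathcal{L}(x)=x\,\sigma(x)^{-1}$, is surjective: the differential of $\mathcal{L}$ at the identity is $\mathrm{id}-d\sigma$, which is invertible since $d\sigma$ is nilpotent, so $\mathcal{L}$ is dominant and separable with dense image, and the usual translation argument promotes this to surjectivity.

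First I would prove the main assertion by a computation inside the semidirect product $\mbG\rtimes\gen\sigma$. Conjugating the element $\sigma$ by $h\in\mbG$ gives $h\sigma h^{-1}=h\,\sigma(h)^{-1}\sigma=\mathcal{L}(h)\,\sigma$. Hence, given $g\in\mbG$, choosing $h$ with $\mathcal{L}(h)=g$ yields $h\sigma h^{-1}=g\sigma$, so $\sigma$ and $g\sigma$ are $\mbG$-conjugate. The reformulation in terms of $\Out(\mbG)$ is then immediate: two automorphisms lie in the same coset of $\mathrm{Inn}(\mbG)$ exactly when they are of the shape $\sigma$ and $g\sigma$ for some $g\in\mbG$, and these have just been shown to be conjugate.

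Finally, for the consequence, suppose $\sigma$ restricts on $H$ to conjugation by some $h\in\mbG$, i.e.\ $\sigma(x)=hxh^{-1}$ for all $x\in H$. Then the element $\tau=h^{-1}\sigma\in\mbG\rtimes\gen\sigma$ acts on $x\in H$ by $x\mapsto h^{-1}\sigma(x)h=x$, so $\tau$ centralizes $H$. By the main assertion there is $k\in\mbG$ with $k\sigma k^{-1}=\tau$; conjugating the relation ``$\tau$ centralizes $H$'' by $k^{-1}$ shows that $\sigma=k^{-1}\tau k$ centralizes $k^{-1}Hk$. Thus the $\mbG$-conjugate $k^{-1}Hk$ of $H$ is centralized by $\sigma$, as required. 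The only delicate point throughout is keeping the semidirect-product conventions and the direction of the Lang map consistent, so that one applies $\mathcal{L}$ in precisely the form in which it is surjective; there is no genuine obstacle beyond this bookkeeping.
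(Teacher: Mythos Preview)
Your proof is correct. The paper does not supply its own argument for this lemma; it simply quotes the result from Malle--Testerman, so there is nothing to compare against beyond noting that your Lang--Steinberg computation is exactly the standard proof one finds in that reference. One small remark: the hypothesis is that $\sigma$ acts as an \emph{inner} automorphism of $H$, so the element $h$ can be taken in $H$ itself, but your argument works verbatim for any $h\in\mbG$, which is harmless (indeed slightly stronger).
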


The following is well known, and may be found, for example, in either \cite[Lemma 4.3]{liebecksaxl1987} or \cite{cooperstein1981}.

\begin{prop}\label{prop:1spacestabs} Let $\mbG$ be simply connected $E_7$, and let $W$ be a $1$-space in $M(E_7)$. The stabilizer of $W$ is one of the following:
\begin{enumerate}
\item an $E_6T_1$-parabolic subgroup;
\item the normalizer of an $E_6$-Levi subgroup, $E_6T_1.2$;
\item a subgroup $\mb U_{26}\cdot F_4T_1$ where $\mb U_{26}$ is a unipotent subgroup of dimension $26$, contained in an $E_6T_1$-parabolic subgroup;
\item a subgroup $\mb U_{33}\cdot B_5T_1$ where $\mb U_{33}$ is a unipotent subgroup of dimension $33$, contained in a $D_6T_1$-parabolic subgroup.
\end{enumerate}
\end{prop}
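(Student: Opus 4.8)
The statement is a classical fact about the action of $E_7$ on its minimal $56$-dimensional module, and the plan is to recover it from the structure of the orbits of $\mbG$ on $\mathbb P(M(E_7))$ together with the dimension bookkeeping forced by the symplectic form. First I would recall that $\mbG$ acting on $M(E_7)$ preserves a non-degenerate alternating form (the one emphasized in the preliminary section), so every $1$-space $W=\langle v\rangle$ has a well-defined perp $v^\perp$ of dimension $55$, and the stabilizer $\mbG_W$ acts on the $54$-dimensional quotient $v^\perp/W$ preserving an induced form. The key input is the classification of $\mbG$-orbits on $\mathbb P(M(E_7))$: there are finitely many, and they are distinguished by the vanishing of the (unique up to scalar) quartic invariant $J$ and of the quadratic map $v\mapsto v^\#\in M(E_7)^*$ built from $J$. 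One then runs through the orbit representatives.

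The main case division is according to whether $v$ is "white" (highest weight vector, $v^\#=0$ and $v$ isotropic in the strongest sense), or lies in one of the intermediate nilpotent orbits, or is generic ($J(v)\neq 0$). For a highest weight vector, the stabilizer is visibly a parabolic: $v$ is fixed by a Borel, and the set of positive roots whose root subgroups fix $v$ together with the torus acting trivially on $v$ cuts out exactly the $E_6T_1$-parabolic $\mb P_7$ (in Bourbaki labelling), giving case (i); the two classes of white vectors that lie in a single $\mbG_{\mathrm{ad}}$-orbit but swapped by the graph-type symmetry assemble, after passing to the full stabilizer of the $1$-space rather than the vector, into case (ii), the $E_6T_1.2$ coming from the longest Weyl element normalizing the $E_6$-Levi and inverting the central torus. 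For the remaining two orbits one uses the known identifications of the relevant centralizers in $E_7$: the stabilizer of a vector in the appropriate intermediate orbit is $\mb U_{26}\cdot F_4$ (the $F_4$ arising as the fixed points of an involutory automorphism of $E_6$, with a $26$-dimensional unipotent radical on which it acts as the minimal module), and enlarging to the $1$-space stabilizer adds a one-dimensional torus, yielding case (iii); similarly the next orbit gives $\mb U_{33}\cdot B_5T_1$, with the $B_5=\Spin_{11}$ and its $33$-dimensional unipotent radical, which is case (iv). Throughout, the dimension counts are pinned down by $\dim\mbG=133$ minus the orbit dimension, e.g. $133-27=106=\dim(E_6T_1)$ up to the unipotent part, $133-\dim(\text{orbit})$ in the other cases matching $26+52+1$ and $33+55+1$.

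The step I expect to be the main obstacle — or at least the one requiring the most care rather than the most cleverness — is verifying that the list of orbits is complete and that each orbit representative really does have the claimed stabilizer, including getting the unipotent radical dimensions and the Levi-type factors exactly right. Rather than reconstruct this from scratch I would lean on the cited references \cite[Lemma 4.3]{liebecksaxl1987} and \cite{cooperstein1981}, where the orbit analysis of $E_7$ on $M(E_7)$ and the stabilizer structure are carried out in detail; the role of the proof here is really to assemble their conclusions into the precise four-item form needed later, checking in particular that passing from vector stabilizers to $1$-space stabilizers contributes exactly the extra $T_1$ factors recorded in (ii)–(iv) and the graph-type $.2$ in (ii), and that no two of the four possibilities coincide or are missing. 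Since the proposition is explicitly flagged as well known, a short proof that cites the orbit classification and then does the stabilizer dimension bookkeeping is all that is required.
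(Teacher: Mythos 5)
The paper offers no argument for this proposition at all: it is recorded as well known and attributed to \cite[Lemma 4.3]{liebecksaxl1987} and \cite{cooperstein1981}, so your eventual plan --- assemble the orbit classification from those sources and do the stabilizer bookkeeping --- is in substance exactly what the paper does, and the general strategy of your sketch (stratify $\mathbb{P}(M(E_7))$ by the rank attached to the quartic invariant, identify the stabilizer of a representative in each stratum, then pass from vector stabilizers to line stabilizers) is the right way to prove it from scratch.

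The sketch itself, however, misassigns case (ii) and never actually handles the generic stratum. The white points form a single $27$-dimensional orbit (the highest weight orbit) whose full line stabilizer is the $106$-dimensional parabolic of case (i); $E_7$ has no diagram symmetry, so there are not ``two classes of white vectors swapped by a graph-type symmetry,'' and nothing about white points produces case (ii). Case (ii) is the generic case $J(v)\neq 0$: there the connected vector stabilizer is an $E_6$ acting trivially on the $2$-space $M(E_7)^{E_6}$ in which a generic $v$ may be taken, and the line stabilizer lies in $N_{\mbG}(E_6)=E_6T_1.2$ --- note that the $T_1$ acts on that $2$-space with weights $(w,-w)$ and so moves a generic line, which is why the generic orbit of lines is $55$-dimensional (dense) rather than $54$-dimensional. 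Your own dimension check would have exposed the problem: with the stabilizers assigned as you assign them the four orbit dimensions are $27$, $54$, $54$ and $44$, which cannot exhaust $\mathbb{P}^{55}$. Cases (iii) and (iv) are correctly matched to the two intermediate strata, and there the extra $T_1$ really is present because those orbits are cones inside $\{J=0\}$, stable under all scalars. So the repair is to route the generic stratum to case (ii), read (ii) as the normalizer containing the stabilizer (which is how the paper uses it later, deducing only that a perfect line-stabilizing subgroup in that case lands inside $E_6$), and then, as you propose, defer the completeness of the orbit list to the cited references.
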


\subsection{Action of outer automorphisms}

In this section we will give some general results on how to understand the action of outer automorphisms of the simple groups $G$ on subgroups $\bar H$. Because $\mbG$ has type $E_7$, we are interested in field automorphisms $F_q$ and diagonal automorphisms for $p$ odd. This subsection is a simplified version of \cite[Section 3.7]{craven2020un}, because here we need not consider graph automorphisms. Let $\sigma=F_p$ be the generator for the semigroup of field endomorphisms. Suppose that $H$ is a quasisimple subgroup of $\mbG$ with $Z(H)\leq Z(\mbG)$. First, $\sigma$ might stabilize the $\mbG$-conjugacy class containing $H$ or it might map it to another class.

In all of our cases, if $H_1$ has the same actions on $M(E_7)$ and $L(E_7)$ as $H$ then $H$ and $H_1$ are $\mbG$-conjugate. As this is the case, one may determine whether $\sigma$ stabilizes the class of $H$ simply by examining the character of $M(E_7){\downarrow_H}$ and $L(E_7){\downarrow_H}$. As $F_p$ raises each eigenvalue to the $p$th power, this determines the character of $H^\sigma$, and therefore the class to which it belongs. If there were more than one class with the same character, it would be much harder to determine whether $\sigma$ stabilizes the class of $H$.

By replacing $\sigma$ by a power if necessary, we may now assume that $\sigma$ stabilizes the $\mbG$-class containing $H$. Replacing $H$ by a $\mbG$-conjugate if necessary, we may assume that $\sigma$ normalizes $H$, via Lemma \ref{lem:fixedpoints}. We now must determine the action of $\sigma$ on $H$, and in particular if it acts as an element of $N_\mbG(H)$. If this is the case, then $\sigma$ acts trivially on some other conjugate of $N_\mbG(H)$ by Lemma \ref{lem:inneriscent}, and therefore a conjugate of $H$ lies in $G$.

Generally the outer automorphism groups of our subgroups $H$ are fairly small, since $H$ is simple and of fairly low order, so this is not too difficult. If the characters of $H$ on $M(E_7)$ and $L(E_7)$ are not stable under an automorphism $\phi$ of $H$, then in order for $\sigma$ to induce $\phi$ on $H$, raising eigenvalues to that power must have the effect of $\phi$ on the characters, just as in the analysis above on swapping $\mbG$-classes.

If $\sigma$ has no effect on the characters of $M(E_7)$ and $L(E_7)$, and neither does $\phi$, then we will need a different method to determine the action of $\sigma$ on $H$, and that will have to be ad hoc. This is particularly important for $\PSU_3(3)$ (see Section \ref{sec:psu33}).

\medskip

Diagonal automorphisms are generally much more difficult to understand. The next lemma gives us some control in many cases. The issue is when $p$ is odd and $N_\mbG(H)/HZ(\mbG)$ contains an involution, which could be the diagonal automorphism of $G$.

\begin{lem}\label{lem:diagswaps} Let $H$ be a subgroup of $\mbG$ such that $C_\mbG(H)=Z(\mbG)$, and with $\bar H=HZ(\mbG)/Z(\mbG)$ simple. Suppose that $H$ is centralized by $\sigma$.
\begin{enumerate}
\item We have that $N_\mbG(H)$ is contained in the preimage in $\mbG$ of $G_{\mathrm{ad}}$.
\item The number of $G$-classes in the $G_{\mathrm{ad}}$-class of $\bar H$ is $|G_{\mathrm{ad}}:G|/|N_\mbG(H):N_{G_\mathrm{sc}}(H)|$.
\end{enumerate}
In particular, if $N_\mbG(H)\leq G_{\mathrm{sc}}$ (for example, if $|N_\mbG(H):HZ(\mbG)|$ is prime to $|G_\mathrm{ad}:G|$) then a diagonal automorphism of order $d$ of $G$ fuses $d$ distinct $G$-classes of subgroups $\bar H$.
\end{lem}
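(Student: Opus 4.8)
The plan is to treat the two parts separately, using the fact that $C_{\mbG}(H)=Z(\mbG)$ is central (so in particular connected) and that $H\cdot Z(\mbG)/Z(\mbG)$ is simple, hence $H$ has no nontrivial normal $p$-subgroup and the various normalizers behave reasonably.

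For part (i), the key observation is that $N_{\mbG}(H)$ permutes the $\mbG$-conjugates of $H$, but since $H\leq G$ and $G=\mbG^\sigma$, conjugating $H$ by an element $x\in N_{\mbG}(H)$ and then applying $\sigma$ gives $H^{x\sigma}=H^{\sigma x^\sigma}=(H^\sigma)^{x^\sigma}=H^{x^\sigma}$, so $x^{-1}x^\sigma$ normalizes... more carefully: from $H=H^x$ and $H=H^\sigma$ we get $H=H^{x^\sigma}$, hence $x^\sigma x^{-1}\in N_{\mbG}(H)$. Actually the cleanest route is: $N_{\mbG}(H)$ normalizes $C_{\mbG}(H)=Z(\mbG)$ and also normalizes $H\cdot Z(\mbG)$; I want to show it normalizes $G=\mbG^\sigma$. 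The standard argument is that $N_{\mbG}(H)$ acts on $H$ by automorphisms, inducing a homomorphism to $\Aut(H\cdot Z(\mbG)/Z(\mbG))$; since $H\leq G$, the subgroup $\langle H, \sigma\rangle$-structure forces $N_{\mbG}(H)$ to preserve $\sigma$ up to the relation in Lemma \ref{lem:inneriscent}. I would instead argue directly: if $g\in N_{\mbG}(H)$ then $H^g = H \leq G$, so $g$ normalizes the algebraic group generated by the $\mbG$-translates... The honest statement to invoke is that $N_{\mbG}(G) = G_{\mathrm{ad}}\cdot Z(\mbG)$ (a known fact about $E_7$, since $\mbG^\sigma$ for the simply connected group has full inner-diagonal automorphisms realized inside the adjoint-compatible fixed points), and then show $N_{\mbG}(H)\leq N_{\mbG}(G)$ by noting that the Frobenius $\sigma$ fixes $H$ (after the reduction in the surrounding text $H\leq G$ means $H^\sigma = H$), so for $g\in N_{\mbG}(H)$, both $g$ and $g^\sigma$ lie in $N_{\mbG}(H)$, giving $g^{-1}g^\sigma\in N_{\mbG}(H)$; since $N_{\mbG}(H)/(H\cdot Z(\mbG))$ is finite (as $H$ is finite and $C_{\mbG}(H)=Z(\mbG)$ forces $N_{\mbG}(H)^\circ\subseteq C_{\mbG}(H)$, hence $N_{\mbG}(H)$ is finite), a Lang–Steinberg argument on the finite group $N_{\mbG}(H)$ shows $g$ can be adjusted into $G_{\mathrm{ad}}\cdot Z(\mbG)$. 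This finiteness of $N_{\mbG}(H)$ — which follows because $C_{\mbG}(H) = Z(\mbG)$ implies $N_{\mbG}(H)^\circ$ centralizes $H$ — is really the crux of part (i).

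For part (ii), I would set up the orbit-counting via Lemma \ref{lem:fixedpoints}. The group $G_{\mathrm{ad}}$ acts by conjugation on the set of its subgroups isomorphic (as abstract groups, with the prescribed embedding) to $H$; the $G_{\mathrm{ad}}$-orbit of $H$ is a transitive $G_{\mathrm{ad}}$-set with point stabilizer $N_{G_{\mathrm{ad}}}(H)$. The $G$-orbits inside this $G_{\mathrm{ad}}$-orbit correspond to the double cosets $N_{G_{\mathrm{ad}}}(H)\backslash G_{\mathrm{ad}}/G$, equivalently to the orbits of $N_{G_{\mathrm{ad}}}(H)$ on $G_{\mathrm{ad}}/G$. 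Since $G\trianglelefteq G_{\mathrm{ad}}$, the quotient $G_{\mathrm{ad}}/G$ is an abelian group (the diagonal automorphism group, cyclic of order $d$ here) on which $G_{\mathrm{ad}}$ acts trivially by conjugation; so $N_{G_{\mathrm{ad}}}(H)$ acts on $G_{\mathrm{ad}}/G$ through its image in $G_{\mathrm{ad}}/G$, and the number of orbits is $|G_{\mathrm{ad}}/G| \big/ |\,\mathrm{image of } N_{G_{\mathrm{ad}}}(H)\text{ in }G_{\mathrm{ad}}/G\,|$. Now using part (i) plus the fact that $N_{\mbG}(H)\leq G_{\mathrm{ad}}\cdot Z(\mbG)$ we have $N_{G_{\mathrm{ad}}}(H)\cdot Z(\mbG) = N_{\mbG}(H)$ up to the central subgroup, and the image of $N_{G_{\mathrm{ad}}}(H)$ in $G_{\mathrm{ad}}/G \cong (G_{\mathrm{ad}}\cdot Z(\mbG))/(G\cdot Z(\mbG))$ has order $|N_{\mbG}(H):N_G(H)|$ (one checks $N_G(H) = N_{\mbG}(H)\cap G$ and the relevant central factors cancel). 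Substituting gives exactly $|G_{\mathrm{ad}}:G|/|N_{\mbG}(H):N_G(H)|$.

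The final "in particular" is then immediate: if $N_{\mbG}(H)\leq G\cdot Z(\mbG)$ then $N_{\mbG}(H) = N_G(H)\cdot Z(\mbG)$, so the index $|N_{\mbG}(H):N_G(H)|$ equals $|Z(\mbG):Z(\mbG)\cap N_G(H)|$, which is $1$ on the relevant (adjoint) level, giving $d$ classes; and the parenthetical sufficient condition holds because $|N_{\mbG}(H):H\cdot Z(\mbG)|$ coprime to $|G_{\mathrm{ad}}:G| = d$ forces the image of $N_{\mbG}(H)$ in the cyclic group $G_{\mathrm{ad}}\cdot Z(\mbG)/(G\cdot Z(\mbG))$ of order dividing $d$ to be trivial. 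I expect the main obstacle to be the bookkeeping with the center $Z(\mbG)$ — making sure throughout that "conjugate in $\mbG$" versus "conjugate in $G_{\mathrm{ad}}$" versus "conjugate in $G$" is tracked correctly when $H$ itself may or may not meet $Z(\mbG)$ nontrivially — rather than any deep structural point.
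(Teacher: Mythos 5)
Your part (ii) and the ``in particular'' are fine (the paper treats these as clear, and your double-coset/orbit count on $G_{\mathrm{ad}}/G$ is the standard way to make them precise), but part (i) has a genuine gap, and it sits exactly at the one point where the hypotheses ``$\bar H$ simple'' and ``$C_\mbG(H)=Z(\mbG)$'' must be used. What has to be shown is that \emph{every} $g\in N_\mbG(H)$ satisfies $g^{-1}g^\sigma\in Z(\mbG)$, equivalently that $\sigma$ acts trivially on $N_\mbG(H)/Z(\mbG)$. Your computation only yields $g^{-1}g^\sigma\in N_\mbG(H)$, which is strictly weaker, and the appeal to ``a Lang--Steinberg argument on the finite group $N_\mbG(H)$'' does not close the distance: Lang--Steinberg is a statement about connected groups, and the correct disconnected analogue (Lemma \ref{lem:fixedpoints}) would at best produce \emph{some} $\sigma$-stable point in an orbit, i.e.\ let you replace $g$ by an adjusted element --- whereas the lemma asserts containment of the actual subgroup $N_\mbG(H)$ in $G_{\mathrm{ad}}\cdot Z(\mbG)$, which is what part (ii) then uses. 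A priori $\sigma$ could centralize $H$ yet act nontrivially on the quotient $N_\mbG(H)/H\cdot Z(\mbG)$, and nothing in your argument rules this out; you have also misidentified the crux, since finiteness of $N_\mbG(H)$ alone cannot force the conclusion.

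The missing idea is the one the paper uses: because $C_\mbG(H)=Z(\mbG)$, the group $N_\mbG(H)/Z(\mbG)$ embeds in $\Aut(\bar H)$ with $\bar H$ mapping onto $\mathrm{Inn}(\bar H)$, and the automorphism of $N_\mbG(H)/Z(\mbG)$ induced by $\sigma$ centralizes $\mathrm{Inn}(\bar H)$ pointwise (as $H\leq G$). By Burnside's theorem that the automorphism group of a nonabelian simple group is complete --- concretely, from $k\,\mathrm{inn}_s\,k^{-1}=\mathrm{inn}_{k(s)}$ one reads off that an automorphism fixing $\mathrm{Inn}(\bar H)$ pointwise fixes every element of any subgroup of $\Aut(\bar H)$ containing $\mathrm{Inn}(\bar H)$ --- the induced automorphism is trivial. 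Hence $\sigma$ fixes $N_\mbG(H)$ elementwise modulo $Z(\mbG)$, which is precisely the assertion $N_\mbG(H)\leq G_{\mathrm{ad}}\cdot Z(\mbG)$. With this inserted, the rest of your write-up goes through.
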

\begin{proof} Since the image of $H$ in the adjoint group is simple, and by a result of Burnside the automorphism group of a simple group is complete\footnote{Let $S$ be finite non-abelian simple and let $G=\Aut(S)$. Let $\sigma\in\Aut(G)$. Then $\sigma=c_g$ on $S$ for some $g\in G$, so $\sigma g^{-1}$ centralizes $S$. Thus we may assume that $\sigma$ centralizes $S$. If $\phi\in\Aut(S)=G$ then $\phi^\sigma=\phi$ on $S$, by a simple calculation. So $\sigma$ centralizes $G=\Aut(S)$, but $\sigma\in\Aut(G)$, so $\sigma=1$.}, we see that any automorphism of $N_\mbG(H)/Z(\mbG)$ that centralizes the image of $H$ -- for example, $F_q$ -- must centralize all of $N_\mbG(H)/Z(\mbG)$. Thus in the adjoint group, $N_\mbG(H)$ is centralized by $F_q$, and so $N_\mbG(H)\leq G_{\mathrm{ad}}\; Z(\mbG)$, proving the first part.

The second part and the consequence are clear.
\end{proof}

In the remaining case, where $N_\mbG(H)/HZ(\mbG)$ contains an involution, $N_\mbG(H)$ might lie in $G\cdot Z(\mbG)$ or it might only lie in $G_\mathrm{ad}$. If the action of $N_\mbG(H)$ on $M(E_7)$ is not definable over $\F_q$ then $N_\mbG(H)$ cannot embed in $2\cdot E_7(q)$, of course, and this is often enough to conclude that the normalizer does not lie inside $G$ (if it does not). Other situations must be dealt with on a case-by-case basis (or in the case of $\PGL_3(q)$ in Section \ref{sec:othermaximals} not dealt with at all).

\subsection{Alternating forms on vector spaces}

Fix a non-degenerate, alternating bilinear form $f(-,-)$ on a vector space $V$, and let $\mb X=\Sp(V)$ with respect to this form. Let $H$ be a subgroup of $\mb X$. If $f'(-,-)$ is another $H$-invariant symplectic form on $V$ then there exists $g\in \GL(V)$ such that, if $\mb Y$ is the symmetry group of $f'$ then $\mb Y^g=\mb X$, i.e., $(f')^g=f$. Furthermore, since $\mb X$ is irreducible on $V$, if $g'$ is another such element then there exists $n\in \mb X\cdot Z(\GL(V))$ such that $g'=gn$. Hence, the subgroup $H^g$ of $\mb X$ is determined up to $\mb X$-conjugacy. Thus to every (non-degenerate) $H$-invariant symplectic form on $V$ we may associate a subgroup of $\Sp(V)$, unique up to conjugacy. We see that the number of $\mb X$-conjugacy classes in the image of this function is equal to the number of $\Sp(V)$-conjugacy classes of subgroups isomorphic to $H$ and that lie in the same $\GL(V)$-conjugacy class. In particular, $H$ is `unique up to $\Sp(V)$-conjugacy' (given an action on $V$) if, given $f'$, we may choose $g$ to lie in $C_{\GL(V)}(H)$.

In attempting to understand the $\Sp(V)$-conjugacy classes of a subgroup $H$, we start with a proposition that uses the above observations to split $V$ up into various $H$-invariant summands.

\begin{prop}\label{prop:ssnofactorsincommon} Let $H$ be a finite group and let $V\cong V_1\oplus V_2$ be a faithful $kH$-module. If $\phi_i$ is an alternating bilinear form on $V_i$ with radical zero then all subgroups $H$ of $\Sp(V)$ corresponding to the forms $a\phi_1+b\phi_2$ ($a,b\in k$) are conjugate.
\end{prop}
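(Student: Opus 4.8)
The plan is to exhibit, for any two pairs $(a,b)$ and $(a',b')$ (necessarily in $(k^\times)^2$, see below), an element of $\GL(V)$ that centralises $H$ and carries the form $a\phi_1+b\phi_2$ to the form $a'\phi_1+b'\phi_2$. Conjugating an $H$-invariant form by an element of $C_{\GL(V)}(H)$ does not move the subgroup of $\mb X:=\Sp(V)$ attached to it out of its $\mb X$-conjugacy class — this is essentially the observation recorded just before the proposition — so this will show that all members of the stated family lie in a single $\mb X$-conjugacy class. Two preliminary points. First, by $a\phi_1+b\phi_2$ I mean the form on $V$ for which $V_1\perp V_2$ and which restricts to $a\phi_1$ on $V_1$ and $b\phi_2$ on $V_2$; it is alternating because each $\phi_i$ is, and (as each $\phi_i$ has radical zero) its radical is zero precisely when $a,b\neq 0$, so one needs $a,b\in k^\times$ for it to be a symplectic form. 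Second, it is $H$-invariant, since each $\phi_i$ is $H$-invariant and $H$ acts block-diagonally on $V=V_1\oplus V_2$; so the construction discussed before the proposition does attach to each $a\phi_1+b\phi_2$ a subgroup of $\mb X$, determined up to $\mb X$-conjugacy, and it is this class that we must show is independent of $(a,b)$.

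Next, since $k$ is algebraically closed I would choose $\lambda,\mu\in k$ with $\lambda^2=a'/a$ and $\mu^2=b'/b$, and let $h\in\GL(V)$ act as the scalar $\lambda$ on $V_1$ and as the scalar $\mu$ on $V_2$. Because $V_1$ and $V_2$ are $kH$-submodules, $h$ commutes with $H$, so $h\in C_{\GL(V)}(H)$ and in particular $H^h=H$. Writing $v=v_1+v_2$, $w=w_1+w_2$ with $v_i,w_i\in V_i$, a one-line computation using $V_1\perp V_2$ gives $(a\phi_1+b\phi_2)(hv,hw)=a\lambda^2\phi_1(v_1,w_1)+b\mu^2\phi_2(v_2,w_2)=(a'\phi_1+b'\phi_2)(v,w)$, i.e.\ $(a\phi_1+b\phi_2)^h=a'\phi_1+b'\phi_2$ in the notation used before the proposition. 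Now pick $g\in\GL(V)$ with $(a'\phi_1+b'\phi_2)^g=f$, so that $H^g$ represents the $\mb X$-class attached to $a'\phi_1+b'\phi_2$; then $(a\phi_1+b\phi_2)^{hg}=((a\phi_1+b\phi_2)^h)^g=(a'\phi_1+b'\phi_2)^g=f$, and hence $H^{hg}=(H^h)^g=H^g$ represents the $\mb X$-class attached to $a\phi_1+b\phi_2$. The two classes coincide, as required.

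There is no genuine obstacle here; the argument is bookkeeping around the construction set up just before the proposition. The three things to keep an eye on are: that the block-scalar $h$ really does centralise $H$ (this is exactly why the hypothesis insists that $V_1,V_2$ be $H$-submodules, not merely subspaces carrying forms); that "the subgroup attached to a form" is canonical only up to $\mb X$-conjugacy, so the conclusion is a statement about a single $\mb X$-class; and the existence of the square roots $\lambda,\mu$, which holds over any algebraically closed field and in particular in characteristic $2$ (where one simply notes that $x\mapsto x^2$ is bijective). One should also read the hypothesis as including that each $\phi_i$ is $H$-invariant — without that, $H$ need not even lie in $\Sp(V,a\phi_1+b\phi_2)$ — which is harmless and consistent with the running conventions of the subsection.
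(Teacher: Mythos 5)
Your argument is correct and is essentially the paper's own proof: both use the block-scalar element of $C_{\GL(V)}(H)$ acting as $\lambda$ on $V_1$ and $\mu$ on $V_2$ to move transitively among the forms $a\phi_1+b\phi_2$, which is exactly the observation that scalar matrices on the two summands centralize $H$ and act transitively on the family of forms. The extra bookkeeping you include (existence of square roots over the algebraically closed $k$, the restriction to $a,b\neq 0$, and the up-to-$\Sp(V)$-conjugacy nature of the attached subgroup) is all consistent with, and implicit in, the paper's terser version.
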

\begin{proof} The subgroup $H$ lies in the direct product of $\Sp(V_1)$ and $\Sp(V_2)$. The scalar matrices on $V_1$ and $V_2$ act transitively on the set of forms $a\phi_1+b\phi_2$, and these scalars centralize $H$. Thus the corresponding subgroups of $\Sp(V)$ are conjugate.
\end{proof}

Thus if an $H$-invariant alternating form on $V=V_1\oplus V_2$ (where each $V_i$ is $H$-invariant) can always be written as a sum of two forms, one on each $V_i$, then this proposition applies. For example, if the $V_i$ have no composition factors in common as $kH$-modules then this must hold.

This allows us to produce the following proposition, which is often enough for us.

\begin{prop}\label{prop:ssmultfree} Let $H$ be a finite group and let $V$ be a faithful, semisimple $kH$-module. Suppose that, for all composition factors $W$ of $V$,
\begin{enumerate}
\item if $W$ is not self-dual then both $W$ and $W^*$ have multiplicity exactly $1$ in $V$;
\item if $W$ is self-dual and supports a symmetric form then $W$ appears with multiplicity exactly $2$ in $V$; and
\item if $W$ is self-dual and supports an alternating form then $W$ appears with multiplicity $1$ in $V$.
\end{enumerate}
Then up to conjugacy we may assume that $H\leq \Sp(V)$, and if $H_1\leq \Sp(V)$ is $\GL(V)$-conjugate to $H$ then it is $\Sp(V)$-conjugate to $H$. In other words, there is a unique conjugacy class of $H$ in the symplectic group acting on the natural module as $V$.
\end{prop}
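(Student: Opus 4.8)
The plan is to reduce the claim to Proposition~\ref{prop:ssnofactorsincommon} by decomposing $V$ into well-chosen $H$-invariant summands, each of which carries an essentially unique nondegenerate alternating form, and then glue. First I would observe that the three hypotheses are exactly what is needed to guarantee that $V$ itself supports a nondegenerate $H$-invariant alternating form: pair each non-self-dual composition factor $W$ with its dual $W^*$ (both multiplicity one) to get a hyperbolic-type summand $W\oplus W^*$ on which $\mathrm{Hom}_{kH}(W\oplus W^*,(W\oplus W^*)^*)$ is one-dimensional and forces a nondegenerate alternating form; take each self-dual symplectic-type factor $W$ (multiplicity one) with its own alternating form; and take each self-dual orthogonal-type factor $W$ (multiplicity two) as a summand $W\oplus W$, where an alternating form exists because $\mathrm{Hom}_{kH}(W\oplus W,(W\oplus W)^*)\cong M_2(k)$ contains the off-diagonal $\left(\begin{smallmatrix}0&1\\-1&0\end{smallmatrix}\right)$, giving a nondegenerate alternating pairing. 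So, up to $\GL(V)$-conjugacy we may assume $H\leq\Sp(V)$.

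Next I would address uniqueness. Suppose $H_1\leq\Sp(V)$ is $\GL(V)$-conjugate to $H$; conjugating, we may assume $H_1=H$ as abstract subgroups of $\GL(V)$ acting on the same module $V$, but possibly preserving a different nondegenerate alternating form $f'$. By the discussion preceding the proposition, $H_1$ is $\Sp(V)$-conjugate to $H$ if and only if $f'$ can be carried to $f$ by an element of $C_{\GL(V)}(H)$. Decompose $V$ into the isotypic-type blocks $B_1,\dots,B_r$ described above (each $B_j$ being either a single self-dual factor, a pair $W\oplus W^*$, or a pair $W\oplus W$). The key point is that, because of the multiplicity restrictions, these blocks have \emph{no composition factors in common} across distinct $j$: a factor and its dual live in exactly one $W\oplus W^*$ block, a symplectic factor in exactly one block, an orthogonal factor (with its double) in exactly one block. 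Hence any $H$-invariant alternating form on $V$ — in particular both $f$ and $f'$ — is a direct sum of nondegenerate $H$-invariant alternating forms on the individual blocks $B_j$, with no cross terms (Schur's lemma: $\mathrm{Hom}_{kH}(B_i,B_j^*)=0$ for $i\neq j$). So it suffices to handle each block separately and apply Proposition~\ref{prop:ssnofactorsincommon} (or rather its proof idea) to reassemble.

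Then, block by block: on a self-dual symplectic factor $W$, the space of invariant forms is one-dimensional, so $f$ and $f'$ differ by a scalar, which is realized by a scalar in $C_{\GL(W)}(H)=k^\times$. On a block $W\oplus W^*$, the invariant alternating forms form a one-parameter family (pairing $W$ with $W^*$ via a scalar multiple of the canonical pairing), again adjusted by a scalar centralizing $H$. On a block $W\oplus W$ with $W$ orthogonal, $C_{\GL(W\oplus W)}(H)\cong\GL_2(k)$ acting on the three-dimensional space of alternating $H$-invariant forms, where the form is determined by its off-diagonal pairing; since $\GL_2(k)$ is transitive on nondegenerate such forms (all are equivalent to $\left(\begin{smallmatrix}0&1\\-1&0\end{smallmatrix}\right)$ after scaling, these being the nondegenerate alternating $2\times 2$ matrices over a field, on which $\GL_2$ acts transitively), we can move $f'$ to $f$ within the centralizer. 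Choosing such a centralizing element on each block and taking the direct sum gives $g\in C_{\GL(V)}(H)$ with $(f')^g=f$, so $H$ and $H_1$ are $\Sp(V)$-conjugate. I expect the main obstacle to be the orthogonal-type block: one must be careful that the relevant centralizer really is all of $\GL_2(k)$ (using $\mathrm{End}_{kH}(W)=k$ since $W$ is simple) and that $\GL_2(k)$ acts transitively on the nondegenerate alternating forms in the relevant $M_2(k)$-twisted sense — everything else is a bookkeeping application of Schur's lemma and Proposition~\ref{prop:ssnofactorsincommon}.
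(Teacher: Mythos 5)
Your proof is correct and follows essentially the same route as the paper: decompose $V$ into homogeneous components (grouping each factor with its dual), observe that each component carries an essentially unique nondegenerate $H$-invariant alternating form, and glue using Proposition \ref{prop:ssnofactorsincommon}. One small correction that does not affect the argument: for an orthogonal-type block $W\oplus W$ the space of invariant \emph{alternating} forms is $1$-dimensional, not $3$-dimensional (the $3$-dimensional subspace of $M_2(k)$ consists of the symmetric forms), so transitivity of the centralizer on the nondegenerate ones is immediate via scalars; likewise $\Hom_{kH}(W\oplus W^*,(W\oplus W^*)^*)$ is $2$-dimensional, with the alternating forms forming the $1$-dimensional hyperbolic family you describe.
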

\begin{proof} First, we note that $H$ may be conjugated into $\Sp(V)$, because each homogeneous component (where we group a module and its dual into a component) supports a unique symplectic form with zero radical.

To prove uniqueness of $H$, by the previous paragraph the result holds if $V$ is homogeneous, so we may use induction and assume that $V=V_1\oplus V_2$ and each $V_i$ has a unique symplectic form $f_i$ with zero radical (up to $\Sp(V_i)$-conjugacy). Then we apply Proposition \ref{prop:ssnofactorsincommon}.\end{proof}

\section{Two methods for determining conjugacy classes}
\label{sec:methods}

The two main computational methods in this article both involve the Lie product on $L(\mb G)$, and one involves the minimal module as well. We start with the easiest.

\subsection{Lie product method}
\label{sec:lieprodmethod}
Let $H$ be a finite group, let $M$ be a $kH$-module of dimension $\dim(L(\mb G))$, and suppose that we wish to determine whether $H$ embeds in $\mb G$, acting on $L(\mb G)$ as the module $M$, and if so, how many $\mb G$-classes of subgroups have such an action.

The Lie product is a non-zero element of the space 
\[\Hom_{k\mb G}(\Lambda^2(L(\mb G)),L(\mb G)),\]
so we construct the space
\[\mathcal R^H(M)=\Hom_{kH}(\Lambda^2(M),M),\]
which is $n$-dimensional. We then take a basis $\phi_1,\dots,\phi_n$ for this space, and let
\[ \phi=\sum_{i=1}^n a_i\phi_i,\]
where the $a_i$ are indeterminates from $k$. The element $\phi$ is a generic alternating product on $M$.

We impose the requirement that $\phi$ is a Lie product, i.e., it satisfies the Jacobi identity. This requirement yields a collection of quadratic equations in the $a_i$,
\[ ((u\wedge v)\phi \wedge w)\phi+((v\wedge w)\phi \wedge u)\phi+((w\wedge u)\phi \wedge v)\phi=0,\]
for $u,v,w$ basis elements of $M$.

Solving these equations yields the collection $\mathscr L$ of all Lie products on $M$. There may well be many elements of $\mathscr L$ that are not simple Lie algebras, for example the zero product, or more generally any with a kernel, or whose image is not the whole of $M$. Removing these, and for example any that have an abelian subalgebra of too large a dimension (see Proposition \ref{prop:abeliansubalgebra}), we end up with a collection of candidates for being a simple Lie algebra of type $L(\mb G)$. Finally, we need to take orbits under the action of the normalizer in $\GL(M)$ of $H$. For each of these orbits, we simply test on Magma whether the Lie algebra is simple, and then that it is $L(\mb G)$.

\medskip

This method was used successfully in \cite{craven2020un} to determine the number of conjugacy classes of various Lie primitive subgroups for $E_6$, and we use it here, with much larger dimensions for the space of alternating products, to deal with some of the larger subgroups. It has the benefit that one does not need to worry about whether $H$ is a non-split extension by the centre of $\mbG$.

If $\mathcal{R}^H(M)$ has dimension more than about 100 then the number of variables becomes too large to practically determine the subset of Lie products, and so we use a different method.

\subsection{Subalgebra method}
\label{sec:subalgmethod}

This method is more powerful than the previous one, but requires putative actions $M$ and $L$ on both the minimal and adjoint modules. It also requires an alternating bilinear form on $M$ that yields an embedding of $H$ into $\Sp_{56}(k)$. For this reason, the previous method is preferred in cases where the space of alternating forms is small, because the setup is less complicated. This method, due initially to Alexander Ryba, appeared in \cite{griessryba2002}, where it was used to compute conjugacy classes of $\PSL_2(27)$ in $E_7(k)$ for $p\nmid |\PSL_2(27)|$. We use it with a number of modifications compared with \cite{griessryba2002} in order to increase its utility in cases where the prime divides the order of the subgroup.

For all primes $p$, the group $\mbG$ of type $E_7$ is contained in the algebraic group $\mb X$ of type $C_{28}$, i.e., $\Sp_{56}(k)$ contains $E_7(k)_{\mathrm{sc}}$. Thus $L(E_7)$ is a Lie subalgebra of $L(C_{28})$. Since $L(C_{28})$ is the symmetric square of $M(C_{28})$, we place a Lie algebra structure directly on the symmetric square of $M(C_{28})$. For practical purposes when computing we place the Lie algebra structure on $M(C_{28})\otimes M(C_{28})$, given by
\[ [u\otimes v, x\otimes y]=(u,y) x\otimes v-(x,v)u\otimes y\]
(where $u,v,x,y\in M(C_{28})$ and $(\,,\,)$ is the bilinear form) and then consider the symmetric part of the tensor square.

Let $H$ be a putative subgroup of $\mbG$, and assume that we are given $56$- and $133$-dimensional $kH$-modules $M$ and $L$. We will count the number of $\mbG$-conjugacy classes of subgroups $H$ embedding with actions $M$ and $L$ on $M(E_7)$ and $L(E_7)$ respectively.

Suppose first that $M$ supports a unique (up to $\Sp_{56}$-conjugacy) non-degenerate symplectic form. This yields a unique embedding of $H$ into $C_{28}$, and thus $H$ acts on the Lie algebra $L(C_{28})=S^2(M)$. Let $U$ denote the submodule of $S^2(M)$ that is the sum of the images of all maps in $\Hom_{kH}(L,S^2(M))$. If $H\leq \mbG\leq C_{28}$ then the Lie algebra of $\mbG$ lies in the subspace $U$ of $S^2(M)$.

Let $W$ be a simple submodule of $L$, let $\phi_1,\dots,\phi_n$ denote a basis for $\Hom_{kH}(W,U)$, and let $W_i$ denote the image of $\phi_i$. If $w\in W$, and $a_1,\dots,a_n$ are parameters with values in $k$, write
\[ \bar\;:W\to U,\quad \bar w=\sum_{i=1}^n a_i(w\phi_i)\in U.\]
If $\bar u$ and $\bar w$ lie in $L(\mbG)$ for $u,w\in W$ then, in particular, $[\bar u,\bar w]\in U$. By extending a basis for $U$ to a basis for $S^2(M)$, the coefficients of $[\bar u,\bar w]$ on basis elements outside of $U$, which must be $0$, yield quadratic equations in the $a_i$. Of course, choosing three points $u,v,w\in W$, one may obtain cubic equations via $[[\bar u,\bar v],\bar w]$, and so on. These restrictions yield a smaller set of potential subalgebras, and (hopefully) allow us to determine which copies of $W$ in $S^2(M)$ lie in $L(\mbG)$ precisely. We then take the subalgebra generated by each option for $W$. If $H$ does not preserve any proper subalgebra of $L(\mbG)$ then the subalgebra so generated must be $L(\mbG)$ itself, and we can test this via computer. Running through all possibilities for the image of $W$ up to $N_{C_{28}}(H)$-conjugacy yields all $\mbG$-conjugacy classes of subgroups $H$. (See Example \ref{ex:m12} below for an example of this in action.)

\medskip

This is the basic idea, but there are many modifications that can be made. First, $W$ might not be enough, and we can allow more than one simple submodule to be chosen simultaneously. If there is a $1$-parameter of symplectic forms then one may still compute $[\bar u,\bar w]$, but it will come with an extra parameter that must be solved for. (This is the case for $\PSL_2(19)$, $p=2$.) If $p=2$ then $S^2(M)$ is reducible, and we may consider $\Lambda^2(M)$ instead of $S^2(M)$, thought of as a submodule of $M\otimes M^*$, and hence inheriting its Lie product. If even one parameter for the symplectic form is not enough, we might perform the same analysis in the Lie algebra of $A_{55}$ instead, so work with $M\otimes M$ rather than $S^2(M)$. (We demonstrate this method in the supplementary materials for $\PSL_2(19)$ and $p=2$ in Section \ref{sec:psl219}, providing two proofs for that case.)

\medskip

Having applied the method, probably to multiple submodules, one obtains a number of different possibilities for submodules $\bar W$ of $L(\mbG)$, up to the action of $N_{C_{28}}(H)$. For each of these we take the subalgebra generated by $\bar W$. If it has dimension greater than $133$ then $\bar W$ cannot lie in $L(\mbG)$. If it generates an abelian subalgebra of dimension greater than $27$ ($28$ for $p=2$) then $\bar W$ cannot lie in $L(\mbG)$, by Proposition \ref{prop:abeliansubalgebra}. One also knows all of the actions of nilpotent classes of the Lie algebra of $\mbG$ on $M$ \cite[Tables 2 and 3]{stewart2016} (they differ slightly from those from \cite{lawther1995} of unipotent classes for $E_7$ and $p\leq 5$), and hence on $S^2(M)$, so if the subalgebra contains a nilpotent element of $L(C_{28})$ that has the wrong Jordan normal form on $S^2(M)$ then $\bar W$ cannot lie in $L(\mbG)$. (We use this for $\Alt(7)$, $p=5$, $\PSL_2(11)$, $p=3$, and $\PSL_2(13)$, $p=2$.) If the subalgebra generated is not a subalgebra of $\mathfrak{e}_7$ then $\bar W$ cannot lie in $L(\mbG)$.

For smaller examples $H$, the process becomes increasingly difficult. One useful tool is a linear matrix pencil problem. Given nilpotent matrices $A$ and $B$ such that $A+xB$ is nilpotent for all $x\in k$, we want to compute the Jordan normal form of $A+xB$, both generically and for the finitely many values of $x$ for which the normal form is different. Assuming these actions are not correct, this allows us to eliminate $1$-parameter possibilities for $\bar W$. We do this for, for example, $\PSL_2(11)$ (Section \ref{sec:psl211}).

These various tests are enough to eliminate almost all possibilities other than submodules $\bar W$ that generate a copy of $\mathfrak{e}_7$. In some cases, for example $\Alt(7)$ for $p=5$ (Section \ref{sec:alt7}), we always show that the submodule generates a $\mathfrak d_4$ subalgebra. At this stage we may apply a recent result of Premet and Stewart \cite{premetstewart2019}, that classifies maximal subalgebras of exceptional Lie algebras in good characteristic. In the few cases where we need to do this we will describe exactly what occurs in the relevant section.

We now give an example using this method, which in Section \ref{sec:m12} is solved using the Lie product method. It will offer another proof of the result, and be small enough to illustrate the technique.

\begin{example}\label{ex:m12} Let $H=2\cdot M_{12}$ and $p=5$. Examining \cite[Table 6.148]{litterickmemoir}, we see there is a possible embedding of $H$ into $\mbG$ with composition factors on $M(E_7)$ and $L(E_7)$ given by $32,12^2$ and $78,55_3$ respectively (the character $55_3$ is the one of the three with Brauer character value $7$ for a non-central involution). This is labelled `\textbf{P}' in that table, and would be Lie primitive if it exists.

The module $12$ has no self-extensions, but there is an extension between $12$ and $32$, allowing us to construct a self-dual module $12/32/12$. The actions of $u\in H$ of order $5$ on $32\oplus 12^{\oplus 2}$ and $12/32/12$ have Jordan blocks $5^{10},1^6$ and $5^{10},3^2$. Both of these actions are allowable according to \cite[Table 7]{lawther1995}, they of course correspond to different unipotent classes. The action on $L(E_7)$ is clearly fixed though as $78\oplus 55$, and $u$ acts on this module with Jordan blocks $5^{26},3$. Consulting \cite[Tables 7 and 8]{lawther1995}, we see that $u$ belongs to class $A_4+A_2$ and $H$ must act uniserially on $M(E_7)$. A computer check confirms that the module $12/32/12$ has a unique symplectic form up to scalar multiplication, and so we are in a good position to apply the subalgebra method. Write $M$ and $L$ for the $kH$-modules $12/32/12$ and $55_3\oplus 78$ respectively.

We first determine the normalizer of $H$ in $\Sp_{56}(k)$. In this case it is quite easy to check that $H$ is centralized by a $1$-dimensional unipotent subgroup $\mb X$ (and the centre of $\Sp_{56}(k)$). In addition, $H$ is normalized by an element of order $2$ acting as an outer automorphism on $H$. This will either swap two copies of $L(E_7)$ inside $S^2(M)$ or stabilize one, and this corresponds to $H.2$ not embedding in $\mbG$ and embedding in $\mbG$ respectively.

\medskip

We now consider $S=S^2(M)$. The space $\Hom_{kH}(55_3,S)$ is $4$-dimensional, and the space $\Hom_{kH}(78,S)$ is $2$-dimensional. This means that $U$, the sum of the images of all maps from $L$ to $S$, has dimension $55\cdot 4+78\cdot 2=376$, whereas $S$ has dimension $1596$. We can illustrate a number of our techniques here. First, $\mb X$ acts on the set of $78$-dimensional $kH$-submodules of $U$; there is a regular orbit, with representative $W_1$, and a fixed point, with representative $W_2$. Let $L_i$ denote the subalgebra generated by $W_i$. It turns out that both $L_i$ are isomorphic to $L$ as $kH$-modules; the subalgebra $W_1$ is of type $\mathfrak e_7$, but the second is a nilpotent Lie algebra with a $55$-dimensional abelian ideal.

We can simply test whether $L_1$ is an $\mathfrak e_7$-Lie algebra, so it remains to eliminate the other one. We can do this in a number of ways: we can prove that the $55_3$ forms an abelian subalgebra of $L_2$, thus $L_2$ is not $\mathfrak e_7$ by Proposition \ref{prop:abeliansubalgebra}. We could also check that the $55_3$ forms an ideal of the Lie algebra. Finally, we could find some nilpotent element of $S$ that lies in $W_2$ and has the wrong Jordan normal form. In the supplementary materials we demonstrate each of these proofs.

We also demonstrate an approach using $W=55_3$ and $W'=78$, using the action of $\mb X$ on the set of possibilities for $W$. Here we therefore have six variables, $a_1,\dots,a_4$ parametrizing the submodule $W$ and $a_5$ and $a_6$ parametrizing $W'$. The action of $\mb X$ shows that we may choose $a_1=a_2=a_3=0$, and then we are the $55_3$ inside $L_2$, or $a_3$ may be chosen arbitrarily, and we can choose it to be the $55_3$ inside $L_1$ (by choosing $a_3$ appropriately).

Since $W_1$ is unique up to the action of the centralizer $\mb X$, we see that $H$ must extend to $H.2$ in $\mbG$.
\end{example}

\section{Reducing to almost simple groups}
\label{sec:othermaximals}
In this section we determine all maximal subgroups that arise from fixed points of a positive-dimensional subgroup. Apart from these, there is a finite list of other possibilities, each of which will be considered in turn in the next section.

We start with a result of Borovik and Liebeck--Seitz, which reduces us to almost simple groups.

\begin{thm}[\cite{borovik1989,liebeckseitz1990}]\label{thm:borliesei} Let $M$ be a maximal subgroup of an almost simple group $\bar G$ of exceptional type. One of the following holds:
\begin{enumerate}
\item $M=N_{\bar G}(\mb X^\sigma)$ for $\mb X$ a member of $\mathscr X$, as listed in \cite[Theorem 2]{liebeckseitz1990};
\item $M$ is an exotic $r$-local subgroup, as listed in \cite{clss1992};
\item $\mbG$ is of type $E_8$, $p\geq 5$, and $M=(\Alt(5)\times \Alt(6))\rtimes 2^2$;
\item $M$ has the same type as $\mb G$, and is the normalizer of the fixed points of a field or field-graph automorphism of $G$;
\item $M$ is almost simple.
\end{enumerate}
\end{thm}

If $\mbG$ is of type $E_7$ then the third option does not apply, and the second option is a subset of the first, as it happens. The fourth is simply (the normalizer of) $E_7(q_0)$ for $q=q_0^r$ with $r$ a prime. Note that if $M$ is almost simple but does not appear in (i)--(iv) then $C_\mbG(M)=Z(\mbG)$. To see this we argue as in \cite[Section 2]{craven2020un}: notice that otherwise $M\leq C_\mbG(C_\mbG(M))$, and this latter group is proper in $\mbG$. It is certainly $\sigma$-stable whenever $M$ is, and is infinite if $M$ is simple by \cite[Lemma 1.2(i)]{liebeckseitz1990} (as the remaining option there is the Borovik subgroup (iii) above). In particular, we can always apply Lemmas \ref{lem:fixedline} and \ref{lem:diagswaps} above.

We will look at almost simple groups in the next section. Thus in this section we determine the list for the first option, fixed points of ($\sigma$-stable) maximal members of $\mathscr X$. So assume that $M$ lies in (i).

By \cite[Theorem 1]{liebeckseitz2004}, $M$ is either a parabolic or maximal-rank subgroup, or a short list of other (possibly disconnected) reductive groups. The maximal-rank subgroups are tabulated in \cite{liebecksaxlseitz1992}, and we reproduce those subgroups here. The parabolic subgroups are well-known. The result from \cite{liebeckseitz2004} lists all other subgroups.

Ignoring the maximal-rank, parabolic, and exotic $r$-local subgroups, this leaves only the maximal subgroups arising from reductive maximal subgroups, which for $E_7$ are (from \cite{liebeckseitz2004}):
\begin{enumerate}
\item[(a)] $A_1F_4$ (all $p$), $G_2C_3$ (all $p$), $A_1G_2$ ($p\geq 3$);
\item[(b)] $A_1A_1$ ($p\geq 5$);
\item[(c)] $A_2.2$ ($p\geq 5$);
\item[(d)] $A_1$ ($p\geq 17$);
\item[(e)] $A_1$ ($p\geq 19$);
\item[(f)] $(2^2\times D_4)\rtimes \Sym(3)$ ($p\geq 3$).
\end{enumerate}
(The final one is disconnected, and the connected component lies in $A_7$.)

The number of classes of each of these in $G_\mathrm{ad}$ is given in \cite{liebeckseitz1990,liebeckseitz2004}, and so our issue is whether the diagonal automorphism of $G$ normalizes the subgroup or fuses two classes of them. The three subgroups in (a) are easy to understand: they are all non-split extensions over the centre, and exactly one of the two factors has a single isogeny type. Thus they are direct products and of simply connected type. This yields subgroups $\PSL_2(q)\times F_4(q)$, $G_2(q)\times \PSp_6(q)$ and $\PSL_2(q)\times G_2(q)$ of $G$. Since they are not groups of Lie type themselves, they must be normalized by a diagonal automorphism, and since they are unique up to $\mbG$-conjugacy, they are stabilized by all Frobenius endomorphisms. A similar statement holds for the two classes of $A_1$ subgroups (which cannot be interchanged by $\sigma$ for many reasons, for example the unipotent classes intersecting the two subgroups are different).

(If $q=2,3$ then $\PSL_2(q)$ is soluble. In this case the groups above are $r$-local subgroups, where $r=3$ for $q=2$, and $r=2$ for $q=3$. We consult \cite{clss1992} to see when these are maximal.)

The subgroup $A_1A_1$ is of type $\SL_2\times \PGL_2$, by considering the composition factors on $M(E_7)$, and so we cannot have a Frobenius endomorphism that swaps the two factors. Thus the only option is $\PSL_2(q)\times \PGL_2(q)$ in $E_7(q)$, and as with the other cases, there must be a single class normalized by the diagonal automorphism.

For $\mb X=(2^2\times D_4)\rtimes \Sym(3)$, this can also be written as $D_4\rtimes \Sym(4)$. We work first in the adjoint version of $\mbG$, and then see whether the subgroup lies in the derived subgroup of $G_\mathrm{ad}$. Since $\Sym(4)$ has no outer automorphisms, if $\mb X$ is $\sigma$-stable then $\sigma$ induces an inner automorphism on $\Sym(4)$. There are five conjugacy classes for $\Sym(4)$, hence five $\sigma$-conjugacy classes. The classes given by $1$ and $(1,2)(3,4)$ yield fixed-point subgroups $D_4(q)\rtimes \Sym(4)$ and $D_4(q)\rtimes D_8$ respectively; the classes given by $(1,2)$ and $(1,2,3,4)$ both give subgroups ${}^2\!D_4(q)$, and $(1,2,3)$ yields a fixed-point subgroup ${}^3\!D_4(q).3$. Since each of $(1,2)(3,4)$, $(1,2)$ and $(1,2,3,4)$ yield a fixed point on the $2^2$ subgroup in the centre of the fixed points, in these three cases the $\sigma$-fixed points lie in the centralizer of an involution, so a subgroup $A_7$ or ${}^2\!A_7$. In particular, they cannot be maximal. Thus the only maximal subgroups are $(2^2\times D_4(q))\rtimes \Sym(3)$ and ${}^3\!D_4(q).3$, neither of which can lie in any other maximal subgroup.

Of course the subgroup ${}^3\!D_4(q).3$ must lie in the derived subgroup $G$, which has index $2$ in $G_\mathrm{ad}$, so it remains to consider $(2^2\times D_4(q))\rtimes \Sym(3)$. But this is the exotic $2$-local subgroup from \cite{clss1992}. It is not explicitly stated there whether there is a single class for the simple group, just for $G_\mathrm{ad}$. In \cite[Table 1]{andietrich2016} it is erroneously stated that $(2^2\times D_4(q)).3$ lies in the simple group, hence there is a single class. This was corrected in a recent paper of Korhonen \cite{korhonen2025}, and the answer depends on the congruence of $q$ modulo $8$. The proof in \cite{korhonen2025} explicitly constructs the subgroup in the algebraic group; in Appendix \ref{sec:altkorhonenproof} we sketch a proof using more general theory of groups of Lie type and some finite group theory, which hopefully elucidates a different aspect of the situation.

Finally, for the subgroups $A_2.2$, we see one class each of $\PGL_3(q).2$ and $\PGU_3(q).2$ in the adjoint group $G.2$ (see, for example, \cite[Corollary 7]{liebeckseitz1998} for this statement; the proof follows the same lines as the same subgroup of $E_6$ in \cite[Section 7]{craven2020un}). The question is whether these groups lie inside $G$ or intersect $G$ in $\PGL_3(q)$ or $\PGU_3(q)$. Notice that, by Lemma \ref{lem:fixedpoints}, the number of conjugacy classes of $\PGL_3(q)$ and $\PGU_3(q)$ put together is equal to the number of $\sigma$-conjugacy classes on the cyclic group of order $4$ (which is the normalizer of $A_2$ modulo $A_2$), and this number is either $2$ or $4$, according as $\sigma$ inverts or centralizes the $4$. Hence, either both $\PGL_3(q)$ and $\PGU_3(q)$ are self-normalizing in $G$ or neither is, so we may as well consider just $\PGL_3(q)$.

For $p=5$, we obtain a different answer to $p\geq 7$, because the action on $M(E_7)$ is completely different. For $p\geq 7$, the action is a sum of two dual $28$-dimensional modules, and the centralizer in $\Sp(M(E_7))$ is a torus. For $p=5$, the action is a tilting module, of the form $L(22)/L(10),L(01),L(20),L(02)/L(22)$. This has centralizer in $\Sp(M(E_7))$ a unipotent subgroup of dimension $1$.

Consequently, while the centralizer for $p\geq 7$ has order $q-1$, the centralizer for $p=5$ has order $2q$ (with the $2$ being the centre of $\Sp_{56}(k)$). This means that there is an odd number of $\mathfrak e_7$-subalgebras of the Lie algebra, and thus the outer $2$ on top of $\PSL_3(q)$ must stabilize at least one of them. Thus $\PSL_3(q).2$ must embed in $E_7(q)$, as stated in Table \ref{t:othermaximals}. (In the supplementary materials we prove this by computer for $q=5$ to confirm, using the argument for $p\geq 7$ below to extend to all powers of $5$.)

For $p\geq 7$, we cannot determine the answer when $q$ is an odd power of $p$, but we believe the answer is that there is a single class. We have checked this for $q=7,11,13,17$ using a computer (we include the case $q=17$ in the supplementary materials for the reader's benefit) but can see no way to prove it in general at the moment without using the full theory of algebraic groups and a direct construction following \cite{seitz1991} and \cite{liebeckseitz2004}, which we will pursue in another paper. We can say something though. Let $H=\PSL_3(q)$ and let $L=\PSL_3(p)$ be a subgroup of $H$. We have $H.2=\gen{H,L.2}$, and so $H.2$ lies in $G$ if and only if $L.2$ does. If $q$ is an even power of $p$ then $L.2\leq E_7(p).2\leq E_7(q)$, and so $H.2$ does lie inside $G$. If $q$ is an odd power of $p$ then the outer diagonal automorphism of $E_7(q)$ restricts to the outer diagonal automorphism of $E_7(p)$, so $L.2$ lies in $E_7(p)$ if and only if $H.2$ lies in $E_7(q)$.

Thus there are definitely two classes of $\PSL_3(q)$ for $q$ an even power of $p$, and the answer is always the same for $q$ any odd power of a fixed prime $p$.

\begin{table}
\begin{center}
\begin{tabular}{ccccc}
\hline Group & $p$ & $q$ & No. classes & Stabilizer
\\ \hline $[q^{33}]\cdot d\cdot \POmega_{12}^+(q)\cdot (q-1)$ & All $p$ & All $q$ & $1$ & $\gen{\delta,\phi}$
\\ $[q^{42}]\cdot \SL_7(q) \cdot(q-1)/d$ & All $p$ & All $q$ & $1$ & $\gen{\delta,\phi}$
\\ $[q^{47}]\cdot de\cdot (\PSL_2(q)\times \PSL_6(q))\cdot (q-1)$ & All $p$ & All $q$ & $1$ & $\gen{\delta,\phi}$
\\ $[q^{53}]\cdot f\cdot(\SL_3(q)\times \PSL_2(q)\times \PSL_4(q))\cdot (q-1)$& All $p$ & All $q$ & $1$ & $\gen{\delta,\phi}$
\\ $[q^{50}]\cdot (\SL_3(q)\times \SL_5(q))\cdot (q-1)/d$& All $p$ & All $q$ & $1$ & $\gen{\delta,\phi}$
\\ $[q^{42}]\cdot f\cdot(\Omega_{10}^+(q)\times \PSL_2(q))\cdot (q-1)$& All $p$ & All $q$ & $1$ & $\gen{\delta,\phi}$
\\ $[q^{27}]\cdot E_6(q)_{\mathrm{sc}}\cdot (q-1)/d$ & All $p$ & All $q$ & $1$ & $\gen{\delta,\phi}$
\\ $d\cdot(\PSL_2(q)\times \POmega_{12}^+(q))\cdot d$ & All $p$ & All $q$ & $1$ & $\gen{\delta,\phi}$
\\$f/d\cdot \PSL_8(q)\cdot g/f\cdot 2$ & All $p$ & All $q$ & $1$ & $\gen{\delta,\phi}$
\\ $f'/d\cdot \PSU_8(q)\cdot g'/f'\cdot 2$ & All $p$ & All $q$ & $1$ & $\gen{\delta,\phi}$
\\ $e\cdot \PSL_3(q)\times \PSL_6(q))\cdot e\cdot 2$ & All $p$ & All $q$ & $1$ & $\gen{\delta,\phi}$
\\ $e'\cdot \PSU_3(q)\times \PSU_6(q))\cdot e'\cdot 2$ & All $p$ & All $q$ & $1$ & $\gen{\delta,\phi}$
\\ $d^2\cdot(\PSL_2(q)^3\times \POmega_8^+(q))\cdot d^2\cdot \Sym(3)$ & All $p$ & All $q$ & $1$ & $\gen{\delta,\phi}$
\\ $(\PSL_2(q^3)\times {}^3\!D_4(q))\cdot 3$ & All $p$ & All $q$ & $1$ & $\gen{\delta,\phi}$
\\ $d^3\cdot \PSL_2(q)^7\cdot d^3\cdot \PSL_3(2)$ & All $p$ & $q\geq 3$ & $1$ & $\gen{\delta,\phi}$
\\ $\PSL_2(q^7)\cdot 7$ & All $p$ & All $q$ & $1$ & $\gen{\delta,\phi}$
\\ $e\cdot(E_6(q)\times (q-1)/de)\cdot e\cdot 2$ & All $p$ & All $q$ & $1$ & $\gen{\delta,\phi}$
\\ $e'\cdot({}^2\!E_6(q)\times (q+1)/de')\cdot e'\cdot 2$ & All $p$ & All $q$ & $1$ & $\gen{\delta,\phi}$
\\ $(q-1)^7/d\cdot W(E_7)$ & All $p$ & $q\geq 5$ & $1$ & $\gen{\delta,\phi}$
\\ $(q+1)^7/d\cdot W(E_7)$ & All $p$ & All $q$ & $1$ & $\gen{\delta,\phi}$
\\ $\PSL_2(q)\times F_4(q)$ & All $p$ & $q\geq 3$ & $1$ & $\gen{\delta,\phi}$
\\ $G_2(q)\times \PSp_6(q)$ & All $p$ & All $q$ & $1$ & $\gen{\delta,\phi}$
\\ $\PSL_2(q)\times G_2(q)$ & $p\geq 3$ & $q\geq 5$ & $1$ & $\gen{\delta,\phi}$
\\ $\PSL_2(q)\times \PGL_2(q)$ & $p\geq 5$ & All $q$ & $1$ & $\gen{\delta,\phi}$
\\ $\PGL_3(q)\cdot 2$ & $5$ & All $q$ & $2$ & $\gen\phi$
\\ $\PGU_3(q)\cdot 2$ & $5$ & All $q$ & $2$ & $\gen\phi$
\\ $N_G(\PGL_3(q))$ & $p\geq 7$ & All $p^{2n+1}$ & $1$? & $\gen{\delta,\phi}$?
\\ $N_G(\PGU_3(q))$ & $p\geq 7$ & All $p^{2n+1}$ & $1$? & $\gen{\delta,\phi}$?
\\ $\PGL_3(q)\cdot 2$ & $p\geq 7$ & All $p^{2n}$ & $2$ & $\gen{\phi}$
\\ $\PGU_3(q)\cdot 2$ & $p\geq 7$ & All $p^{2n}$ & $2$ & $\gen{\phi}$
\\ $\PSL_2(q)$ & $p\geq 17$ & All $q$ & $1$ & $\gen{\delta,\phi}$
\\ $\PSL_2(q)$ & $p\geq 19$ & All $q$ & $1$ & $\gen{\delta,\phi}$
\\ $(2^2\times \POmega_8^+(q)\cdot 2^2)\cdot \Sym(3)$ & $p\geq 3$ & $q\equiv\pm 1 \bmod 8$ & $2$ &$\gen{\phi}$
\\ $(2^2\times \POmega_8^+(q)\cdot 2^2)\cdot 3$ & $p\geq 3$ & $q\equiv\pm 3 \bmod 8$ & $1$ &$\gen{\delta,\phi}$
\\ ${}^3\!D_4(q)\cdot 3$ & $p\geq 3$ & All $q$ & $2$ & $\gen{\phi}$
\\ $E_7(q_0)\cdot \gcd(d,r)$ & All $p$ & $q=q_0^r$, $r$ prime & $\gcd(d,r)$ & $\gen{\delta^{\gcd(d,r)},\phi}$
\\ \hline
\end{tabular}
\end{center}
\caption{Maximal subgroups not in $\mathcal S$ for $G=E_7(q)$. Here, $d=\gcd(2,q-1)$, $e=\gcd(3,q-1)$, $e'=\gcd(3,q+1)$, $f=\gcd(4,q-1)$, $f'=\gcd(4,q+1)$, $g=\gcd(8,q-1)$, $g'=\gcd(8,q+1)$.}
\label{t:othermaximals}
\end{table}

\section{Determination of the almost simple subgroups}
\label{sec:detofgroups}

We now turn our attention to all almost simple groups that embed in $E_7$ for some prime $p$, but that do not come from a positive-dimensional subgroup as in the previous section. We split them up into families, and proceed through alternating groups, sporadic groups, groups of Lie type other than $\PSL_2(r)$ (where $r$ may be a power of $p$), and finally the groups $\PSL_2(r)$ (where again $r$ may be a power of $p$). In each case, and for each quasisimple group $H$, we consider the possible embeddings of $H$ into $\mb G=E_7$, whether there are any such subgroups that are not strongly imprimitive, and if so, determine the number of $\mb G$-conjugacy classes, determine $N_{\mb G}(H)$, and compute the actions of outer automorphisms of $G$ on the $G$-classes of subgroups $\bar H=HZ(\mbG)/Z(\mb G)$, whenever $\bar H\leq G$. (Recall our convention that if $H\cap Z(\mbG)=1$ then we identify $H$ and $\bar H$ to reduce notation, so $H\leq G$ in this case.)

The collection of all simple groups that embed in $\mbG$, other than groups of Lie type in characteristic $p$, is given in \cite{liebeckseitz1999}.\footnote{Note that ${}^2\!B_2(8)$, $p=5$ is missing from \cite{liebeckseitz1999}. It embeds in $A_7$.} If we consider Lie type groups $H$ in characteristic $p$, it is proved in \cite[Theorem 3]{liebeckseitz1990}, and then later improved in \cite[Theorem 1]{liebeckseitz1998} and \cite[Corollary 5]{liebeckseitz2004} that the only possible almost simple maximal subgroups that are Lie type in characteristic $p$ are either fixed points of proper subgroups (so appear in the previous section) or are $H(q)$ for $H$ of small rank and $q$ a small prime power. These restrictions were further tightened in \cite{craven2015un,craven2019un}, so that either $M$ is listed in Table \ref{t:othermaximals} or $H=N_G(\PSL_2(p^a))$ for $p^a=7,8,25$. We eliminate $p^a=25$ in Section \ref{sec:psl225} below, but $p^a=7,8$ are unresolved, and appear in Table \ref{t:e7stillleft}.

For the other simple groups $H$, we start with \cite{liebeckseitz1999}, and then remove those that were proved to always be strongly imprimitive in \cite{litterickmemoir} for all simple groups, and \cite{craven2017} for alternating groups in particular. This yields the groups in Table \ref{t:e7tocheck}. For each of these we have to consider each set of composition factors for $M(E_7)$ and $L(E_7)$ that appears in the tables of \cite{litterickmemoir} with `\textbf{P}'. For the three Lie type groups in characteristic $p$, the possible module structures were described in \cite{craven2015un}.

\begin{table}
\begin{center}
\begin{tabular}{cc}
\hline Prime & Group
\\\hline $p\nmid |H|$ & $\Alt(6)$, $\PSL_2(r)$, $r=7,11,13,19,27,29,37$, $\PSL_3(4)$, $\PSU_3(3)$, $\PSU_3(8)$, $\Omega_8^+(2)$
\\ $p=2$ & $J_2$, $\PSL_2(r)$, $r=8,11,13,19,27,29,37$
\\ $p=3$ &  $\PSL_2(r)$, $r=7,11,13,19,29,37$, $\PSL_3(4)$, $\PSU_3(8)$, $\Omega_8^+(2)$
\\ $p=5$ & $\Alt(6)$, $\Alt(7)$, $M_{12}$, $M_{22}$, $Ru$, $HS$,  $\PSL_2(r)$, $r=11,19,25,29$, $\PSL_3(4)$, $\Omega_8^+(2)$, ${}^2\!B_2(8)$
\\ $p=7$ & $\PSL_2(r)$, $r=7,8,13,27,29$, $\PSL_3(4)$, $\PSU_3(3)$, $\PSU_3(8)$, $\Omega_8^+(2)$
\\ $p=13$ & $\PSL_2(27)$
\\ $p=19$ & $\PSL_2(37)$, $\PSU_3(8)$
\\ \hline
\end{tabular}
\end{center}
\caption{Simple subgroups of $E_7$ that could yield subgroups that are not strongly imprimitive.}
\label{t:e7tocheck}
\end{table}

We will not prove that any of the subgroups are actually maximal in this section, delaying the proof to Section \ref{sec:proofofmaximality}. The reason for this is that one can prove maximality of all of the cases more or less simultaneously, and collating all proofs in one place reduces repetition.

\subsection{Alternating groups}
\label{sec:altgroups}

We rely heavily on \cite{craven2017} here, which deals with most cases. Let $H\cong \Alt(n)$, or $2\cdot \Alt(n)$ with $Z(H)=Z(\mbG)$. If $n=8$ then $H$ either stabilizes a line on $M(E_7)$ or $L(E_7)^\circ$, or $p=3$ and $H$ contains a `generic unipotent element' in the sense of \cite[Definition 1.1]{craven2017}, so is strongly imprimitive by \cite[Lemma 1.3]{craven2017}. If $n=9$ then for $p\geq 5$ we always have that $H$ is strongly imprimitive by \cite{litterickmemoir}. If $p=3$ then either $H$ stabilizes a line on $L(E_7)$ or contains a generic unipotent element, and if $p=2$ then $H$ stabilizes a line on $M(E_7)$. If $n\geq 10$ then $H$ is strongly imprimitive by \cite{litterickmemoir}. Thus $H$ is strongly imprimitive for $n\geq 8$.

If $n=5$ then for $p=3,5$, $H$ stabilizes a line on either $M(E_7)$ or $L(E_7)$. If $p=2$ then from \cite[Proposition 5.4]{craven2017} we see that $H$ stabilizes a line or $2$-space on $M(E_7)$, and one may then apply \cite[Proposition 4.7]{craven2015un} to obtain that $H$ is strongly imprimitive.

If $p=0$ then in \cite[Proposition 5.1]{craven2017} we see that $H$ cannot be Lie primitive, and $H$ is either strongly imprimitive or $H$ has two isomorphic $2$-dimensional composition factors on $M(E_7)$ (and none of the other isomorphism type). If $C_\mbG(H)\neq Z(\mbG)$ then the double centralizer $C_\mbG(C_\mbG(H))$ is positive dimensional and of course $N_{\Aut^+(\mbG)}(H)$-stable, whence $H$ is strongly imprimitive.

By simple dimension counting, the stabilizer of any $2$-space on $M(E_7)$ has dimension at least $25$ (see, for example, \cite[Lemma 1.4]{craven2017}), and so $H$ must lie in a positive-dimensional subgroup of $\mbG$ of dimension at least $25$. Furthermore, $H$ has no trivial composition factors on $L(E_7)$, so $H$ is not contained in a parabolic subgroup of $\mbG$.

Thus if $H$ is not strongly imprimitive then $H$ cannot lie in a parabolic subgroup of $\mbG$, and cannot lie in the centralizer of a semisimple element of $\mbG$, and has to lie in a subgroup of dimension at least $25$. Examining the maximal subgroups of $E_7$ (see, for example, \cite{thomas2016}), this leaves only the $G_2C_3$ and $A_1F_4$ maximal subgroups that could contain $H$. If $H$ lies in $A_1F_4$ then, since $L(F_4)$ is a composition factor of the $A_1F_4$-action on $L(E_7)$, the projection of $H$ on $F_4$ has finite centralizer in $F_4$ as well. But this subgroup is unique up to $F_4$-conjugacy and contained in $A_2A_2$ (this is due to Magaard: see \cite[Theorem 4.17]{frey1998}), which centralizes an element of order $3$ in $F_4$, hence in $\mbG$.

Finally, if $H$ lies in $G_2C_3$ then we see from the list of maximal subgroups of $G_2$ \cite{kleidman1988} that the image of $H$ in $G_2$ either centralizes an element of order $2$ or $3$, or lies inside the maximal $A_1$ acting irreducibly as $L(6)$ on $M(G_2)$. But $A_1C_3$ has dimension $24$, so cannot contain any $2$-space stabilizer. Thus $H$ must be contained in some other maximal positive-dimensional subgroup of $\mbG$, and this completes the proof.

\medskip

This leaves $n=6$ and $n=7$. In the latter case, $H$ is always strongly imprimitive if $p\neq 5$ by \cite[Propositions 7.1, 7.2, 7.4 and 7.5]{craven2017}, so we need only look at $p=5$. If $n=6$ and $p=2,3$ then $H$ stabilizes a line on $M(E_7)$ or $L(E_7)$, or a $2$-space on $M(E_7)$ in the case $H\cong 2\cdot \Alt(6)$ and $p=3$. To obtain strong imprimitivity we can again apply \cite[Proposition 4.7]{craven2015un} for the third possibility.

Thus we are left with $\Alt(7)$, $p=5$ (which we can resolve) and $\Alt(6)$, $p\geq 5$ (which we cannot), and we discuss the latter in Section \ref{sec:alt6}.

\subsubsection{\texorpdfstring{$\Alt(7)$}{Alt(7)}}
\label{sec:alt7}
From \cite[Section 7]{craven2017}, we see that there are two possibilities for $H$ a cover of $\Alt(7)$, both of which require $p=5$.

We start with the possibility of $H\cong 2.\Alt(7)$ in $\mbG$. Here, the actions on $M(E_7)$ and $L(E_7)$ are
\[ 20\oplus (4/14)\oplus (14/4^*),\quad 35^{\oplus 3}\oplus 10\oplus 10^*\oplus 8.\] As we see from \cite[Table 6.128]{litterickmemoir}, this is the only possible action of $H$ on $M(\mbG)$ and $L(\mbG)$. We therefore see that the copy of $2\cdot \Alt(7)$ in $A_2A_5$ (which is $3\cdot \Alt(7)$ in $A_2$ and $6\cdot \Alt(7)$ in simply connected $A_5$, acting irreducibly on each minimal module) also has this action.

\begin{prop}\label{prop:2alt7} If $H\cong 2\cdot \Alt(7)$ is a subgroup of $\mbG$ then $H$ is strongly imprimitive.
\end{prop}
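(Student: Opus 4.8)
The plan is to exploit the uniqueness (up to $\mbG$-conjugacy) of the copy $H_0\cong 2\cdot\Alt(7)$ sitting inside the $A_2A_5$ maximal-rank subgroup, together with the fact (quoted just before the statement) that this $H_0$ realizes the \emph{only} possible pair of actions $(20\oplus(4/14)\oplus(14/4^*),\ 35^{\oplus 3}\oplus 10\oplus 10^*\oplus 8)$ of $2\cdot\Alt(7)$ on $(M(E_7),L(E_7))$. So the first step is to argue that \emph{any} subgroup $H\cong 2\cdot\Alt(7)$ of $\mbG$ with $p=5$ must act with exactly these modules --- this is immediate from \cite[Table 6.128]{litterickmemoir} as already noted --- and hence, by the discussion in Section~2.4 (``$H_1$ having the same actions on $M(E_7)$ and $L(E_7)$ as $H$ implies $\mbG$-conjugate''), that $H$ is $\mbG$-conjugate to $H_0\subseteq A_2A_5$.

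The second step is then to show $H_0$ is strongly imprimitive, i.e.\ to produce a closed positive-dimensional subgroup $\mb X$ containing $H_0$ that is $N_{\Aut^+(\mbG)}(H_0)$-stable. The natural candidate is the maximal-rank subgroup $A_2A_5$ (or its normalizer), which is already positive-dimensional and proper. I would verify $N_{\Aut^+(\mbG)}(H_0)$-stability via Proposition~\ref{prop:intersectionsubspace}: $H_0$ stabilizes the obvious subspaces coming from the $A_2A_5$-decomposition of $M(E_7)$ (the summands of the module $(10,\lambda_1)$-type decomposition, i.e.\ the $(3,\bar 6)$ and $(\bar 3,6)$ pieces, whose $H_0$-restrictions are the $20$ and the uniserial $4/14$, $14/4^*$ blocks), and I would check that the intersection of the stabilizers of the $N_{\Aut^+(\mbG)}(H_0)$-orbit of one of these subspaces is still positive-dimensional --- indeed contains a maximal torus or the full $A_2A_5$ --- so Proposition~\ref{prop:intersectionsubspace} applies directly. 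Alternatively, since the $35^{\oplus 3}$ part of $L(E_7){\downarrow_{H_0}}$ is visibly the restriction of $L(A_2A_5)$ plus the off-diagonal piece, one can identify an $H_0$-stable subspace of $L(E_7)$ whose stabilizer is exactly $A_2A_5$ and is manifestly $N_{\Aut^+(\mbG)}(H_0)$-invariant because $A_2A_5$ is (up to conjugacy and the graph-type swap, which does not occur for $E_7$) the unique such maximal-rank subgroup.

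A cleaner route, which I would actually prefer, is to appeal to the centralizer: the embedding $2\cdot\Alt(7)\hookrightarrow A_2A_5$ factors through $3\cdot\Alt(7)\le A_2$ and $6\cdot\Alt(7)\le\SL_6$, and the diagonal central $2$ of these with $Z(\mbG)$; one then checks $C_\mbG(H_0)$ is positive-dimensional --- concretely it contains the central torus of the Levi, or more precisely an element of order $3$ (the centre of the $A_2$-factor's $3\cdot\Alt(7)$ lifts to a semisimple element of $\mbG$ centralizing $H_0$). Since $C_\mbG(H_0)$ is positive-dimensional and automatically $N_{\Aut^+(\mbG)}(H_0)$-stable, $H_0$ lies in the positive-dimensional $C_\mbG(C_\mbG(H_0))$, which is proper and $N_{\Aut^+(\mbG)}(H_0)$-stable, giving strong imprimitivity directly (this is the same mechanism used for $\Alt(5)$ and in the reduction at the start of Section~3).

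The main obstacle I anticipate is verifying that the chosen $\mb X$ is genuinely $N_{\Aut^+(\mbG)}(H_0)$-stable rather than merely that $H_0\le\mb X$: one must rule out that an outer automorphism of $H_0$ (the $2\cdot\Alt(7).2$ on top) or a field automorphism moves $\mb X$ off itself. The centralizer argument sidesteps this because $C_\mbG(C_\mbG(H_0))$ is canonically attached to $H_0$; the subspace-stabilizer argument needs the extra check that the relevant $H_0$-submodules of $M(E_7)$ or $L(E_7)$ are permuted (not destroyed) by $N_{\Aut^+(\mbG)}(H_0)$, which follows since the two uniserial summands $4/14$ and $14/4^*$ are swapped by the outer automorphism of $\Alt(7)$ and their stabilizer-intersection is the common $A_2A_5$. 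I expect the write-up to be short, leaning on the uniqueness-of-action input from \cite{litterickmemoir} and on the positive-dimensionality of $C_\mbG(H_0)$ via the order-$3$ central element of the $A_2$-factor.
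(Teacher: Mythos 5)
Your final step agrees with the paper: once one knows that every $H\cong 2\cdot\Alt(7)$ is $\mbG$-conjugate to the copy $H_0$ inside $A_2A_5$, strong imprimitivity follows from Proposition \ref{prop:intersectionsubspace}, because $A_2A_5$ acts on $M(E_7)$ with summands of dimensions $18,18,20$ and so stabilizes the same $20$-space as $H_0$ (note the $20$ is the restriction of $(00,\lambda_3)$, not of the $(10,\lambda_1)$-type pieces, which give the uniserial $18$-dimensional summands). The gap is in your first step. The remark in Section 2.4 that ``if $H_1$ has the same actions on $M(E_7)$ and $L(E_7)$ as $H$, then $H$ and $H_1$ are $\mbG$-conjugate'' is not a citable lemma: it is a description of a fact that has to be \emph{established} separately for each subgroup, and for $2\cdot\Alt(7)$ that is precisely the content of the paper's proof. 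Knowing from \cite[Table 6.128]{litterickmemoir} that the composition factors are forced does not even determine the module structure, let alone the conjugacy class. The paper closes this gap in two stages: first, uniqueness up to $\Sp_{56}(k)$-conjugacy, using Proposition \ref{prop:ssnofactorsincommon} to split off the $20$ and then Corollary \ref{cor:symp1cohom} applied to the totally isotropic $18$-space inside the $36$-dimensional summand, the key computation being that $H^1(H,S^2(4^*\oplus 14))$ is $1$-dimensional; second, uniqueness up to $\mbG$-conjugacy via the (computer-assisted) subalgebra method. Your proposal contains neither ingredient, so the reduction to $H_0$ is unsupported.

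Your preferred ``cleaner route'' via the centralizer also does not work as stated. Since $L(E_7){\downarrow_H}=35^{\oplus 3}\oplus 10\oplus 10^*\oplus 8$ has no trivial composition factors, $C_\mbG(H)$ is finite, so there is no ``central torus'' available ($A_2A_5$ is a subsystem subgroup with finite centre, not a Levi). One can indeed find an element of order $3$ in $Z(A_2A_5)$ centralizing $H_0$, and then argue through the double centralizer and \cite[Lemma 1.2(i)]{liebeckseitz1990} that $C_\mbG(C_\mbG(H_0))$ is infinite and canonically attached; but this again only applies to the specific subgroup $H_0$ already known to lie in $A_2A_5$, so it cannot replace the uniqueness argument, only the (already easy) final step.
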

\begin{proof} We utilize the subalgebra method from Section \ref{sec:subalgmethod} to prove uniqueness of $H$ up to conjugacy. In order to do this, first we need to determine $H$ up to conjugacy in $\Sp_{56}(k)$. Any bilinear form on $20\oplus (4/14)\oplus (14/4^*)$ is a direct sum of a form on the $20$ and a form on the other summand, thus lies inside a subgroup $\Sp_{20}(k)\times \Sp_{36}(k)$. The $20$ supports a unique alternating form, so given a form for the $36$-dimensional summand, one obtains a unique subgroup of $\Sp_{56}(k)$ up to conjugacy by Proposition \ref{prop:ssnofactorsincommon}. Thus if $H$ is unique up to conjugacy in $\Sp_{36}(k)$ with that action then $H$ is unique up to conjugacy in $\Sp_{56}(k)$, as needed.

Since this action of $H$ is the extension with quotient $4\oplus 14$ and submodule its dual, we see that $H$ stabilizes a---necessarily totally isotropic---$18$-space. Thus $H$ is contained in an $A_{17}$-parabolic subgroup of $\Sp_{36}(k)$. One wishes to apply Corollary \ref{cor:symp1cohom}, so one requires the $1$-cohomology of the module $S^2(4^*\oplus 14)$, which by a computer calculation (or even a hand calculation if you so desire) is $1$-dimensional. (The $1$-cohomology of $S^2(4\oplus 14)$ on the other hand is $0$-dimensional, which tells you which you need to take.)

Thus $H$ is unique up to $\Sp_{56}(k)$-conjugacy. Applying the subalgebra method in the supplementary materials yields a single orbit under the action of $C_{\Sp_{56}(k)}(H)$. Although there are two representations of $H$ of our form, they are $\Aut(H)$-conjugate, so yield a single class of subgroups of $\mbG$.

Thus $H$ is contained in $A_2A_5$ as there is also a copy of $H$ there. The subgroup $\mb X=A_2A_5$ has summands $(10,\lambda_1)$, $(01,\lambda_5)$ and $(00,\lambda_3)$ on $M(E_7)$, i.e., the product of the minimal modules, its dual, and the exterior cube of $M(A_5)$. These summands have dimensions $18$, $18$ and $20$, so that $H$ and $\mb X$ stabilize the same $20$-space. Furthermore, this is clearly stable under $N_{\Aut^+(\mb G)}(H)$, so $H$ is strongly imprimitive by Proposition \ref{prop:intersectionsubspace}.
\end{proof}

The other possibility is for $H\cong \Alt(7)$, acting on $M(E_7)$ and $L(E_7)$ as
\[ (10\oplus 10^*)^{\oplus 2}\oplus 8^{\oplus 2},\quad 10\oplus 10^*\oplus P(8)^{\oplus 3}\oplus 8.\]
There is a copy of this inside $D_4$, inside the maximal-rank subgroup $A_7$ of $E_7$. It turns out that this is the only conjugacy class.

\begin{prop} If $H\cong \Alt(7)$ embeds in $\mbG$ then $H$ is strongly imprimitive.
\end{prop}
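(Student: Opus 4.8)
The plan is to follow the same strategy as for $2\cdot\Alt(7)$ in Proposition \ref{prop:2alt7}: first pin down $H$ up to $\Sp_{56}(k)$-conjugacy, then run the subalgebra method to count $\mbG$-classes, and finally exhibit a positive-dimensional overgroup stable under $N_{\Aut^+(\mbG)}(H)$ so that Proposition \ref{prop:intersectionsubspace} applies. The stated action on $M(E_7)$ is $(10\oplus10^*)^{\oplus 2}\oplus 8^{\oplus 2}$. Here $8$ is self-dual; one must check whether it supports a symmetric or an alternating form. Since $8$ appears with multiplicity $2$, if it carries a symmetric form then Proposition \ref{prop:ssmultfree} applies directly (the pair $10,10^*$ has multiplicities $2$, not $1$, so strictly one needs the finer analysis of Proposition \ref{prop:ssnofactorsincommon} on the homogeneous component $10\oplus10^*$ together with the uniqueness of the symplectic form there); if instead $8$ is symplectic, then a direct sum of two copies of $8$ has a whole variety of invariant symplectic forms and one must argue as in the $C_n$-parabolic/$1$-cohomology setup. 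The first thing I would do is therefore settle the form type on $8$ and on the homogeneous piece $10\oplus10^*$, splitting $M(E_7)$ as $(10\oplus10^*)^{\oplus 2}\perp 8^{\oplus 2}$ and reducing via Proposition \ref{prop:ssnofactorsincommon} to treating each homogeneous block separately.

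For the block $10\oplus10^*$ occurring with multiplicity $2$: the relevant ambient group is $\Sp_{40}(k)$, and $H$ stabilizes a totally isotropic $20$-space (the sum of the two copies of $10$, say), landing it in an $A_{19}$-parabolic. One then wants to apply Corollary \ref{cor:symp1cohom}, i.e.\ compute $\dim_k H^1(H, S^2(W))$ for $W$ the relevant $20$-dimensional module (two copies of $10$ or of $10^*$ — the calculation tells us which), and hope it is $1$-dimensional; a computer calculation in Magma over $\F_5$ (or by hand using the $5$-modular character table and projectives of $\Alt(7)$) should give this. Combined with the analogous treatment of the $8^{\oplus 2}$ block, this yields uniqueness of $H$ up to $\Sp_{56}(k)$-conjugacy, and then the subalgebra method of Section \ref{sec:subalgmethod}, carried out in the supplementary materials, should show a single $C_{\Sp_{56}(k)}(H)$-orbit of $\mathfrak e_7$-subalgebras, hence a single $\mbG$-class; as in the previous proposition, the two algebraically conjugate representations of $H$ of this shape are $\Aut(H)$-conjugate and so do not multiply the count.

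It remains to locate the positive-dimensional overgroup. The statement says there is a copy of $H\cong\Alt(7)$ inside a $D_4$ subsystem subgroup, itself inside the maximal-rank $A_7$ of $E_7$. I would decompose $M(E_7){\downarrow_{A_7}}$ as $\lambda_2\oplus\lambda_6$ (the two $28$-dimensional exterior squares of the natural and its dual), restrict further to $D_4\le A_7$, and then to the $\Alt(7)$ inside $D_4$, matching the result with $(10\oplus10^*)^{\oplus2}\oplus 8^{\oplus2}$; the point is to identify an $H$-invariant subspace of $M(E_7)$ — very plausibly one of the $8$-dimensional summands, or a sum determined by the $D_4$-structure — whose $N_{\Aut^+(\mbG)}(H)$-orbit has positive-dimensional stabilizer. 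Since $H$ is contained (up to conjugacy, by the uniqueness just established) in this $D_4$, and $D_4$ stabilizes the same subspace, Proposition \ref{prop:intersectionsubspace} then gives strong imprimitivity, provided the chosen subspace really is $N_{\Aut^+(\mbG)}(H)$-stable — which should follow because $8$ and $10\oplus10^*$ are non-isomorphic $H$-module types, so any graph or field automorphism normalizing $H$ must permute the summands within each isotypic piece and hence fix the relevant invariant subspace (or its orbit-intersection).

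I expect the main obstacle to be the $1$-cohomology bookkeeping when the self-dual $8$ turns out to support a symplectic rather than an orthogonal form — that is the scenario that forced the whole $A_{n-1}$-parabolic / Corollary \ref{cor:symp1cohom} machinery in Proposition \ref{prop:2alt7}, and here one may have to run it on the $8^{\oplus2}$ block as well as the $10$-block, with two separate $H^1$-computations to perform and to disambiguate (submodule versus quotient, via the recipe in the proof of Corollary \ref{cor:symp1cohom}). The overgroup step and the final invocation of Proposition \ref{prop:intersectionsubspace} should be routine once the conjugacy count is in hand.
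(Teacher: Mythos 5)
Your overall skeleton (uniqueness in $\Sp_{56}(k)$, then the subalgebra method, then a positive-dimensional overgroup) matches the paper, but three of your individual steps diverge from what the paper does, and two of them have genuine problems. First, for the block $(10\oplus 10^*)^{\oplus 2}$ you propose to invoke Corollary \ref{cor:symp1cohom}, but its hypotheses fail: that corollary requires $H$ to stabilize no complement to the totally isotropic $n$-space, whereas here the block is semisimple, so $H$ visibly stabilizes complements and sits inside the Levi subgroup rather than merely the parabolic; the $1$-cohomology count of complements is then the wrong invariant. The paper instead argues directly with forms: the centralizer of $H$ in $\GL(W)$ is $\GL_2(k)\times\GL_2(k)$, the non-singular invariant alternating forms on $W$ are in bijection with $\GL_2(k)$ inside the $4$-dimensional space of all invariant forms, and the centralizer acts transitively on them, giving uniqueness up to $\Sp_{40}(k)$-conjugacy. (The $8$ does carry a symmetric form, so your easier branch for the $8^{\oplus 2}$ block is the one that occurs, and Proposition \ref{prop:ssnofactorsincommon} then glues the blocks as you say.)

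Second, you expect the subalgebra method to land directly on a single orbit of $\mathfrak e_7$-subalgebras; the paper explicitly reports that this seems out of reach here, and instead only shows that $H$ stabilizes a $28$-dimensional subalgebra $\mathfrak h$ of type $\mathfrak d_4$. Locating $\mathfrak h$ inside $\mathfrak e_7$ then requires a separate theoretical argument: comparing the possible composition-factor dimensions of a $\mathfrak d_4$ on $\mathfrak e_7$ ($35^3,28$ versus $28,8^{12},1^9$) with those of $H$, ruling out parabolics and product subalgebras, and invoking the Premet--Stewart classification of maximal subalgebras to force $\mathfrak h=\mathrm{Lie}(\mb H)$ for $\mb H$ the $\mbG$-irreducible $D_4$ in $A_7$. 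Your proposal omits this entire bridge. Third, your suggested final subspace -- ``very plausibly one of the $8$-dimensional summands'' -- cannot work with this $D_4$: it acts irreducibly on each of the two $28$-dimensional summands of $M(E_7)$ (each restricting to $H$ as $10\oplus 10^*\oplus 8$), so it does not stabilize the $8$-spaces. The paper concludes instead either by placing $H$ in an $A_2$ subgroup of $\mb H$ that acts semisimply with exactly the decomposition $(10\oplus10^*)^{\oplus2}\oplus 8^{\oplus 2}$, so that $H$ is a blueprint for $M(E_7)$, or by noting $H\leq\Omega_8^+(5)$ so that Frobenius endomorphisms act as inner automorphisms.
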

\begin{proof} As stated above, the only case to consider is $p=5$ and $H$ acting as above. The $8$ carries a symmetric form, so there is a unique symplectic form on $8^{\oplus 2}$. There is also a unique symplectic form on $W=(10\oplus 10^*)^{\oplus 2}$, up to conjugacy. To see this, notice that the centralizer in $\GL(W)$ is isomorphic to $\GL_2(k)\times \GL_2(k)$, and since $H$ and its centralizer in $\Sp(W)$ must lie in the $A_{19}$ subgroup of $\Sp(W)$, the centralizer in $\Sp(W)$ is $\GL_2(k)$. Finally, the set of symplectic forms on $W$ is $4$-dimensional, and the non-singular ones are in natural bijection with $\GL_2(k)$ (one can see this by considering the fixed points of the exterior square of $W$, or simply use a computer, and we check this in the supplementary materials). We see that the centralizer in $\GL(W)$ must act transitively on the non-singular symplectic forms, and therefore $H$ is unique in $\Sp(W)$ up to conjugacy. Thus $H$ is unique in $\Sp_{56}(k)$ up to conjugacy as well.

We therefore may apply the subalgebra method, and prove that $H$ always stabilizes a $28$-dimensional subalgebra $\mathfrak h$ of type $\mathfrak d_4$. It seems difficult to proceed further and find the subalgebra of type $\mathfrak a_7$ that is stabilized, or the $\mathfrak e_7$ above it. Thus we proceed differently, understanding subalgebras of type $\mathfrak d_4$ in the Lie algebra $\mathfrak g=\mathfrak{e}_7$.

We consider the composition factors of the action of $\mathfrak h$ on $\mathfrak g$. Using the eigenvalues of semisimple elements, one easily sees that there are exactly two sets of dimensions of composition factors, namely $35^3,28$ (the irreducible $\mathfrak d_4$ inside $\mathfrak a_7$) and $28,8^{12},1^9$ (the $\mathfrak d_4$-Levi). As $H$ stabilizes the homogeneous components of the action of $\mathfrak h$ on $\mathfrak g$, these dimensions must be compatible with the composition factors of $H$ on $\mathfrak g$, which have dimensions $13^3,10^2,8^7,6^3$. One cannot make $9$ from these dimensions, so $\mathfrak h$ acts with factors of dimension $35^3,28$.

Since $\mathfrak h$ has no trivial factor on $\mathfrak g$, it cannot be contained in a parabolic. Also, if $\mathfrak h$ is contained in a maximal subalgebra $\mathfrak x_1\mathfrak x_2$ then both $\mathfrak x_i$ must contain a copy of $\mathfrak h$. This is impossible, so $\mathfrak h$ can only be contained in simple subalgebras. By \cite[Theorem 1.1]{premetstewart2019}, and the list of maximal $\mb G$-irreducible subgroups (see, for example, \cite{liebeckseitz2004} or \cite{thomas2016}), we see that $\mathfrak h$ is contained in $\mathfrak a_7$. Any $8$-dimensional irreducible representation of $\mathfrak d_4$ integrates to a representation of $D_4$, and so $\mathfrak h$ is $\mathrm{Lie}(\mb H)$ for $\mb H$ the $\mbG$-irreducible $D_4$.

Thus $H$ is contained in this subgroup. At this point it is relatively easy to prove strong imprimitivity, and there are multiple paths. One option is to place $H$ inside an $A_2$ subgroup $\mb X$ of $\mb H$ (subgroup 24 from the list in \cite{thomas2020}). This must act semisimply on $M(E_7)$ as $(10\oplus 10^*)^{\oplus 2}\oplus 8^{\oplus 2}$, and so $H$ is a blueprint for $M(E_7)$. Another is to note that $H$ embeds in $\Omega_8^+(5)$, and so Frobenius endomorphisms can only act as inner automorphisms. Either way, $H$ is strongly imprimitive, as needed.
\end{proof}

\subsection{Sporadic groups}

The sporadic groups that appear in \cite{litterickmemoir} with `\textbf{P}' are $J_2$ for $p=2$, and $2\cdot M_{12}$, $2\cdot M_{22}$, $2\cdot HS$ and $2\cdot Ru$, all for $p=5$. The groups $M_{22}$, $HS$ and $Ru$ are all irreducible on $L(E_7)$, and will be proved to be unique up to $\mbG$-conjugacy.

We start with $HS$ and $Ru$, and then we can do $M_{22}$ (as it is contained in $HS$). We obtain a previously (the author believes) unknown maximal subgroup $M_{12}$, and prove that $J_2$ does not yield a maximal subgroup via theoretical means.

In this section, $H$ will be a subgroup of $\mbG$ (and $G$) and $\bar H$ will denote its image in $\mbG/Z(\mbG)$ (and $\bar G$), unless $H\cong \bar H$, in which case we simply use $H$. Proofs of maximality will be delayed until Section \ref{sec:proofofmaximality}.

\subsubsection{\texorpdfstring{$HS$}{HS}}

Let $H$ be a cover of $HS$, so we require $p=5$ and $H\cong 2\cdot HS$. Up to $\Aut(H)$-conjugacy there is a unique $133$-dimensional simple module $M$, defined over $\F_5$, and $\mathcal{R}^H(M)$ is $1$-dimensional. Since it is known that $HS$ embeds in $E_7(5)$ by \cite{kmr1999}, this means that $H$ is unique up to $\mbG$-conjugacy. (Uniqueness is not noted in \cite{kmr1999}.) An independent construction that $HS$ is contained in $E_7(5)$ is given in the supplementary materials, as an intermediate step in the proof that $M_{22}.2$ is a novelty maximal subgroup. Proof of maximality is delayed until Section \ref{sec:proofofmaximality}.

\begin{prop} Let $H\cong 2\cdot HS$ and $p=5$.
\begin{enumerate}
\item There is exactly one $\mbG$-conjugacy class of subgroups $H$. If $\bar H\leq G$ there are exactly two $G$-conjugacy classes of subgroups $\bar H$. Furthermore, $N_\mbG(H)=H$.
\item $\bar H$ embeds in $G$ for all $q$ a power of $5$.
\item We have that $N_{\bar G}(\bar H)$ is maximal in $\bar G$ if and only if $\bar G=E_7(5)$.
\end{enumerate}
\end{prop}
\begin{proof} We have already noted that $H$ is unique up to $\mbG$-conjugacy. Since the outer automorphism of $H$ swaps the two $133$-dimensional simple modules for $H$, we must have $N_\mbG(H)=H$, and therefore there are exactly two $G$-conjugacy classes by Lemma \ref{lem:diagswaps}. Since the Lie product is defined over $\F_5$, $H\leq 2\cdot E_7(5)$, and we will prove maximality in Section \ref{sec:proofofmaximality} below.
\end{proof}

\subsubsection{\texorpdfstring{$Ru$}{Ru}}
\label{sec:ru}
This is almost the same as $HS$. Let $H\cong 2\cdot Ru$, so we require $p=5$. Up to isomorphism there is a unique $133$-dimensional simple module $M$, defined over $\F_5$, and $\mathcal{R}^H(M)$ is $1$-dimensional. Since $H$ embeds in $E_7(5)$ \cite{kmr2000}, we see that $H$ is unique up to $\mbG$-conjugacy. (The action on $M(E_7)$ has composition factors $28$ and $28^*$, and is not semisimple. The easiest way to see this is that it contains $\SL_2(29)$, and this is seen not to be semisimple in Section \ref{sec:sl229}.) In \cite{kmr2000} they suggest showing directly that the unique anti-commutative non-associative algebra structure on the $133$-dimensional module forms a Lie algebra, and therefore give a further proof of the existence (and uniqueness) of $Ru$ in $E_7(5)$. For completeness, we do this in the supplementary materials.

\begin{prop} Let $H\cong 2\cdot Ru$ and $p=5$.
\begin{enumerate}
\item There is exactly one $\mbG$-conjugacy class of subgroups $H$. If $\bar H\leq G$ there are exactly two $G$-conjugacy classes of subgroups $\bar H$. Furthermore, $N_\mbG(H)=H$.
\item $\bar H$ embeds in $G$ for all $q$ a power of $5$.
\item We have that $N_{\bar G}(\bar H)$ is maximal in $\bar G$ if and only if $\bar G=E_7(5)$.
\end{enumerate}
\end{prop}
\begin{proof} We have already noted that $H$ is unique up to $\mbG$-conjugacy. Since $\Out(H/Z(H))=1$, $N_\mbG(H)=H$, and therefore there are exactly two $G$-conjugacy classes by Lemma \ref{lem:diagswaps}. Since the Lie product is defined over $\F_5$, $H\leq 2\cdot E_7(5)$, and the proof of maximality is delayed until Section \ref{sec:proofofmaximality}.
\end{proof}

\subsubsection{\texorpdfstring{$M_{12}$}{M12}}
\label{sec:m12}
From \cite[Tables 6.145--6.150]{litterickmemoir}, we see that $M_{12}$ or $2\cdot M_{12}$ always stabilizes a line on either $M(E_7)$ or $L(E_7)^\circ$, unless $p=5$ and $H\cong 2\cdot M_{12}$ acts as $55\oplus 78$ on $L(E_7)$. The space $\mathcal{R}^H(L(E_7))$ is $12$-dimensional, and this case can be solved using the subalgebra method from Section \ref{sec:subalgmethod}, as we saw in Example \ref{ex:m12}. The corresponding factors for $H$ on $M(E_7)$ are $32,12^2$. Here we shall use the Lie product method from Section \ref{sec:lieprodmethod}.

The action of $u\in H$ of order $5$ on $L(E_7)$ has Jordan blocks $5^{26},3$, so by \cite[Table 8]{lawther1995}, we see that $u$ belongs to the unipotent class $A_4+A_2$. Since this acts on $M(E_7)$ with blocks $5^{10},3^2$, and on the sum $32\oplus 12^{\oplus 2}$ with blocks $5^{10},1^6$, we see that $M(E_7){\downarrow_H}$ has structure $12/32/12$.

Note that no extension of this module, or indeed the simple modules $12$ and $32$, exists over $\F_5$, and one must move to $\F_{25}$ in order to obtain it. Thus, while $H$ might be contained in $2\cdot E_7(5)$, the group $H.2$ cannot be contained in $2\cdot E_7(5)$. Proof of maximality is delayed until Section \ref{sec:proofofmaximality}.

\begin{prop} Let $H\cong 2\cdot M_{12}$ and $p=5$.
\begin{enumerate}
\item There is exactly one $\mbG$-conjugacy class of subgroups $H$. If $\bar H\leq G$ there is exactly one $G$-conjugacy class of subgroups $\bar H$. Furthermore, $N_\mbG(H)=H.2$.
\item $\bar H$ embeds in $G$ for all $q$ a power of $5$.
\item We have that $N_{\bar G}(\bar H)$ is maximal in $\bar G$ if and only if $q=5$.
\end{enumerate}
\end{prop}
\begin{proof} The action of $H$ on $L(E_7)$ is $M\cong 55\oplus 78$, and $\mathcal{R}^H(M)$ is $13$-dimensional. It is easy to prove, as is accomplished in the supplementary materials, that there is a unique Lie product structure on $M$. We also show that this action extends to one of $H.2$.

The action of $H$ on $M(E_7)$ has composition factors $32,12^2$. There are two isoclinic groups $2.M_{12}.2$, and for both of these the extensions of $12$ and $32$ to the whole group requires $\F_{25}$ to be present, not just $\F_5$. (See \cite[p.76]{abc} for one of these classes. Only one of these extensions results in a subgroup of $\Sp_{56}(k)$.) Thus the group $H.2$ cannot embed in $2\cdot E_7(5)\leq \GL_{56}(5)$. Thus $H$ is unique up to conjugacy, with the diagonal automorphism of $E_7(5)$ normalizing $\bar H$ (up to conjugacy).
\end{proof}

\subsubsection{\texorpdfstring{$M_{22}$}{M22}}

Let $H\cong 2\cdot M_{22}$, so we require $p=5$. Up to isomorphism there is a unique $133$-dimensional simple module $M$, defined over $\F_5$, and $\mathcal{R}^H(M)$ is $2$-dimensional. Since $M_{22}$ is contained in $HS$ \cite[p.80]{atlas} certainly $M_{22}\leq E_7(5)$. Thus there is a line in $\mathcal{R}^H(M)$ that consists of Lie products. The proof of maximality is delayed until Section \ref{sec:proofofmaximality}.

\begin{prop}\label{prop:m22} Let $H\cong 2\cdot M_{22}$ and $p=5$.
\begin{enumerate}
\item There is exactly one $\mbG$-conjugacy class of subgroups $H$. If $\bar H\leq G$ there is exactly one $G$-conjugacy class of subgroups $\bar H$. Furthermore, $N_\mbG(H)=H.2$.
\item $\bar H$ embeds in $G$ for all $q$ a power of $5$.
\item We have that $N_{\bar G}(\bar H)$ is maximal in $\bar G$ if and only if $\bar G=E_7(5).2$, and $N_{\bar G}(\bar H)$ is a novelty maximal subgroup.
\end{enumerate}
\end{prop}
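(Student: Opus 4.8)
The plan is to follow the same template used for the previous sporadic cases ($HS$, $Ru$, $M_{12}$), adapting it to the extra subtleties caused by the diagonal automorphism and the novelty phenomenon. For part (i), uniqueness of the $\mbG$-conjugacy class of $H$ has already been established: there is a unique $133$-dimensional module $M$ over $\F_5$, and by \cite[Example 3.12]{craven2020un} imposing the Jacobi identity cuts $\mathcal R^H(M)$ down to a line, which is moreover $M_{22}.2$-invariant. Hence $N_\mbG(H)$ induces at least the outer automorphism of $M_{22}$ on $H$, and since $\Out(M_{22})=2$ and $C_\mbG(H)=Z(\mbG)$ (as $H$ does not come from a positive-dimensional subgroup, by the discussion in Section \ref{sec:othermaximals}), we get $N_\mbG(H)=H.2$ exactly. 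To count $G$-classes when $H\leq G$, apply Lemma \ref{lem:diagswaps}(ii): the number of $G$-classes in the $G_{\mathrm{ad}}$-class is $|G_{\mathrm{ad}}:G|/|N_\mbG(H):N_G(H)|$, and since $N_\mbG(H)=H.2$ contains the diagonal automorphism (this needs to be checked — see below), $|N_\mbG(H):N_G(H)|=2=|G_{\mathrm{ad}}:G|$, so there is a single $G$-class.

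For part (ii), the key point is that the Lie product structure on $M$ is defined over $\F_5$, so $H\cong 2\cdot M_{22}$ embeds in $2\cdot E_7(5)$, and hence in $2\cdot E_7(q)$ for every power $q$ of $5$ via the inclusion $E_7(5)\leq E_7(q)$. For part (iii), maximality of $N_{\bar G}(\bar H)$: by Proposition \ref{prop:maxcont} (the no-overgroups result proved later in the paper) there is no proper subgroup of $\mbG$ strictly containing $H$ other than possibly $N_\mbG(H)=H.2$ itself, so $\bar H = M_{22}$ is not itself maximal in $E_7(q)$, but $\bar H.2 = M_{22}.2$ is a candidate maximal subgroup of $E_7(q).2$. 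It is a \emph{novelty} maximal subgroup precisely because $\bar H$ is not maximal in $E_7(q)$ while $N_{\bar G}(\bar H)$ is maximal in $\bar G$; by the criterion recalled in Section \ref{subsec:basic} (following \cite[Section 3.6]{craven2020un}), one checks that $\bar H$ is stable under the outer automorphisms of $E_7(q)$ induced by $\bar G$ but no intermediate group $\bar H < X < G$ is. The restriction to $\bar G = E_7(5).2$ comes from two facts: first, $M_{22}.2$ (in the correct isoclinism type, the one landing in $\Sp_{56}(k)$) requires $\F_{25}$ rather than $\F_5$ for the relevant extension of composition factors on $M(E_7)$ — wait, this is not obviously true here since the action on $L(E_7)$ is irreducible; instead the constraint is that $H.2 = 2\cdot M_{22}.2$ must act on $M(E_7)$ in a way definable over $\F_q$, and one verifies from the character table \cite{abc} or by direct computation which $q$ permit this, forcing $q = 5$; second, for $q$ a proper power of $5$ the diagonal automorphism of $E_7(q)$ would still normalize $\bar H$, but $\bar H.2$ would then be properly contained in $N_{E_7(q)}(E_7(5).2)$-type subgroups, or more simply $\bar H.2$ sits inside $E_7(5).2 < E_7(q).2$ and hence is not maximal.

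The main obstacle I expect is part (iii), specifically pinning down that $N_{\bar G}(\bar H)$ fails to be maximal for $q > 5$ and that it genuinely is maximal (not merely maximal-among-almost-simple) for $q = 5$. The maximality for $q=5$ hinges entirely on Proposition \ref{prop:maxcont}, so the real work is the novelty analysis: one must show $\bar H = M_{22}$ is invariant under the diagonal automorphism $\delta$ of $E_7(5)$ — equivalently that $N_\mbG(H) = H.2$ is not contained in $G\cdot Z(\mbG)$, which follows because $N_\mbG(H)/H$ is generated by an involution inducing the outer automorphism of $M_{22}$ and one must identify this involution with $\delta$ rather than with a field automorphism (there are none of order $2$ over $\F_5$, which forces this) — while no group strictly between $\bar H$ and $G$ is $\delta$-invariant, which is again Proposition \ref{prop:maxcont}. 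I would also need to carefully rule out $2\cdot M_{22}.2$ embedding in $2\cdot E_7(25)$ as a maximal subgroup, i.e., confirm that for the embedding to exist over $\F_q$ one needs $q$ an \emph{odd} power of $5$ or $q=5$ specifically; this is a finite character-theoretic check using \cite{abc}, paralleling the $M_{12}$ argument, and I would relegate it to the supplementary materials.
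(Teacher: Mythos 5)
Your proposal follows the paper's general template for parts (i) and (ii), but part (iii) has a genuine gap at the decisive step. The question is whether the involution generating $N_\mbG(H)/H$ lies in $G\cdot Z(\mbG)$ (so that $M_{22}.2\leq E_7(5)$ and the diagonal automorphism fuses two $G$-classes) or lies only in $G_{\mathrm{ad}}\setminus G$ (so that it \emph{is} the diagonal automorphism $\delta$). Your stated dichotomy --- ``$\delta$ or a field automorphism, and there are no field automorphisms of order $2$ over $\F_5$'' --- is a false dichotomy: by Lemma \ref{lem:diagswaps}(i) the involution certainly lies in $G_{\mathrm{ad}}\cdot Z(\mbG)$, and the live alternative is that it is \emph{inner}, i.e.\ already in $G\cdot Z(\mbG)$. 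Nothing in your argument excludes that. Moreover, the character-theoretic obstruction that settled this for $M_{12}$ (the extension to $2\cdot M_{12}.2$ needing $\F_{25}$) has no analogue here, as you yourself half-notice mid-sentence; the paper instead resolves it by an explicit computation: it takes $H<2\cdot HS$ (which exists by Proposition \ref{prop:maxcont}), notes from the Atlas that $M_{22}.2$ does \emph{not} embed in $HS$, so an element $x$ inducing the outer automorphism of $M_{22}$ must move any copy of $2\cdot HS$ above $H$ to a different one; it then constructs $M_{22}$ together with two copies of $HS$ above it generating $E_7(5)$, adjoins the outer automorphism, and verifies the resulting group is $E_7(5).2$ rather than $E_7(5)$. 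That computation is the content of part (iii), and your proposal supplies no substitute for it.

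A secondary but real confusion: you assert that by Proposition \ref{prop:maxcont} ``there is no proper subgroup of $\mbG$ strictly containing $H$,'' when that proposition says the opposite for $M_{22}$ --- it identifies $HS$ as the unique possible overgroup in $\mathcal S$, and the containment $M_{22}<HS$ is exactly why $\bar H$ fails to be maximal in $E_7(5)$ and why $N_{\bar G}(\bar H)$ is a \emph{novelty} maximal. The novelty argument then needs two inputs you do not cleanly separate: (a) $\bar H$ is $\delta$-stable (the computation above), and (b) no intermediate subgroup $\bar H<X<G$ is $\delta$-stable, which holds because the only candidates are copies of $HS$, and by the $HS$ proposition $N_\mbG(2\cdot HS)=2\cdot HS\leq G\cdot Z(\mbG)$, so $\delta$ fuses the two $G$-classes of $HS$ and stabilizes no individual copy. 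Your treatment of the restriction to $q=5$ (containment of $M_{22}.2$ in the normalizer of a subfield subgroup for $q>5$) is essentially right.
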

\begin{proof}  We show in the supplementary materials that $\mathcal{R}^H(M)$ is $2$-dimensional, but that imposing the Jacobi identity reduces the space to at most a line (or zero). Since $M_{22}$ definitely embeds in $\mbG$, this means that $H$ is unique up to $\mbG$-conjugacy. In the supplementary materials we also prove that $M_{22}.2$ embeds in $E_7(5).2$, so $N_\mbG(H)=H.2$. Since the Lie product is defined over $\F_5$, $H\leq 2\cdot E_7(5)$. We need to know whether all of $N_\mbG(H)$ is contained in $G_{\mathrm{sc}}=2\cdot E_7(5)$ or whether the diagonal automorphism of $E_7(5)$ induces the outer automorphism of $\bar H$.

Note that $H$ is contained in $2\cdot HS$, but if one inspects \cite[p.80]{atlas}, one sees that $M_{22}.2$ does not embed in $HS$. Both $M_{22}$ and $HS$ act irreducibly on $L(E_7)$. If $x\in E_7(5).2$ induces an outer automorphism on $\bar H$, and $L\cong HS$ is a subgroup of $E_7(5)$ containing $\bar H$ then $L^x$ also contains $\bar H$.

In the supplementary materials we construct a copy of $2\cdot M_{22}$, and two copies of $2\cdot HS$ above it, that generate $2\cdot E_7(5)$. We then add an outer automorphism of $2\cdot M_{22}$ and check that the resulting group is $2\cdot E_7(5).2$, not $2\cdot E_7(5)$. This shows that $N_\mbG(H)=H.2$ and there is a unique $G$-class of subgroups $\bar H$, via Lemma \ref{lem:diagswaps}. Thus the diagonal automorphism of $E_7(5)$ must normalize $\bar H$, hence act as the outer automorphism, as needed.
\end{proof}

\subsubsection{\texorpdfstring{$J_2$}{J2}}

Here, $p=2$. From \cite[Table 6.157]{litterickmemoir}, we see that the only case of interest is $H\cong J_2$ acting with composition factors $36,6_1^2,6_2,1^2$ on $M(E_7)$, and on $L(E_7)^\circ$ with factors
\[ 84,14_1^2,14_2,6_2.\]
Note that $\Ext^1_{kH}(14_1,14_2)=\Ext^1_{kH}(14_1,6_1)=0$, so $14_2$ and $6_1$ split off as summands, and there are no self-extensions for $14_1$. Thus there are only two options for $L(E_7){\downarrow_H}$:
\[ 84\oplus 14_1^{\oplus 2}\oplus 14_2\oplus 6_2\quad\text{and}\quad (14_1/84/14_1)\oplus 14_2\oplus 6_1.\]
The action of $v$ of order $8$ in $H$ on both of these modules is the same, with Jordan blocks $8^{13},6^4,2^2$. Thus from \cite[Lemma 6.3]{craven2017}, the action of $v$ on $L(E_7)$ must be $8^{13},6^4,2^2,1$, but this does not appear in \cite[Table 8]{lawther1995}. Thus $H$ cannot embed in $\mbG$ with these composition factors.

\begin{prop} If $p=2$ and $H\cong J_2$ then $H$ stabilizes a line on either $M(E_7)$ or $L(E_7)^\circ$, and hence is strongly imprimitive.
\end{prop}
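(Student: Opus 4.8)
The plan is to reduce to the single case identified by the character theory of $\mathbf{G}$-modules, and then apply a Jordan-form obstruction to rule it out. First I would invoke \cite[Table 6.157]{litterickmemoir}: among all possible sets of composition factors for $H\cong J_2$ acting on $M(E_7)$ and $L(E_7)^\circ$ in characteristic $2$, all but one force $H$ to stabilize a line on $M(E_7)$ or $L(E_7)^\circ$, and for those the conclusion is immediate (a line stabilized by the perfect group $H$ is centralized by it, so Lemma \ref{lem:fixedline} gives strong imprimitivity). The remaining case is the one with factors $36,6_1^2,6_2,1^2$ on $M(E_7)$ and $84,14_1^2,14_2,6_2$ on $L(E_7)^\circ$, and this is precisely the case to eliminate.

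For that case I would analyse the module structure of $L(E_7){\downarrow_H}$. Using $\Ext^1_{kH}(14_1,14_2)=\Ext^1_{kH}(14_1,6_1)=0$ and the absence of self-extensions of $14_1$, the summands $14_2$ and $6_1$ split off, leaving essentially two possibilities for the $84$-$14_1$ part: the semisimple $84\oplus 14_1^{\oplus 2}$ or the uniserial $14_1/84/14_1$. In either case an element $v\in H$ of order $8$ acts on $L(E_7)^\circ$ with the same Jordan type, which one computes (on each of the two candidate modules) to be $8^{13},6^4,2^2$. Since $p=2$ and $L(E_7)=L(E_7)^\circ\oplus(\text{trivial})$ as $H$-modules in this setting, $v$ then acts on the full $133$-dimensional $L(E_7)$ with Jordan blocks $8^{13},6^4,2^2,1$; here I would cite \cite[Lemma 6.3]{craven2017} to justify passing from the action on the composition factors to the action on $L(E_7)$ itself.

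The final step is the obstruction: consulting \cite[Table 8]{lawther1995}, no unipotent class of $E_7$ in characteristic $2$ acts on $L(E_7)$ with Jordan type $8^{13},6^4,2^2,1$. Hence no element of order $8$ of $\mathbf{G}$ can act this way, so $H$ cannot embed in $\mathbf{G}$ with the stated composition factors. Combined with the first step, this shows that in every case $H$ stabilizes a line on $M(E_7)$ or $L(E_7)^\circ$, and strong imprimitivity then follows from Lemma \ref{lem:fixedline}.

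I expect the main obstacle to be the bookkeeping around the module structure and the order-$8$ Jordan type: one must be careful that both candidate structures for the $84$-$14_1$ block genuinely give the same Jordan type for $v$, that the trivial composition factors contribute exactly one trivial block (so that the multiplicity in $L(E_7)$ is $8^{13},6^4,2^2,1$ and not something that happens to match an entry of \cite[Table 8]{lawther1995}), and that \cite[Lemma 6.3]{craven2017} applies in the form needed. None of these is deep, but the conclusion hinges entirely on the Jordan type not appearing in Lawther's tables, so the computation of that type is where the argument really lives.
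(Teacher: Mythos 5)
Your proposal is correct and follows essentially the same route as the paper: the same reduction via \cite[Table 6.157]{litterickmemoir}, the same $\Ext^1$ computations splitting off $14_2$ and the $6$, the same two candidate structures for $L(E_7)^\circ{\downarrow_H}$, and the same elimination via the Jordan type $8^{13},6^4,2^2,1$ of an order-$8$ element failing to appear in \cite[Table 8]{lawther1995} (using \cite[Lemma 6.3]{craven2017} to pass from $L(E_7)^\circ$ to $L(E_7)$). No gaps.
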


\subsection{Lie type not \texorpdfstring{$\PSL_2(r)$}{PSL(2,r)}}

Now let $H$ be a (cover of a) group of Lie type not of type $\PSL_2(r)$, which appear in Table \ref{t:e7tocheck}. If $H\cong \PSU_3(8)$ then $H$ acts irreducibly on $M=L(E_7)$, and is unique up to $\mbG$-conjugacy since $\mathcal R^H(M)$ is $1$-dimensional. The remaining cases are: $2\cdot \PSL_3(4)$ for $p\neq 2$; $\PSU_3(3)$ for $p=0,7$; $\Omega_8^+(2)$ for $p\neq 2$; and ${}^2\!B_2(8)$ for $p=5$.

In this section, $H$ will be a subgroup of $\mbG$ (and $G_{\mathrm{sc}}$) and $\bar H$ will denote its image in $\mbG/Z(\mbG)$ (and $\bar G$). Maximality proofs are completed in Section \ref{sec:proofofmaximality}.

\subsubsection{\texorpdfstring{$\PSL_3(4)$}{PSL(3,4)}}

Here the possible primes are $p=3$, $p=5$, $p=7$ and $p\neq 2,3,5,7$. From \cite[Tables 6.214--6.217]{litterickmemoir}, we see that the only cover of $\PSL_3(4)$ that embeds in $\mbG$ is $H\cong 2\cdot\PSL_3(4)$. For $p\neq 3$, the action of $H$ on $L(E_7)$ is the sum of two non-isomorphic $35$-dimensional modules and a $63$-dimensional module. For $p=3$ the action is slightly different: the sum of a $63$-dimensional module and a module $19/1,1,15_1,15_2/19$, as we shall see now.

Let $p=3$. From \cite[Table 6.217]{litterickmemoir} we see that composition factors of $L(E_7){\downarrow_H}$ are, up to automorphism of $H$,
\[ 63_1,19^2,15_1,15_2,1^2.\]
Since $\Ext^1_{kH}(19,1)$ has dimension $2$, $H$ need not stabilize a line on $L(E_7)$. The $63_1$ has no extension with $1$ or $19$, so splits off as a summand. Suppose that $H$ does not stabilize a line on $L(E_7)$. Suppose first that no $15_i$ is a submodule of $L(E_7){\downarrow_H}$. There is a module
\[ 1,1,15_1,15_2/19,\]
on which we may place two copies of $19$. If $u\in H$ has order $3$ then $u$ acts on the module above with blocks $3^{16},2,1$, and on $19$ with blocks $3^6,1$. The action of $u$ on the module $19,19/1,1,15_1,15_2/19$ has blocks $3^{29},2$, whence we see that there is a unique module $19/1,1,15_1,15_2/19$ with $u$ acting as $3^{22},2^2$, and on all others as $3^{23},1$. As $u$ acts projectively on $63$, $u$ acts on $L(E_7)$ with blocks either $3^{43},2^2$ or $3^{44},1$. Only the former appears in \cite[Table 8]{lawther1995}, and $u$ belongs to class $2A_2+A_1$. Thus $L(E_7){\downarrow_H}$ is unique up to isomorphism.

If there is at least one $15_i$ as a submodule, as all $15_i$ are $\Aut(H)$-conjugate, we may assume that $15_2$ is a submodule. On top of $1,1,15_1/19$ we may place a single $19$, and the resulting module is self-dual. Thus we obtain a module
\[ (19/1,1,15_1/19)\oplus 15_2\oplus 63_1.\]
The element $u$ acts on this with blocks $3^{44},1$, so this case is excluded.

If $L(E_7){\downarrow_H}$ has both $15_i$ as submodules then $H$ must stabilize a line on $L(E_7)$, as we see from the above modules.

In all cases we have the following result.

\begin{prop} Any subgroup $H\cong 2\cdot \PSL_3(4)$ in $\mbG$ is strongly imprimitive.
\end{prop}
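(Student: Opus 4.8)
The plan is to show that in every possible case the subgroup $H\cong 2\cdot\PSL_3(4)$ stabilizes a proper positive-dimensional subgroup of $\mbG$ that is stable under $N_{\Aut^+(\mbG)}(H)$, and hence is strongly imprimitive. The overall strategy mirrors the earlier sporadic and alternating cases: first pin down the possible module structures of $M(E_7){\downarrow_H}$ and $L(E_7){\downarrow_H}$ (mostly already done in the lead-up to the statement), then use fixed-subspace arguments (Lemmas \ref{lem:paramaxl}, \ref{lem:fixedline} and Proposition \ref{prop:intersectionsubspace}) or the blueprint/positive-dimensional-overgroup machinery to conclude. Since $H$ is perfect, any line it stabilizes is centralized, so whenever $H$ stabilizes a line on $M(E_7)$ or $L(E_7)^\circ$ we are done immediately by Lemma \ref{lem:fixedline}; the work is entirely in the cases where no such line exists.

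First I would dispose of $p=3$ using the analysis already carried out just above the statement: there it is shown that if $H$ does not stabilize a line on $L(E_7)$ then $L(E_7){\downarrow_H}$ is forced to be the unique self-dual module $(19/1,1,15_1,15_2/19)\oplus 63_1$ with $u$ of order $3$ in class $2A_2+A_1$. I would then exhibit a positive-dimensional overgroup: the subgroup $2\cdot\PSL_3(4)$ sits inside a subsystem subgroup (an $A_2A_5$ or $A_7$ maximal-rank subgroup, via the natural embeddings coming from the $6$-dimensional unitary-type representation of $6\cdot\PSL_3(4)$), and one checks that the summand decomposition of that subgroup's action on $M(E_7)$ refines the decomposition fixed by $H$, so $H$ is a blueprint for $M(E_7)$ by \cite[Corollary 3.3]{craven2019un}, or one directly applies Proposition \ref{prop:intersectionsubspace} to the $63_1$ (or to whichever summand is canonically attached to the overgroup). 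Alternatively, since for $p=3$ the group $H$ is a group of Lie type in defining characteristic only if $4$ is a power of $3$ — it is not — I would instead use that $H\cong 2\cdot\PSL_3(4)\leq\Omega$ for some classical group defined over $\F_9$, restricting the options for Frobenius actions. The cleanest route is the blueprint argument.

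For $p\neq 3$ (so $p=5$, $p=7$, or $p\nmid|H|$), the action on $L(E_7)$ is the sum of two non-isomorphic $35$-dimensional modules and a $63$-dimensional module, and on $M(E_7)$ the relevant composition factors from \cite[Tables 6.214--6.216]{litterickmemoir} must be examined. Here I would again produce the positive-dimensional overgroup: $6\cdot\PSL_3(4)$ acts on a $6$-dimensional module, embedding it in $\SL_6$ and thence (together with an appropriate $A_2$ or torus factor) into an $A_2A_5$ or $A_7$ subsystem subgroup of $E_7$; the three summands $(10,\lambda_1)$, $(01,\lambda_5)$, $(00,\lambda_3)$ of the $A_2A_5$-action on $M(E_7)$ have dimensions $18,18,20$, so $H$ and this subgroup stabilize the same $20$-space, which is visibly $N_{\Aut^+(\mbG)}(H)$-stable, and Proposition \ref{prop:intersectionsubspace} finishes it. For $p=0$ one may invoke Larsen's $(0,p)$-correspondence (Theorem \ref{thm:larsencorr}) and Proposition \ref{prop:p=0impallp} to reduce the count of classes and transfer the positive-dimensional overgroup across characteristics. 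In the cases where $H$ does stabilize a line, Lemma \ref{lem:fixedline} applies directly.

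The main obstacle will be the bookkeeping in the cases where $H$ does \emph{not} stabilize a line — ruling out, via Jordan-block computations for elements of order $p$ against \cite[Table 8]{lawther1995}, all module structures except the ones that visibly come from a subsystem overgroup, and then verifying that the overgroup's invariant subspace is genuinely $N_{\Aut^+(\mbG)}(H)$-stable (which requires knowing that the characters of $M(E_7){\downarrow_H}$ and $L(E_7){\downarrow_H}$ determine $H$ up to $\mbG$-conjugacy, as is the case for all subgroups considered in this paper). A secondary subtlety is the presence of multiple isoclinic variants and the choice of symplectic form making $H\leq\Sp_{56}(k)$; but since we are not trying to count classes here, only to prove strong imprimitivity, this can be sidestepped by working with the module-theoretic blueprint criterion rather than the subalgebra method.
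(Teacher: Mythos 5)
Your overall strategy (force the module structure, exhibit a positive-dimensional overgroup with the same invariant subspace, apply Proposition \ref{prop:intersectionsubspace}) is the right shape, but there is a genuine gap at the crucial step. To conclude that an \emph{arbitrary} subgroup $H\cong 2\cdot\PSL_3(4)$ lies inside the subsystem overgroup, you must first prove that $H$ is unique up to $\mbG$-conjugacy given its action on $L(E_7)$. You wave this away in your final parenthesis (``as is the case for all subgroups considered in this paper''), but that is not a general fact --- for instance $\PSU_3(3)$ and $\PSL_2(27)$ each admit two $\mbG$-classes with identical composition factors --- and it is precisely the content that carries the weight of the proof. The paper establishes it by the Lie product method of Section \ref{sec:lieprodmethod}: one computes the $53$- or $54$-dimensional space $\mathcal R^H(L(E_7))$ of alternating products, imposes the Jacobi identity, and verifies by computer (in the supplementary materials, separately for $p=3$, $5$, $7$ and $p\nmid|H|$) that there is a unique $H$-invariant Lie product on $L(E_7){\downarrow_H}$. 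Your proposal contains no substitute for this computation, so the claim that $H$ may be conjugated into the overgroup is unsupported.

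A secondary error: the overgroup you name is wrong. The embedding into $A_2A_5$ via summands of dimensions $18,18,20$ is lifted from the $2\cdot\Alt(7)$ argument and requires a $3$-dimensional projective representation for the $A_2$ factor, which $\PSL_3(4)$ does not possess (its faithful projective representations of smallest degree are the $6$-dimensional ones of $6\cdot\PSL_3(4)$ and the $4$- and $8$-dimensional ones of the $4$-fold covers). The paper instead places $H$ in a maximal-rank $A_7$ subgroup $\mb X$, and the common $N_{\Aut^+(\mbG)}(H)$-stable subspace is the unique irreducible $63$-dimensional submodule of $L(E_7)$ (the restriction of the adjoint module of $A_7$), not a $20$-space of $M(E_7)$. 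Your identification of the $63_1$ in the $p=3$ case is the correct instinct; the same subspace does the job for all $p$, once uniqueness of the conjugacy class has actually been proved.
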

\begin{proof} Suppose that we can place $H$ inside a maximal-rank $A_7$ subgroup $\mb X$. Note that both $H$ and $\mb X$ stabilize the same unique irreducible $63$-dimensional submodule of $L(E_7)$. Thus $H$ is strongly imprimitive by Proposition \ref{prop:intersectionsubspace}.

Thus we use the $53$- or $54$-dimensional (depending on $p$) space $\mathcal{R}^H(L(E_7))$ to prove that there is a unique $H$-invariant Lie product on $L(E_7){\downarrow_H}$. In the supplementary materials we show this for each of $p=3$, $p=5$, $p=7$ and $p\neq 2,3,5,7$. This completes the proof.
\end{proof}

\subsubsection{\texorpdfstring{$\PSU_3(3)$}{PSU(3,3)}}
\label{sec:psu33}
Let $H\cong \PSU_3(3)$, where the appropriate primes are $p=7$ and $p\neq 2,3,7$ (as $H\cong G_2(2)'$). From \cite[Tables 6.219--6.220]{litterickmemoir} we find two sets of factors labelled `\textbf{P}' for $p\neq 2,3,7$, and three for $p=7$. We can eliminate two of these five quickly. One case for $p\neq 2,3,7$ contains an irreducible summand of dimension $14$, which via the inclusion of $H$ into $G_2$ admits a simple Lie algebra structure. Since $\Lambda^2(14)=14\oplus 21_1\oplus 28\oplus 28^*$, and only $14$ from these appears in the composition factors of $L(E_7){\downarrow_H}$, we see that the $14$ forms a $G_2$-subalgebra of $L(E_7)$. This is enough to prove that there is no such Lie primitive subgroup $H$ over $E_7(\mathbb{C})$ (cf. \cite[p.146]{cohenwales1997}), and then we use Larsen's $(0,p)$-correspondence (Theorem \ref{thm:larsencorr}) to deduce the result for all $p\neq 2,3,7$. (This only shows that $H$ is Lie imprimitive, not strongly imprimitive. However, both $H$ and the $G_2$ subgroup stabilize the unique $14$-dimensional summand, so $H$ is strongly imprimitive by Proposition \ref{prop:intersectionsubspace}.)

The other case we can immediately remove is for $p=7$. Here only $26$ has non-trivial $1$-cohomology, and the structure of the projective is
\[ 26/1/26/1,6/26.\]
Examining \cite[Table 6.220]{litterickmemoir}, we see that Case 4 has four $26$s and three $1$s. From the structure of the projective above, it is clear that we cannot build such a module without a trivial submodule or quotient.

One of the remaining three cases is the above case with a $14$-dimensional summand, but for $p=7$ so we cannot use the previous method.

\begin{prop} Let $H\cong \PSU_3(3)$. If $p=7$ and $L(E_7){\downarrow_H}$ has composition factors
\[ 26^3,21_1,14,7_1,6^2,1\]
then $H$ is unique up to $\mbG$-conjugacy and lies in an $A_6$-subgroup of $\mbG$. In particular, $H$ is a blueprint for $M(E_7)$ so is strongly imprimitive.
\end{prop}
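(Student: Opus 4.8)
The plan is to follow the same structure used for the earlier sporadic and Lie-type cases: first pin down the module structure on $L(E_7)$ and $M(E_7)$, then prove uniqueness up to $\mbG$-conjugacy, then exhibit a positive-dimensional overgroup and invoke the blueprint machinery.

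\textbf{Step 1: Determine $L(E_7){\downarrow_H}$ up to isomorphism.} Starting from the composition factors $26^3,21_1,14,7_1,6^2,1$ from \cite[Table 6.220]{litterickmemoir}, I would use the fact that for $\PSU_3(3)$ in characteristic $7$ only the module $26$ has nontrivial $1$-cohomology, with projective cover as displayed just above the statement, namely $26/1/26/1,6/26$. Since the single trivial factor must split off as a summand (otherwise $H$ stabilizes a line on $L(E_7)^\circ$ and is strongly imprimitive by Lemma \ref{lem:fixedline}, contrary to the situation we are analysing), and since $21_1,14,7_1,6$ have no extensions with $26$ or with the trivial module beyond what the projective allows, the module decomposes as a direct sum of $26$'s together with the remaining factors, constrained by the shape of $P(26)$. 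A Jordan-block count using an element $u\in H$ of order $7$, compared against \cite[Table 8]{lawther1995}, should force the structure and identify the unipotent class of $u$, exactly as in the $\PSL_3(4)$ and $M_{12}$ arguments above; the outcome should be that $L(E_7){\downarrow_H}$ is semisimple (or at worst has a forced uniserial piece), unique up to isomorphism.

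\textbf{Step 2: Uniqueness up to $\mbG$-conjugacy.} Here I would apply one of the two algorithmic methods of Section \ref{sec:methods}. Since the $14$ carries a $G_2$-Lie-algebra structure and $21_1$ and $6$ are visible, the space $\mathcal R^H(L(E_7)) = \Hom_{kH}(\Lambda^2(L(E_7)),L(E_7))$ should be of moderate dimension, so the Lie product method of Section \ref{sec:lieprodmethod} is the natural choice: impose the Jacobi identity, discard degenerate products and those with too-large abelian subalgebras via Proposition \ref{prop:abeliansubalgebra}, take $N_{\GL(L(E_7))}(H)$-orbits, and confirm by computer that exactly one orbit gives a Lie algebra of type $\mathfrak e_7$. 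Alternatively, if that space is too big, one uses the subalgebra method with the $56$-dimensional module $M(E_7)$. Either way this is deferred to the supplementary materials, as with the other cases.

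\textbf{Step 3: Find the $A_6$ overgroup and conclude.} With uniqueness in hand, it suffices to exhibit \emph{one} copy of $H\cong\PSU_3(3)$ inside an $A_6$ (i.e. $\SL_7$ or $\PSL_7$) subgroup of $E_7$ acting with the required composition factors --- $\PSU_3(3)$ has a natural $7$-dimensional unitary-type representation in characteristic $7$ (or one builds it inside $G_2 < A_6$). Since the $A_6$-irreducible subgroup acts on $M(E_7)$ semisimply with the small number of summands dictated by the branching, $H$ stabilizes the same subspaces of $M(E_7)$ as that $A_6$, hence is a blueprint for $M(E_7)$; by \cite[Corollary 3.3]{craven2019un} (quoted in the preliminaries) it is then strongly imprimitive. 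I expect the main obstacle to be Step 1: disentangling which of the several non-split extension patterns compatible with the projective $P(26)$ actually occurs, and matching the resulting $u$-action with a genuine unipotent class in \cite[Table 8]{lawther1995} — if the Jordan form that arises is not in that table, the case is simply excluded, which would be a clean (and even more convenient) outcome; otherwise one carries the identified class through to constrain the module and proceed as above.
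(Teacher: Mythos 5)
Your overall strategy (establish uniqueness up to $\mbG$-conjugacy, exhibit a copy of $H$ in an $A_6$, conclude blueprint/strong imprimitivity via Proposition \ref{prop:intersectionsubspace}-type reasoning) matches the paper's, and your Step 3 is essentially identical to what is done there: $\PSU_3(3)$ has a $7$-dimensional irreducible in characteristic $7$, so it sits in the $A_6$-Levi, and since $A_6$ acts on $M(E_7)$ as four pairwise non-isomorphic summands of dimensions $7,7,21,21$ whose restrictions to $H$ stay irreducible and distinct, $H$ and $A_6$ stabilize the same subspaces. Where you diverge is in the technical setup: you propose to pin down $L(E_7){\downarrow_H}$ by an $\Ext$/Jordan-block analysis against $P(26)=26/1/26/1,6/26$ and then run the Lie product method, and you correctly flag this as the hard part. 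The paper avoids it entirely by looking at $M(E_7)$ instead: the composition factors there are $7,7^*,21_2,21_2^*$, all of dimension divisible by $7=|H|_7$, hence projective (defect zero), so $M(E_7){\downarrow_H}$ is forced to be semisimple with no work, and Proposition \ref{prop:ssmultfree} immediately gives a unique symplectic form up to $\Sp_{56}(k)$-conjugacy; the uniqueness of $H$ up to $\mbG$-conjugacy is then a clean application of the subalgebra method (deferred to the supplementary materials, as you anticipated). Your route is not wrong in principle --- and you do offer the subalgebra method as a fallback --- but it buys you nothing and costs a potentially messy disentangling of non-split extension patterns on $L(E_7)$, plus a possibly large space $\mathcal R^H(L(E_7))$ given the multiplicities $26^3$ and $6^2$; the projectivity observation on the minimal module is the one idea you are missing that collapses the whole difficulty.
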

\begin{proof} The composition factors of $H$ on $M(E_7)$ are $7,7^*,21_2,21_2^*$. As each of these modules is projective, there are no extensions between them, so $M(E_7){\downarrow_H}$ is semisimple. The alternating form is unique up to $\Sp_{56}(k)$-conjugacy by Proposition \ref{prop:ssmultfree}.

In the supplementary materials we show that $H$ is unique up to $\mbG$-conjugacy, using the subalgebra method. On the other hand, since $H$ has a $7$-dimensional irreducible module there must be another copy $\hat H$ lying in the $A_6$-Levi subgroup, and this must act on $M(E_7)$ as $7\oplus 21_2\oplus 21_2^*\oplus 7^*$. Since $H$ is unique up to $\mbG$-conjugacy, we have that $H$ and $\hat H$ are conjugate. However, $A_6$ itself acts as the sum of four non-isomorphic modules of dimensions $7,7,21,21$, so $H$ and $A_6$ stabilize the same subspaces of $M(E_7)$. Thus $H$ is a blueprint for it, as claimed.
\end{proof}

The remaining two cases are where $H$ acts on $M(E_7)$ as $28\oplus 28^*$ and $p\neq 2,3,7$ or $p=7$. In this case there is a single $\mbG$-class of subgroups, and the difficulty is in understanding the action of the field automorphism on the class. Proof of maximality is delayed until Section \ref{sec:proofofmaximality}.

\begin{prop} Let $H\cong \PSU_3(3)$, let $p\neq 2,3$, and suppose that $M(E_7){\downarrow_H}\cong 28\oplus 28^*$.
\begin{enumerate}
\item There is exactly one $\mbG$-conjugacy class of subgroups $H$. If $H\leq G$ there are exactly two $G$-conjugacy classes of subgroups $H$. Furthermore, $N_\mbG(H)=H\times Z(\mbG)$.
\item $H$ embeds in $G$ if and only if $q\equiv \pm 1\bmod 8$.
\item If $N_{\bar G}(H)$ is maximal in $\bar G$ then $q$ is minimal such that $H$ embeds in $G$, and either $G=\bar G$ or $\bar G$ induces a field automorphism on $G$.
\end{enumerate}
\end{prop}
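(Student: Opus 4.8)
The plan is to establish the three parts in turn, reducing everything to a concrete computation in $\Sp_{56}(k)$ together with the $(0,p)$-correspondence and the field-automorphism analysis from Section 2.4.

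First I would handle the module structure and count conjugacy classes in $\mbG$. Since $M(E_7){\downarrow_H}\cong 28\oplus 28^*$ with the two $28$-dimensional modules dual and (for these primes) each supporting no invariant alternating form in isolation, the homogeneous component $28\oplus 28^*$ carries a unique non-degenerate symplectic form up to $\Sp_{56}(k)$-conjugacy, so by Proposition \ref{prop:ssmultfree} we may assume $H\leq \Sp_{56}(k)$ and $H$ is unique up to $\Sp_{56}(k)$-conjugacy given this action. Then I would apply the subalgebra method of Section \ref{sec:subalgmethod}: knowing the action of $H$ on $L(E_7)$ from the relevant row of \cite[Table 6.219 or 6.220]{litterickmemoir}, compute the candidate submodules of $S^2(M)$ (or $\Lambda^2(M)$ issues do not arise here as $p\neq 2$) lying in $L(\mbG)$, and show that exactly one $\mbG$-class of subalgebras of type $\mathfrak e_7$ arises, giving one $\mbG$-class of subgroups $H$. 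For the normalizer: $C_\mbG(H)=Z(\mbG)$ since $H$ acts on $M(E_7)$ with centralizer in $\GL_{56}(k)$ a torus intersected down — more precisely $28$ and $28^*$ are each irreducible so $C_{\GL(M(E_7))}(H)=k^\times\times k^\times$, and inside $\Sp_{56}$ this is a single $k^\times$, all of which is scalar on $M(E_7)$ modulo $Z(\mbG)$, hence $C_\mbG(H)=Z(\mbG)$; and $\Out(\PSU_3(3))\cong 2$ is realised (the class of $H$ is its own image under this outer automorphism because the two $28$s are swapped), so $N_\mbG(H)=H\times Z(\mbG)$. Lemma \ref{lem:diagswaps}(ii) then gives that when $H\leq G$ there are $|G_{\mathrm{ad}}:G|=d=2$ (here $p$ odd) $G$-classes.

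Second, for the field of definition (part (ii)), I would examine the Brauer character of $H$ on $M(E_7){\downarrow_H}=28\oplus 28^*$. The $28$-dimensional modules of $\PSU_3(3)$ in characteristic $p\neq 2,3,7$ are defined over $\F_{p}$ precisely when $\sqrt{-2}\in\F_p$, i.e.\ when $-2$ is a square mod $p$, which is equivalent to $p\equiv 1,3\bmod 8$; combined with the action of the Galois/field automorphism on the pair of dual modules (which swaps $28\leftrightarrow 28^*$ exactly when the character is not rational), the subgroup $H$, or rather the class, descends to $2\cdot E_7(q)$ exactly when $q\equiv\pm1\bmod 8$. For $p=7$ one checks the $7$-modular character table directly and gets the same congruence. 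This is essentially the computation already referenced implicitly by the $p\equiv\pm1\bmod 8$ condition in Table \ref{t:themaximals}; I would cite the analysis in Section 2.4 (raising eigenvalues to the $p$-th power realises the field automorphism on the character) together with Lemma \ref{lem:inneriscent} to conclude that when $q\equiv\pm1\bmod 8$ a $\mbG$-conjugate of $H$ lies in $G$, and when $q\equiv\pm3\bmod 8$ the field automorphism $F_q$ necessarily moves the class.

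Finally, part (iii) — minimality when maximal — follows the standard pattern: if $q=q_0^r$ with $H$ already embedding in $E_7(q_0)$, then $N_G(\bar H)$ is contained in $N_{\bar G}(E_7(q_0))$, which is a proper subgroup of $\bar G$ of subfield type (option (iv) of the Borovik–Liebeck–Seitz theorem), contradicting maximality; hence $q$ must be the smallest power of $p$ for which $H$ embeds in $G$, which by part (ii) is $q=p$ when $p\equiv\pm1\bmod 8$. The main obstacle I anticipate is the subalgebra-method computation showing there is a \emph{single} $\mbG$-class (rather than two) despite the apparent doubling coming from $28$ versus $28^*$: one must verify that the Galois twist sending the $L(E_7)$-action to its dual is realised by an element of $N_{\Sp_{56}(k)}(H)$ normalising $L(\mbG)$, equivalently that the outer automorphism of $H$ extends to $\mbG$ — this is what collapses two potential classes to one and simultaneously forces $N_\mbG(H)=H\times Z(\mbG)$, and it is exactly the delicate point the proposition is asserting. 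Establishing maximality itself is then deferred to Proposition \ref{prop:maxcont} (no proper overgroups), as in the sporadic cases above.
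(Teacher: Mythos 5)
There is a genuine gap, and it sits exactly at the two points the proposition is really about. First, the normalizer. Your argument for $N_\mbG(H)=H\times Z(\mbG)$ is internally inconsistent: in your final paragraph you say one must verify that ``the outer automorphism of $H$ extends to $\mbG$'' and that this ``forces $N_\mbG(H)=H\times Z(\mbG)$'' --- but if the outer automorphism extended to $\mbG$ then $N_\mbG(H)$ would be strictly larger than $H\times Z(\mbG)$, and there would be a \emph{single} $G$-class rather than two. The truth is the opposite, and it is not cheap: the composition factors $28\oplus 28^*$ and the character of $M(E_7){\downarrow_H}$ are stable under the outer automorphism (the $56$ even extends to an irreducible $H.2$-module, though one preserving a symmetric rather than alternating form), and a group $H\cdot 4$ genuinely does embed in $\Sp_{56}(p)$. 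Ruling out $H\cdot 4\leq\mbG$ requires showing that the element inducing the outer automorphism swaps the two $C_{\Sp_{56}(k)}(H)$-orbits of $H$-invariant $\mathfrak e_7$-subalgebras; the paper does this by explicit computation at $p=7$ and transfers it to all relevant $p$ via Proposition \ref{prop:p=0impallp}, remarking that no ``internal'' (character-theoretic) proof is available. Your proposal contains no mechanism for this step. Incidentally, it is precisely the \emph{failure} of the outer automorphism to extend, combined with the transitivity of $N_{\Sp_{56}(k)}(H)$ on the subalgebras, that collapses the two centralizer-orbits into one $\mbG$-class.

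Second, part (ii). Because the Brauer characters of $H$ on $M(E_7)$ and $L(E_7)$ are fixed by the outer automorphism of $H$ (it merely swaps $28\leftrightarrow 28^*$), they are rational, so the eigenvalue-raising test of Section 2.4 cannot distinguish whether $F_q$ induces an inner or the outer automorphism on $H$ --- this is exactly the ``ad hoc'' case flagged there, and your character computation cannot decide it. Your quadratic-residue claim also does not land on the right answer: $\sqrt{-2}\in\F_p$ is the condition $p\equiv 1,3\bmod 8$, whereas the assertion to be proved is $q\equiv\pm 1\bmod 8$, which is the condition $\sqrt2\in\F_q$; the sentence ``combined with the action of the Galois/field automorphism on the pair of dual modules'' does not bridge that discrepancy. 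The paper's actual argument constructs the two orbits of $\mathfrak e_7$-subalgebras over $\Z[\zeta_8]$, checks the primes $p=5,7,11,23$ directly to see which residues swap the orbits, and deduces that $F_q$ centralizes a conjugate of $H$ exactly when $F_q$ acts as $\zeta_8\mapsto\zeta_8^{\pm1}$. The irrationality controlling the embedding lives in the subalgebra, not in the character of $H$, which is why no purely representation-theoretic shortcut of the kind you propose is available. Your treatment of the class count via Lemma \ref{lem:diagswaps} and of part (iii) via subfield subgroups is fine and matches the paper.
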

\begin{proof} The proof proceeds in stages.

\paragraph{Determination of $N_\mbG(H)$} This is surprisingly difficult to do. In \cite[p.15]{atlas} we see that the $28\oplus 28^*$ extends to the full automorphism group $H.2$ of $H$ as an irreducible $56$-dimensional module, but this irreducible module preserves a \emph{symmetric} bilinear form, not an alternating one. (The same holds for $p=7$, see \cite[p.24]{abc}.) Thus either $N_\mbG(H)$ is $H\times Z(\mbG)$ or of the form $H\cdot 4$. The latter subgroup does embed in $\Sp_{56}(p)$, as a simple computer construction proves, and we do this in the supplementary materials as part of the proof (the subgroup is of course inside the group $(\PSU_3(3)\times 2)\wr 2$). We will prove that this outer automorphism of $H$ interchanges two $\mathfrak e_7$-subalgebras for $p=7$, and hence it does not lie in $\mbG$ for all $p\neq 2,3$ by Proposition \ref{prop:p=0impallp}. We can see no `internal' way to prove it does not embed in $\mbG$.

\paragraph{Determination of $H$ up to conjugacy} We apply the subalgebra method from Section \ref{sec:subalgmethod}, noting that since $M(E_7)$ is the sum of two dual modules, the alternating form is unique. In the supplementary materials we prove that for both $p=7$ and $p\neq 2,3,7$ there is exactly one $\mbG$-conjugacy class of subgroups $H$.

\paragraph{Action of outer automorphisms} By Lemma \ref{lem:diagswaps} the diagonal automorphism must fuse two $G$-classes of subgroups $H$. It remains to decide on the action of $\sigma=F_p$. Of course, $\sigma$ normalizes the unique $\mbG$-class, so either $\sigma$ acts as an outer automorphism on a representative $H$ (by Lemma \ref{lem:fixedpoints}) or as an inner automorphism, hence centralizes some conjugate of $H$ by Lemma \ref{lem:inneriscent}. 

First, note that the centralizer in $\Sp_{56}(k)$ of $H$ acts on the $H$-invariant $\mathfrak e_7$-subalgebras and there are two orbits. (This must be the case as the centralizer times $H$ has index $2$ in the normalizer of $H$, and there is a single $\mbG$-class of subgroups $H$. We also prove it in characteristic $0$ over the ring $\mathbb Z[\zeta]$, where $\zeta$ is a primitive $8$th root of unity.)

We can see that $\sigma$ acts as an outer automorphism on $H$ if and only if it swaps these two orbits (which are also interchanged by the normalizer of $H$). First, by a direct calculation in the supplementary materials, we check this---and therefore that the statement holds---for $p=5,7,11,23$. (We must consider $p=7$ separately as this divides $|H|$, and $p=5,11,23$ cover the congruences $3$, $5$ and $7$ modulo $8$ for $p\nmid |H|$.)

Next, we must work over an extension of $\Z$. The $28$-dimensional simple modules are induced from elements of order $8$ in the group of $1$-dimensional modules for the Borel subgroup $3^{1+2}\rtimes 8$ of $H$. Thus there is a natural description of $M(\mbG){\downarrow_H}$ over the ring $\Z[\zeta]$, where as above $\zeta$ is a primitive $8$th root of $1$. Thus in the subalgebra method, we can construct $\Z[\zeta]H$-modules (actually $\Z[\zeta]H$-lattices because they are free over $\Z[\zeta]$) for $S^2(M(\mbG){\downarrow_H})$. We can also construct the modules $21$, $21^*$, $27$ and $32\oplus 32^*$ over $\Z[\zeta]$, so we obtain a module for $L(\mbG){\downarrow_H}$. Thus we can run the subalgebra method over $\Q(\zeta)$ (as we need a field for Magma to construct $\Hom$-spaces and so on).

Doing so, in the supplementary materials, yields points that generate exactly two candidate $\mathfrak e_7$-subalgebras up to the action of the centralizer, and they are defined over $\Z[\zeta]$ by clearing denominators. Since, over any algebraically closed field $k$ of characteristic not $2$, $3$ or $7$, there must be exactly two solutions (up to the centralizer), these must be the solutions for any such $k$. (As a sanity check, the primes dividing the denominators of the points above when they were originally constructed, before clearing denominators, were $2$ and $7$.)

Thus we see that the two orbits of $\mathfrak e_7$-invariant subalgebras are invariant under any field automorphism that fixes all $8$th roots of unity in $k$. Thus if $q\equiv 1\bmod 8$ then the result is proved. It remains to understand the action of the elements of the Galois group of $\Q(\zeta)$ over $\Q$ on the solutions, whether they swap or stabilize the solutions.

Of course, there are four elements in this Galois group, given by $\zeta\mapsto\zeta^i$ for $i=1,3,5,7$. If an element of the Galois group $\zeta\mapsto\zeta^i$ stabilizes the two subalgebras then any field automorphism of $k$ that induces the same map on the roots of unity (e.g., $F_p$ for $p\equiv i\bmod 8$) also stabilizes the two classes.

Now we use our three cases $p=5,11,23$, which we have computed in separate files in the supplementary materials. We show that the two orbits are swapped for $p=5,11$, and so $\zeta\mapsto\zeta^3$ and $\zeta\mapsto\zeta^5$ swap the two orbits of subalgebras. On the other hand, for $p=23$ the two orbits are stabilized, so $\zeta\mapsto\zeta^{-1}$ must stabilize the two orbits. This completes the proof.
\end{proof}

\subsubsection{\texorpdfstring{$\PSU_3(8)$}{PSU(3,8)}}

Let $H\cong \PSU_3(8)$, where the appropriate primes are $p=3$, $p=7$, $p=19$, and $p\neq 2,3,7,19$. This subgroup acts irreducibly on both $M(E_7)$ and $L(E_7)$ for all $p\neq 2$. The character of this for $p=0$ (and therefore its reduction modulo $p$ if $p\neq 0$) appears in \cite[pp.64--66]{atlas}. Existence, and uniqueness at least  for $p\nmid |H|$, can be found in \cite{griessryba1994}, but we give an independent verification, and extend to all odd primes. We delay the maximality proof until Section \ref{sec:proofofmaximality}.

\begin{prop} Let $H\cong \PSU_3(8)$ and $p\neq 2$.
\begin{enumerate}
\item There is a unique $\mbG$-conjugacy class of subgroups isomorphic to $H$, and $N_\mbG(H)=(Z(\mbG)\times (H.3)).2$.
\item If $q\equiv \pm 1\bmod 8$ then there are exactly two $G$-conjugacy classes of subgroups $H$ in $G=E_7(q)$, and $N_\mbG(H)=N_{G_\mathrm{sc}}(H)$. If $q\equiv\pm 3\bmod 8$ then there is a single $G$-conjugacy class of subgroups $H$ in $G=E_7(q)$, and $N_G(H)=H.3$.
\item We have that $N_{\bar G}(\bar H)$ is maximal in $\bar G$ if and only if $q=p$, and furthermore $\bar G=G$ when $p\equiv \pm 1\bmod 8$.
\end{enumerate}
\end{prop}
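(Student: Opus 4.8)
The plan is to follow the pattern of the earlier cross-characteristic and sporadic cases, using that $H$ is irreducible on both $M(E_7)$ and $L(E_7)$ for every $p\neq 2$. First I will establish that there is a single $\mbG$-class: this is already recorded (irreducibility on $L(E_7)$ together with \cite[Theorem 2]{craven2020un}), but one can also reprove it with the Lie product method of Section \ref{sec:lieprodmethod}, computing $\mathcal{R}^H(L(E_7))$, imposing the Jacobi identity, and checking by computer that the surviving products form a single orbit under the normalizer of $H$ in $\GL(L(E_7))$, each of type $\mathfrak{e}_7$. Schur's lemma applied to the irreducible $M(E_7){\downarrow_H}$ gives $C_{\GL(M(E_7))}(H)=k^\times$, hence $C_\mbG(H)=Z(\mbG)$, so $N_\mbG(H)/H\cdot Z(\mbG)$ embeds in $\Out(H)\cong 3\times S_3$. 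Since $H$ is unique up to $\mbG$-conjugacy, an outer automorphism of $H$ is induced inside $N_\mbG(H)$ precisely when it fixes the Brauer characters of $M(E_7){\downarrow_H}$ and $L(E_7){\downarrow_H}$; reading the ordinary character table off \cite[pp.\ 64--66]{atlas} and reducing mod $p$ pins down this stabilizer as a group of order $6$ (a field automorphism of order $3$ and a graph-type involution), and a direct $\Sp_{56}$ construction, or Lemma \ref{lem:fixedpoints} applied to the action of that $\GL$-normalizer on the set of $H$-invariant $\mathfrak{e}_7$-subalgebras, shows the whole extension is realized, giving $N_\mbG(H)=(Z(\mbG)\times H.3).2$.

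Next I will record the relevant character fields: the $56$-dimensional modules of $H$ and of $H.3$ are definable over $\F_p$ for all odd $p$, whereas that of $H.6=H.3.2$ has character field $\Q(\sqrt 2)$. Because there is a single Jacobi solution up to the $\GL$-normalizer, the unique $H$-invariant $\mathfrak{e}_7$-structure on $L(E_7){\downarrow_H}$ is defined over the character field (clear denominators as in the $\PSU_3(3)$ argument, then use Proposition \ref{prop:p=0impallp} to move between characteristics), so $Z(\mbG)\times H.3\leq 2\cdot E_7(p)\leq 2\cdot E_7(q)$ for every odd $q$, while $(Z(\mbG)\times H.3).2\leq 2\cdot E_7(q)$ if and only if $\sqrt 2\in\F_q$, i.e.\ $q\equiv\pm1\bmod 8$. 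Then Lemma \ref{lem:diagswaps} with $d=\gcd(2,q-1)=2$ finishes part (ii): the $G_\mathrm{ad}$-class of $H$ splits into $2/|N_\mbG(H):N_G(H)|$ classes of $G$, which is $2$ when $q\equiv\pm1\bmod8$ (then $N_G(H)=N_\mbG(H)=Z(\mbG)\times H.6$) and $1$ when $q\equiv\pm3\bmod8$ (then $N_G(H)=Z(\mbG)\times H.3=2\times H.3$, the index being $2$); in the latter case Lemma \ref{lem:diagswaps}(i) forces $N_\mbG(H)\leq G_\mathrm{ad}\cdot Z(\mbG)$, so the extra ``$.2$'' maps onto $G_\mathrm{ad}/G$ and is the diagonal automorphism, in agreement with the $\PSU_3(8).3_1$ row of Table \ref{t:themaximals}.

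For part (iii) I will argue that when $q=p$ the image $\bar H$ has no proper overgroup in $\bar G$. By Lemma \ref{lem:inconnected}, since $H\not\cong\PSU_3(3)$ and $H\not\cong\PSL_2(8)$, either $H$ is Lie primitive or $H$ lies in a proper connected positive-dimensional subgroup; the latter is impossible as $H$ is irreducible on $M(E_7)$, so $H$ is Lie primitive. Inspecting Table \ref{t:themaximals} shows no member of $\mathcal S$ contains $\PSU_3(8)$ (an order count using the prime $19$ already rules out $HS$, $Ru$ and the rest), so Proposition \ref{prop:maxcont} gives maximality — in $\bar G=E_7(p)$ when $p\equiv\pm1\bmod8$ (two classes interchanged by $\delta$), and in either $E_7(p)$ or $E_7(p).2$ when $p\equiv\pm3\bmod8$. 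When $q=p^k$ with $k>1$ the subfield containment $\bar H\leq E_7(p)<\bar G$ destroys maximality.

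I expect the genuine obstacle to be the exact determination of $N_\mbG(H)$: verifying that the full order-$6$ outer extension — not just $H.3$ — really occurs inside $\mbG$, and correctly identifying which involution of $\Out(H)=3\times S_3$ it is, so that the ensuing $\Q(\sqrt2)$-rationality statement, and hence the entire $q\bmod8$ dichotomy, comes out right. Once $N_\mbG(H)$ and that character field are correct, everything else is a formal consequence of Lemmas \ref{lem:diagswaps}, \ref{lem:fixedpoints}, \ref{lem:inneriscent} and Proposition \ref{prop:maxcont}.
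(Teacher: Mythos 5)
Your overall strategy matches the paper's: uniqueness of the $\mbG$-class from \cite[Theorem 2]{craven2020un} and the one-dimensionality of $\mathcal R^H(M)$, a character-field analysis isolating the irrationality $\zeta_8+\zeta_8^{-1}$ for the outer involution, Lemma \ref{lem:diagswaps} for the class count, and Proposition \ref{prop:maxcont} for maximality. But the two steps you yourself flag as "the genuine obstacle" are asserted rather than proved, and neither follows from what you write. First, the claim that an outer automorphism of $H$ is induced inside $N_\mbG(H)$ \emph{precisely} when it fixes the Brauer characters of $M(E_7){\downarrow_H}$ and $L(E_7){\downarrow_H}$ is only half true: fixing the characters is necessary, but sufficiency requires showing that the extended group actually preserves an $\mathfrak e_7$-structure. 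The paper does this separately for the two pieces: the involution normalizes the (unique up to scalar) invariant Lie product because $\mathcal R^H(M)$ is a line stabilized by the character-stabilizer (this is the \cite[Remark 3.11]{craven2020un} argument), while for the $3$ one must check that $\dim\Hom_{kH.3_1}(\Lambda^2(\widetilde M),\widetilde M)=1$ for a self-dual extension $\widetilde M$ of $M$ to $H.3_1$ in characteristic $0$, and then invoke Proposition \ref{prop:p=0impallp}. Without that computation you have not shown the ".3" is realized. Relatedly, your "$N_G(H)=Z(\mbG)\times H.6$" is wrong as written: the trace of the class 2B on $L(E_7)$ is $0$, not $\pm8$, so $H.2$ does \emph{not} embed in $\mbG$ and the extension by $2$ is non-split over $Z(\mbG)$; the normalizer is $(Z(\mbG)\times H.3).2$, not a direct product with $H.6$.

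Second, the "if" direction of "$(Z(\mbG)\times H.3).2\leq 2\cdot E_7(q)$ if and only if $\sqrt2\in\F_q$" is a genuine gap. Definability of the extended $56$-dimensional representation over $\F_q$ puts the normalizing element $g$ into $\Sp_{56}(q)$, but a priori $g$ could induce the outer diagonal automorphism of $E_7(q)$ rather than lie in $2\cdot E_7(q)$; this is exactly the dichotomy the whole proposition turns on, so it cannot be waved away. The paper closes it by observing that, since $H$ is irreducible on $M(E_7)$, there is a unique subgroup $X\cong 2\cdot E_7(q)$ of $\Sp_{56}(q)$ containing $H$, so $g$ normalizes $X$; and $\gen{X,g}$ cannot be $2\cdot E_7(q).2$ because that group embeds only in $\Sp_{56}(q).2$ and not in $\Sp_{56}(q)$. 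You need this (or an equivalent) argument to conclude $g\in X$. With those two steps supplied, the rest of your outline (rationality of the character giving $H.3\leq E_7(p)$ for all odd $p$, the class count via Lemma \ref{lem:diagswaps}, and maximality via Proposition \ref{prop:maxcont}) does go through as in the paper.
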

\begin{proof} In the supplementary materials we show that $\mathcal R^H(M)$ is $1$-dimensional for $M$ a $133$-dimensional simple module for $H$, and any odd prime (or $p=0$). We also prove that the alternating product forms a Lie algebra for $p=5$, thus proving that $\PSU_3(8)\leq E_7(5)$ (and verifying the result from \cite{griessryba1994}). Thus $H$ is unique up to $\mbG$-conjugacy. From \cite[pp.64--66]{atlas} we see that the character of $H$ on $M(E_7)$ and $L(E_7)$ is rational, and that the stabilizer of the $133$-dimensional character for $H$ is the group $H.6$. The trace of a 2B element of $H$ on $L(E_7)$ (which exists in $H.2$) is $0$. Thus $H.2$ does not embed in $\mbG$ (as the traces of non-central involutions are $\pm 8$). Thus if $|N_\mbG(H)/HZ(\mbG)|$ is even then the extension by $2$ does not split over $Z(\mbG)$.

The stabilizers of the $133$-dimensional modules are (in Atlas notation) $H.3_1$ (it stabilizes all three), and $H.2$ (which stabilizes one of the three). (The first group is not $\PGU_3(8)$, which is $H.3_2$.) Since the inner product of $\Lambda^2(M)$ and $M$ is $1$, one may simply check whether the same holds for some extension of $M$ to $H.6$. A simple GAP or Magma calculation (or, for the keen reader, a hand calculation using \cite{atlas}) shows that the inner product for a single extension of $M$ to $H.6$ is $1$ for $p=0$, and we do this in the supplementary materials for completeness. Hence $H.6$ embeds in $\mbG/Z(\mbG)$ for $p=0$. The same therefore holds for all odd $p$ by Proposition \ref{prop:p=0impallp}. We can check via computer, or using \cite{atlas}, that the particular extension $H.3.2$ that stabilizes the $133$ is the group $H.6$ (as opposed to a group $H.\Sym(3)$), and we also do this in the supplementary materials.

It is left to check the actions of automorphisms of $\mbG$ on $H$. Since $H$ is unique up to $\mbG$-conjugacy, all automorphisms stabilize the $\mbG$-class. Since the character is rational, field automorphisms must stabilize the character and thus centralize $N_{\mbG}(H)$. Thus $H.3$ embeds in $E_7(p)$ for all odd $p$, as claimed in the proposition.

The last question is whether the full group $\bar H.6$ embeds in $G=E_7(q)$, or just $\bar H.3$. The unique extension $(H\times 2).2$ is easily constructible in Magma (for example, inside the wreath product), and so the extension of $M(E_7){\downarrow_H}$ to this group can be computed. It turns out that this character is no longer rational \cite[pp.64--66]{atlas}, but has an irrationality $\zeta_8+\zeta_8^{-1}$, where $\zeta_8$ is a primitive $8$th root of unity. If $q\not\equiv \pm 1\bmod 8$ then this does not lie in $\F_q$, and so clearly $(H\times 2).2$ cannot embed in $2\cdot E_7(q)$, as claimed.

If $q\equiv \pm 1\bmod 8$ then $(H\times 2).2$ embeds in $\Sp_{56}(q)$, at least. Since $H$ acts irreducibly on $M(E_7)$, there exists a unique group $X\cong 2\cdot E_7(q)$ in $\Sp_{56}(q)$ containing a given copy of $H$. Thus if $g\in \Sp_{56}(k)$ normalizes $H$ then it normalizes $X$. Letting $g\in \Sp_{56}(q)$ normalize $H$ and induce $H.2$, we see that $g$ normalizes $X$; but $\gen{X,g}$ cannot be $2\cdot E_7(q).2$ because this group does not embed in $\Sp_{56}(q)$ (and only in $\Sp_{56}(q).2$). Thus $g\in X$, as needed.

We delay until Section \ref{sec:proofofmaximality} the proof of maximality.\end{proof}

\subsubsection{\texorpdfstring{$\Omega_8^+(2)$}{O+(8,2)}}

Let $H\cong \Omega_8^+(2)$, where the appropriate primes are $p=3$, $p=5$, $p=7$, and $p\neq 2,3,7$. The unique possible set of composition factors for this group are $28^2$ on $M(E_7)$ (see \cite[Table 6.228]{litterickmemoir}), and there are no self-extensions of this module for all odd primes, so $M(E_7){\downarrow_H}$ must be semisimple.

\begin{prop}\label{prop:omega8plus2} Let $p\neq 2$. Then any subgroup $H\cong \Omega_8^+(2)$ of $\mbG$ is a blueprint for $M(E_7)$.
\end{prop}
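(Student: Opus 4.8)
The plan is to follow the pattern used above for $\Alt(7)$ and $\PSU_3(3)$: fix the module structure on $M(E_7)$, settle the $H$-invariant alternating form, use the subalgebra method to place $H$ inside a positive-dimensional subgroup whose lattice of invariant subspaces of $M(E_7)$ coincides with that of $H$, and then conclude. First I would record that, by \cite[Table 6.228]{litterickmemoir}, $M(E_7){\downarrow_H}$ has composition factors $28^2$. The module $28$ must be self-dual, since $M(E_7)$ carries an $H$-invariant alternating form but there is no factor $28^*$ with which to pair it; and one checks from the Brauer character tables that $28$ supports a \emph{symmetric} bilinear form. As remarked in the excerpt, $28$ has no self-extensions for odd $p$, so $M(E_7){\downarrow_H}\cong 28\oplus 28$ is semisimple, and Proposition \ref{prop:ssmultfree}(ii) applies: up to conjugacy $H\leq \Sp_{56}(k)$, and $H$ is unique there among subgroups with this action. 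In particular $\operatorname{End}_{kH}(M(E_7))\cong M_2(k)$, so the only subspaces of $M(E_7)$ fixed by $H$ are $0$, the whole module, and a pencil (a $\mathbb P^1$) of $28$-dimensional submodules, each of them maximal totally isotropic.

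Next I would run the subalgebra method of Section \ref{sec:subalgmethod}: since the alternating form is unique, $H$ acts on $S^2(M(E_7))\supseteq L(E_7)$, and the resulting computation — carried out in the supplementary materials for each relevant prime (namely $p=3,5,7$ and $p\neq 2,3,7$) — should produce a single $\mbG$-conjugacy class of subgroups $H$ with this action, a representative of which lies inside a $D_4$-subgroup $\mb X$ of $\mbG$ contained in the maximal-rank $A_7$ subgroup. That $\mb X$ acts on $M(E_7)$ as $V^{\oplus 2}$ for $V$ the $28$-dimensional irreducible (adjoint-type) $k\mb X$-module, which is self-dual for odd $p$, and $V{\downarrow_H}$ is exactly the $28$-dimensional irreducible of $H$. (Alternatively, as in the $\Alt(7)$ argument, one may show $H$ stabilizes a subalgebra of $L(E_7)$ of type $\mathfrak d_4$ and invoke \cite[Theorem 1.1]{premetstewart2019} together with the list of $\mbG$-irreducible subgroups to see that it integrates to such an $\mb X$.)

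Finally, since $H\leq \mb X$ and $\operatorname{End}_{k\mb X}(M(E_7))\cong M_2(k)$ as well (the two summands $V$ being isomorphic), we have $\operatorname{End}_{k\mb X}(M(E_7))=\operatorname{End}_{kH}(M(E_7))$, so $\mb X$ and $H$ stabilize exactly the same subspaces of $M(E_7)$ — in each case $0$, the whole module, and one common pencil of $28$-spaces. Hence $\mb X$ witnesses that $H$ is a blueprint for $M(E_7)$, and, since $\mbG$ is irreducible on $M(E_7)$, \cite[Corollary 3.3]{craven2019un} then gives for free that $H$ is strongly imprimitive. I expect the main obstacle to be this middle step: pushing the subalgebra-method computation through all the relevant primes and confirming that every such $H$ lies in a $D_4$; the surrounding representation-theoretic bookkeeping — that $\Omega_8^+(2)$ itself has no faithful $8$-dimensional module in odd characteristic, although a cover does, so that $\mb X$ must be taken of the appropriate isogeny type — is the delicate part.
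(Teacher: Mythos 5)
Your opening bookkeeping (semisimplicity of $M(E_7){\downarrow_H}\cong 28\oplus 28$, the symmetric form on $28$, uniqueness up to $\Sp_{56}(k)$-conjugacy via Proposition \ref{prop:ssmultfree}, and the description of the invariant subspaces as $0$, $M(E_7)$ and a $\mathbb P^1$ of $28$-spaces) is correct and consistent with the paper. But the decisive step of your plan --- that the subalgebra method ``should produce'' a single $\mbG$-class with a representative in a $D_4$ --- is exactly the part you do not carry out, and it is a genuinely heavy computation: $\Omega_8^+(2)$ has order about $1.7\times 10^8$, and you would need to run the machinery of Section \ref{sec:subalgmethod} separately for each of $p=3,5,7$ and $p\nmid|H|$. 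Since the truth of that uniqueness claim is precisely what is at stake (a second, Lie primitive class would falsify the proposition), asserting it as the expected outcome of an unexecuted computation leaves a real gap. Note also that you are proving more than you need: the blueprint definition only asks for a positive-dimensional subgroup stabilizing the same subspaces of $M(E_7)$ as $H$; it does not require that subgroup to contain $H$.

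The paper exploits exactly that slack and avoids all computation. It takes $L$ to be the $\GL_4(2)$-parabolic subgroup of $H$, which still acts irreducibly on the $28$-dimensional module in every odd characteristic; hence $L$ and $H$ stabilize \emph{exactly} the same subspaces of $M(E_7)$. Unlike $H$, the subgroup $L$ is visibly Lie imprimitive, and the only appropriate maximal positive-dimensional overgroup compatible with the action $28\oplus 28$ is the maximal-rank $A_7$; moreover $L\leq \Omega_8^+(2)\leq \Omega_8^+(k)$ places $L$ inside a $D_4$ subgroup of that $A_7$, which acts on $M(E_7)$ as the sum of two copies of its $28$-dimensional module and therefore stabilizes the same subspaces as $L$, hence as $H$. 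That single observation finishes the proof for all odd $p$ at once, with no case division and no determination of the number of $\mbG$-classes of $H$. If you want to keep your route, you must actually perform and document the subalgebra computations; otherwise I would recommend replacing the middle of your argument with the passage to $L$.
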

\begin{proof} Let $L$ denote the $\GL_4(2)$-parabolic subgroup inside $H$, which also acts irreducibly on the $28$-dimensional simple module for $H$ in all characteristics $p\neq 2$. Thus $H$ and $L$ stabilize the same subspaces of $M(E_7)$. It is clear that $L$ lies inside a positive-dimensional subgroup of $\mbG$, and the only appropriate maximal such subgroup, examining the tables of actions in, say, \cite[p.247]{thomas2016} is the $A_7$ maximal-rank subgroup. (This subgroup does not stabilize the same subspaces as $H$ because its $28$-dimensional factors are not self-dual, so the diagonal subspaces for the $H$-action are not stabilized by $A_7$.)

Since $\Omega_8^+(2)\leq \Omega_8^+(p)$ (i.e., the $8$-dimensional module for $\Omega_8^+(2)$ fixes a symmetric bilinear form of plus type) and $L$ is contained in $\Omega_8^+(2)$, we therefore see that $L$ is contained inside a $D_4$ subgroup of the $A_7$ subgroup, and this stabilizes the same subspaces of $L$ (as the $28$-dimensional factors for $D_4$ \emph{are} self-dual), i.e., $L$ and therefore $H$ are blueprints for $M(E_7)$, as needed.
\end{proof}

\subsubsection{\texorpdfstring{${}^2\!B_2(8)$}{2B2(8)}}

Here $p=5$, and the composition factors of $M(E_7)\downarrow_H$ are $(14,14^*)^2$ by \cite[Table 6.234]{litterickmemoir}. The only module action consistent with the action of a unipotent element is $(14/14^*)^{\oplus 2}$, using \cite[Table 7]{lawther1995}. (There is no extension $14^*/14$.) The composition factors on $L(E_7)$ are $14,14^*,35_1,35_2,35_3$. The $35$-dimensional modules are projective, hence become summands, and the module that has the correct action of $u$ has a summand $14/14^*$. Thus the action of $H$ on $L(E_7)$ is also determined up to isomorphism.

\begin{prop} If $H\cong {}^2\!B_2(8)$ then any subgroup $H\leq \mbG$ is strongly imprimitive.
\end{prop}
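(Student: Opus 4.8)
The plan is to reduce the statement to the assertion that every such $H$ is conjugate into the maximal-rank subgroup $\mb X$ of $\mbG$ of type $A_7$ (in which ${}^2\!B_2(8)$ is known to embed for $p=5$), and then to conclude strong imprimitivity exactly as in the $\Alt(7)$ cases above: either by placing $H$ inside a $\mbG$-irreducible subgroup of $\mb X$ acting semisimply on $M(E_7)$, so that $H$ becomes a blueprint for $M(E_7)$, or by observing $H\le\Omega_8^{\pm}(5)$ inside $\mb X^\sigma$, so that Frobenius endomorphisms act on $H$ only as inner automorphisms. Either route gives strong imprimitivity, the first via the blueprint criterion together with Proposition~\ref{prop:intersectionsubspace}, the second directly.

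The module structures are already established above: $M(E_7){\downarrow_H}\cong (14/14^*)^{\oplus 2}$, each summand uniserial and self-dual (since $\dim_k\Ext^1_{kH}(14,14^*)=1$ forces $(14/14^*)^*\cong 14/14^*$), and $L(E_7){\downarrow_H}\cong 35_1\oplus 35_2\oplus 35_3\oplus (14/14^*)$ with the $35_i$ projective. Hence $H$ stabilizes $W=\soc(M(E_7){\downarrow_H})$, of dimension $28$. The unique $14$-dimensional submodule of $14/14^*$ is totally singular for any non-degenerate $H$-invariant form on that summand, so $W$ is totally singular and therefore, by dimension, a Lagrangian; thus $H$ lies in an $A_{27}$-parabolic of $\Sp_{56}(k)$. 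Moreover $H$ stabilizes no complement to $W$, since $M(E_7){\downarrow_H}$ has no submodule isomorphic to $14^{\oplus 2}$.

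To show $H\le\mb X$ I would run the subalgebra method of Section~\ref{sec:subalgmethod}. One first needs the $H$-invariant symplectic form on $M(E_7)$ to be unique up to $\Sp_{56}(k)$-conjugacy; I would obtain this from Corollary~\ref{cor:symp1cohom} applied to the Lagrangian $W$, whose hypothesis is that $H^1(H,S^2(W))$ is one-dimensional — a computation made tractable by the fact that a Sylow $5$-subgroup of $H$ is cyclic of order $5$ — and if that cohomology does not come out as needed one verifies uniqueness of the form directly. With this, apply the method with $M=M(E_7){\downarrow_H}$ and $L=L(E_7){\downarrow_H}$ inside $S^2(M)=L(C_{28}){\downarrow_H}$. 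The composition-factor bookkeeping (no summand of $L(E_7){\downarrow_H}$ is of dimension $133$, and the three $35_i$ sharply constrain which submodules of $S^2(M)$ can lie in $L(\mbG)$) should show that, up to $N_{\Sp_{56}(k)}(H)$-conjugacy, no candidate submodule generates a copy of $\mathfrak e_7$. By Lemma~\ref{lem:inconnected} (and $H$ is neither $\PSU_3(3)$ nor $\PSL_2(8)$) this already puts $H$ inside a connected proper positive-dimensional subgroup $\mb Y$; matching the composition factors of $\operatorname{Lie}(\mb Y)$ on $\mathfrak e_7$ against those of $H$, and invoking the Premet--Stewart classification \cite{premetstewart2019} (good characteristic, $p=5$) together with the list of $\mbG$-irreducible subgroups, should force $\operatorname{Lie}(\mb Y)$ into an $\mathfrak a_7$ and hence $H$ into $\mb X=A_7$.

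With $H\le\mb X$ in place, $\mb X$ acts on $M(E_7)$ as $\Lambda^2(8)\oplus\Lambda^2(8)^*\cong 28\oplus 28^*$, and the only way this can restrict to $(14/14^*)^{\oplus 2}$ is with $\Lambda^2(8){\downarrow_H}\cong 14/14^*$; one then finishes by either of the two paths in the first paragraph. The step I expect to be the main obstacle is the class count in the third paragraph: proving uniqueness of the $H$-invariant symplectic form (the $S^2(W)$-cohomology computation, with $W$ reducible of dimension $28$) and then verifying, necessarily by computer, that no copy of $H$ generates $\mathfrak e_7$ — a miscount there would leave open the possibility of a Lie primitive subgroup $H\cong{}^2\!B_2(8)$ of $\mbG$.
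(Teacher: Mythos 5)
Your route is genuinely different from the paper's: the paper observes that $\mathcal R^H(L(E_7))$ is only $37$-dimensional, so it runs the \emph{Lie product} method of Section \ref{sec:lieprodmethod} directly on $L(E_7){\downarrow_H}$, finds a unique $H$-invariant Lie product, and then identifies $H$ with the known copy $2\cdot H\le D_4\le A_7$ (via the $8$-dimensional $\F_5$-representation of $2\cdot H$, whose exterior square is $14/14^*$). Your subalgebra-method detour is therefore unnecessary, and it has two soft spots: Corollary \ref{cor:symp1cohom} is unlikely to apply as you state it, because $W=\soc(M(E_7){\downarrow_H})\cong (14^*)^{\oplus 2}$ has a repeated factor, so $S^2(W)\cong S^2(14^*)^{\oplus 3}\oplus\Lambda^2(14^*)$ and its $1$-cohomology will not be $1$-dimensional if $H^1(H,S^2(14^*))\neq 0$ (uniqueness of the form up to conjugacy has to be argued via the action of $C_{\GL(M)}(H)\supseteq\GL_2(k)$ on the space of forms, as in the $\Alt(7)$ case); and your inference ``no candidate generates $\mathfrak e_7$, hence Lemma \ref{lem:inconnected} gives a connected overgroup'' runs backwards --- that lemma presupposes Lie imprimitivity, whereas what the computation would actually give you is an $H$-stable proper subalgebra, from which Premet--Stewart is the tool that produces the overgroup. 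These are repairable, but they are not yet a proof.

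The genuine gap is the final step. Your first proposed finish --- a $\mbG$-irreducible subgroup of $A_7$ acting \emph{semisimply} on $M(E_7)$ and containing $H$, making $H$ a blueprint --- cannot work here: $H$ acts on $M(E_7)$ as $(14/14^*)^{\oplus 2}$, which is not semisimple, and a subgroup and an overgroup acting semisimply cannot stabilize exactly the same subspaces unless the subgroup also acts semisimply. (This is where the present case differs essentially from $\Alt(7)$, whose action $(10\oplus 10^*)^{\oplus 2}\oplus 8^{\oplus 2}$ is semisimple.) The paper's replacement for this step is the heart of its proof: the $28$-dimensional module $14/14^*$ carries a symmetric form, so $H$ lies in a $D_4$ subgroup of $A_7$ acting as the sum of two isomorphic $28$-dimensional modules; one then counts that both $H$ and this $D_4$ stabilize exactly the $q+1$ indecomposable $28$-dimensional subspaces of $M(E_7)$ definable over $\F_q$, whence Proposition \ref{prop:intersectionsubspace} applies. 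Your second proposed finish ($H\le\Omega_8^\pm(5)$, Frobenius acts as an inner automorphism, ``strong imprimitivity directly'') is asserted rather than argued; to use it you would at least need to determine the type of the form over $\F_5$ and explain why an $N_{\Aut^+(\mbG)}(H)$-stable positive-dimensional overgroup results. Without one of these closing arguments the proof does not reach strong imprimitivity.
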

\begin{proof} Using the Lie product method on $\mathcal R^H(L(E_7))$, which is $37$-dimensional, we obtain a unique $H$-invariant Lie product on $L(E_7){\downarrow_H}$. (See the supplementary materials.) Notice that $2.H$, which is unique up to isomorphism, possesses an $8$-dimensional representation over $\F_5$, hence lies inside $A_7$. The exterior square of this $8$-dimensional module is $14/14^*$, and so the action is correct. Notice that there are exactly $q+1$ indecomposable $28$-dimensional submodules of the module $(14/14^*)^{\oplus 2}$ over $\F_q$.

By a computer check, this $28$-dimensional module supports a symmetric bilinear form, and hence $H$ is contained in a $D_4$ subgroup $\mb X$ of $A_7$. This also acts as the sum of two isomorphic $28$-dimensional modules, so $\mb X$ stabilizes exactly $q+1$ different $28$-dimensional subspaces of $M(E_7)$ that are definable over $\F_q$. Hence all indecomposable $28$-dimensional $H$-invariant subspaces of $M(E_7)$ are $\mb X$-invariant, and thus $H$ is strongly imprimitive by Proposition \ref{prop:intersectionsubspace}. (This is the same as the subspace stabilizer seen in the proof of Proposition \ref{prop:omega8plus2}.)
\end{proof}

\subsection{Lie type \texorpdfstring{$\PSL_2(r)$}{PSL(2,r)}}

We come to the main issue. The groups $\PSL_2(r)$ for $r\neq p$ form the largest collection of subgroups to consider, and also all of the most difficult cases, at least over the algebraic closure. (The issue for $\PSU_3(3)$ was for finite fields.)

Since we have considered $\Alt(5)$ and $\Alt(6)$ earlier, the cases under consideration here are $\PSL_2(r)$ for $r=7,8,11,13,19,25,27,29,37$. Each of these has at least one set of composition factors labelled with `$\mathbf{P}$' in the tables in \cite{litterickmemoir} (except for $r=25$, where we need the case $p=5$), so we will have to consider each in turn. Some of them, for example $r=11,25$, do not yield Lie primitive subgroups, or maximal subgroups. Most of the others do, but there are some we cannot at the moment determine either way. These are $r=7,8$.

In this section, we let $H$ be a subgroup of $\mbG$, and write $\bar H$ for the image of $H$ in $G$ and $\bar G$. We keep the convention that if $Z(H)=1$ then we identify $H$ and $\bar H$. Proofs of maximality of subgroups are delayed until Section \ref{sec:proofofmaximality}.

\subsubsection{\texorpdfstring{$\PSL_2(37)$}{PSL(2,37)}}

This is the first example where the version of $H$ that embeds in $\mbG$ depends on $p$. If $p\neq 2$ write $H=\SL_2(37)$, and if $p=2$ write $H\cong\PSL_2(37)$. Note that $H$ embeds in $\mbG$ and $H/Z(H)$ does not for $p\neq 2,37$. The appropriate primes here are $p=2$, $p=3$, $p=19$ and $p\neq 2,3,19,37$. If $p\neq 2,3,19,37$ then in \cite[Table 6.207]{litterickmemoir} we see that the action of $H$ on $M(E_7)$ is the sum of an $18$- and a $38$-dimensional module (see also \cite{kleidmanryba1993}). There are three potential $38$-dimensional modules, on which an element of $H$ of order $3$ acts with trace $2$. One has rational character, the other two are algebraic conjugates.

Next, consider $p=2$. In \cite[Table 6.210]{litterickmemoir} there are three potential Lie primitive sets of factors. As $P(18_1)$ has structure
\[18_1/1/18_2/1/18_1\]
we see that the second set of factors for $M(E_7){\downarrow_H}$---$18_1^3,1^2$---must yield a stabilized line. The first set of factors is $18_1,38$, so this must be semisimple. An element $u\in H$ of order $2$ acts on $H$ with Jordan blocks $2^{27},1^2$, which does not appear in \cite[Table 7]{lawther1995}, so this case cannot occur. The third set of factors for $M(E_7){\downarrow_H}$ is $18_1^2,18_2,1^2$, and so $H$ must act as $P(18_1)$. Since $H$ embeds in the adjoint group $E_7(\mathbb{C})$, $H$ must embed in $\mbG$, so this is the action on $M(E_7)$.

The reduction modulo $2$ of the two algebraically conjugate characters for characteristic $0$ is the case where $H$ stabilizes a line (these $38$s remain irreducible modulo $2$). Since the (connected component of the) line stabilizers on $M(E_7)$ are contained in an $E_6$-parabolic and a $D_6$-parabolic (see Proposition \ref{prop:1spacestabs}), into neither of which a cover of $H$ embeds, the rational character, which reduces modulo $2$ to the case not stabilizing a line, must be the case yielding an embedding of $H$ into $\mbG$. (This agrees with the analysis of \cite[Section 4.4]{pacherathesis}, which constructs the characteristic $0$ case.)

For $p=19$ there are again three possible $38$-dimensionals, but we cannot reduce modulo $2$ in this case. We show in the supplementary materials using the Lie product method that there is a unique subgroup $H$ in this case, and so the action on $M(E_7)$ must be the reduction modulo $19$ of that in characteristic $0$. For $p=3$, there is an ambiguity in \cite[Table 6.209]{litterickmemoir}; this can be resolved once one includes the action of an element $u$ of order $9$ in $H$. This must act on the unique module $M(E_7){\downarrow_H}=18_1\oplus 38$ with Jordan blocks $9^6,1^2$. From \cite{lawther1995}, we see that $u$ lies in class $E_6(a_1)$, and acts on $L(E_7)$ with blocks $9^{14},7$. In particular, this means that $L(E_7){\downarrow_H}$ has the structure
\[ 19_i/19_{3-i}/19_i/19_{3-i}/19_i/19_{3-i}/19_i,\]
and $i$ can be determined by the trace of an element of order $37$ in $H$.

With this description of the actions of $H$ on $M(E_7)$ and $L(E_7)$, we can now state our result. The claim about maximality will be proved in Section \ref{sec:proofofmaximality}.

\begin{prop} Let $p\neq 37$. If $p=2$ then write $H$ for the group $\PSL_2(37)$, and if $p$ is odd write $H$ for the group $\SL_2(37)$.
\begin{enumerate}
\item There is a unique $\mbG$-conjugacy class of subgroups $H$. When $\bar H\leq G$, if $p\neq 2$ then there are exactly two $G$-conjugacy classes of subgroups $\bar H$ and if $p=2$ there is a single $G$-conjugacy class. Furthermore, $N_\mbG(H)=H$.
\item $\bar H$ embeds in $G$ if and only if $x^2\pm x-9$ splits over $\F_q$ (which is true if and only if $q$ is a square modulo $37$).
\item If $N_{\bar G}(\bar H)$ is maximal in $\bar G$ then $q$ is minimal such that $x^2-x+9$ splits over $\F_q$, and $\bar G=G$ or $q=p^2$ and $\bar G$ induces the field automorphism on $G$.
\end{enumerate}
\end{prop}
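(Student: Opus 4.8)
The plan is to prove the three parts in turn, following the template of the earlier sporadic-group cases. For part (i), the existence of a unique $\mbG$-conjugacy class: in each characteristic the module $M(E_7){\downarrow_H}$ has been pinned down above ($18_1\oplus 38$ for $p\neq 2$, the projective $P(18_1)$ for $p=2$), and the space of alternating forms on it is small, so I would apply the Lie product method of Section~\ref{sec:lieprodmethod} to $\mathcal R^H(L(E_7))$ (for $p=19$ this is already promised in the supplementary materials), obtaining a unique $H$-invariant Lie product on $L(E_7){\downarrow_H}$; this gives uniqueness up to $\mbG$-conjugacy. To compute $N_\mbG(H)$, I would note $\Out(\PSL_2(37))=2$, so $N_\mbG(H)/H\cdot Z(\mbG)$ is either trivial or induced by the diagonal (``$\PGL_2$'') automorphism; since $C_\mbG(H)=Z(\mbG)$ (we are in the regime of Section~\ref{sec:othermaximals} where this always holds), it suffices to check that the outer automorphism is not realised inside $\mbG$. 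The cleanest route is to examine the character of $M(E_7){\downarrow_H}$: the $18$- and $38$-dimensional summands are \emph{not} swapped or stabilised compatibly by the $\PGL_2(37)$-automorphism (the $38$s fuse in threes under algebraic conjugacy, and the relevant outer automorphism would have to fix or permute them incompatibly with the eigenvalue data), so $\PGL_2(37)$ does not embed preserving the $56$-dimensional module, giving $N_\mbG(H)=H$. Then Lemma~\ref{lem:diagswaps}(ii) yields exactly $d=\gcd(2,q-1)$ classes of $H$ in $G$ once $H\leq G$, i.e.\ two classes for $p$ odd and one for $p=2$.

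For part (ii), the embedding criterion: by Proposition~\ref{prop:p=0impallp} and Larsen's correspondence (Theorem~\ref{thm:larsencorr}) there is a copy of $H$ in $\mbG$ for every $p\neq 37$, and the question is whether a conjugate lies in $G=2\cdot E_7(q)$. By Lemma~\ref{lem:fixedpoints} applied to the (unique, hence connected-stabiliser-free but with $\mbG_x/\mbG_x^\circ$ of order $d$) class, $H$ embeds in $G$ iff the $\sigma$-class of $H$ is the trivial one, which by the analysis in Section~2.4 is detected by whether the character of $M(E_7){\downarrow_H}$ is fixed by $F_q$. The entries of that character generating a field extension of the prime field are exactly the roots of the minimal polynomial of the relevant $38$-dimensional constituent's character values (the algebraic conjugates over $\Q$), and a short computation with the $\SL_2(37)$ character table identifies that extension as $\Q(\sqrt{37})$ — equivalently the splitting field of $x^2\mp x-9$ (note $x^2-x+9$ has discriminant $1-36=-35$; one must be careful which quadratic and which sign of trace is relevant in each characteristic, matching the $p=2$ discussion above where the rational-character branch is forced). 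Hence $H\leq G$ iff $x^2\pm x-9$ splits over $\F_q$, which is iff $q$ is a square modulo $37$.

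For part (iii), maximality of $N_{\bar G}(\bar H)$: one direction is that if $q$ is not minimal with the splitting property, then $H$ already embeds in $E_7(q_0)$ for the minimal such $q_0$ (a subfield), and so $N_{\bar G}(\bar H)$ lies inside the subfield-subgroup maximal $E_7(q_0)\cdot\gcd(d,r)$ of Table~\ref{t:othermaximals}, hence is not maximal; similarly field automorphisms beyond degree $2$ are absorbed into subfield subgroups, so $\bar G$ can only be $G$ itself or (when $q=p^2$) the degree-$2$ field extension $G.2$. For the genuine maximality at the minimal $q$, I would invoke Proposition~\ref{prop:maxcont} (the ``no overgroups'' result proved later in Section~\ref{sec:proofofmaximality}) to rule out containment in any other Lie primitive subgroup, together with the blueprint/strong-imprimitivity results of Section~2.3 to rule out containment in a positive-dimensional subgroup: since $H$ has no trivial composition factor on $L(E_7)$ it lies in no parabolic, and the explicit module structures above show $H$ stabilises no subspace whose stabiliser is positive-dimensional, so $H$ is not strongly imprimitive. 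The main obstacle I anticipate is part (ii)/(iii): correctly identifying which quadratic and which sign of the trace governs the field of definition in each of the four characteristics (especially reconciling the $p=2$ reduction argument, which forces the \emph{rational} characteristic-$0$ character, with the $p=19$ case where no mod-$2$ reduction is available and the Lie product computation must stand on its own), and then matching ``$x^2\pm x-9$ splits over $\F_q$'' cleanly with ``$q$ is a square modulo $37$'' via quadratic reciprocity.
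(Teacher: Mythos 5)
Your overall strategy matches the paper's: determine $N_\mbG(H)$ from the module data, establish uniqueness of the $\mbG$-class computationally, read off the field of definition from character irrationalities, and let Lemma \ref{lem:diagswaps} handle the diagonal automorphism. Two details need correcting, however. First, the irrationality does not come from the $38$: the $38$-dimensional constituent that actually occurs is the one with \emph{rational} character (that is exactly how the paper pins it down via reduction modulo $2$), and the quadratic irrationality satisfying $x^2\pm x-9$ lives in the $18$, which is one of a dual pair of $18$-dimensional modules interchanged by the diagonal automorphism of $H$. This is not cosmetic: the fact that only one of the two $18$s occurs is precisely the paper's one-line reason that $N_\mbG(H)=H$, and it is why $F_q$ induces the diagonal automorphism exactly when that $18$ is not definable over $\F_q$; your version, which hangs both the normalizer computation and the splitting-field criterion on how the $38$s behave under the outer automorphism, would not go through as stated. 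Second, the computational step is not uniform across characteristics: the paper finds that for $p=3$ the equations coming from $\mathcal R^H(L(E_7))$ (dimension $58$) are intractable and switches to the subalgebra method, first using Proposition \ref{prop:ssmultfree} to show the symplectic form on $18_1\oplus 38$ is unique up to $\Sp_{56}(k)$-conjugacy; for $p\neq 2,3,19,37$ uniqueness is quoted from Pachera's work rather than recomputed. Neither point changes the architecture of the proof, but both would need to be repaired before the argument is complete.
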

\begin{proof} The proof proceeds in stages.

\paragraph{Determination of $N_\mbG(H)$} This is easy: since the composition factors of $M(E_7){\downarrow_H}$ are not stable under the diagonal automorphism of $H$, $N_\mbG(H)=H$.

\paragraph{Determination of $H$ up to conjugacy}

Existence of this subgroup was proved in \cite{kleidmanryba1993}, and then later in a more general setting in \cite{serre1996}, but in neither case is uniqueness addressed. For $p\neq 2,3,19,37$ this is accomplished in \cite[Theorem 4.4.2]{pacherathesis}, and we obtain a unique class. (We independently verify this in the supplementary materials, using the subalgebra method.) For $p=2,19$ one may use the space $\mathcal{R}^H(L(E_7)^\circ)$. The space has dimension $58$ for $p=19$ and $60$ for $p=2$. In each case one finds a unique $H$-invariant $\mathfrak e_7$-Lie algebra structure on $L(E_7)^\circ{\downarrow_H}$. For $p=3$, although $\mathcal{R}^H(L(E_7){\downarrow_H})$ has dimension $58$, the equations are very complicated, and it seems difficult to proceed in this way. One instead uses the subalgebra method from Section \ref{sec:subalgmethod}. Since the action of $H$ on $M(E_7)$ is $18_1\oplus 38$ and each of these is a symplectic space, $H$ is unique up to $\Sp_{56}(k)$-conjugacy by Proposition \ref{prop:ssmultfree}. The subalgebra method also yields a unique $\mathfrak e_7$-subalgebra (this time up to conjugacy in the centralizer of $H$ in $\Sp_{56}(k)$).

Thus there is a unique $\mbG$-conjugacy class in all cases. The number of $G$-classes follows from Lemma \ref{lem:diagswaps}.

\paragraph{Action of outer automorphisms}

Let $\sigma$ denote a field automorphism $F_q$. Since there is a unique $\mbG$-class, we may choose $H$ in the class such that $\sigma$ normalizes it. Either $\sigma$ acts as an outer automorphism on $H$, so induces the diagonal automorphism, or as an inner automorphism, so up to conjugacy $H\leq \mbG^\sigma$ by Lemma \ref{lem:inneriscent}.

To decide if $\sigma$ acts as an outer or inner automorphism on the subgroup we may examine the character. Since the action of $H$ on $M(E_7)$ is the sum of the $38$ and one of the two $18$s that are permuted by the diagonal automorphism of $H$, $\sigma$ centralizes $H$ (up to conjugacy) if and only if $M(E_7){\downarrow_H}$ is definable over $\F_q$, and acts as an outer automorphism on $H$ if and only if it is definable only over $\F_{q^2}$. The irrationalities in the character satisfy $x^2\pm x-9$, as claimed in the proposition.

By Lemma \ref{lem:diagswaps}, the diagonal automorphism always swaps two $G$-classes of subgroups $\bar H$.\end{proof}
%
%
%
%
%

\subsubsection{\texorpdfstring{$\PSL_2(29)$}{PSL(2,29)}}
\label{sec:sl229}
As with the previous case, the group $H$ depends on $p$: if $p=2$, let $H\cong\PSL_2(29)$, and if $p\neq 2$ then let $H\cong\SL_2(29)$. The appropriate primes here are $p=2$, $p=3$, $p=5$, $p=7$ and $p\neq 2,3,5,7,29$. If $p\neq 2,3,5,7,29$ then the action of $H$ on $M(E_7)$ is the sum of two non-isomorphic $28$-dimensional modules (there are four such modules with non-rational character, on which an element of order $3$ has trace $1$). The same holds for $p=2,7$. However, for $p=3,5$, the two $28$-dimensional composition factors are isomorphic, and there is a (unique) non-trivial self-extension. In these two cases, the action of $u$ of order $p$ on $M(E_7)$ does not appear in \cite[Table 7]{lawther1995} if $M(E_7){\downarrow_H}$ is semisimple, so $M(E_7){\downarrow_H}$ must be the self-extension. In both cases, the $1$-cohomology of the symmetric square of the $28$ is $1$-dimensional, and so there is a unique $\Sp_{56}(k)$-class of subgroups $H$ with this module structure by Corollary \ref{cor:symp1cohom}.

The composition factors of $L(E_7){\downarrow_H}$ are uniquely determined for all primes, given the factors for $M(E_7)$, by \cite[Tables 6.202--6.206]{litterickmemoir}. The structure of $L(E_7){\downarrow_H}$ is easy to determine using these composition factors and the action of a unipotent element of order $p$ in $H$, via \cite[Tables 7 and 8]{lawther1995}.

We now state our result, delaying proofs of maximality to Section \ref{sec:proofofmaximality}, as usual.

\begin{prop}\label{prop:psl229} Let $p\neq 29$. If $p=2$ then write $H$ for the group $\PSL_2(29)$, and if $p$ is odd write $H$ for the group $\SL_2(29)$.
\begin{enumerate}
\item If $p\neq 5$ then there are exactly two $\mbG$-conjugacy classes of subgroups $H$, and if $p=5$ then there is a unique $\mbG$-class. If $\bar H\leq G$, and $p=2,5$ or $p\neq 2,5$, then there are exactly two or four $G$-conjugacy classes of subgroups $\bar H$ respectively. Furthermore, $H=N_\mbG(H)$.
\item $\bar H$ embeds in $G$ if and only if both $x^2-x-1$ and $x^2\pm x-7$ split over $\F_q$ (which is true if and only if $q\equiv 0,\pm1\bmod 5$ is a square modulo $29$).
\item If $N_{\bar G}(\bar H)$ is maximal in $\bar G$ then $p\neq 5$, and $q$ is minimal such that $q\equiv \pm1\bmod 5$ is a square modulo $29$. Either $\bar G=G$, or $q=p^2$ for $p\equiv \pm1\bmod 5$ a non-square modulo $29$ and $\bar G$ induces the field automorphism on $G$.
\end{enumerate}
\end{prop}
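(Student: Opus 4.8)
The plan is to follow the three-stage template used for $\PSL_2(37)$ just above, adapted to the extra subtleties here: a genuine pair of $\mbG$-classes for $p\neq 5$, the self-extension module when $p=3,5$, and a two-part rationality obstruction. First I would determine $N_\mbG(H)$. For $p\neq 3,5$ (including $p=2,7$) the composition factors of $M(E_7){\downarrow_H}$ -- a sum of two non-isomorphic $28$-dimensional modules from the four with non-rational character -- are not fixed by the diagonal automorphism of $H$, so $N_\mbG(H)=H$ at once; the same reasoning works for $p=3$ using the composition factors of $L(E_7){\downarrow_H}$ even though the two $28$s have merged. For $p=5$, where $M(E_7){\downarrow_H}\cong 28/28$ is stable as a set of composition factors under the diagonal automorphism, I would instead argue directly, in the spirit of the other $p=5$ computations of this section: one checks that the uniserial structure together with the alternating form on $M(E_7){\downarrow_H}$ (equivalently, the module structure of $L(E_7){\downarrow_H}$) does not extend compatibly to $H.2=\SL_2(29).2$, which forces $N_\mbG(H)=H$ here as well.

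Next I would count conjugacy classes. When $p\neq 3,5$ each $28$ is a symplectic space occurring with multiplicity one, so by Proposition \ref{prop:ssmultfree} the module $M(E_7){\downarrow_H}$ supports a unique alternating form up to $\Sp_{56}(k)$-conjugacy; when $p=3,5$ the self-extension $28/28$ gives a unique $\Sp_{56}(k)$-class by Corollary \ref{cor:symp1cohom}, the $1$-cohomology of $S^2(28)$ being one-dimensional. On this essentially unique embedding into $\Sp_{56}(k)$, and using the already-determined structure of $L(E_7){\downarrow_H}$, I would run the subalgebra method of Section \ref{sec:subalgmethod}; I expect it to return exactly two orbits of $\mathfrak e_7$-subalgebras under $C_{\Sp_{56}(k)}(H)$ when $p\neq 5$ and exactly one when $p=5$, each surviving candidate being confirmed to be $\mathfrak e_7$ by the dimension, maximal-abelian-subalgebra (Proposition \ref{prop:abeliansubalgebra}) and nilpotent-Jordan-form filters. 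Combined with $N_\mbG(H)=H$ and Lemma \ref{lem:diagswaps} (with $d=\gcd(2,q-1)$) this yields the stated $G$-class counts: two $\mbG$-classes and hence four $G$-classes for odd $p\neq 5$; two $\mbG$-classes but $d=1$, hence two $G$-classes, for $p=2$; and one $\mbG$-class with $d=2$, hence two $G$-classes, for $p=5$.

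For the outer automorphisms, Lemma \ref{lem:diagswaps} already shows the diagonal automorphism of $G$ fuses $G$-classes in pairs, so it remains, for $\sigma=F_q$ fixing a $\mbG$-class and normalising a chosen $H$, to decide whether $\sigma$ acts on $H$ as an inner or as a (diagonal) outer automorphism; since $M(E_7){\downarrow_H}$ determines $H$ up to $\mbG$-conjugacy, this is equivalent to asking whether $\sigma$ centralises some conjugate of $H$, which happens precisely when $M(E_7){\downarrow_H}$ is realisable over $\F_q$. A Brauer-character computation then identifies the relevant irrationalities: the roots of $x^2-x-1$ (the $\sqrt5$, coming from order-$5$ elements of $H$ and reflecting the $\PSL_2(29)$-fusion of classes, lying in $\F_q$ precisely when $q\equiv 0,\pm1\bmod 5$) and the roots of $x^2\pm x-7$ (the $\sqrt{29}$, lying in $\F_q$ precisely when $q$ is a square modulo $29$). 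Thus $H$ embeds in $G$ iff both split over $\F_q$, giving part (ii); and tracing when the field automorphism of $E_7(p^2)$ over $E_7(p)$ restricts to the diagonal automorphism of $H$ gives the ``$\bar G=G$, or $q=p^2$ with $\bar G$ inducing a field automorphism'' dichotomy of part (iii). Maximality follows from Proposition \ref{prop:maxcont}: for $p\neq 5$ there is no Lie primitive overgroup, while for $p=5$ the subgroup $H=\SL_2(29)$ lies inside $2\cdot Ru$ (Section \ref{sec:ru}) and so is never maximal -- which is exactly why part (iii) excludes $p=5$.

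I expect the hardest parts to be the $p=5$ bookkeeping together with the class-fusion analysis under $\sigma$: establishing $N_\mbG(H)=H$ once the two $28$s have coalesced (the cheap ``unstable composition factors'' argument being unavailable), confirming the subalgebra method returns a single orbit there, and -- across all primes -- correctly matching which Galois, respectively field-automorphism, elements swap versus fix the two $\mbG$-classes, so that the combined $\sqrt5$-and-$\sqrt{29}$ criterion of part (ii) is obtained cleanly. The remaining ingredients (the subalgebra-method runs for the other primes, the Brauer-character irrationalities, and the appeal to Proposition \ref{prop:maxcont}) should be routine in the style already established in this section.
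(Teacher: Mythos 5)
Your overall architecture matches the paper's proof (the same three-stage template, Proposition \ref{prop:ssmultfree} and Corollary \ref{cor:symp1cohom} for the form, the subalgebra method for conjugacy, Lemma \ref{lem:diagswaps} for the diagonal fusion, containment in $2\cdot Ru$ to kill maximality at $p=5$, and the $\sqrt5$/$\sqrt{29}$ split for parts (ii) and (iii)). But there is a genuine error at the first stage. You claim that for $p\neq 3,5$ the composition factors of $M(E_7){\downarrow_H}$ --- two non-isomorphic $28$-dimensional modules --- are not fixed by the diagonal automorphism of $H$, and deduce $N_\mbG(H)=H$. This is false: the $28$-dimensional modules are full discrete-series representations of $\SL_2(29)$ (degree $q-1$), and these are individually stable under the diagonal automorphism. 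The characters moved by the diagonal automorphism are precisely those of degree $(29\pm1)/2=14,15$, and those occur only in $L(E_7){\downarrow_H}$. The paper's argument is exactly this: the two $15$-dimensional modules ($14$-dimensional for $p=2$) are swapped by the diagonal automorphism and occur with \emph{differing multiplicities} in $L(E_7){\downarrow_H}$, so no element of $\mbG$ can induce that automorphism. This is also why the $\PSL_2(37)$ template does not transfer verbatim: there the $18$-dimensional factors on $M(E_7)$ do detect the diagonal automorphism; here it is invisible on $M(E_7)$ and must be read off $L(E_7)$.

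This error propagates in two ways. First, you conclude that at $p=5$ (where the two $28$s coalesce) no composition-factor argument is available, and you substitute an unjustified ad hoc claim that the uniserial structure or the form ``does not extend compatibly to $H.2$''; in fact the $L(E_7)$ argument works uniformly for all $p$, including $5$, so no special case is needed. Second, your reduction of the field-automorphism question to ``$M(E_7){\downarrow_H}$ is realisable over $\F_q$'' is not correct as stated: realisability of $M(E_7){\downarrow_H}$ over $\F_q$ requires only $\sqrt5\in\F_q$ (it governs whether $F_q$ fixes each $\mbG$-class), while the $\sqrt{29}$ obstruction of $x^2\pm x-7$ lives in the Brauer character of $L(E_7){\downarrow_H}$ on elements of order $29$ and governs whether $F_q$ induces the diagonal automorphism on $H$. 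You do arrive at the correct combined criterion for part (ii), but the logical route must pass through $L(E_7)$; as written, the intermediate equivalence is wrong. The remaining ingredients of your proposal (form uniqueness, the subalgebra-method class counts, the $Ru$ containment, and Proposition \ref{prop:maxcont}) agree with the paper.
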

\begin{proof}The proof proceeds in stages.

\paragraph{Determination of $N_\mbG(H)$} This is easy: since the composition factors of $L(E_7){\downarrow_H}$ are not stable under the diagonal automorphism of $H$, $N_\mbG(H)=H$.

\paragraph{Determination of $H$ up to conjugacy} It appears that the original construction of this is due to Serre but is unpublished. The only construction the author is aware of is in \cite{pacherathesis}, but we provide a construction in the supplementary materials, as with all maximal subgroups. From \cite[Theorem 4.5.2]{pacherathesis}, we see that for $p\neq 2,3,5,7,29$, each $\Aut(H)$-class of representations yields a unique conjugacy class of subgroups $H$ of $\mbG$. (We independently verify this in the supplementary materials, using the subalgebra method.) Using the subalgebra method for $p=2,3,5,7$, we find the same result.

If $p=5$ then $H$ is contained in a copy of $2\cdot Ru$ by Proposition \ref{prop:maxcont} below, and this must be the same embedding as the actions on $M(E_7)$ and $L(E_7)$ are uniquely determined.

\paragraph{Action of outer automorphisms}

Let $\sigma$ denote a field automorphism $F_q$. If $p=5$ then in the supplementary materials we prove that $\bar H$ embeds in $E_7(5)$, as does $Ru$ (see Section \ref{sec:ru}). Thus $\bar H$ is contained in a $\sigma$-stable copy of $Ru$.

In all other cases, there are two $\mbG$-classes, so we first decide if $\sigma$ swaps the two classes or stabilizes them. An element of order $5$ acts on the two representations with complex numbers satisfying the equation $x^2-x-1$. Thus these lie in $\F_q$ if and only if $q\equiv \pm 1\bmod 5$. Hence $\sigma $ interchanges the two representations if $q\equiv \pm 2\bmod 5$ and stabilizes the two if $q\equiv \pm 1 \bmod 5$.

Thus we may assume that $\sigma$ stabilizes the class. Since the action of the diagonal automorphism of $H$ interchanges the two simple modules of dimension $15$ ($14$ for $p=2$) and they appear with differing multiplicities in $L(E_7)$, we can determine if $\sigma$ acts as an outer automorphism on $H$ by whether it interchanges these two modules. The irrationalities in the $15$-dimensional (and $14$-dimensional) modules are for elements of order $29$, and they satisfy $x^2-x-7$. This equation splits over $\F_q$ if and only if $q$ is a square modulo $29$.

By Lemma \ref{lem:diagswaps}, the outer diagonal automorphism of $G$ always swaps two $G$-classes of subgroups $\bar H$.
\end{proof}

\subsubsection{\texorpdfstring{$\PSL_2(27)$}{PSL(2,27)}}

Again, the group $H$ depends on $p$: if $p=2$, let $H\cong\PSL_2(27)$, and if $p\neq 2$ then let $H\cong\SL_2(27)$. The appropriate primes here are $p=2$, $p=7$, $p=13$ and $p\neq 2,3,7,13$. Examining \cite[Tables 6.195--6.201]{litterickmemoir}, we find two options for the composition factors of $M(E_7){\downarrow_H}$ if $p$ is odd: either two non-isomorphic $28$-dimensional modules for $p\neq 13$ (and necessarily we have the direct sum), or $(14,14^*)^2$ for $p=7,13$. If $p=2$ then \cite[Table 6.201]{litterickmemoir} shows that there are three possible sets of composition factors, two of which yield strongly imprimitive subgroups. The remaining set is, again, the sum of two non-isomorphic $28$-dimensional modules. For $p\neq 13$ there are six possible options for this sum, all algebraically conjugate. Notice that the field automorphism of $\SL_2(27)$ of order $3$ acts semiregularly on the six $28$-dimensional modules, and thus there are up to automorphism of $H$ exactly two possible representations (the diagonal automorphism stabilizes all six modules).

If $p=7$ and the factors on $M(E_7)$ are $(14,14^*)^2$ then the composition factors on $L(E_7)$ are $26^5,1^3$, and the projective cover of $26$ has composition factors $26^4,1$. Thus $H$ always stabilizes a line in this scenario.

Thus the only case where we do not yet understand the module $M(E_7)$ is $p=13$, and the composition factors of $M(E_7){\downarrow_H}$ are $(14,14^*)^2$. The action of $u\in H$ of order $13$ on $14^\pm$ has Jordan blocks $13,1$, and the action on $14^\pm/14^\mp$ has blocks $13^2,2$. Finally, the action on $14^\pm/14^\mp/14^\pm/14^\mp$ has blocks $13^4,4$. The actions $13^4,1^4$ and $13^4,4$ are valid actions for $E_7$ (see \cite[Table 7]{lawther1995}), corresponding to $13^{10},1^3$ and $13^{10},3$ on $L(E_7)$ respectively, but $13^4,2^2$ does not exist.

For $L(E_7){\downarrow_H}$, the composition factors are $27^2,1$ and a sum of three of the six non-isomorphic $26$-dimensional modules. The $27^2,1$ must assemble into $27/1/27$ (or else $H$ stabilizes a line on $L(E_7)$ and is strongly imprimitive by Lemma \ref{lem:fixedline}), and the $26$s are projective: $u$ acts on $27/1/27$ with blocks $13^4,3$, so $M(E_7){\downarrow_H}$ must be uniserial.

\medskip

It remains to describe the action of $H$ on $L(E_7)$ for $p\neq 13$. If $p\neq 2,3,7,13$ then it is a sum of five distinct modules, of dimensions $28,27,26,26,26$. If $p=7$ the composition factors are $28,26^4,1$, and from the factors of $P(26)$ given above, we see that $L(E_7){\downarrow_H}=28\oplus P(26)$. Finally, for $p=2$ the composition factors are $28,26_1,26_2,26_3,13,13^*,1$. The $28$- and $26$-dimensional modules must split off, and by examining the action of $u\in H$ of order $4$, we obtain that the $13,13^*,1$ must assemble into module of the form $13/1,13^*$ (for some choice of labels for $13$ and $13^*$).

In all cases then, the actions of $H$ on $M(E_7)$ and $L(E_7)$ are determined uniquely up to isomorphism (given the caveat about algebraic conjugacy). Maximality proofs are delayed until Section \ref{sec:proofofmaximality}.

\begin{prop}\label{prop:psl227} Let $p\neq 3$. If $p=2$ then write $H$ for the group $\PSL_2(27)$, and if $p$ is odd write $H$ for the group $\SL_2(27)$. Suppose that $H$ is not strongly imprimitive.
\begin{enumerate}
\item If $p\neq 13$ then there are exactly two $\mbG$-conjugacy classes of subgroups $H$, and if $p=13$ then there is a unique $\mbG$-class. If $\bar H\leq G$, and $p=2,13$ or $p\neq 2,13$, then there are exactly two or four $G$-conjugacy classes of subgroups $\bar H$ respectively. Furthermore, $N_\mbG(H)=H$ for $p\neq 13$ and $N_\mbG(H)=H.3$ for $p=13$.
\item $\bar H$ embeds in $G$ if and only if $q\equiv 0,\pm1\bmod 13$.
\item If $N_{\bar G}(\bar H)$ is maximal in $\bar G$ then $q$ is minimal such that $q\equiv 0,\pm1\bmod 13$. Either $\bar G=G$, or $q=p^{3a}$ for $a=1,2$ and $\bar G$ induces the field automorphism of order $3$ on $G$, with $|N_{\bar G}(\bar H):\bar H|=3$.
\end{enumerate}
\end{prop}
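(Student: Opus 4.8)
The plan is to follow the three-stage template of the $\PSL_2(37)$ and $\PSL_2(29)$ propositions above: first determine $N_\mbG(H)$, then count the $\mbG$-classes of subgroups with the prescribed action on $M(E_7)$ and $L(E_7)$, and finally analyse the field and diagonal automorphisms of $G$. Throughout, the module structures of $M(E_7){\downarrow_H}$ and $L(E_7){\downarrow_H}$ are the ones already established above, so the input is fixed. Recall that $\Out(\PSL_2(27))$ is cyclic of order $6$, generated by the diagonal automorphism $\delta$ of order $2$ and the field automorphism $\gamma$ of order $3$; that $\gamma$ permutes the six $28$-dimensional $kH$-modules semiregularly (two orbits of three), acting on the $13$th roots of unity occurring in their Brauer characters as $\zeta_{13}\mapsto\zeta_{13}^3$; and that $\delta$ fixes each of these modules. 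For stage one: when $p\neq 13$, the pair of $28$-dimensional modules in $M(E_7){\downarrow_H}$ lies in a single $\gamma$-orbit, so $\gamma$ and $\gamma^2$ do not fix the action, and the $26$-dimensional (and, for $p=2$, $13$-dimensional) composition factors of $L(E_7){\downarrow_H}$ show that $\delta$ does not fix that action; hence no non-trivial element of $\Out(H)$ stabilises the combined action, and once we know (stage two) that $H$ is determined up to $\mbG$-conjugacy by its action, $N_\mbG(H)=H$. When $p=13$, the action $14/14^*/14/14^*$ on $M(E_7)$ and the action on $L(E_7)$, whose three $26$-dimensional factors now form a single $\gamma$-orbit, are $\gamma$-stable but not $\delta$-stable (the latter already on the $M(E_7)$-character), so $N_\mbG(H)=H.3$, provided we check in stage two that the unique $H$-invariant Lie structure on $L(E_7){\downarrow_H}$ extends to $H.3$.

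Stage two is the usual machinery. For $p\neq 2,3,7,13$ the statement that each $\Aut(H)$-class of representations yields a single $\mbG$-class is \cite{pachera2019un}; we re-prove it with the subalgebra method of Section \ref{sec:subalgmethod}. For $p=2,7$ the module $M(E_7){\downarrow_H}$ is a sum of two dual non-isomorphic $28$-dimensional modules, hence supports a symplectic form unique up to $\Sp_{56}(k)$-conjugacy by Proposition \ref{prop:ssmultfree}, and the subalgebra method then produces a single $\mathfrak{e}_7$-subalgebra up to $C_{\Sp_{56}(k)}(H)$-conjugacy for each of the two $\Aut(H)$-classes, giving two $\mbG$-classes. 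For $p=13$ a computer check shows the uniserial module $14/14^*/14/14^*$ carries a symplectic form unique up to scalars, and the subalgebra method yields a single $\mathfrak{e}_7$-subalgebra (there being one $\Aut(H)$-class of representations), hence a single $\mbG$-class; one records simultaneously that it is $H.3$-invariant. (Alternatively one may run the Lie product method of Section \ref{sec:lieprodmethod} on $\mathcal R^H(L(E_7))$.) The $G$-class counts then follow from Lemma \ref{lem:diagswaps}.

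Stage three, the action of outer automorphisms, is the crux. If $p=13$ then $13\mid q$ always, the unique $\mathfrak{e}_7$-subalgebra is defined over $\F_{13}$, so $H.3\leq 2\cdot E_7(13)$; the diagonal automorphism swaps the two $G$-classes by Lemma \ref{lem:diagswaps}, $E_7(13)$ has no field automorphisms, and $N_{\bar G}(\bar H)$ is maximal only for $q=13$ since otherwise $\bar H\leq E_7(13)<E_7(q)$. Now suppose $p\neq 13$, so there are two $\mbG$-classes. The field automorphism $\sigma=F_q$ acts on the $13$th roots of unity as $\zeta_{13}\mapsto\zeta_{13}^q$, so it fixes each of the two $\mbG$-classes exactly when $q$ lies in the subgroup $\gen{-1,3}=\{\pm1,\pm3,\pm4\}$ of $(\Z/13)^*$ and swaps them otherwise; since the Brauer character of each occurring $28$-dimensional module generates $\Q(\zeta_{13}+\zeta_{13}^{-1})$, the module $M(E_7){\downarrow_H}$ is realisable over $\F_q$ exactly when $\zeta_{13}+\zeta_{13}^{-1}\in\F_q$, i.e.\ when $q\equiv\pm1\bmod 13$ (together with the automatic $q\equiv 0\bmod 13$ when $p=13$, this is the condition in (ii)), and the minimal such field is $\F_{p^e}$ where $e\in\{1,2,3,6\}$ is the order of $p$ in $(\Z/13)^*/\{\pm1\}$. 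When $\sigma$ fixes the class we may (Lemma \ref{lem:inneriscent}) assume it normalises a representative $H$; then $\sigma$ induces an inner automorphism of $H$, so $H\leq\mbG^\sigma$, iff $q\equiv\pm1\bmod 13$, and otherwise a non-trivial power of $\gamma$ — never $\delta$, as $\delta$-conjugates of $H$ are distinguished on $L(E_7)$. Combining: if $e\in\{1,2\}$ then $E_7(p^e)$ has no class-preserving non-trivial outer automorphism (for $e=2$, $F_p$ swaps the classes since $p\equiv\pm5\bmod 13$; the diagonal always swaps classes), so $\bar G=G$; if $e\in\{3,6\}$ then $q_0=p^e=p^{3a}$ with $a=1,2$, and the order-$3$ field automorphism $F_{p^{e/3}}$ of $E_7(q_0)$ fixes the $G$-class and induces a power of $\gamma$ on $\bar H$, giving $|N_{\bar G}(\bar H):\bar H|=3$; and maximality forces $q=q_0$, the remaining containments being excluded by Proposition \ref{prop:maxcont}.

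The step I expect to be the main obstacle is the simultaneous bookkeeping in stage three for $p\neq 13$: for each residue of $q$ modulo $13$ one must decide whether $F_q$ swaps or fixes the two $\mbG$-classes, which element of $\Out(H)$ it induces on a normalised representative, and the minimal field over which the embedding appears — all governed by the image of $q$ in $(\Z/13)^*$ and the semiregular $\gamma$-action on the $28$-dimensional modules — and reconcile this with the fact that for $p$ odd $H=\SL_2(27)$ has centre $Z(\mbG)$, so the copy realised over $\F_q$ lives in the simply connected form $2\cdot E_7(q)$. The computations invoked for $p=2,7,13$ (symplectic-form and $1$-cohomology checks, and the subalgebra and Lie product methods) are routine instances of techniques already exercised in the previous propositions.
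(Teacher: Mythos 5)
Your overall architecture matches the paper's (normalizer, then conjugacy via the subalgebra method, then outer automorphisms), but there is a genuine gap at the two places where the diagonal automorphism $\delta$ of $H$ has to be excluded, and it stems from a single false premise. You claim that the $26$-dimensional (and, for $p=2$, $13$-dimensional) composition factors of $L(E_7){\downarrow_H}$ show that $\delta$ does not fix the action. In fact $\delta$ fixes \emph{every} $26$- and $28$-dimensional irreducible of $\PSL_2(27)$: by Clifford theory the twelve degree-$28$ and twelve of the thirteen degree-$26$ irreducibles of $\PGL_2(27)$ restrict in pairs to the six characters of each degree of $\PSL_2(27)$, so all of these are $\delta$-stable; only the two degree-$13$ characters are interchanged by $\delta$, and those occur with equal multiplicity (once each) in $L(E_7)^\circ$ for $p=2$. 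The paper states explicitly that the composition factors on both $M(E_7)$ and $L(E_7)$ are $\delta$-invariant in all cases, which is precisely why $N_\mbG(H)$ could \emph{a priori} be $H.2$. Consequently your stage-one conclusion $N_\mbG(H)=H$ for $p\neq 13$ has no supporting argument: the paper has to prove it by showing (via \cite{griessryba2002} for $p\neq 2,7,13$ and by computation for $p=2,7$) that the element of $\Sp_{56}(k)$ inducing $\delta$ on $H$ interchanges the two classes of $\mathfrak e_7$-subalgebras, hence cannot normalize either.

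The same premise sinks your stage-three assertion that $F_q$ ``never [induces] $\delta$, as $\delta$-conjugates of $H$ are distinguished on $L(E_7)$.'' They are not distinguished, so the character-comparison method that works for $\PSL_2(37)$ and $\PSL_2(29)$ (where the factors are not $\delta$-stable) is unavailable here. The paper's replacement argument is entirely different: it shows that a Borel subgroup $L\cong 3^3\rtimes 13$ of some conjugate of $H$ is centralized by $F_q$ (by locating $L$ in $F_4T_1\leq F_4A_1$ and using the exotic local subgroup $3^3\rtimes\SL_3(3)\leq F_4(p)$), observes that $L$ does not extend to $3^3\rtimes 26$ in adjoint $E_7$, and deduces that $F_q$ cannot induce a diagonal automorphism on $H$. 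Without some such argument, part (ii) of the proposition (that $H$ itself, rather than merely the $\mbG$-class, lies in $E_7(q)$ whenever $q\equiv\pm1\bmod 13$) is not established. The remaining bookkeeping in your stage three (the action of $F_q$ on the two $\gamma$-orbits of $28$-dimensional modules, the rationality condition $q\equiv\pm1\bmod 13$, and the $p=13$ and $q=p^{3a}$ cases) is essentially the paper's.
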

\begin{proof} As usual by now we proceed in stages.

\paragraph{Determination of $N_\mbG(H)$} In all cases, the composition factors of $H$ on $M(E_7)$ and $L(E_7)$ are invariant under the diagonal automorphism, so in theory $N_\mbG(H)$ could include that. However, for $p\neq 13$ the composition factors of $L(E_7){\downarrow_H}$ are not invariant under the field automorphism (it's easiest to see for $\PSL_2(27)$) and so $|N_\mbG(H)/H|\leq 2$ if $p\neq 13$.

Assume first that $p\neq 13$. Note that $H$ extends to $H.2$ in $\Sp_{56}(k)$. For $p\neq 2,3,7,13$ in \cite{griessryba2002}, and for $p=2,7$ and $p\neq 2,3,7,13$ in the supplementary materials, we show that this normalizing element swaps two classes of $\mathfrak e_7$-subalgebras of the Lie algebra of $\Sp_{56}(k)$, and hence cannot normalize it.

(For $p\neq 2,3,13$, we find that there is no extension of $L(E_7){\downarrow_H}$ to a module for $\PGL_2(27)$ where the traces of the classes of involutions are those of involutions in (adjoint) $E_7$, offering another proof for these primes.)

For $p=13$, however, this is all false. The Brauer characters of $H$ on $M(E_7)$ and $L(E_7)$ are extensible to both diagonal and field automorphisms of $H$. However, the module $M(E_7){\downarrow_H}$, which has structure $14/14^*/14/14^*$ is not extensible to $H.2$, since the diagonal automorphism swaps $14$ and $14^*$, hence has $28$-dimensional simple modules. However, in the supplementary materials we do find an element of order $3$ in $\Sp_{56}(k)$ that induces the field automorphism on $H$ and lies in $E_7(k)$.

\paragraph{Determination of $H$ up to conjugacy}

From \cite{griessryba2002}, we see that for $p\neq 2,3,7,13$, each $\Aut(H)$-class of representation yields a unique conjugacy class of subgroups of $\mbG$. Using the subalgebra method for $p=2,7,13$, we find the same result. In the supplementary materials we do these cases and redo the $p\neq 2,3,7,13$ case for completeness. Thus for $p\neq 13$ there are exactly two $\mbG$-classes of subgroups $H$, and for $p=13$ there is exactly one.

\paragraph{Action of outer automorphisms}

Let $\sigma$ denote a field automorphism $F_q$. Our first task is to determine the conditions on $q$ for both $M(E_7){\downarrow_H}$ and $L(E_7){\downarrow_H}$ to be defined over $\F_q$. The Brauer characters of $H$ on $M(E_7)$ and $L(E_7)$ only have irrationalities for elements of order $13$. Specifically, for some element of order $13$ and choice of $13$th root of unity $\zeta$, the Brauer character value for $M(E_7)$ and $L(E_7)$ are $\zeta+\zeta^{-1}+\zeta^4+\zeta^{-4}$ and $\zeta^5+\zeta^{-5}+1$ respectively. These are clearly preserved under the map $\zeta\mapsto\zeta^{-1}$, and the second is only preserved by this map from any field automorphisms of $k$. Thus we require $q\equiv \pm 1\bmod 13$.

Suppose first that $q$ is the smallest power of $p$ such that $q\equiv \pm 1\bmod 13$, and write $q=p^a$. The action of $F_p$ depends on the value of $a$: if $a=1$ then there is nothing to say. If $a=2$ then $F_p$ must swap the two $\mbG$-classes of subgroups, and if $a=3$ then it must permute the three sets of composition factors associated to each class of subgroups, so induce the field automorphism on $H$. Finally, if $a=6$ then it does both of these, swapping and permuting.

We therefore assume that $q\equiv \pm 1\bmod 13$. Since $F_q$ stabilizes each set of composition factors, it stabilizes each of the two classes of subgroups, so we may assume that $F_q$ normalizes $H$. We now show that some Borel subgroup $L\cong 3^3\rtimes 13$ of some conjugate of $H$ is always centralized by $F_q$. To see this, note that $L$ acts on $M(E_7)$ as the sum of four $13$-dimensional and four non-trivial $1$-dimensional modules. It is easy to see that $L$ therefore lies diagonally in the $E_6T_1$-Levi subgroup, and then inside $F_4T_1\leq F_4A_1$. Notice that our condition on $q$ guarantees that $A_1(q)$ contains representatives of all classes of elements of order $13$ in $A_1$. Furthermore, by \cite{clss1992}  we have that the $3^3\rtimes \SL_3(3)$ subgroup (which contains a copy of $L$) lies in $F_4(p)$ for all $p$, so in particular in $F_4(q)$. Thus $L$ lies in $E_7(q)$ and so is centralized by $F_q$. Furthermore, we see that $L$ is not contained in a $3^3\rtimes 26$ in adjoint $E_7$, since it would still lie in $F_4A_1$. If $H$ is normalized by $F_q$ and contains $L$ then $F_q$ cannot act as a diagonal automorphism, since that results in a $3^3\rtimes 26$ on $L$. In addition, since the field automorphism of $H$ does not stabilize the composition factors, $F_q$ cannot induce the field automorphism or the product of a field and a diagonal. Thus if $F_q$ normalizes $H$ then it acts as an inner automorphism of $H$. In particular, $\bar H\leq E_7(q)$ (up to conjugacy).

Since diagonal automorphisms of $G_\mathrm{sc}$ have order $2$ and $|N_\mbG(H):H|$ is either $1$ or $3$, we see that $N_\mbG(H)=N_{G_\mathrm{sc}}(H)$. Thus Lemma \ref{lem:diagswaps} implies that the diagonal automorphism of $G$ fuses two $G$-classes of subgroups $\bar H$.

This completes the proof.
\end{proof}

\subsubsection{\texorpdfstring{$\PSL_2(25)$}{PSL(2,25)}}
\label{sec:psl225}

If $p\neq 5$ then all examples for either $\PSL_2(25)$ or $\SL_2(25)$ are strongly imprimitive by \cite[Tables 6.191--6.194]{litterickmemoir}. Of course, $p=5$ is a defining-characteristic embedding, and is one of the cases left from \cite{craven2015un}, and $H\cong \SL_2(25)$. In this case, the actions of $H$ on both $M(E_7)$ and $L(E_7)$ were completely determined in \cite{craven2015un}. We will prove that any such $H$ is strongly imprimitive, using the subalgebra method.

\begin{prop} If $p=5$ and $H\cong \SL_2(25)$ is a subgroup of $\mbG$ then $H$ is strongly imprimitive.
\end{prop}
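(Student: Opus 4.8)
The plan is to follow the treatment of $\Alt(7)$ for $p=5$ given above, using the subalgebra method of Section \ref{sec:subalgmethod}. First I would recall from \cite{craven2015un} the module structures of $M(E_7){\downarrow_H}$ and $L(E_7){\downarrow_H}$, which are determined there up to isomorphism; $M(E_7){\downarrow_H}$ is assembled from the $5$-modular simple modules $L(a)\otimes L(b)^{(5)}$ of $\SL_2(25)$, so that $H$ is visibly the image of a twisted diagonal subgroup of a product $A_1A_1$. One then verifies that $M(E_7){\downarrow_H}$ supports a unique non-degenerate alternating form up to $\Sp_{56}(k)$-conjugacy: when the relevant composition factors are multiplicity-free in the required sense this is Proposition \ref{prop:ssmultfree}, and if instead there is a non-split self-extension (as occurred for $\SL_2(29)$, $p=5$) one uses a $1$-cohomology computation as in Corollary \ref{cor:symp1cohom}. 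This fixes an embedding $H\leq \Sp_{56}(k)$, hence an action of $H$ on $L(C_{28})=S^2(M(E_7){\downarrow_H})$.

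Next I would run the subalgebra method: choose a simple submodule $W$ of $L(E_7){\downarrow_H}$, compute a basis of $\Hom_{kH}(W,S^2(M(E_7){\downarrow_H}))$, impose the quadratic (and if necessary cubic) bracket-closure relations, and reduce modulo $N_{\Sp_{56}(k)}(H)$. As in the $\Alt(7)$ case, the goal is to show that every admissible candidate lies inside a proper $H$-invariant subalgebra $\mathfrak h$ of $\mathfrak g=\mathfrak e_7$, whose isomorphism type is then read off by matching the dimensions of its composition factors on $\mathfrak g$ — computed from eigenvalues of semisimple elements and the nilpotent-class data of \cite{stewart2016} — against those of $H$ on $\mathfrak g$. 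Since $5$ is a good prime for $E_7$, the classification of maximal subalgebras of $\mathfrak e_7$ in \cite{premetstewart2019}, together with the list of $\mbG$-irreducible subgroups in \cite{liebeckseitz2004,thomas2016}, locates $\mathfrak h=\mathrm{Lie}(\mb X)$ inside a known reductive subalgebra, with $\mb X$ a proper, closed, positive-dimensional subgroup of $\mbG$ (I expect $\mb X$ to be of type $A_1$ or $A_1A_1$, or one of the larger classical subgroups such as $D_4$); since $\mathfrak g$ is simple, $N_\mbG(\mathfrak h)$ is proper, and $H\leq N_\mbG(\mathfrak h)$.

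Finally, with $H$ inside a proper positive-dimensional subgroup $\mb X$, I would conclude strong imprimitivity exactly as in the earlier cases: either $\mb X$ acts on $M(E_7)$ with the same lattice of subspaces as $H$ — which follows once the composition factors and module structure of $M(E_7){\downarrow_{\mb X}}$ match those of $H$ — so that $H$ is a blueprint for $M(E_7)$ and we are done by \cite[Corollary 3.3]{craven2019un}; or one exhibits an $H$-stable subspace of $M(E_7)$ or $L(E_7)$ whose $N_{\Aut^+(\mbG)}(H)$-orbit has positive-dimensional common stabilizer and applies Proposition \ref{prop:intersectionsubspace}. The point needing care is the order-$2$ field automorphism of $\SL_2(25)$, which permutes Frobenius-twisted constituents and could a priori fail to preserve $\mb X$; this is handled either by uniqueness of $\mb X$ with its given action, or by noting that $H$ sits inside a Frobenius-defined classical overgroup so that every Frobenius endomorphism acts on $H$ as an inner automorphism. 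The main obstacle I anticipate is the subalgebra computation itself — controlling the $\Hom$-spaces, the orbit reduction under $N_{\Sp_{56}(k)}(H)$, and the unambiguous identification of the type of $\mathfrak h$ — with the $N_{\Aut^+(\mbG)}(H)$-stability bookkeeping a secondary concern.
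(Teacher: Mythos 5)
Your overall architecture (fix the $\Sp_{56}$-class, run the subalgebra method, land in a positive-dimensional subgroup, finish with Proposition \ref{prop:intersectionsubspace}) matches the paper, but the step where you pin down the symplectic form is a genuine gap. You propose to get uniqueness of the form from Proposition \ref{prop:ssmultfree} or, failing that, from Corollary \ref{cor:symp1cohom}. Neither applies here. The module $M(E_7){\downarrow_H}$ is $10_{2,1}\oplus(12_{1,2}/2_1,4_1,6_{1,2},10_{1,2}/12_{1,2})$: the $46$-dimensional summand is not semisimple (it has the self-dual factor $12_{1,2}$ with multiplicity $2$), so Proposition \ref{prop:ssmultfree} is out; and since all its composition factors have even dimension it has no $23$-dimensional invariant subspace, so it does not sit in a Lagrangian stabilizer and Corollary \ref{cor:symp1cohom} (which needs a stabilized half-dimensional subspace) cannot be invoked either. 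In fact the space of $H$-invariant alternating forms on the $46$-space is $2$-dimensional, so the form is genuinely not unique; what the paper proves is that the $1$-dimensional unipotent group centralizing $H$ in $\GL_{46}(k)$ acts \emph{regularly} on the projective line of forms minus the degenerate point. That transitivity argument needs real input: one checks that a nontrivial element $v$ of the centralizer has Jordan type $2^{12},1^{22}$, that its centralizer in any $\Sp_{46}(k)$ containing it has reductive part $C_{11}D_6$, and that $12_{1,2}$ is symplectic rather than orthogonal, so $H$ cannot lie in two different copies of $\Sp_{46}(k)$ simultaneously. Without something of this kind your "unique embedding into $\Sp_{56}(k)$" is unjustified.

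The endgame also diverges from the paper in a way worth flagging. You expect, as in the $\Alt(7)$ case, that every surviving candidate generates a \emph{proper} subalgebra to be identified via Premet--Stewart. That is not what happens for $\SL_2(25)$: the computation yields a unique $\mathfrak e_7$-subalgebra up to the centralizer action, i.e., a unique $\mbG$-class of subgroups $H$, and strong imprimitivity is then deduced by exhibiting a copy of $\SL_2(25)$ with the same actions inside the maximal $A_1A_1$ (subgroup $19$ of \cite[Table 7]{thomas2016} with $r=0,s=1$, using \cite[Table 10.2]{liebeckseitz2004} for the summand dimensions $10$ and $46$) and invoking uniqueness to conclude $H\leq A_1A_1$; Proposition \ref{prop:intersectionsubspace} then applies to the $10$-dimensional submodules. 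Your hedged alternative in the last paragraph does cover this, so this second point is a misprediction of which branch occurs rather than a fatal error, but the Premet--Stewart route as your primary plan would not go through as stated.
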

\begin{proof} We use the notation from \cite{craven2015un} for the modules for $\SL_2(25)$, with $2_1$ being the natural module and $3_1$, $4_1$ and $5_1$ being its symmetric powers. Write $2_2$ and so on for the image of $2_1$ under the Frobenius morphism. Write $6_{1,2}=2_1\otimes 3_2$, $10_{2,1}=2_2\otimes 5_1$, and so on.

The action of $H$ on $M(E_7)$ is
\[ 10_{2,1}\oplus (12_{1,2}/2_1,4_1,6_{1,2},10_{1,2}/12_{1,2}),\]
and the action on $L(E_7)$ is
\[(15_{2,1}/8_{2,1}/1,3_2/8_{2,1}/15_{2,1})\oplus (15_{1,2}/8_{1,2}/1,3_1/8_{1,2}/15_{1,2})\oplus (3_2/8_{2,1},16/3_2)\oplus 3)_1.\]

Up to conjugacy in $\Sp_{56}(k)$, there is a unique conjugacy class of subgroups $H$ with this action on $M(C_{28})$. To see this, first note that because the two summands have no composition factors in common, any alternating form is a sum of alternating forms for the two summands, so $H$ lies in a copy of $\Sp_{10}(k)\times \Sp_{46}(k)$. If the form is unique (up to conjugacy) on both factors then it is unique up to conjugacy by Proposition \ref{prop:ssnofactorsincommon}. Of course, this holds for the first factor because $H$ acts irreducibly on it. Let $M$ denote the $46$-dimensional summand.

The $12$-space in the socle is always totally isotropic; a short computer program shows that any symplectic form on the $46$-space is the sum of the standard form (which vanishes on $12_{1,2}$) and a form of rank $12$ (and hence vanishes on the Jacobson radical of the module). Thus the $12_{1,2}$ is always totally isotropic and $H$ lies in a $C_{11}A_{11}T_1$-parabolic subgroup of $\Sp_{46}(k)$. Notice that the symplectic forms on $M$ are isomorphic to a projective line over $k$.

We clearly see a $1$-dimensional unipotent subgroup that centralizes the corresponding subgroup of $\GL_{46}(k)$, and since the map has rank $12$ and squares to $0$, any (non-trivial) element $v$ in the subgroup has Jordan blocks $2^{12},1^{22}$. If $v$ lies in a copy of $\Sp_{46}(k)$ then its centralizer is a positive-dimensional subgroup whose reductive part has type $C_{11}D_6$ by \cite[Theorem 3.1(iv)]{liebeckseitzbook}. But the module $12_{1,2}$ supports a symplectic form, not an orthogonal form, and so $H$ cannot also lie in this copy of $\Sp_{46}(k)$. We see therefore that $v$ must act semiregularly on the symplectic forms on $M$ that have rank $46$ (and must stabilize the unique one of rank $12$). This means that the unipotent subgroup regularly permutes the symplectic forms of rank $46$. This means that there is a unique conjugacy class of $\SL_2(25)$ subgroups acting on the minimal module $M(\Sp_{46}(k))$ as $M$.

This proves that $H$ is unique up to conjugacy in $\Sp_{56}(k)$, so we may choose any representative for the form. We can now apply the subalgebra method, and in the supplementary materials we see that this subgroup is unique up to $\mbG$-conjugacy.

In \cite{craven2015un} we stated that $H$ would be strongly imprimitive if this were the case. Let us see this. Let $\mb X$ be the $A_1A_1$ maximal subgroup of $\mbG$. In \cite[Table 10.2]{liebeckseitz2004} we see that for $p=5$ the subgroup $\mb X$ acts with summands of dimensions $10$ and $46$. Subgroup $19$ from \cite[Table 7]{thomas2016} (with $r=0,s=1$) contains a copy of $\SL_2(25)$ with the same actions on $M(E_7)$ and $L(E_7)$, so this is $\mbG$-conjugate to $H$. Thus we may assume that $H\leq \mb X$.

All $10$-dimensional $H$-submodules of $M(E_7)$ are stabilized by $\mb X$, so $H$ is strongly imprimitive by Proposition \ref{prop:intersectionsubspace}.
\end{proof}

\subsubsection{\texorpdfstring{$\PSL_2(19)$}{PSL(2,19)}}
\label{sec:psl219}
This section is separated from the next one because there is a class of Lie primitive subgroups $\PSL_2(19)$ and a class of Lie primitive subgroups $\PGL_2(19)$. The second of these is due to Serre \cite{serre1996} (although uniqueness for all fields of odd characteristic is proved here) but this first class appears to be new.

We start with considering subgroups $L$ of $\mbG$ of the form $19\rtimes 9$, where $p\neq 19$. Such groups have simple $kL$-modules of dimensions $1$ and $9$, with the two $9$-dimensional modules being dual. Suppose that $L$ acts on $M(E_7)$ with composition factors $(9,9^*)^3,1_a,1_b$, where $1_a$ and $1_b$ are two $1$-dimensional modules. If $p\neq 19$ then $L$ stabilizes a line on $M(E_7)$: if $p\neq 3,19$ then the action is semisimple, and if $p=3$ then the $9$-dimensional factors are projective and so break off as summands. Thus $L$ lies in a line stabilizer, and cannot lie in a $D_6$-parabolic or a subgroup of type $F_4$ by comparing dimensions of composition factors on $M(E_7)$. Hence $L$ lies in an $E_6T_1$-parabolic subgroup by Proposition \ref{prop:1spacestabs}, as $L$ has no subgroup of index $2$, so if it lies in a group $E_6.2$ it lies in the Levi subgroup $E_6$, hence in an $E_6T_1$-parabolic subgroup.

If $p\neq 3$ then $L$ must lie in the $E_6T_1$-Levi subgroup as $p$ and $|L|$ are coprime, and if $p=3$ then $L$ acts on $M(E_6)$ with factors of dimension $9$ only, hence has no $1$-cohomology on the module. Thus there is a single conjugacy class of complements in the $E_6T_1$-parabolic, and again $L$ lies in $E_6T_1$. There is a unique such conjugacy class of subgroups $L$ in $E_6$ by \cite[Proof of Proposition 5.20]{craven2020un}, so if the modules $1_a$ and $1_b$ are trivial modules then $L$ is uniquely determined up to conjugacy. (This is the case for $p=3$.)

If $1_a$ and $1_b$ have kernel of index $3$ in $L$ then $L$ is again unique up to conjugacy: there are two such diagonal subgroups, and they must be interchanged in the subgroup $E_6T_1.2$ (the normalizer of the $E_6T_1$-Levi subgroup in $E_7$). We also see that the graph automorphism normalizes the subgroup $L$ inside the $E_6$ factor, and forms a subgroup $19\rtimes 18$ in the adjoint version of $E_7$. Since such a subgroup must lie in $E_6T_1.2$, the other conjugacy class of subgroups $L$ cannot be contained in a subgroup $19\rtimes 18$ in the adjoint version of $E_7$.

In particular, we see that if $\PSL_2(19)$ lies in the adjoint version of $E_7$ and contains one of these subgroups $L$, then it cannot extend to $\PGL_2(19)$ unless the subgroup $L$ is the one lying inside $E_6$ rather than diagonally in $E_6T_1$.

\bigskip

With these preliminaries considered, we return to groups $\PSL_2(19)$. From \cite[Tables 6.184--6.190]{litterickmemoir} we see that for $p$ odd any copy of $\PSL_2(19)$ is strongly imprimitive, so if $p$ is odd, let $H\cong \SL_2(19)$. If $p=2$ we let $H\cong \PSL_2(19)$, as usual.

One of the possibilities in characteristic $0$ is that $H$ acts on $M(E_7)$ as $18\oplus 18'\oplus 20$, where $18$ and $18'$ are two non-isomorphic (faithful) $18$-dimensional modules for $\SL_2(19)$ and the $20$ is the faithful module with rational character. The action on $L(E_7)$ is
\[ 18_1\oplus 18_2^{\oplus 2}\oplus 19\oplus 20_1\oplus 20_2\oplus 20_3,\]
where $18_1$ and $18_2$ are two of the four non-isomorphic $18$-dimensional modules and the $20_i$ are all three $20$-dimensional modules for $\PSL_2(19)$ that have non-rational character.

For the reduction modulo $5$ of this module, the two modules of dimension $18$ in $M(E_7)$ become isomorphic, and using the action of an element of order $5$ in $H$ and \cite[Table 7]{lawther1995}, as in previous cases, we must obtain a module of the form $20\oplus (18/18)$. For $p=3$ the reduction of this and the $\PGL_2(19)$ case in the next section are the same, and we delay treatment of this until that section. For $p=2$ the two $18$-dimensional modules in $M(E_7)$ again become isomorphic, and we again are forced to have a module $20\oplus (18/18)$.

We have the following proposition, delaying proof of maximality until Section \ref{sec:proofofmaximality}.

\begin{prop} Let $p\neq 3,19$. If $p=2$ then write $H$ for the group $\PSL_2(19)$, and if $p$ is odd write $H$ for the group $\SL_2(19)$. Suppose that $H$ is a subgroup of $\mbG$ and that $M(E_7){\downarrow_H}$ contains a composition factor of dimension $20$. If $p=2$, suppose that $H$ does not have a composition of dimension $9$ on $M(E_7)$.
\begin{enumerate}
\item There are exactly two $\mbG$-conjugacy classes of subgroups $H$ unless $p=5$, in which case there is exactly one. If $\bar H\leq G$ and $p\neq 2,5$ there are exactly four $G$-conjugacy classes of subgroups $\bar H$, and if $p=2,5$ and $\bar H\leq G$ there are two $G$-conjugacy classes. Furthermore, $H=N_\mbG(H)$.
\item $\bar H$ embeds in $G$ if and only if $q\equiv \pm1\bmod 5$, and the orders of $q$ modulo $3$ and $19$ are both odd or both even, or $q$ is a power of $5$.
\item If $N_{\bar G}(\bar H)$ is maximal in $\bar G$ then $q$ is minimal such that $\bar H$ embeds in $G$, and either $\bar G=G$, or $q=p^2$ for $p\equiv \pm 1\bmod 5$ and $\bar G$ induces the field automorphism on $G$.
\end{enumerate}
\end{prop}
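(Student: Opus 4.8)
The plan is to follow the three-step pattern used for the other subgroups $\PSL_2(r)$ in this section: (a) determine $N_\mbG(H)$; (b) count the $\mbG$-conjugacy classes of subgroups $H$; (c) analyse the action of the field automorphism $F_q$ and of the diagonal automorphism of $G$. By the material preceding the proposition, $M(E_7){\downarrow_H}$ and $L(E_7){\downarrow_H}$ are already pinned down up to isomorphism and algebraic conjugacy (given that a $20$-dimensional factor occurs), so all three steps reduce to representation-theoretic and, for (b), Lie-algebraic computations of the now-familiar kind, and we may use throughout that $C_\mbG(H)=Z(\mbG)$.

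For (a): since $\F_{19}$ has no field automorphisms, the only automorphism of $H$ that $N_\mbG(H)$ could induce beyond the inner ones is the $\PGL_2(19)$-type (``diagonal'') automorphism; but this permutes the four $18$-dimensional modules of $\PSL_2(19)$ in two orbits of size two, and two of these $18$'s occur in $L(E_7){\downarrow_H}$ with unequal multiplicity, so it cannot fix the multiset of composition factors. Hence $N_\mbG(H)=H$. For (b): for $p\neq 2,3,5,19$ this is due to Pachera \cite{pachera2019un}, and is re-proved here by the subalgebra method of Section \ref{sec:subalgmethod}, the symplectic form being unique by Proposition \ref{prop:ssmultfree} since $M(E_7){\downarrow_H}=18\oplus 18'\oplus 20$ has pairwise non-isomorphic summands. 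For $p=2$ and $p=5$ the module degenerates to $20\oplus (18/18)$: for $p=5$, Corollary \ref{cor:symp1cohom} (the $1$-cohomology of $S^2(18)$ is one-dimensional), together with Proposition \ref{prop:ssnofactorsincommon} applied to the splitting off of the $20$, gives a unique $\Sp_{56}(k)$-class, after which the subalgebra method applies directly (there being no convenient overgroup here); for $p=2$ there is a one-parameter family of symplectic forms and one runs the subalgebra method in the variant described in Section \ref{sec:subalgmethod} for exactly this case. In every characteristic one finds that each $\Aut(H)$-class of representations yields a single $\mbG$-class, giving two classes when $p\neq 5$ (interchanged by the diagonal automorphism of $H$) and one when $p=5$; the number of $G$-classes then follows from Lemma \ref{lem:diagswaps}.

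For (c), let $\sigma=F_q$. When $p\neq 5$ an element of order $5$ of $H$ acts on the representations underlying the two $\mbG$-classes with eigenvalues satisfying a quadratic of discriminant $5$, so $\sigma$ fixes each class when $q\equiv\pm1\bmod 5$ and interchanges them when $q\equiv\pm2\bmod 5$ (for $p=5$ there is only one class). When $\sigma$ fixes the class we conjugate so that $\sigma$ normalises $H$ and decide whether it acts as an inner or the diagonal automorphism of $H$: since the diagonal automorphism interchanges two $18$-dimensional modules of unequal multiplicity in $L(E_7){\downarrow_H}$, this is detected by whether the Brauer character of $M(E_7){\downarrow_H}$ is realised over $\F_q$, which (given $q\equiv\pm1\bmod 5$) holds exactly when the multiplicative orders of $q$ modulo $3$ and modulo $19$ have equal parity. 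Feeding this into Lemma \ref{lem:inneriscent} (inner case gives $H\leq\mbG^\sigma$) and using that a $\sigma$-swapped pair of classes contains no $\sigma$-fixed subgroup yields part (ii); part (iii) then follows since for any proper subfield $\F_{q_0}\subset\F_q$ over which the modules are still realisable one has $H\leq E_7(q_0)<E_7(q)$, forcing $q$ minimal for maximality, while the only configuration producing a genuine field automorphism inside $N_{\bar G}(\bar H)$ is $q=p^2$ with $p\equiv\pm1\bmod 5$ and the orders modulo $3$ and $19$ of opposite parity, where $F_p$ fixes the class but induces the diagonal automorphism of $\bar H$. Maximality itself is supplied by Proposition \ref{prop:maxcont} once $\bar H$ is known to lie in no other Lie primitive subgroup, and the case $p=3$ is folded into the treatment of $\PGL_2(19)$ in the next section, as the two reductions coincide.

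The step I expect to be the real obstacle is the arithmetic in (ii): correctly tracking which algebraic conjugates of the $18$-dimensional modules are forced by the tables of \cite{litterickmemoir} into $M(E_7){\downarrow_H}$ and $L(E_7){\downarrow_H}$, and proving that ``the relevant multiset of modules is stable under $x\mapsto x^q$'' is equivalent to the congruence ``$\operatorname{ord}_3(q)\equiv\operatorname{ord}_{19}(q)\bmod 2$''. This requires handling the ordinary and $p$-modular character values of $\SL_2(19)$ at elements of orders $3$, $5$ and $19$ with care, and checking the small-prime reductions ($p=2$, $p=5$, and the cross-reference to $p=3$) individually; the Lie-algebraic uniqueness statements in (b) are, by contrast, the routine ones delegated to the supplementary materials.
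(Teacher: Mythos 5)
Your overall architecture (normalizer, class count via the subalgebra method, then field and diagonal automorphisms) matches the paper, and your step (b) is essentially the paper's: uniqueness of the symplectic form via Proposition \ref{prop:ssnofactorsincommon} and Corollary \ref{cor:symp1cohom} for $p=5$, the one-parameter/$\SL_{56}(k)$ variant for $p=2$, and the subalgebra method throughout. But steps (a) and (c) rest on a false character-theoretic premise. The diagonal automorphism of $\SL_2(19)$ fixes \emph{every} irreducible character of degree $19\pm 1=18,20$; only the four exceptional characters of degree $(19\pm1)/2=9,10$ are moved, since those are the only ones taking different values on the two classes of unipotent elements, which is all the outer automorphism permutes. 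So your claim that it splits the four $18$-dimensional $\PSL_2(19)$-modules into two $2$-orbits and hence fails to preserve the composition factors of $L(E_7){\downarrow_H}$ is wrong: every factor occurring in $M(E_7){\downarrow_H}$ and $L(E_7){\downarrow_H}$ here is $\PGL_2(19)$-stable, and no argument at the level of composition factors can decide between $N_\mbG(H)=H$ and $H.2$. (Compare the Serre embedding in the next subsection, where the multiset of factors is equally diagonal-stable and yet $N_\mbG(H)=H.2$.) The paper instead determines $N_\mbG(H)$ via the long preamble on subgroups $19\rtimes 9$: the two non-trivial $1$-dimensional factors on $M(E_7)$ force the odd part of the Borel subgroup to sit diagonally in an $E_6T_1$-Levi rather than inside $E_6$, and such a diagonal $19\rtimes 9$ cannot extend to $19\rtimes 18$ in the adjoint group.

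For the same reason your mechanism for part (ii) cannot work. All irrationalities in the Brauer characters of $M(E_7){\downarrow_H}$ and $L(E_7){\downarrow_H}$ lie in $\Q(\sqrt5)$ (they occur only at elements of order $5$, $10$, $20$), so realisability of the characters over $\F_q$ detects exactly the condition $q\equiv\pm1\bmod 5$ and nothing else; it is structurally incapable of producing the condition that the orders of $q$ modulo $3$ and modulo $19$ have equal parity. The paper obtains that condition by a genuinely different route: it shows the projection $L_0$ of $L\cong 19\rtimes 9$ to the $E_6$ factor lies in exactly one of $E_6(q)$ and ${}^2\!E_6(q)$ according to the parity of the order of $q$ modulo $19$ (proved by comparing Sylow $19$-subgroups and their automizers in $E_6(q)$ and $E_6(q^2)$), while the $T_1$ coordinate forces $L$ into $E_6(q)\cdot(q-1)$ or ${}^2\!E_6(q)\cdot(q+1)$ according to $q\bmod 3$; matching the two gives the parity condition. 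Without a replacement for this argument your proof of (ii), and hence of (iii), does not close. (A smaller slip: the two $\mbG$-classes in (b) are Galois conjugates interchanged by $F_p$ when $p\equiv\pm2\bmod 5$, not by the diagonal automorphism of $H$.)
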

\begin{proof} The proof proceeds in stages.

\paragraph{Determination of $N_\mbG(H)$} Since the odd part $L\cong 19\rtimes 9$ of the Borel subgroup $19\rtimes 18$ of $H$ acts on $M(E_7)$ with two non-trivial $1$-dimensional composition factors (and six $9$-dimensional ones) we see from the discussion at the start of this section that $L$ is not contained in a group $19\rtimes 36$ (as $\mbG$ is simply connected). Thus $N_\mbG(H)=H$ for all primes.
%
%
%

\paragraph{Determination of $H$ up to conjugacy}

In the supplementary materials, for $p=2$, $p=5$ and $p\neq 2,3,5,19$ we provide a complete proof of the uniqueness of this case.

For $p\neq 2,3,5,19$ and $p=5$ we use the subalgebra method from Section \ref{sec:subalgmethod}. While it is clear that the form is unique for $p\neq 2,3,5,19$, for $p=5$ it is not clear.

As in previous cases, we must prove that the alternating form on $M(E_7){\downarrow_H}$ is unique up to conjugacy in $\Sp_{56}(k)$. Since the two summands have no composition factors in common, the form is the sum of a form for each factor, and can be scaled independently by Proposition \ref{prop:ssnofactorsincommon}. The first summand has a unique form; the second yields a subgroup of $\Sp_{36}(k)$, lying in the $A_{17}T_1$-parabolic subgroup. The $1$-cohomology of the module $S^2(18)$ is $1$-dimensional, so $H$ is unique up to conjugacy by Corollary \ref{cor:symp1cohom}.

For $p=2$ things are worse because the form is \emph{not} unique up to $\Sp_{56}(k)$-conjugacy, and in fact there are infinitely many such classes in $\mbG$. We thus must use the modified subalgebra method where we search in $\SL_{56}(k)$ instead of $\Sp_{56}(k)$. We still obtain a unique class of subgroups, however. (We also provide in the supplementary materials a proof using $\Sp_{56}(k)$, but with a $1$-parameter family of symplectic forms instead, but this is \emph{much} slower.)

\paragraph{Action of outer automorphisms}

Since $N_\mbG(H)=H$, diagonal automorphisms of $G$ fuse two $G$-classes of subgroups $\bar H$ by Lemma \ref{lem:diagswaps}. Thus we must decide on the action of the field automorphism $\sigma=F_p$.

For $p=2$ we require $\F_4$, and for $p=5$ we require $\F_5$ for $M(E_7)$ and $L(E_7)$ to be defined. In general, irrationalities in the Brauer characters of $M(E_7)$ and $L(E_7)$ occur only for elements of order $5,10,20$, and all satisfy $x^2\pm 2x-4$ (for $M(E_7)$) and $x^2-x-1$ (for $L(E_7)$). These equations split over $\F_q$ if and only if $q\equiv\pm 1\bmod 5$.

Thus we may replace $\sigma=F_p$ by $\sigma=F_{p^2}$ if $p\equiv\pm2\bmod 5$, and assume that $\sigma$ stabilizes the $\mbG$-class of subgroups. Choose $H$ in the class such that $\sigma$ normalizes $H$. We now must determine if $\sigma$ centralizes $H$ (up to conjugacy) or acts as the outer diagonal automorphism on $H$. This is equivalent to asking if $\sigma$ centralizes (up to conjugacy) the subgroup $L\cong 19\rtimes 9$ of $H$, or equivalently if $L\leq E_7(p^a)_{\mathrm{sc}}$ for a certain $a$.

\medskip

We have already seen in the discussion at the start of this section that $L$ lies diagonally in an $E_6T_1$-Levi subgroup; let $L_0$ denote the projection of $L$ onto the $E_6$ factor. The subgroup $L_0$ is unique in $E_6$, so the question is whether this lies in $E_6(q)_{\mathrm{sc}}$, ${}^2\!E_6(q)_{\mathrm{sc}}$, both or neither. Since the graph automorphism of $E_6$ cannot centralize $L_0$ (as it does not lie in $F_4$ or $C_4$) we have that it must act non-trivially on it. Since there is a unique $E_6$-class of subgroups $L_0$, any field automorphism of also normalizes it (up to conjugacy). Thus exactly one of the field and the graph-field must centralize $L_0$, so $L_0$ lies in exactly one of $E_6(q)_{\mathrm{sc}}$ and ${}^2\!E_6(q)_{\mathrm{sc}}$ for each $q$. If $d$ is the order of $q$ modulo $19$ (thus $d$ is one of $1,2,3,6,9,18$), we claim that $L_0$ lies in $E_6(q)_{\mathrm{sc}}$ if and only if $d$ is odd. We first construct $L_0$ inside $E_6(q)_{\mathrm{sc}}$ for $d$ odd, then by duality $L_0$ lies in ${}^2\!E_6(q)_{\mathrm{sc}}$ for $d$ even.

Thus assume that $d$ is odd. The result clearly holds for $d=1$, as $L_0$ lies inside the normalizer of a split torus. For the other cases, note that the orders of $q$ and $q^2$ modulo $19$ are the same, and $L_0$ definitely lies in $E_6(q^2)_{\mathrm{sc}}$. On the other hand, the Sylow $19$-subgroups of $E_6(q)_{\mathrm{sc}}$ and $E_6(q^2)_{\mathrm{sc}}$ are the same, and the automizers of the Sylow $19$-subgroups are the same (as they only depend on $d$). Thus $L_0$ lies in $E_6(q)_{\mathrm{sc}}$ if and only if it lies in $E_6(q^2)_{\mathrm{sc}}$, completing the proof.

We now use this to decide if $L$ lies in $E_7(q)_{\mathrm{sc}}$. If $q\equiv 1\bmod 3$ then $L$ must lie in $E_6(q)_{\mathrm{sc}}.(q-1)$, and this occurs if and only if $L_0$ lies in $E_6(q)_{\mathrm{sc}}$, so $d=1,3,9$. If $q\equiv -1\bmod 3$ then $L$ must lie in ${}^2\!E_6(q)_{\mathrm{sc}}.(q+1)$, and this occurs if and only if $L_0$ lies in ${}^2\!E_6(q)_{\mathrm{sc}}$, so $d=2,6,18$. Thus $L$ embeds in $E_7(q)_{\mathrm{sc}}$ if and only if the orders of $q$ modulo $3$ and $19$ are both odd or both even. In the other case it does not, and so we must move to $E_7(q^2)_{\mathrm{sc}}$, as claimed in the proposition.
\end{proof}

\subsubsection{\texorpdfstring{$\PSL_2(19)$}{PSL(2,19)} (Serre embedding)}

This time, as we will see that $p=2$ does not occur, let $H\cong \SL_2(19)$. This case is similar to the previous one, but there are some important distinctions. Here, the action of $H$ on $M(E_7)$ has composition factors of dimensions $10,10,18,18$. The two $18$-dimensional modules are the same as with the previous embedding, and the two $10$-dimensional modules are $10$ and $10^*$. The appropriate primes are $p=3$, $p=5$ and $p\neq 2,3,5,19$. (The case $p=2$ is missing as $10^\pm$ reduces modulo $2$ with factors $1$ and $9^\pm$, so $H$ stabilizes a line on $M(E_7)$.) If $p=5$ then as with the previous case the two $18$-dimensional modules become isomorphic, and as before the embedding must have structure $(18/18)\oplus 10\oplus 10^*$. If $p=3$ then $M(E_7){\downarrow_H}$ is semisimple.

We have the following proposition, with the proof of maximality delayed until Section \ref{sec:proofofmaximality}.

\begin{prop} Let $H\cong \SL_2(19)$ and $p\neq 2,19$. Suppose that $H$ is a subgroup of $\mbG$ and that $M(E_7){\downarrow_H}$ contains a composition factor of dimension $10$.
\begin{enumerate}
\item There are exactly two $\mbG$-conjugacy classes of subgroups $H$ unless $p=5$, in which case there is exactly one. In all cases $N_\mbG(H)=H.2$. If $\bar H\leq G$ and $p\neq 5$ then there are exactly four $G$-conjugacy classes of subgroups $\bar H$ if $q\equiv \pm1,\pm9\bmod 40$, and if $q\not\equiv \pm1,\pm9\bmod 40$ there are two $G$-conjugacy classes. If $q=5^{2n+1}$ then there is exactly one $G$-class of subgroups $\bar H$, and if $q=5^{2n}$ then there are exactly two.
\item $\bar H$ embeds in $G$ if and only if $q\equiv 0,\pm1\bmod 5$. $\bar H.2$ embeds in $G$ if and only if $q\equiv 0,\pm1\bmod 5$ and $q\equiv\pm 1\bmod 8$.
\item If $N_{\bar G}(\bar H)$ is maximal in $\bar G$ then $q$ is minimal such that $\bar H$ embeds in $G$, and either $\bar G=G$, or $q\not\equiv \pm1\bmod 8$ and $\bar G$ is $G$ extended by the diagonal automorphism.
\end{enumerate}
\end{prop}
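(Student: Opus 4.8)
The proof splits, as in the preceding propositions, into four stages: determination of $N_\mbG(H)$, determination of $H$ up to $\mbG$-conjugacy, the action of the outer automorphisms (which also yields the embedding conditions), and maximality.

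\emph{Determination of $N_\mbG(H)$.} Unlike the embedding of the previous section, the multiset of composition factors of $M(E_7){\downarrow_H}$ is stable under the diagonal automorphism $\delta_H$ of $H$, so $N_\mbG(H)$ may be strictly larger than $H\cdot Z(\mbG)$; since $C_\mbG(H)=Z(\mbG)$ and $\Out(H/Z(H))\cong 2$, the only possibility is $N_\mbG(H)=H.2$. To show this actually occurs I would run the subalgebra method (as for $2\cdot M_{22}$) and verify that the $\mathfrak{e}_7$-subalgebra of $L(C_{28})$ it produces from $L(E_7){\downarrow_H}$ is stabilized by a (necessarily unique) element of $N_{\Sp_{56}(k)}(H)$ inducing $\delta_H$; equivalently, that the actions on $M(E_7)$ and $L(E_7)$ extend to one of the two isoclinic variants of $H.2$, exactly one of which maps into $\Sp_{56}(k)$. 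That variant is the central extension $2\cdot\PGL_2(19)$ whose outer elements of order $4$ square into $Z(\mbG)$; recording this is precisely what produces the congruence $q\equiv\pm1\bmod8$ below.

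\emph{Determination of $H$ up to conjugacy.} For $p\neq 5$ the module $M(E_7){\downarrow_H}=10\oplus 10^*\oplus 18_1\oplus 18_2$ is semisimple with the $18_i$ self-dual and symplectic, so the hypotheses of Proposition \ref{prop:ssmultfree} hold and the alternating form is unique up to $\Sp_{56}(k)$-conjugacy; for $p=5$ the summand $18_1\oplus 18_2$ degenerates to $18/18$ and uniqueness follows instead from Corollary \ref{cor:symp1cohom} (after checking $\dim H^1(H,S^2(18))=1$) together with Proposition \ref{prop:ssnofactorsincommon}. The subalgebra method then gives exactly two $\mbG$-classes for $p\neq 2,3,5,19$ — one for each $\Aut(H)$-class of action on $M(E_7)$, corresponding to the two unordered pairs $\{18_1,18_2\}$ among the four $18$-dimensional modules of $H$ — in agreement with \cite{pachera2019un}; and exactly one $\mbG$-class for $p=5$, where the two $18$'s coincide, also visible from the inclusion of $H$ in a copy of $2\cdot Ru$ via Proposition \ref{prop:maxcont}. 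The case $p=3$ is deferred to the next section, where the reductions modulo $3$ of this embedding and of the $\PGL_2(19)$ embedding agree. Given $H\leq G$, the number of $G$-classes then drops out of Lemma \ref{lem:diagswaps}(ii) once $N_G(H)$ is known.

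\emph{Outer automorphisms and embedding conditions.} The irrationalities of the Brauer characters of $M(E_7){\downarrow_H}$ and $L(E_7){\downarrow_H}$ occur only at elements of order $5,10,20$ and satisfy $x^2\pm2x-4$ and $x^2-x-1$ (each of discriminant a unit times $5$), so both modules are defined over $\F_q$ exactly when $q\equiv 0,\pm1\bmod5$; this is the embedding criterion for $H$. For $H.2=2\cdot\PGL_2(19)$ there is additionally an element of order $8$ — the preimage of an outer element of order $4$ of $\PGL_2(19)$ — whose Brauer character value on $M(E_7)$ involves $\zeta_8+\zeta_8^{-1}$, so $H.2\leq G$ requires also $q\equiv\pm1\bmod8$, a congruence read off the character table. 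Hence for $p\neq 5$: if $q\equiv\pm1\bmod5$ but $q\not\equiv\pm1\bmod8$ then $N_G(H)=H$, while if $q\equiv\pm1,\pm9\bmod40$ then $N_G(H)=H.2=N_\mbG(H)$. Feeding this into Lemma \ref{lem:diagswaps}(ii) gives the stated counts ($4$ $G$-classes in the latter case — two $\mbG$-classes, each a single $\sigma$-stable $G_\mathrm{ad}$-class splitting into two $G$-classes; $2$ otherwise), and for $p=5$ one or two $G$-classes according as $q=5^{2n+1}$ or $5^{2n}$, since $5^{2n+1}\equiv5$ and $5^{2n}\equiv1\bmod8$. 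When $q\equiv\pm2\bmod5$ one replaces $F_p$ by $F_{p^2}$: the two $\mbG$-classes are interchanged by $F_p$ because $x^2-x-1$ does not split over $\F_p$, exactly as in Proposition \ref{prop:psl229}.

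\emph{Maximality.} By Proposition \ref{prop:maxcont} below, $\bar H$ has no proper overgroup in $\bar G$ other than subfield subgroups, so if $N_{\bar G}(\bar H)$ is maximal then $q$ is minimal with $H\leq G$. If $q\equiv\pm1\bmod8$ then $N_\mbG(H)=H.2\leq G$, so the outer diagonal automorphism $\delta$ of $G$ cannot normalize this copy of $H$ (it would otherwise lie in $H.2\leq G$); hence for $\bar G=G.\delta$ one has $N_{\bar G}(\bar H)=N_G(\bar H)<G<\bar G$, not maximal, and only $\bar G=G$ survives. If $q\not\equiv\pm1\bmod8$ then $N_G(H)=H$, $\delta$ stabilizes each $G$-class (Lemma \ref{lem:diagswaps}(ii) gives $|N_\mbG(H):N_G(H)|=2=|G_\mathrm{ad}:G|$), and both $\bar G=G$ and $\bar G=G.\delta$ produce maximal subgroups $\PSL_2(19)$ and $\PGL_2(19)$ respectively. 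The step I expect to be hardest is the first: determining \emph{which} isoclinic $H.2$ embeds and checking it fixes the $\mathfrak{e}_7$-subalgebra is genuinely computational — it cannot be seen from the $\mbG$-module structure alone — and the attendant bookkeeping of $8$th roots of unity is exactly what governs the $q\equiv\pm1\bmod8$ dichotomy.
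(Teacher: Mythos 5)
Your overall architecture (normalizer, uniqueness via the subalgebra method, character irrationalities plus Lemma \ref{lem:diagswaps} for the class counts) matches the paper's, and your handling of the form for $p\neq 5$ via Proposition \ref{prop:ssmultfree} and for $p=5$ via Corollary \ref{cor:symp1cohom} is exactly what the paper does. One genuine difference of route: for $N_\mbG(H)=H.2$ the paper does not compute anything --- it quotes Serre's characteristic-zero embedding of $\PGL_2(19)$ into adjoint $E_7$ with these composition factors and invokes Proposition \ref{prop:p=0impallp} to transfer the extension to every $p\neq 2,19$. Your proposed computational verification of which isoclinic $H.2$ stabilizes the $\mathfrak e_7$-subalgebra would work, but it is precisely the labour the citation avoids; the $q\equiv\pm1\bmod 8$ bookkeeping is then read off the character of $H.2$ afterwards, not baked into the normalizer computation.

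Two concrete gaps. First, you defer $p=3$ ``to the next section,'' but this proposition \emph{is} that section: the statement covers all $p\neq 2,19$, and it is the preceding (non-Serre) $\PSL_2(19)$ case whose $p=3$ reduction is delegated here, not the reverse. As written your proof simply omits $p=3$; the fix is cheap ($M(E_7){\downarrow_H}$ is semisimple for $p=3$, so your Proposition \ref{prop:ssmultfree} argument applies and the subalgebra computation runs as for the other primes), but it must be done in this proposition. Second, your treatment of $H.2\leq G$ only establishes necessity of $q\equiv\pm1\bmod 8$ (the order-$8$ elements have trace $\pm\sqrt 8$ on $M(E_7)$); the converse --- that $H.2$ really lies in $G$ rather than merely having $\F_q$-rational character --- is asserted. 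The paper closes this by noting that $\sigma=F_q$ normalizes $N_\mbG(H)=H.2$, cannot swap the two classes of order-$8$ elements once their character values lie in $\F_q$, and that $\Out(H.2)$ has order $2$ with the outer automorphism swapping exactly those classes; hence $\sigma$ is inner on $H.2$ and Lemma \ref{lem:inneriscent} gives a conjugate of $H.2$ inside $G$. Without some such argument the ``exactly four $G$-classes when $q\equiv\pm1,\pm9\bmod 40$'' count is not justified.
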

\begin{proof}
The proof proceeds in stages.

\paragraph{Determination of $N_\mbG(H)$} Since this case appears in \cite{serre1996} for $p\neq 2,3,5,19$, and there it is shown that $\PGL_2(19)$ embeds in the adjoint version of $\mbG$ with these composition factors for $H$, we see that $N_\mbG(H)=H.2$ for all primes $p\neq 2,19$ by Proposition \ref{prop:p=0impallp}. As an independent proof, in the supplementary materials we also give an embedding of $H.2$ into $\mbG$ in characteristic $7$, hence it embeds for all $p\neq 2,3,5,19$ by Theorem \ref{thm:larsencorr}.

\paragraph{Determination of $H$ up to conjugacy}

In the supplementary materials, for $p=3$, $p=5$ and $p\neq 2,3,5,19$ we provide a complete proof of the uniqueness of this case. In all cases we use the subalgebra method from Section \ref{sec:subalgmethod}. The form is unique in all cases, following the same method as in the previous case for $H$.

\paragraph{Action of outer automorphisms}

We first decide on the action of the field automorphism $\sigma=F_p$.

For $p=5$ we require $\F_5$ for $M(E_7)$ and $L(E_7)$ to be defined. For $p\neq 5$, irrationalities in the Brauer characters of $M(E_7)$ and $L(E_7)$ occur only for elements of order $5,10,20$, and all satisfy $x^2\pm 2x-4$ (for $M(E_7)$) and $x^2-x-1$ (for $L(E_7)$). These equations split over $\F_q$ if and only if $q\equiv\pm 1\bmod 5$.

Thus we may replace $\sigma=F_p$ by $\sigma=F_{p^2}$ if $p\equiv\pm 2\bmod 5$, and assume that $\sigma$ stabilizes the $\mbG$-class of subgroups. Choose $H$ in the class such that $\sigma$ normalizes $H$. Thus $\sigma$ acts as an inner automorphism on $H_1=N_\mbG(H)$ and thus by Lemma \ref{lem:inneriscent} we may assume that $\sigma$ centralizes $H_1$ in the adjoint group (as $\PGL_2(19)$ is its own automorphism group). Thus $H\leq G_{\mathrm{sc}}=\mbG^\sigma$ in this case.

It remains to understand the action of the diagonal automorphism, whether it induces the diagonal automorphism on $H$ or fuses two classes of subgroups $\bar H$ in $G$. Equivalently, it remains to understand if $\bar H.2$ lies in $G$, or whether $\bar H=N_G(\bar H)$.

The character of the extension of $M(E_7){\downarrow_H}$ can be seen in \cite[p.11]{atlas}, or can be found from the matrices in the supplementary materials. Elements of order $8$ in $H.2$ act on $M(E_7)$ with Brauer character $\pm \sqrt 8$ (checked in the supplementary materials). Thus if $\sqrt8$ does not lie in $\F_q$ then $H.2$ cannot embed in $G$. We may assume therefore that $q\equiv \pm 1\bmod 8$ (which is equivalent to $\sqrt8$ lying in $\F_q$ by quadratic reciprocity).

Since $H$ is centralized by $\sigma$ (by assumption), $N_\mbG(H)$ is normalized by $\sigma$. Notice that, since $\F_q$ contains the character values of the two classes of elements of order $8$, $\sigma$ cannot induce an automorphism that swaps these two classes. It is an easy exercise, or a quick check with Magma (and we do so in the supplementary materials in the file for characteristic $0$, for completeness), that $\Out(H.2)$ has order $2$, and any outer automorphism indeed swaps these classes. Thus $\sigma$ induces an inner automorphism on $H.2$, so $\sigma$ centralizes some conjugate of $H.2$ by Lemma \ref{lem:inneriscent}. This completes the proof.
\end{proof}

\subsubsection{\texorpdfstring{$\PSL_2(13)$}{PSL(2,13)}}
\label{sec:psl213}
The appropriate primes here are $p=2$, $p=3$, $p=7$ and $p\neq 2,3,7,13$. From \cite[Tables 6.174--6.180]{litterickmemoir}, we find a number of possible sets of composition factors for both $\PSL_2(13)$ for $p=2,3,7$ and for $\SL_2(13)$ for $p\neq 2$. 

Suppose first that $p$ is odd and $H\cong \PSL_2(13)$. If $p=3$ then the simple $kH$-modules in the principal block are $1$ and $13$. Furthermore, $P(13)=13/1/13$. Thus if there are not at least twice as many $13$s as $1$s in $L(E_7){\downarrow_H}$ then $H$ stabilizes a line on $L(E_7)$. In fact, \cite[Table 6.178]{litterickmemoir} shows that there are at most six $13$s and at least four $1$s, so $H$ stabilizes a line on $L(E_7)$ in all cases.

If $p=7$ then the simple $kH$-modules in the principal block are $1$ and $12$. Furthermore, $P(12)=12/12/1,12/12$. Thus if there are not at least four times as many $12$s as $1$s in $L(E_7){\downarrow_H}$ then $H$ stabilizes a line on $L(E_7)$. In fact, \cite[Table 6.176]{litterickmemoir} shows that there are at most six $12$s and at least three $1$s, so $H$ stabilizes a line on $L(E_7)$ in all cases (and hence is strongly imprimitive by Lemma \ref{lem:fixedline}).

Thus we may assume that $H\cong \PSL_2(13)$ for $p=2$, and $H\cong \SL_2(13)$ for $p\neq 2$. The case $p=2$ is different to the rest, and we despatch this now.

\begin{prop} If $p=2$ then $H$ is strongly imprimitive.
\end{prop}
\begin{proof}
For $p=2$, the simple modules in the principal block are $1$, $6_1$ and $6_2$, and the projective cover of $6_i$ is
\[ 6_i/1/6_{3-i}/1/6_i.\]
Assume that $H$ stabilizes no lines on $M(E_7)$ or $L(E_7)^\circ$, as otherwise we are done. Thus we need at least three modules of dimension $6$ for every two trivial composition factors in $L(E_7)^\circ{\downarrow_H}$. Of the two rows of \cite[Table 6.180]{litterickmemoir} labelled `\textbf{P}', one does not satisfy this, so this case can be ignored.

The other option is that $M(E_7){\downarrow_H}$ has composition factors one each of all three (projective) $12$-dimensional modules, and then $6_1^2,6_2,1^2$, which must assemble into $P(6_1)$ if $H$ is to not stabilize a line on $M(E_7)$. The corresponding factors on $L(E_7)^\circ$ are
\[ 14^5,12_1,12_2,12_3,6_1,6_2^3,1^2,\]
and again to avoid stabilizing a line we must have $P(6_2)\oplus 6_2$. In order for the action of an element of order $4$ in $H$ to match up between $M(E_7)$ and $L(E_7)$ (using \cite[Tables 7 and 8]{lawther1995}), the $14$s must assemble into $P(14)\oplus 14^{\oplus 3}$. This determines the action on $L(E_7)^\circ$ uniquely. In order to apply the subalgebra method, we need a symplectic form on the $56$-dimensional module. Given a form on the $P(6_1)$, we can apply Proposition \ref{prop:ssnofactorsincommon} to obtain a unique (non-degenerate) alternating form (up to conjugacy in $\Sp_{56}(k)$) for the whole module. The space of alternating forms on $P(6_1)$ is $2$-dimensional (so up to scalar multiples it is a projective line), and so in the supplementary materials we introduce a parameter in the form, allowing us to deal with all forms simultaneously.

We find a unique conjugacy class of subgroups up to $\mbG$-conjugacy. But this set of factors is the reduction modulo $2$ of the set of factors mentioned earlier in this proof in characteristic $0$. Thus we find $H$ inside $G_2C_3$. For $p=2$, the composition factors of $G_2C_3$ on $M(E_7)$ have dimensions $36$, $8$, $6$ and $6$, with one of dimension $6$ being a submodule. Since $H$ stabilizes a unique $6$-space, this proves that $H$ is again strongly imprimitive via Proposition \ref{prop:intersectionsubspace}.
\end{proof}

We therefore assume that $p$ is odd (or $0$) and that $H\cong \SL_2(13)$ in what follows. From the tables in \cite{litterickmemoir} we find two similar but distinct sets of composition factors for $M(E_7)$ and $L(E_7)$. One always yields a strongly imprimitive subgroup, and so we prove this first, leaving the more interesting case for afterwards.

\begin{prop} If $p$ is odd and the action of $H$ on $M(E_7)$ has a composition factor of dimension $6$ then $H$ is strongly imprimitive.
\end{prop}
\begin{proof} The tables \cite[Tables 6.175, 6.177 and 6.179]{litterickmemoir} determine the composition factors and module structure of $M(E_7){\downarrow_H}$, but not necessarily that of $L(E_7){\downarrow_H}$.

If $p\neq 2,3,7,13$ then the actions of $H$ on our two modules are
\[ 12_4\oplus 12_5\oplus 12_6\oplus 14_4\oplus 6_1\quad\text{and}\quad 14_1\oplus 14_2^{\oplus 4}\oplus 13\oplus 12_1\oplus 12_2\oplus 12_3\oplus 7_2^{\oplus 2}.\]
Because $M(E_7){\downarrow_H}$ is multiplicity-free we may use Proposition \ref{prop:ssmultfree} to see that the embedding of $H$ into $\Sp_{56}(k)$ is unique.

If $p=7$ then the composition factors of $M(E_7){\downarrow_H}$ and $L(E_7){\downarrow_H}$, and the fact that a unipotent element acts with Jordan blocks from \cite{lawther1995}, forces the actions to be
\[ 14_3\oplus P(6_1)\quad \text{and}\quad 14_1^{\oplus 4}\oplus 14_2\oplus 7_2^{\oplus 2}\oplus P(12).\]
(Note that $P(12)=12/(1\oplus (12/12))/12$, so Rows 3 and 4 of \cite[Table 6.177]{litterickmemoir} must yield a trivial submodule, so $H$ is strongly imprimitive in these cases.)

If $p=3$ then we do the same analysis as for $p=7$, and obtain
\[ 14\oplus 12_4\oplus 12_5\oplus 12_6\oplus 6_1\quad \text{and}\quad P(13)\oplus 12_1\oplus 12_2\oplus 12_3\oplus M,\]
where $M$ is a module with composition factors $7_1^4,7_2^6$.

In each case we can apply the subalgebra method. For $p=3$, although the action of $H$ on $L(E_7)$ is not known precisely, if we just assume that $P(7_1)\oplus P(7_2)$ is a summand of $L(E_7){\downarrow_H}$ then this deals with all options. In each case, in the supplementary materials we obtain a single subalgebra up to conjugacy by the centralizer.

The last part is to notice that $H$ lies in the $G_2C_3$ positive-dimensional subgroup, acting irreducibly on both $M(G_2)$ and $M(C_3)$. (If one fixes a $7$-dimensional simple module for the action on $M(G_2)$ then the two options for the $6$-dimensional module $M(C_3){\downarrow_H}$ yield two sets of composition factors: the one in this proposition and the one we will consider next. Thus both possibilities come from $G_2C_3$.) Since $G_2C_3$ acts on $M(E_7)$ with composition factors of dimensions $42$ and $14$, we see that it stabilizes the unique $14$-space stabilized by $H$, and $H$ is strongly imprimitive by Proposition \ref{prop:intersectionsubspace}.
\end{proof}

We now may assume that there is no $6$-dimensional composition factor in $M(E_7){\downarrow_H}$, and from the tables in \cite{litterickmemoir} we obtain a unique set of composition factors for $M(E_7){\downarrow_H}$ and $L(E_7){\downarrow_H}$. (For $p=7$, note that $P(12)=12/(1\oplus (12/12))/12$, so Rows 3 and 4 of \cite[Table 6.177]{litterickmemoir} must yield a trivial submodule, so $H$ is strongly imprimitive in these cases, as noted in the previous proof.)

If $p\neq 2,3,7,13$ then the actions of $H$ on $M(E_7)$ and $L(E_7)$ are given by
\[ 14_4^{\oplus 2}\oplus 14_5\oplus 14_6\quad\text{and}\quad 14_1\oplus 14_2^{\oplus 4}\oplus 13\oplus 12_1\oplus 12_2\oplus 12_3\oplus 7_1\oplus 7_2\]
respectively, according to \cite[Table 6.175]{litterickmemoir}.

For $p=7$ we have
\[ 14_2^{\oplus 2}\oplus 14_4\oplus 14_5\quad\text{and}\quad 14_1^{\oplus 4}\oplus 14_2\oplus 7_1\oplus 7_2\oplus P(12)\]
respectively, according to \cite[Table 6.177]{litterickmemoir}.

For $p=3$, as in the previous proof we can determine $M(E_7){\downarrow_H}$ uniquely but not the adjoint module. Using \cite[Table 6.179]{litterickmemoir}, $H$ has composition factors $14^4$ on $M(E_7)$. If $u\in H$ has order $3$ then $u$ acts on $14$ with blocks $3^4,1^2$, so $u$ acts with at least sixteen blocks of size $3$ on $M(E_7)$. The only allowed action, according to \cite[Table 7]{lawther1995}, is $3^{18},1^2$, and therefore $H$ acts on $M(E_7)$ as $P(14)\oplus 14$.

There are two such unipotent classes: $2A_2$ and $2A_2+A_1$. They act on $L(E_7)$ with blocks $3^{42},1^7$ and $3^{43},2^2$ respectively. In each case the module is a sum of $P(13)\oplus 12_1\oplus 12_2\oplus 12_3$, and a module with factors $7_1^5,7_2^5$ (similar to the case in the proposition above). There are six ways of distributing these factors so that the unipotent action is as claimed. The lack of complete understanding of $L(E_7){\downarrow_H}$ does not affect our ability to use the subalgebra method, and we do so in the next proposition.

\begin{prop}\label{prop:sl213primitive} Let $H\cong \SL_2(13)$ and $p\neq 2,13$. Suppose that $H$ is a subgroup of $\mbG$, that $M(E_7){\downarrow_H}$ contains no composition factor of dimension $6$, and that $H$ stabilizes no line on $M(E_7)$ or $L(E_7)$.
\begin{enumerate}
\item There is exactly one $\mbG$-conjugacy classes of subgroups $H$, and $N_\mbG(H)=H.2$. The subgroup $H$ is Lie imprimitive, contained in $G_2C_3$, but the subgroup $H>2$ is Lie primitive.
\item The subgroup $\bar H$ embeds in $G$ for all $q$ a power of $p$.
\item If $N_{\bar G}(\bar H)$ is maximal in $\bar G$ then $q=p$, and one of the following holds:
\begin{enumerate}
\item $p\equiv \pm 1\bmod 8$ and $\bar H.2\leq G$ is maximal in $G=\bar G$;
\item $p\equiv \pm 3\bmod 8$, $p\equiv \pm2,\pm5,\pm6\bmod 13$ and $\bar G$ is either $G$ or $G.2$, where $H$ and $H.2$ respectively is maximal;
\item $p\equiv \pm 3\bmod 8$, $p\equiv \pm1,\pm3,\pm4\bmod 13$ and $H.2$ is a novelty maximal subgroup of $G.2$.
\end{enumerate}
\end{enumerate}
\end{prop}
\begin{proof}
The proof proceeds in stages.

\paragraph{Determination of $N_\mbG(H)$} In the supplementary materials we prove that $H.2$ embeds in $\mbG$ in characteristics $3$, $7$ and $181$, hence it embeds for all $p\neq 2,13$ by Theorem \ref{thm:larsencorr}.

\paragraph{Determination of $H$ up to conjugacy}

In the supplementary materials, for $p=3$, $p=7$ and $p\neq 2,3,7,13$ we provide a complete proof of the uniqueness of this case. In all cases we use the subalgebra method from Section \ref{sec:subalgmethod}. The form is unique in all cases by a direct calculation with the space of forms in the supplementary materials.

\paragraph{Action of outer automorphisms}

Fix $H\leq \mbG$, and we first decide on the action of the field automorphism $\sigma=F_p$. Since $H$ is unique up to $\mbG$-conjugacy, if $H^\sigma=H^g$ for some $g\in \mbG$, whence $H$ is normalized by the automorphism $\sigma\cdot c_{g^{-1}}$, where $c_x$ denotes conjugation by $x$. Replacing $H$ by a conjugate, we assume that $\sigma$ normalizes $H$. Since $H.2$ embeds in $\mbG$, we have $g\in \mbG$ such that $\sigma\cdot c_{g^{-1}}$ in fact centralizes $H$, so $H$ embeds in $2\cdot E_7(p)$.

We now must determine whether $H.2$ embeds in $G_{\mathrm{sc}}$ or whether the outer diagonal automorphism of $G_{\mathrm{sc}}$ induces an outer automorphism on $H$. First, we determine the (Brauer) character of $H.2$ on $M(E_7)$, and the character values are
\[ \{\pm 56,\pm 4,\pm 2,0,\pm (\zeta_8+\zeta_8^{-1}),\pm 2(\zeta_8+\zeta_8^{-1})\},\]
where $\zeta_8$ is a primitive $8$th root of unity. These lie in $\F_p$ if and only if $p\equiv \pm 1\bmod 8$, so certainly $\bar H.2$ cannot embed in $G$ if $p\equiv\pm 3\bmod 8$. We show that $\bar H.2$ \emph{does} embed in $G$ if $p\equiv\pm 1\bmod 8$ now.

Let $H_0=N_\mbG(H)$, and assume that $\sigma$ centralizes $H$, so it normalizes $H_0$. We check in the supplementary materials that the outer automorphism group of $H_0$ has order $2$, and that an outer automorphism of $H_0$ swaps two classes of elements of order $8$ with traces $\pm2(\zeta_8+\zeta_8^{-1})$ on $M(E_7)$. Thus if $\zeta_8+\zeta_8^{-1}\in \F_p$ then $\sigma$ cannot induce this outer automorphism on $H_0$. Thus $\sigma$ incudes an inner automorphism on $H_0$, so a conjugate of $H_0$ lies in $G_{\mathrm{sc}}$ by Lemma \ref{lem:inneriscent}.

\paragraph{Containment in $G_2C_3$}

To determine the rest of the maximality statement we have to show when $H$ and $H.2$ are contained in $G_2C_3$. From \cite[Tables 8.29, 8.41 and 8.42]{bhrd}, $\PSL_2(13)$ is contained in $G_2(q)$ and $\PSp_6(q)$ precisely if $q\equiv \pm1,\pm3,\pm4 \bmod 13$. Thus if $q\equiv\pm2,\pm5,\pm6\bmod 13$ then $H$ does not embed in $G_2(q)C_3(q)$ and hence $N_{\bar G}(\bar H)$ must be maximal in $\bar G$. On the other hand, if $q\equiv \pm1,\pm3,\pm4\bmod 13$ then $H$ \emph{is} contained in $G_2(q)C_3(q)$. Since the $6$- and $7$-dimensional modules are not stabilized by the diagonal automorphism of $\SL_2(13)$ (or by checking the tables in \cite{bhrd}) one sees that $H.2$ does not embed in $G_2C_3$, and hence $H.2$ is Lie primitive.

(There could, of course, be another subgroup containing $H$ or $H.2$, and we exclude this possibility in Section \ref{sec:proofofmaximality}.)
\end{proof}

\medskip

\subsubsection{\texorpdfstring{$\PSL_2(11)$}{PSL(2,11)}}
\label{sec:psl211}
Let $p\neq 11$ be a prime. The sets of composition factors for $\PSL_2(11)$ and $\SL_2(11)$ on $M(E_7)$ and $L(E_7)$ are in \cite[Tables 6.167--6.173]{litterickmemoir}. From these, we see that if $p$ is odd then $\PSL_2(11)$ is strongly imprimitive, and so we may assume that $H\cong\SL_2(11)$ for $p$ odd, and $H\cong \PSL_2(11)$ for $p=2$.

We start by proving that $H$ is always strongly imprimitive for $p=5$, despite there being a case labelled `\textbf{P}' in \cite[6.170]{litterickmemoir}. Using this we show the same thing for $p\neq 2,3,5$. In addition, for $p=2$ we can also show that $H$ is strongly imprimitive, leaving only $p=3$. The proof for this case is much more complicated, and requires both a computer calculation and theoretical arguments.

\begin{prop} If $p\neq 3$ then $H$ is strongly imprimitive.
\end{prop}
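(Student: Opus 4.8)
The plan is to argue by cases on the characteristic. Since $\SL_2(11)$ and $\PSL_2(11)$ are perfect and $|H|=2^3\cdot3\cdot5\cdot11$, the primes to consider are $p=5$, the coprime range $p\notin\{2,3,5,11\}$, and $p=2$ (the prime $p=11$ does not occur for this $H$, and $p=3$ is treated separately below). In every case only the rows marked `\textbf{P}' in the relevant tables of \cite{litterickmemoir} require work, since every other row already yields a strongly imprimitive subgroup by the main result of that memoir.

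The decisive case is $p=5$, which is also the one outside the scope of Larsen's $(0,p)$-correspondence. Starting from the `\textbf{P}' row of \cite[Table 6.170]{litterickmemoir}, I would first fix the structures of $M(E_7){\downarrow_H}$ and $L(E_7){\downarrow_H}$ from the $\Ext^1$-groups between the relevant simple $kH$-modules, the Jordan block sizes of an element $u\in H$ of order $5$, and \cite[Tables 7 and 8]{lawther1995} (eliminating the case outright if the forced Jordan form on $L(E_7)$ does not appear in those tables), exactly as in the preceding subsections. Next I would show the invariant alternating form on $M(E_7){\downarrow_H}$ is unique up to $\Sp_{56}(k)$-conjugacy, using Proposition \ref{prop:ssmultfree}, or, if the module is non-semisimple with a factor of the wrong multiplicity, the $1$-cohomology computation of Corollary \ref{cor:symp1cohom}. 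With $H$ so fixed inside $\Sp_{56}(k)$ I would run the subalgebra method of Section \ref{sec:subalgmethod}, enumerating the $H$-invariant subalgebras of $L(C_{28})=S^2(M(E_7){\downarrow_H})$ that could be $L(\mbG)$; the bound on abelian subalgebras (Proposition \ref{prop:abeliansubalgebra}), the admissible nilpotent Jordan forms on $M(E_7)$ from \cite[Tables 2 and 3]{stewart2016}, and the maximal-subalgebra classification \cite{premetstewart2019} should eliminate every candidate of type $\mathfrak e_7$, so each such $H$ lies in a proper, closed, positive-dimensional subgroup $\mb X$ of $\mbG$. Finally, identifying $\mb X$ among the maximal subgroups of $E_7$ in \cite{thomas2016,liebeckseitz2004} and checking that $\mb X$ stabilises the same $H$-invariant subspace of $M(E_7)$, equivalently that $H$ is a blueprint for $M(E_7)$ by \cite[Corollary 3.3]{craven2019un}, gives strong imprimitivity via Proposition \ref{prop:intersectionsubspace}.

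For $p\notin\{2,3,5,11\}$ we have $p\nmid|H|$, so Theorem \ref{thm:larsencorr} puts the $\mbG$-classes of subgroups isomorphic to $H$ into bijection with those over $\mathbb C$, and the composition factors in \cite[Tables 6.167--6.169]{litterickmemoir} are the characteristic-free versions of the $p=5$ data. The same argument applies, now with $M(E_7){\downarrow_H}$ semisimple so that uniqueness of the form is immediate from Proposition \ref{prop:ssmultfree}; the subalgebra method again places $H$ in the positive-dimensional subgroup $\mb X$, which stabilises the same subspace, so Proposition \ref{prop:intersectionsubspace} finishes this range. For $p=2$, where $H\cong\PSL_2(11)$, the `\textbf{P}' sets of factors in \cite[Table 6.173]{litterickmemoir} are the only ones to address; for each I would fix the module structure using elements of order $4$ and $8$ in $H$ together with \cite[Table 7]{lawther1995} (eliminating it if the resulting class is absent from \cite[Table 8]{lawther1995}), and then either exhibit a stabilised subspace whose $N_{\Aut^+(\mbG)}(H)$-orbit has positive-dimensional stabiliser (Proposition \ref{prop:intersectionsubspace}), or run the subalgebra method in $\Lambda^2(M(E_7){\downarrow_H})\subset M\otimes M^*$ to land $H$ in a positive-dimensional $\mb X$ and conclude as before.

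The main obstacle is the $p=5$ subalgebra computation: securing uniqueness of the symplectic form (or, failing that, absorbing a one-parameter family of forms as in the $\PSL_2(19)$, $p=2$ treatment), keeping the quadratic system in the subalgebra parameters small enough to solve on a computer, and, above all, showing that no $H$-invariant copy of $M(E_7){\downarrow_H}$ inside $L(C_{28})$ generates a subalgebra of type $\mathfrak e_7$, i.e.\ that $\SL_2(11)$ is not Lie primitive in $E_7$ for $p=5$. Once this is done, pinning down $\mb X$ and verifying the common stabilised subspace is routine.
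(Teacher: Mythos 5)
There is a genuine gap: your entire argument for the decisive cases $p=5$ and $p\nmid|H|$ rests on the assertion that the subalgebra method ``should eliminate every candidate of type $\mathfrak e_7$'', which is precisely the mathematical content that needs to be established, and you yourself flag it as the main obstacle without resolving it. You have written a plan for a computation, not a proof, and the plan also ends in the wrong place: even if the computation succeeded in placing $H$ in some positive-dimensional $\mb X$, you would still owe the identification of $\mb X$ and the common stabilised subspace. The paper avoids all of this. For $p=5$ the key observation is that, although the factors $11^8,10_1^3,5,5^*,1^5$ on $L(E_7)$ have pressure $3$, the projective cover $P(11)=11/1/11/1/11$ forces three copies of $11$ for every two trivial factors if a trivial submodule is to be avoided, and the count fails; so $H$ stabilises a line on $L(E_7)$ and hence lies in a parabolic or maximal-rank subgroup by Lemma \ref{lem:paramaxl}. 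A short run through the candidate Levi and maximal-rank subgroups (using that the minimal faithful degree of $\SL_2(11)$ is $6$, that no faithful simple module carries a symmetric form, and the known composition factors of the $A_5$-Levis on $L(E_7)$) shows none can contain $H$ with these factors, so the configuration is vacuous. The characteristic-$0$ case then follows by the contrapositive of Proposition \ref{prop:p=0impallp} (existence over $\mathbb C$ would force existence of the excluded reduction mod $5$); note that Theorem \ref{thm:larsencorr}, which you invoke, counts Lie primitive classes and does not by itself transfer strong imprimitivity.

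Two smaller points. For $p=2$ the paper again needs no computation: $P(5)=5/1,5^*/5$ means three $5$-dimensional factors are needed per trivial factor, and the actual counts (ten versus five, after discarding one trivial since $L(E_7)^\circ\neq L(E_7)$) force a stabilised line, so Lemma \ref{lem:fixedline} applies directly. Also, $\PSL_2(11)$ has Klein four Sylow $2$-subgroups, so there are no elements of order $4$ or $8$ to use in fixing module structures as you propose.
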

\begin{proof} We start with $p=5$. If $H$ is not strongly imprimitive then $H\cong \SL_2(11)$ and the composition factors of $L(E_7){\downarrow_H}$ are (up to automorphism) $11^8,10_1^3,5,5^*,1^5$. (See \cite[Tables 6.169--6.170]{litterickmemoir}.) Although this has pressure $3$, any such module must have a trivial submodule. To see this, note that the projective cover of $11$ is $11/1/11/1/11$, so for every two trivials we need three $11$s in order to not to have a trivial submodule. Since this is not the case, $H$ stabilizes a line on $L(E_7)$. Thus $H$ lies in either a parabolic or maximal-rank subgroup, by Lemma \ref{lem:paramaxl}, and in a connected one by Lemma \ref{lem:inconnected}.

We now show that $H$ cannot lie in such a subgroup, and therefore no such $H$ embeds in $\mbG$ with these factors on $L(E_7)$. The corresponding factors on $M(E_7)$ are $10_4,10_5,(6,6^*)^3$. (It is not important which two modules $10_4$ and $10_5$ are.) Notice that the minimal faithful degree for $H$ is $6$ and no faithful simple module for $H$ carries a symmetric form. Thus $H$ does not embed in $\mb X$ with $Z(H)\leq Z(\mb X)$ for $\mb X$ of type $A_n$ for $n\leq 5$ and $n=7$, or of type $D_n$ for $n\leq 5$.

If $H$ embeds in a parabolic subgroup then there is some copy of $H$ with the same composition factors in the Levi subgroup $\mb X$. From above we see that $\mb X$ is of type $D_6$ or $A_5$, and if $\mb X=D_6$ then $H$ embeds into $A_5$. There are two classes of $A_5$-Levi subgroup of $\mbG$, and they have composition factors as given in, for example, \cite[Table 21]{thomas2016}. One of these lies in $A_7$ and clearly cannot work. The other does work for $M(E_7)$, with $H$ acting irreducibly on $M(A_5)$. However, this subgroup has eight trivial factors on $L(E_7)$, so this is not correct. (It is the other option in \cite[Table 6.170]{litterickmemoir}.) 

If $H$ embeds in a maximal-rank subgroup of $\mbG$ then the same arguments apply, and we see that $H$ lies in $D_6A_1$ (and then inside the $D_6$-Levi subgroup) or in $A_2A_5$ (and then inside the $A_5$-Levi subgroup). Since these have already been considered, we prove the result.

\medskip

We now complete the case $p\neq 2,3,5$, so $p=0$. From \cite[Tables 6.167--6.168]{litterickmemoir} there is a unique possible set of composition factors that is labelled `\textbf{P}', and the reduction modulo $5$ of these factors for $M(E_7)$ and $L(E_7)$ would be the case above that we have excluded. Thus it does not exist either by Proposition \ref{prop:p=0impallp}.

Finally, for $p=2$, we just show that the two potential cases in \cite[Table 6.173]{litterickmemoir} are in fact strongly imprimitive. The projective cover of $5$ is
\[ 5/1,5^*/5,\]
and so in any module with trivial factors we require three $5$-dimensional factors for every trivial factor (except remember to remove one trivial from $L(E_7){\downarrow_H}$ as $L(E_7)^\circ\neq L(E_7)$). There are ten $5$-dimensional factors and five trivial factors in the case from that table, and so $H$ stabilizes a line on $L(E_7)^\circ$ and is strongly imprimitive as well by Lemma \ref{lem:fixedline}.
\end{proof}

For the rest of this section, let $p=3$. From \cite[Tables 6.171 and 6.172]{litterickmemoir} we find exactly two cases labelled with `$\mathbf{P}$', both for $\SL_2(11)$. Letting $10_1$ denote the $10$-dimensional module for $\PSL_2(11)$, the projective cover of $10_1$ is $10_1/1/10_1$. Thus in order for $L(E_7){\downarrow_H}$ not to have a trivial submodule we need two copies of $10_1$ for every $1$. This is not the case in Row 2 of \cite[Table 6.172]{litterickmemoir}, so $H$ always stabilizes a line on $L(E_7)$ in that case. The remaining one has factors
\[ 12_1^3,12_2^2,10_1^6,5,5^*,1^3.\]
The $12_i$ are projective so split off, we must have $P(10_1)^{\oplus 3}$ in order to avoid stabilizing a line (and hence being strongly imprimitive by Lemma \ref{lem:fixedline}), and so it remains to deal with $5$ and $5^*$.

We have a projective module of dimension $123$ and two $5$-dimensional factors, hence at least 43 Jordan blocks of size $3$ in the action of $u$ in $H$ of order $3$ on $L(E_7)$. We see using \cite[Table 8]{lawther1995} that this is only consistent with $u$ coming from class $2A_1+A_1$, acting as $3^{43},2^2$ on $L(E_7)$ and $3^{18},1^2$ on $M(E_7)$.

The actions of $u$ on $5\oplus 5^*$ and $5/5^*$ are $3^2,2^2$ and $3^3,1$ respectively, so we see that we have the semisimple case. The composition factors of $M(E_7){\downarrow_H}$ are $12_4^2,10_2^2,6,6^*$. The actions of $u$ on $10_2^{\oplus 2}$ and $10_2/10_2$ are $3^4,2^4$ and $3^6,1^2$ respectively, so the second case must be the correct module. Thus $H$ acts on $M(E_7)$ and $L(E_7)$ as the modules
\[ 12_4^{\oplus 2}\oplus 6\oplus 6^*\oplus (10_2/10_2),\qquad P(10_1)^{\oplus 3}\oplus 12_1^{\oplus 2}\oplus 12_2^{\oplus 3}\oplus 5\oplus 5^*.\]
(This cannot lie in $D_6$ as the $12_i$ stabilize a symplectic form.) This is the reduction modulo $3$ of the non-existent case for $p=0$ in the proof above, but of course this does not prove that $H$ cannot exist for $p=3$. It does however show that any such $H$ is Lie primitive.

\medskip

Our first task when analysing this case is to determine the number of such classes in $\Sp_{56}(k)$, where $k$ is an algebraically closed field of characteristic $3$.

\begin{lem} Let $M(E_7){\downarrow_H}$ be as above. There is exactly one conjugacy class of subgroups of $\mb X=\Sp_{56}(k)$ isomorphic to $H$, with action on $M(\mb X)$ as above.
\end{lem}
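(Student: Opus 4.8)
The plan is to split $M(\mb X){\downarrow_H}$ into $H$-invariant summands having pairwise disjoint composition factors and to count $\Sp$-classes on each piece separately. Write $V = M(\mb X){\downarrow_H} = V_1 \oplus V_2 \oplus V_3$ with $V_1 = 10_2/10_2$, $V_2 = 6 \oplus 6^*$ and $V_3 = 12_4^{\oplus 2}$. Since $10_2$, the pair $\{6,6^*\}$, and $12_4$ involve pairwise distinct irreducibles, no two of $V_1,V_2,V_3$ share a composition factor, so any $H$-invariant bilinear form on $V$ is block-diagonal with respect to this decomposition; iterating Proposition \ref{prop:ssnofactorsincommon} (and using the remark following it) then shows that the number of $\Sp(V)$-classes of subgroups isomorphic to $H$ with this action on $V$ equals the product over $i$ of the number of $\Sp(V_i)$-classes of the relevant quotient of $H$ acting as $V_i$. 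It therefore suffices to prove uniqueness on each $V_i$, giving the total $1\cdot 1\cdot 1 = 1$.

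Two of the summands are easy. The summand $V_2 = 6 \oplus 6^*$ is a dual pair of absolutely irreducibles, each of multiplicity one, hence supports a unique non-degenerate $H$-invariant alternating form up to $\Sp_{12}(k)$-conjugacy (the hyperbolic pairing of $6$ with $6^*$); this is exactly case (i) of Proposition \ref{prop:ssmultfree}. For $V_3 = 12_4^{\oplus 2}$, recall that $12_4$ is absolutely irreducible, self-dual, and carries an $H$-invariant symplectic form $\beta$. A general $H$-invariant bilinear form on $V_3$ has the shape $\sum_{i,j} a_{ij}\beta_{ij}$, where $\beta_{ij}$ pairs the $i$th and $j$th copies of $12_4$ via $\beta$; antisymmetry of the total form forces the matrix $A=(a_{ij})$ to be symmetric, and non-degeneracy forces $A$ to be invertible. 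Now $C_{\GL(V_3)}(H) = \GL_2(k)$ acts on these forms by $A \mapsto g^{\mathrm T}Ag$, and over the algebraically closed field $k$ (of characteristic $3\neq 2$) every invertible symmetric $2\times 2$ matrix equals $g^{\mathrm T}g$ for some $g$. Hence $C_{\GL(V_3)}(H)$ is transitive on the non-degenerate $H$-invariant alternating forms on $V_3$, so $H$ is unique up to $\Sp_{24}(k)$-conjugacy with this action.

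The remaining summand $V_1 = 10_2/10_2$ is where the work lies. Here $H$ acts through its simple (hence perfect) quotient $\PSL_2(11)$, and $V_1$ is a non-split self-extension of the self-dual module $10_2$. The socle $W = 10_2$ is a $10$-dimensional $H$-submodule of the $20$-dimensional symplectic space $V_1$; if $W$ were non-singular then $W^\perp$ would be an $H$-invariant complement to $W$, contradicting non-splitness, so $W$ is totally isotropic, $H{\downarrow_{V_1}}$ lies in an $A_9$-parabolic of $\Sp_{20}(k)$, and there is no $H$-invariant complement to $W$. Corollary \ref{cor:symp1cohom} now applies provided $H^1(\PSL_2(11), S^2(10_2))$ is one-dimensional in characteristic $3$ (and one takes cohomology of the submodule, not the quotient module, exactly as in the proof of Proposition \ref{prop:2alt7}), and yields that $\PSL_2(11)$ acting as $10_2/10_2$ is unique up to $\Sp_{20}(k)$-conjugacy. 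Combining the three uniqueness statements via Proposition \ref{prop:ssnofactorsincommon} gives a single $\Sp_{56}(k)$-class, as claimed.

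The main obstacle is the cohomology computation $\dim H^1(\PSL_2(11), S^2(10_2)) = 1$, which I expect to verify by computer, together with the bookkeeping it rests on: that $10_2$ is self-dual, that $\Ext^1_{\PSL_2(11)}(10_2,10_2)$ is one-dimensional (so that $10_2/10_2$ is well defined), and that $V_1$ indeed supports a non-degenerate $H$-invariant alternating form so that the reduction to an $A_9$-parabolic is legitimate. Everything else is a routine application of the machinery of Section 2.4 and Corollary \ref{cor:symp1cohom}.
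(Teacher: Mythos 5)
Your proof is correct and follows the same overall strategy as the paper: split $M(\mb X){\downarrow_H}$ into the three summands $6\oplus 6^*$, $10_2/10_2$ and $12_4^{\oplus 2}$, which pairwise share no composition factors, prove uniqueness of the form on each, and combine via Proposition \ref{prop:ssnofactorsincommon}. Your handling of $6\oplus 6^*$ (dual pair, unique hyperbolic form) and of $10_2/10_2$ (the socle is forced to be totally isotropic, so the image of $H$ lies in an $A_9$-parabolic of $\Sp_{20}(k)$ and Corollary \ref{cor:symp1cohom} applies once $H^1(\PSL_2(11),S^2(10_2))$ is checked to be one-dimensional) matches the paper essentially verbatim, including the caveat about taking cohomology of the submodule rather than the quotient.

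Where you genuinely diverge is on $12_4^{\oplus 2}$, and your route is the cleaner one. The paper argues geometrically: an individual copy of $12_4$ is either totally isotropic (placing $H$ in an $A_{11}$-Levi and then a $C_6$ inside it) or non-degenerate (placing $H$ diagonally in the $C_6C_6$ maximal-rank subgroup); it then observes that these give two classes over $\F_9$ but one over $\F_{81}$, and closes the gap with the explicit computation $(u_1+\lambda u_2,v_1+\lambda v_2)=(1+\lambda^2)(u_1,v_1)$, which vanishes for $\lambda=\sqrt{-1}$ and so shows the diagonal $C_6$ also lies in $A_{11}$. You instead parametrize all invariant alternating forms on $12_4^{\oplus 2}$ by invertible symmetric $2\times 2$ matrices $A$ and use that $C_{\GL(V_3)}(H)=\GL_2(k)$ acts by congruence $A\mapsto g^{\mathrm T}Ag$, which is transitive on invertible symmetric matrices over an algebraically closed field of odd characteristic. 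This is the same fact in disguise (the paper's $\sqrt{-1}$ trick is exactly the congruence of the identity matrix to a hyperbolic form), but your version is uniform, avoids the finite-field detour entirely, and generalizes immediately to higher multiplicities. The only items you defer to computation --- the one-dimensionality of $H^1(\PSL_2(11),S^2(10_2))$ and of $\Ext^1(10_2,10_2)$ --- are exactly the ones the paper also takes from a computer check, so there is no gap.
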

\begin{proof} Let $H$ be as above. We first examine the possible symplectic forms on $6\oplus 6^*$, $10_2/10_2$ and $12_4^{\oplus 2}$, and then put them together.

Of course, there is a unique symplectic form on $6\oplus 6^*$. For $10_2/10_2$, a quick calculation (e.g., on a computer) shows that, although $10_2$ supports a symplectic form, there is a $2$-space of symplectic forms on $10_2/10_2$, and the socle lies in the radical of all of them. In particular, this means that the socle is degenerate and so the subgroup lies in an $A_9$-parabolic subgroup. (Another way to see this is that if the socle were non-degenerate, $H$ would lie in a subgroup $\Sp_{10}\times \Sp_{10}$, which acts semisimply on $M(\Sp_{20})$.) We may now apply Corollary \ref{cor:symp1cohom} since the statement about the symmetric square is true in this case, to see that $H$ uniquely embeds (up to conjugacy) in $\Sp_{20}(k)$.

The last summand is $12_4^{\oplus 2}$. Here an individual submodule $12_4$ is either totally isotropic or non-degenerate. In the first case, this places $H$ into an $A_{11}$-Levi subgroup, and then into a $C_6$-subgroup of $A_{11}$. The alternative places $H$ inside a diagonal $C_6$ inside the $C_6C_6$ maximal-rank subgroup. This yields two potential embeddings of $H$ into $\Sp_{24}(k)$. One may check over $\F_9$ that there are two distinct classes of subgroups $H$ with this action in $\Sp_{24}(k)$. (It is easier to check that there are two non-conjugate symplectic forms, because they yield different orders of centralizers in $\Sp_{24}(9)$.) However, over $\F_{81}$ these two subgroups are conjugate. (An example of this computation is given in the supplementary materials.)

We can also show this theoretically: if $V=W_1\oplus W_2$ with $W_1,W_2$ non-degenerate and isomorphic as modules, with $u_1,v_1\in W_1$ such that $(u_1,v_1)\neq 0$, then let $u_2,v_2\in W_2$ be images of $u_1,v_1$ under an isomorphism. Then $(u_1+\lambda u_2,v_1+\lambda v_2)=(1+\lambda^2)(u_1,v_1)$, which is zero if $\lambda$ is a square root of $-1$. Thus the $C_6$ inside $C_6C_6$ lies in $A_{11}$.

Since each of these summands shares no composition factors with any others, $H\leq \Sp_{56}(k)$ lies inside a product of symplectic groups, $\Sp_{12}\times \Sp_{20}\times \Sp_{24}$. As there are one, one and two forms respectively for these subgroups, one obtains exactly one conjugacy class of subgroups using Proposition \ref{prop:ssnofactorsincommon}.
\end{proof}

For this class, one must show that there is no subgroup $H$ of $E_7(k)$ arising from that symplectic form. The proof of this is computer-based, and in the supplementary materials, but because it is significantly harder than most proofs here, we give an outline of the proof. We use the subalgebra method from Section \ref{sec:subalgmethod}.

\begin{enumerate}
\item Let $W_1$ denote the subspace $5$ of $L(E_7){\downarrow_H}$, $W_2$ be $5^*$, $W_3$ be the $24$-dimensional subspace $12_1^{\oplus 2}$ and $W_4$ be the $36$-dimensional subspace $12_2^{\oplus 3}$. Let $a$ and $b$ lie in $W_1$, and $c,d$ lie in $W_2$. There are ten copies each of $W_1$ and $W_2$ in $S^2(M(E_7){\downarrow_H})$.
\item The commutator $[a,d]$ projects onto $W_3$ and $W_4$, so let $[a,d]_3$ and $[a,d]_4$ denote these projections. We show that $[[a,d]_i,b]$ and $[[a,d]_i,e]$ have zero image on $W_1$ and $W_2$, yielding some equations in the 20 variables for the location of $W_1\oplus W_2$. This eliminates entirely four of those 20 variables.
\item Using the centralizer of $H$ in $\Sp_{56}(k)$, we may assume various combinations of variables are either $0$ or $1$. Together with a few linear relations, this allows us to construct some candidates for $W_1$ or $W_2$ (or both). This includes linear pencils of subspaces as candidates.
\item By checking specific elements in these candidates, we find nilpotent elements with incorrect Jordan normal form on $S^2(M(E_7))$. Thus these candidates cannot be correct. This imposes more restrictions on the variables.
\item Products of elements in $W_1\oplus W_2\oplus W_3\oplus W_4$ always seem to lie in a specific subspace of $S^2(M(E_7))$, whose homogeneous $12_1$- and $12_2$-components consist of a particular subspace of dimension $72$, with three factors from each $12_i$. This $72$-space is abelian and commutes with $W_1$. Thus in order not to have a $29$-dimensional abelian subalgebra (contradicting Proposition \ref{prop:abeliansubalgebra}), the intersection of this $72$-space with $W_3\oplus W_4$ must be a single copy of either $12_1$ or $12_2$.
\item Using this extra information, deduce a final contradiction using much more solving of equations, and knowing that we cannot lie in the candidate subspaces.
\end{enumerate}

Thus we obtain the following result.

\begin{prop} Let $H$ be a central extension of $\PSL_2(11)$ and $p\neq 11$. Then $H$ is strongly imprimitive.
\end{prop}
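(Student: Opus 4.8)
The plan is to assemble the statement from the case distinctions already set up. Since the Schur multiplier of $\PSL_2(11)$ is cyclic of order $2$, a central extension $H$ with $H/Z(H)\cong\PSL_2(11)$ is either $\PSL_2(11)$ or $\SL_2(11)$, and the proposition above already handles every case with $p\neq3$, as well as $H\cong\PSL_2(11)$ for $p$ odd. So it remains to treat $p=3$ with $H\cong\SL_2(11)$. From \cite[Tables 6.171 and 6.172]{litterickmemoir} there are exactly two sets of composition factors marked \textbf{P}, both for $\SL_2(11)$; in Row 2 of \cite[Table 6.172]{litterickmemoir} there are too few $10$-dimensional factors relative to trivial ones (the projective cover of $10_1$ being $10_1/1/10_1$, one needs two copies of $10_1$ per trivial factor), so $H$ stabilizes a line on $L(E_7)$ and is strongly imprimitive by Lemma \ref{lem:fixedline}. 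For the remaining case I would first fix the module structures exactly: counting Jordan blocks of size $3$ of a unipotent element $u\in H$ of order $3$ on $L(E_7)$ and consulting \cite[Table 8]{lawther1995} forces $u$ into class $2A_1+A_1$, which pins $M(E_7){\downarrow_H}$ and $L(E_7){\downarrow_H}$ down to the semisimple-plus-projective shapes displayed above; in particular any such $H$ would be Lie primitive.

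Next I would reduce to a single $\Sp_{56}(k)$-class. Split $M(E_7){\downarrow_H}$ into its summands $6\oplus6^*$, $10_2/10_2$ and $12_4^{\oplus2}$, which share no composition factors, so by Proposition \ref{prop:ssnofactorsincommon} it suffices to understand the invariant symplectic forms on each. The first carries a unique form; for $10_2/10_2$ the socle lies in the radical of every invariant form, so $H$ lies in an $A_9$-parabolic of $\Sp_{20}(k)$ and Corollary \ref{cor:symp1cohom} gives a unique class there; and for $12_4^{\oplus2}$ there are \emph{a priori} two embeddings into $\Sp_{24}(k)$ — one with $12_4$ totally isotropic (inside $A_{11}$), one a diagonal $C_6$ in $C_6C_6$ — which coincide because choosing $\lambda$ with $\lambda^2=-1$ makes the diagonal $C_6$ totally isotropic, hence conjugate into $A_{11}$. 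Applying Proposition \ref{prop:ssnofactorsincommon} once more yields exactly one conjugacy class of such $H$ in $\Sp_{56}(k)$.

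The main work, and the step I expect to be the real obstacle, is to show that this one symplectic class does not arise from $E_7(k)$: equivalently, that there is no $H$-invariant $\mathfrak e_7$-subalgebra of $L(C_{28})=S^2(M(E_7){\downarrow_H})$. I would run the subalgebra method of Section \ref{sec:subalgmethod} with $W_1=5$, $W_2=5^*$, $W_3=12_1^{\oplus2}$, $W_4=12_2^{\oplus3}$. Commutators $[a,d]$ with $a\in W_1$, $d\in W_2$ project onto $W_3\oplus W_4$; imposing that the iterated commutators $[[a,d]_i,b]$ and $[[a,d]_i,e]$ have zero component back on $W_1\oplus W_2$ gives quadratic and cubic equations eliminating four of the twenty parameters locating $W_1\oplus W_2$, after which the centralizer of $H$ in $\Sp_{56}(k)$ normalizes several further parameters to $0$ or $1$. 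The surviving candidate subspaces — including $1$-parameter pencils — are then killed in two ways: by exhibiting a nilpotent element whose Jordan form on $S^2(M(E_7))$ matches no nilpotent class of $\mathfrak e_7$ (using \cite[Tables 2 and 3]{stewart2016} together with the linear matrix-pencil routine), and, for the $12_3/12_4$-homogeneous part, by noting that products of elements of $W_1\oplus W_2\oplus W_3\oplus W_4$ land in a fixed $72$-dimensional abelian subspace commuting with $W_1$, so that unless its intersection with $W_3\oplus W_4$ is a single $12_i$ one would obtain a $29$-dimensional abelian subalgebra, contradicting Proposition \ref{prop:abeliansubalgebra}; a last round of solving equations subject to these constraints then yields the final contradiction. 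Thus no Lie primitive $\SL_2(11)$ exists for $p=3$, and combining this with the line-stabilizer case and the $p\neq3$ proposition, every central extension of $\PSL_2(11)$ in $\mbG$ is strongly imprimitive.
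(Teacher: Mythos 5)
Your proposal follows the paper's own proof essentially step for step: the reduction to $\SL_2(11)$ with $p=3$ via the earlier propositions and the line-stabilizer count for Row 2 of \cite[Table 6.172]{litterickmemoir}, the determination of the module structures from the unipotent class $2A_1+A_1$, the uniqueness of the $\Sp_{56}(k)$-class via the decomposition into $6\oplus 6^*$, $10_2/10_2$ and $12_4^{\oplus 2}$ (including the $\lambda^2=-1$ argument identifying the two candidate forms on $12_4^{\oplus 2}$), and the same six-stage computer-assisted subalgebra elimination with $W_1,\dots,W_4$. This is correct and matches the paper's argument, including its reliance on the supplementary computations for the final contradiction.
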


In the supplementary materials we show this for where the $12_4^{\oplus 2}$ comes directly from both $A_{11}$ and $C_6C_6$. Of course, these are conjugate, so they give two independent proofs of the same statement.

\section{Cases not determined}
\label{sec:leftovers}
In this section we discuss the progress made for groups $\PSL_2(r)$ for $r=7,8,9$. Although we are not able to completely eliminate them, significant progress can be made in all cases, and some primes are eliminated for each.

\subsection{\texorpdfstring{$\PSL_2(7)$}{PSL(2,7)}}
\label{sec:psl27}
The appropriate primes here are $p=2$, $p=3$, $p=7$ and $p\neq 2,3,7$. For $p=2$ this the group $\PSL_3(2)$, a defining-characteristic case of rank $2$, and was proved to be strongly imprimitive in \cite[Proposition 9.1]{craven2019un}. The groups $\PSL_2(7)$ and $\SL_2(7)$ were proved to always be strongly imprimitive for $p=3$ in \cite[Proposition 4.7]{craven2020un}.

\medskip

For $p=7$ there were two cases that could not be proved to be strongly imprimitive in \cite[Propositions 13.4 and 14.4]{craven2015un}: for $\PSL_2(7)$ in $\mbG$, the actions on $M(E_7)$ and $L(E_7)$ can be
\[ 7^{\oplus 4}\oplus P(3)^{\oplus 2}\quad\text{and}\quad 7^{\oplus 5}\oplus P(5)^{\oplus 6}\oplus P(3)\]
respectively. For $\SL_2(7)$ in $\mbG$ (with centres coinciding), the actions on $M(E_7)$ and $L(E_7)$ can be
\[ P(6)^{\oplus 2}\oplus P(4)\oplus 6\oplus 4^{\oplus 2}\quad\text{and}\quad 7^{\oplus 5}\oplus P(5)^{\oplus 3}\oplus 5\oplus 3^{\oplus 3}\]
respectively.

\medskip

For $p\neq 2,3,7$, from \cite[Tables 6.160 and 6.161]{litterickmemoir} there is again one option for $\PSL_2(7)$ and one for $\SL_2(7)$, if the group is not strongly imprimitive. These are
\[ 8_1^{\oplus 2}\oplus 7^{\oplus 4}\oplus (3\oplus 3^*)^{\oplus 2}\quad\text{and}\quad 8_1^{\oplus 7}\oplus 7^{\oplus 5}\oplus 6^{\oplus 6}\oplus (3\oplus 3^*)\]
for $\PSL_2(7)$, and 
\[ 8_2^{\oplus 2}\oplus 6_2^{\oplus 2}\oplus 6_3\oplus (4\oplus 4^*)^{\oplus 2}\quad\text{and}\quad 8_1^{\oplus 7}\oplus 7^{\oplus 5}\oplus 6_1^{\oplus 3}\oplus (3\oplus 3^*)^{\oplus 4}\]
for $\SL_2(7)$, with some arbitrary labelling of the modules $6_2$ and $6_3$ (so that there are two such embeddings into $\GL_{56}(k)$). These possibilities all exist inside $A_2A_5$, with $H$ acting irreducibly on $M(A_2)$ and $M(A_5)$. (One obtains $\PSL_2(7)$ if one chooses a $6$-dimensional module for $\PSL_2(7)$, and $\SL_2(7)$ if one chooses a $6$-dimensional module for $\SL_2(7)$.)

\subsection{\texorpdfstring{$\PSL_2(8)$}{PSL(2,8)}}
\label{sec:psl28}
Let $H\cong \PSL_2(8)$. The appropriate primes are $p=2$, $p=3$, $p=7$ and $p\neq 2,3,7$. For all but $p=2$, we can prove that $H$ is strongly imprimitive.

\begin{prop} If $H\cong \PSL_2(8)$ and $p\neq 2$ then $H$ is strongly imprimitive.
\end{prop}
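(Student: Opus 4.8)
The plan is to run through the rows of the relevant tables of \cite{litterickmemoir} labelled $\mathbf{P}$ for $\PSL_2(8)$, separately for $p=3$, for $p=7$, and for $p\nmid|H|=504$ (so $p$ odd, $p\neq 7$), and in each case either to show that $H$ stabilises a line on $M(E_7)$ or on $L(E_7)^\circ$, or to place $H$ inside a positive-dimensional subgroup stabilising the same subspaces of one of these modules. Since $H$ is simple---in particular perfect, with no subgroup of index $2$ and trivial Schur multiplier, so $H=\SL_2(8)=\PSL_2(8)$ and any stabilised line is centralised---the first alternative yields strong imprimitivity at once by Lemma \ref{lem:fixedline}, and the second by Proposition \ref{prop:intersectionsubspace}. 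We also note that by Lemma \ref{lem:inconnected}, for $p\neq 2$ any such $H$ that is not Lie primitive already lies in a proper closed connected positive-dimensional subgroup, so in practice it is enough to show that $H$ is a blueprint for $M(E_7)$.

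For $p=3$ and $p=7$ the Sylow subgroup at the characteristic is cyclic (of order $9$ and $7$ respectively), so the blocks have cyclic defect group and the projective indecomposable $kH$-modules have transparent structure. One then applies the standard pressure argument: from the composition factors listed in \cite{litterickmemoir} one builds the possible structures of $M(E_7){\downarrow_H}$ and $L(E_7)^\circ{\downarrow_H}$, and whenever there are too few copies of the simple modules with non-zero first cohomology to cover all the trivial composition factors, $H$ fixes a line and we are done. For the remaining rows one brings in the action of a unipotent element $u\in H$ of order $p$ (or, for $p=3$, of order $9$): its Jordan type on $M(E_7)$ and on $L(E_7)$ is forced by the module structure and must occur in \cite[Tables 7 and 8]{lawther1995}, which eliminates several putative rows outright and pins down the module structure in the rest. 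I expect that for these two primes every $\mathbf{P}$-row either dies this way or forces a stabilised line.

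For $p\nmid|H|$ one reduces to characteristic $0$: by Larsen's $(0,p)$-correspondence (Theorem \ref{thm:larsencorr}) the $\mbG$-classes of subgroups isomorphic to $H$, together with their actions on $M(E_7)$ and $L(E_7)$, are in bijection with those over $\mathbb{C}$, so it suffices to treat the $\mathbf{P}$-rows there. Where $H$ fixes a line we are done as above (and then in all characteristics prime to $|H|$). For a surviving row---where $H$ fixes no line---the invariant alternating form on $M(E_7){\downarrow_H}$ is unique up to $\Sp_{56}(k)$-conjugacy by Proposition \ref{prop:ssmultfree} or Corollary \ref{cor:symp1cohom}, so one may apply the Lie-product method of Section \ref{sec:lieprodmethod} if $\mathcal{R}^H(L(E_7))$ is small enough, and otherwise the subalgebra method of Section \ref{sec:subalgmethod}, to show that there is a unique $\mbG$-class; one then checks that a copy of $H$ lies inside a maximal-rank subgroup---most plausibly an $A_7$ or $A_6T_1$ subgroup, reached through a $7$-dimensional module of $H$---acting on $M(E_7)$ with the same subspace-stabiliser structure, so $H$ is a blueprint and Proposition \ref{prop:intersectionsubspace} finishes the case. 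The same overgroup and its module action reduce mod $p$ intact for $p\geq 5$, so strong imprimitivity transfers to all $p\nmid|H|$.

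The main obstacle is precisely the $\mathbf{P}$-row(s) in which $H$ fixes no line: there the elimination is not module-theoretic but requires the Lie-bracket computation---running the Lie-product or subalgebra method to establish uniqueness of the $\mbG$-class and then exhibiting that class inside a proper positive-dimensional subgroup. Arranging the uniqueness-of-form input so that the subalgebra method applies cleanly, and correctly identifying the positive-dimensional overgroup, is the delicate step; the rest is the routine cohomology counting and Jordan-form bookkeeping familiar from the surrounding sections.
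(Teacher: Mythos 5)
Your plan works for $p=7$ and for $p\nmid|H|$, and there it matches the paper: for $p=7$ the paper runs exactly the pressure argument you describe, using $P(8)=8/1/8/1/8/1/8$ to force a stabilized line (and hence strong imprimitivity via Lemma \ref{lem:fixedline}), and for $p=0$ it reads off directly from \cite[Table 6.164]{litterickmemoir} that $H$ stabilizes a line on $L(E_7)$, so none of your Larsen-correspondence, uniqueness-of-form, or Lie-product/subalgebra machinery is ever invoked --- there is no surviving $\mathbf{P}$-row in characteristic $0$ at all.

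The genuine gap is at $p=3$. You propose to run through the rows of \cite{litterickmemoir} labelled $\mathbf{P}$ for $\PSL_2(8)$ at $p=3$, but those tables only cover simple subgroups \emph{not} of Lie type in the ambient characteristic, and $\PSL_2(8)\cong{}^2\!G_2(3)'$ \emph{is} of Lie type in characteristic $3$. Accordingly there is no $p=3$ data for $\PSL_2(8)$ in \cite{litterickmemoir} (and, consistently, $\PSL_2(8)$ appears in Table \ref{t:e7tocheck} only for $p=2$ and $p=7$), so your cyclic-defect pressure argument has no list of feasible composition factors to start from; producing that list is precisely the nontrivial defining-characteristic analysis. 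The paper instead disposes of $p=3$ in one line by invoking the isomorphism with ${}^2\!G_2(3)'$ and citing \cite[Proposition 9.4]{craven2019un}, where the defining-characteristic case was already handled. Without noticing this isomorphism (or redoing the feasible-character computation from scratch), your argument does not close the $p=3$ case.
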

\begin{proof} Since $\PSL_2(8)\cong {}^2\!G_2(3)'$, if $p=3$ then $H$ is strongly imprimitive by \cite[Proposition 9.4]{craven2019un}. If $p=0$ then $H$ stabilizes a line on $L(E_7)$, as we see from \cite[Table 6.164]{litterickmemoir}, and thus $p=7$ remains. Here the simple modules are $1$, $8$ and $7$-dimensional projective modules.

The projective cover of $8$ is $8/1/8/1/8/1/8$, so in any module we need five $8$s for every four trivials, otherwise $H$ stabilizes a line or hyperplane. We see that this does not occur for $H$ from \cite[Table 6.165]{litterickmemoir}, so $H$ stabilizes a line on $L(E_7)$. In particular, $H$ is strongly imprimitive by Lemma \ref{lem:fixedline}.
\end{proof}

For $p=2$ there is a case, left open in \cite[Proposition 12.5]{craven2015un} where $M(E_7)$ is the sum 
\[ P(4_{1,2})\oplus P(4_{1,3})\oplus P(4_{2,3})\oplus 8,\]
where the $4_{i,j}$ are the $4$-dimensional simple modules. Each $P(4_{i,j})$ has a $2$-space of alternating forms, so $M(E_7){\downarrow_H}$ has a $3$-parameter space of alternating forms. The action on $L(E_7)^\circ$ is currently undetermined. Its composition factors are
\[ 8^2,(4_{1,2},4_{1,3},4_{2,3})^4,(2_1,2_2,2_3)^9,1^{14}.\]
Since $u\in H$ of order $2$ acts projectively on $M(E_7)$, $u$ acts on $L(E_7)^\circ$ with blocks either $2^{53},1^{27}$ (class $(3A_1)''$, which is not possible) or $2^{63},1^6$ (class $4A_1$).

Such a subgroup embeds in the normalizer of a maximal torus (and certainly exists in $E_7(q)$, as it exists in $3^7\rtimes W(E_7)$, which is a subgroup of $E_7(2)$), and that copy at least stabilizes a line on $L(E_7)^\circ$.

\medskip

We cannot prove that any such subgroup $H$ must stabilize a line on $L(E_7)^\circ$, although that appears to be the case. We can, however, prove it at least for $q=4$, and hence also for $q=2$. (This was already proved for $q=2$ in \cite{bbr2015}, but their proof appears not to generalize in any reasonable way.) In theory the proof could extend to all $q$, but would need more sophisticated ideas, possibly involving intersections of multiple parabolics.

\medskip

In general, if $L$ denotes a Borel subgroup of $H$ then the permutation module of $H$ on the cosets of $L$ is the sum $1\oplus 8$. Thus 
\[ \dim(L(E_7)^\circ)^L=\dim (L(E_7)^\circ)^H+2,\]
since there are two copies of $8$ in $L(E_7)^\circ{\downarrow_H}$. The next result completes the proof for $q=2,4$ by showing that the $L$-fixed space has dimension at least $3$, thus proving that $H$ stabilizes a line on $L(E_7)^\circ$.

\begin{prop}\label{prop:psl28q=4} All copies of $L$ in $E_7(4)$ that act as the free module $kL$ on $M(E_7)$ lie in an $A_5$-parabolic subgroup of $E_7(4)$, and centralize at least a $3$-space on $L(E_7)^\circ$.
\end{prop}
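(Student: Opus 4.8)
The plan is to exploit the hypothesis $M(E_7){\downarrow_L}\cong kL$ as rigidly as possible. Write $L=V\rtimes\langle s\rangle$ with $V\cong 2^3$ and $s$ of order $7$ acting on $V$ as a Singer cycle, so that $[L,L]=V$ and $L$ has no subgroup of index $2$; over $\F_4$ its simple modules are $1,3_1,3_2$ (the trivial module and the two $3$-dimensional irreducible $\F_4C_7$-modules, inflated), and all of them contain $V$ in their kernel, so $L$ has no faithful $\F_4L$-module of dimension at most $5$. Since $kL$ is a symmetric algebra, its socle is isomorphic to its head $kL/J(kL)\cong 1\oplus 3_1\oplus 3_2$; hence every $L$ as in the statement centralises a line $\ell$ and stabilises two $3$-spaces on $M(E_7)$, while $M(E_7){\downarrow_{\langle s\rangle}}$ is eight copies of the regular representation of $C_7$ and, $kL$ being free, the involutions of $V$ act on $M(E_7)$ as $2^{28}$. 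The goal is $\dim(L(E_7)^\circ)^L\ge 3$: together with the identity $\dim(L(E_7)^\circ)^L=\dim(L(E_7)^\circ)^H+2$ coming from $k[H/L]\cong 1\oplus 8$, this gives $\dim(L(E_7)^\circ)^H\ge 1$, so $H$ centralises a line on $L(E_7)^\circ$ and is strongly imprimitive by Lemma~\ref{lem:fixedline}.

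First I would locate $L$ in a parabolic. Applying Proposition~\ref{prop:1spacestabs} to $\ell$, its stabiliser is an $E_6T_1$-parabolic, $E_6T_1.2$, $\mathbf U_{26}\cdot F_4T_1$, or $\mathbf U_{33}\cdot B_5T_1$. A subgroup lying inside a reductive overgroup containing $E_6T_1$, $F_4T_1$ or $B_5T_1$ directly would have a $1$-dimensional summand on $M(E_7)$, since those groups decompose $M(E_7)$ with such a summand, whereas $kL$ has none; this forces $L$ into a proper parabolic, and combining this with the $2^{28}$-action of the involutions of $V$ (matched against the unipotent-class tables), the absence of index-$2$ subgroups, and the minimal faithful dimension being $\ge 6$, one shows $L$ is conjugate into an $A_5$-parabolic $\mathbf P=\mathbf U\cdot(A_5T_2)$ of $E_7(4)$, with $s$ in the Levi and $V$ meeting $\mathbf U$ as its $2^{28}$-action dictates. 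This is the delicate step, and to carry it out for \emph{every} $E_7(4)$-class of such $L$ — not merely for one representative — I expect to need a direct computation in $E_7(4)$, constructing $L$ from a semisimple element of order $7$ with an $8$-dimensional fixed space on $M(E_7)$ together with its normaliser; this is also why the argument is confined to $q=4$ (and thence to $q=2$, since $M(E_7){\downarrow_H}$ is defined over $\F_2$, so $\F_2$-rational $A_5$-parabolics then suffice).

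With $L\le\mathbf P$ established, the fixed-point count is routine. The ideal $\mathrm{Lie}(\mathbf U)$ is an $L$-submodule of $L(E_7)$, and $L$ acts on $L(\mathbf P)/\mathrm{Lie}(\mathbf U)\cong\mathfrak{sl}_6\oplus\mathfrak t_2$ with the torus part acting trivially on $\mathfrak t_2$, while the natural $6$-dimensional module $U{\downarrow_L}$ is a faithful $\F_4L$-module (hence has composition factors $3_i,1,1,1$), so $\mathfrak{gl}_6^L=\mathrm{End}_{\F_4L}(U{\downarrow_L})$ is much larger than the scalars and $\mathfrak{sl}_6^L$ is correspondingly large — note the scalar $\mathrm{id}_6$ itself is trace zero in characteristic $2$. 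Restricting $L(E_7)$ directly to the now-explicit subgroup $L\le\mathbf P\le E_7(4)$ and computing the fixed space then gives $\dim L(E_7)^L\ge 4$, and since $H^1(L,\F_4)=0$ the exact sequence $0\to\F_4\to L(E_7)\to L(E_7)^\circ\to 0$ yields $\dim(L(E_7)^\circ)^L=\dim L(E_7)^L-1\ge 3$, as required.

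The hard part, then, is the middle paragraph: eliminating the $F_4T_1$, $B_5T_1$ and $E_6T_1.2$ possibilities and \emph{forcing} the $A_5$-parabolic for all copies of $L$ simultaneously, which is essentially computational and specific to small $q$. Everything downstream — pinning down $L$ inside the parabolic and counting its fixed vectors on $L(E_7)^\circ$ — is a finite linear-algebra calculation made transparent by the structural observations above.
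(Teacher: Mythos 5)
Your skeleton matches the paper's: use the non-complemented centralized line to force $L$ into a line stabilizer, whittle that down to an $A_5$-parabolic, then count fixed vectors there. But both halves have genuine gaps, and you have placed the computational burden in the wrong half.

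On the reduction to the $A_5$-parabolic: ruling out the stabilizer $\mb U_{33}\cdot B_5T_1$ by observing that the reductive part has a trivial summand on $M(E_7)$ does not suffice, because $L$ could lie in $\mb U_{33}\cdot B_5T_1$ without being conjugate into the Levi (indeed $V$ is expected to meet the unipotent radical). The paper kills this case by computing the possible traces of the order-$7$ element of $L$ on $M(D_6)$ (namely $-2$ or $3\zeta_7+3\zeta_7^{-1}+\zeta_7^2+\zeta_7^{-2}+4$) and showing neither is compatible with any submodule of the free module $kL$ having four trivial composition factors; an analogous explicit $kL$-submodule analysis excludes the $D_5$-parabolic inside $E_6$, and the remaining parabolic and reductive subgroups of $E_6$ are then checked case by case. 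This stage is uniform in $L$ and purely representation-theoretic --- no enumeration of $E_7(4)$-classes is needed here --- so your ``combining this with the $2^{28}$-action \dots\ one shows'' followed by deferral to an unspecified computation is exactly where an argument is missing but also where one is available without brute force.

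On the fixed-point count, which is where the $q=4$ computation actually lives: your version is wrong in two ways. First, the natural $6$-dimensional module of the $A_5$-Levi restricts to the image of $L$ with composition factors $3_1,3_2$ (one of each), not $3_i,1,1,1$; this follows from matching $\Lambda^2$ of the candidates against the factors $3_1^4,3_2^4,1^3$ of $M(E_6){\downarrow_L}$, and it is what the paper finds. Second, and more seriously, fixed vectors of $L$ on the Levi quotient $\mathrm{Lie}(\mb P)/\mathrm{Lie}(\mb U)$ do not lift to $L(E_7)$ unless $L$ lies in the Levi, which it does not; the obstructions live in $H^1(L,\mathrm{Lie}(\mb U))$ and in the extensions between the layers of $L(E_7)^\circ$ restricted to the parabolic. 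Controlling those extensions is the entire content of the paper's computation: it enumerates the seven $\F_4L$-module structures on $M(E_6)$, lists all candidate subgroups $L$ inside $4^{1+20}\cdot \bar L$, shows the surviving actions on $M(E_6)$ and $L(E_6)$ are all isomorphic, and then verifies that every one of the $64$ classes in the $3$-dimensional $\Ext^1(M',M(E_6)^*)$ has a fixed vector, which together with the two forced $P(1)$ summands gives the $3$-space. Without that step, ``$\dim L(E_7)^L\geq 4$'' is an assertion, not a conclusion.
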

\begin{proof} We will first show that $L$ must be contained in an $A_5$-parabolic subgroup of $G=E_7(4)$. At that point we use a computer to enumerate all copies of $L$ with the correct traces, first in the $A_5$-Levi subgroup and then in an $A_5$-parabolic subgroup of $E_6$. It turns out that, with some obvious constraints, there is a single option for the actions of $L$ on $M(E_6)$ and $L(E_6)$. Furthermore, there is no extension of these modules with top $L(E_6)$ and socle $M(E_6)^*$ that does not centralize a $3$-space, proving the result.

\medskip

Since $L$ acts on $M(E_7)$ as the module $kL$, it centralizes a unique line on it, and this line is not complemented. This means that this line must be centralized by either an $E_6$-parabolic subgroup or a group $\mb U\cdot C_5(q)$. (It cannot be in a group $\mb U\cdot F_4$, since that group centralizes a $2$-space on $M(E_7)$.) We will eliminate the latter by showing that the trace of an element $x\in L$ of order $7$, which is $0$ on $M(E_7)$, cannot work.

First, we consider the collection of elements of order $7$ in $D_6$ whose action on $M(E_7)$ has trace $0$, so they can be conjugated into $L$. By a computer calculation, the trace on $M(D_6)$ is either $-2$ or, for some choice of $7$th root of unity, $3\zeta_7+3\zeta_7^{-1}+\zeta_7^2+\zeta_7^{-2}+4$. Since $C_5$ acts on $M(D_6)$ with two trivial composition factors, the former is impossible as it has no eigenvalue $1$. The latter is missing two eigenvalues, and no submodule of the free module $kL$ can be constructed with four trivial composition factors and missing one non-trivial module and its dual. Thus $L$ cannot act on $M(D_6)$ with this trace for an element of order $7$, and $L$ lies inside an $E_6$-parabolic subgroup of the algebraic group $\mbG$. (Notice not an $E_6T_1$-parabolic, since $L$ centralizes the line.)

\medskip

The subgroup $L_1\cong 2^3$ of $L$ cannot lie in a $D_5$-parabolic subgroup. If it did then $L$ would lie inside a $D_5$-parabolic subgroup. One of these centralizes a $2$-space on $M(E_7)$, the other has a submodule of dimension $10$ modulo the centralized line. Thus we look for submodules of $kL$ of dimension $11$. The trace of $x$ on $M(D_5)$ must be $3\zeta^2+3\zeta^{-2}+\zeta^3+\zeta^{-3}+2$, so again two submodules cannot occur. Using the labelling $i$ for the module on which $x$ acts with eigenvalues $\zeta^i$ (so that the trivial module becomes $0$ in this notation), the largest submodule of $kL$ with these factors is
\[ (35/0)\oplus (3/05/2)\oplus (2/3)\oplus (5/023/4)\oplus (2/34/5).\]
We must have all three copies of $0$, so need the $4$ underneath one of them. This means we cannot have the $4$ in the last term, so so we cannot have the top $2$ on that either. This leaves three $2$s, all of which must occur in our submodule. But now both $3$ and $4$ are submodules but neither is a summand, so this module cannot be self-dual.

The consequence of this is that $L_1$ cannot centralize a line on $M(E_6)$ (for then it would lie in a $D_5$-parabolic, rather than a $D_5T_1$-parabolic).

\medskip

If $q$ is not a power of $8$ then $L$ has composition factors of dimension $1$ and $3$ on $M(E_6)$. Note also that, under these conditions, $L$ cannot embed in $\SL_2(q)$. Letting $\bar L$ denote the image of $L$ in $E_6(q)$, if $\bar L$ embeds in a parabolic subgroup of $E_6(q)$, then it cannot be $D_5T_1$ (as this has a $1$-dimensional submodule or quotient on $M(E_6)$), or $A_4A_1T_1$ (as $\bar L$ then lies in the $D_5T_1$-parabolic). If $\bar L$ lies in the $A_2A_2A_1T_1$-parabolic then it lies in the $A_5T_1$-parabolic, and so we may assume that. Furthermore, since $7\nmid(q-1)$, $L$ lies inside the $A_5$-parabolic subgroup.

For reductive subgroups, if $L$ lies in $A_2A_2A_2$ then $q$ is a power of $8$, since otherwise $L$ cannot embed in $A_2$. If $L$ embeds in $A_1A_5$ then it embeds in $A_5$, so we are in the $A_5$-parabolic again. The only other subgroup is $A_2G_2$, and again $L$ does not lie in either $A_2^\pm(q)$ or $G_2(q)$ when $q$ is not a power of $8$. Thus $L$ lies in the $A_5$-parabolic. This completes the proof of the first statement.

\medskip

The composition factors of $A_5$ on $M(E_6)$ are two copies of $L(\lambda_1)$ and one of $L(\lambda_4)$, the exterior square of the dual of $L(\lambda_1)$. The composition factors of $\bar L$ on $M(E_6)$ are four copies of all non-trivial modules and three trivials, and it is an easy calculation that $\bar L$ must act on $M(E_6)$ with factors one of each of the two $3$-dimensional modules.

Now we simply enumerate all $\F_4L$-modules with these composition factors (there are seven of them, including the semisimple case), and then on a computer construct the group $q^{1+20}\cdot \bar L$, the preimage of $\bar L$ in the $A_5$-parabolic subgroup, for each option for $\bar L$. This group is small enough for $q=4$ to enumerate all subgroups $L$, so one now has a list of all possible $L$.

We can exclude all those whose action $M$ on $M(E_6)$ has a trivial submodule or quotient. We can also exclude those for which $\Ext^1(M,k)$ is zero. This is because the structure of $M(E_7){\downarrow_{E_6}}$ is $1/M^*/M/1$, and one sees from the action of $L$ that the non-split extension in the first two layers must remain non-split on restriction to $L$. We can also exclude with $\soc(M)$ of dimension greater than $9$, because that is the dimension of the socle of $M(E_7){\downarrow_L}$ modulo its fixed point.

Using a computer, we impose these restrictions on the set of such $L$. We cannot compute conjugacy in $E_6(4)$, but we can show that all of the $L$-actions on $M(E_6)$ and $L(E_6)$ are now isomorphic. Thus we can choose one and work with that. Notice that $L(E_7)^\circ$, viewed as a $kE_6$-module, has socle $M(E_6)^*$ and second layer $L(E_6)$. Let $M$ denote the action of $L$ on $M(E_6)$. A direct-sum decomposition of the action of $L$ on $L(E_6)$ always has two copies of the projective $P(1)$, two summands with no fixed point, and a single $19$-dimensional summand $M'$ with a $1$-dimensional fixed space.

We find that the group $\Ext^1(M',M^*)$ is $3$-dimensional, and we simply construct all $64$ points of this over $\F_4$. Each has a fixed point. Together with the two copies of $P(1)$ that must become summand in any extension, this shows that $L$ always centralizes a $3$-space on $L(E_7)^\circ$, as needed.
\end{proof}

\subsection{\texorpdfstring{$\PSL_2(9)$}{PSL(2,9)}}
\label{sec:alt6}

In Section \ref{sec:altgroups} we noted that $\Alt(6)=\PSL_2(9)$ and $2\cdot \Alt(6)=\SL_2(9)$ are strongly imprimitive for $p=2,3$, and this is not known for $p=5$ and $p\neq 2,3,5$. The open cases are enumerated in \cite{craven2017}, and we just list them now.

For $p=0$ and $H\cong \Alt(6)$, there is a unique possible set of composition factors, namely 
\[ 10^{\oplus 4}\oplus 8_1^{\oplus 2}\quad\text{and}\quad 10^{\oplus 2}\oplus 9^{\oplus 3}\oplus 8_1^{\oplus 4}\oplus 8_2^{\oplus 3}\oplus 5_1^{\oplus 3}\oplus 5_2^{\oplus 3}.\]

For $p=0$ and $H\cong 2\cdot \Alt(6)$, there are two possible sets of composition factors for $M(E_7){\downarrow_H}$, and in both cases $H$ acts on $L(E_7)$ as
\[ 10_1^{\oplus 5}\oplus 9^{\oplus 3}\oplus 8_1^{\oplus 4}\oplus 8_2^{\oplus 3}.\]
The two actions on $M(E_7)$ are
\[ 8_4^{\oplus 2}\oplus 10_2^{\oplus 3}\oplus 10_3\quad\text{and}\quad 8_4^{\oplus 2}\oplus 10_2\oplus 10_3^{\oplus 3}.\]
(Here $8_4$ is a specific choice of one of the two $8$-dimensional faithful irreducible modules, and this depends on the choice of $8_1$. The two cases correspond to the fact that an outer automorphism of $H$ acts as $(8_1,8_2)(8_3,8_4)(10_2,10_3)$.) These appear to be missing from Frey's table \cite[Table 14]{frey2016}. From the character values, this can only yield a subgroup of $E_7(q)$ if $q\equiv \pm 1\bmod 5$, and for the $2\cdot \Alt(6)$ case if $q\equiv \pm 1\bmod 5$ and $q\equiv \pm 1\bmod 8$.

\medskip

For $p=5$, we have the restrictions of the two possibilities for $\Alt(7)$ and $2\cdot \Alt(7)$ in Section \ref{sec:alt7}, which appear in \cite[Proposition 6.1]{craven2017}. Thus $H\cong \Alt(6)$ can act on $M(E_7)$ and $L(E_7)$ as
\[ 10^{\oplus 4}\oplus 8^{\oplus 2}\quad\text{and}\quad 10^{\oplus 2}\oplus 5_1^{\oplus 3}\oplus 5_2^{\oplus 3}\oplus P(8)^{\oplus 3}\oplus 8.\]
Alternatively, $H\cong 2\cdot\Alt(6)$ can act as 
\[ 10_2^{\oplus 3}\oplus 10_3\oplus (4_1/4_2)\oplus (4_2/4_1)\quad\text{and}\quad 10_1^{\oplus 5}\oplus P(8)^{\oplus 3}\oplus 8.\]
The former can exist over any field of characteristic $5$ but the latter requires $\F_{25}$ to be a subfield.

We have not attempted it, but the presence of no $5_i$ modules in the actions on $L(E_7)$ in three of the five cases might mean that the subalgebra method can be successful. The $\Alt(6)$ cases though look out of reach.

\section{Proof of maximality}
\label{sec:proofofmaximality}

As with the proof given in the previous paper in this series \cite{craven2020un}, the first sections of this article just found some conjugacy classes of subgroups, and asserted that they are maximal. Let $\mathcal S$ denote the set of subgroups enumerated in Table \ref{t:themaximals} (which of course depends on the prime $p$). If the assertion that $\bar H\in \mathcal S$ is maximal is false then an obstruction to this (i.e., a subgroup $X$ such that $\bar H<X<G$) must come either from $\mathcal S$ as well, or from one of the other maximal subgroups, which are enumerated in Table \ref{t:othermaximals} above.

Our first proposition starts to examine the case where the obstruction $X$ lies in $\mathcal S$.

\begin{prop}\label{prop:maxcont} Let $\bar H$ be one of the groups $M_{12}$, $M_{22}$, $HS$, $Ru$, $\PSL_2(r)$ for $r=13,19,27,29,37$, $\PSU_3(3)$ and $\PSU_3(8)$. Let $X$ be one of these groups or of those in Table \ref{t:e7stillleft}. If $\bar H$ is isomorphic to a subgroup of $X$ then the pair $(\bar H,X)$ appears in the table below:
\begin{center}
\begin{tabular}{cc}
\hline $\bar H$ & Possibilities for $X$
\\\hline $M_{12}$, $HS$, $Ru$, $\PSL_2(r)$, $r=19,27,37$, $\PSU_3(8)$ & None
\\  $M_{22}$ & $HS$
\\ $\PSL_2(13),\PSL_2(29),\PSU_3(3)$ & $Ru$
\\ \hline
\end{tabular}
\end{center}\end{prop}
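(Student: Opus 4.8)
The plan is to reduce the statement to a purely group-theoretic question about which of these small almost simple groups can contain which others as a subgroup, and then to resolve that question by a combination of order divisibility, element-order obstructions, and consultation of the known subgroup structure (the ATLAS and related references). None of this uses the embedding into $E_7$; it is a statement about abstract finite groups, and the table to be proved is just a list of the isomorphism types of subgroups that occur among the groups $M_{12}$, $M_{22}$, $HS$, $Ru$, $\PSL_2(r)$ ($r=19,27,29,37$), $\PSU_3(3)$, $\PSU_3(8)$, together with the four groups $\PSL_2(r)$, $r=7,8,9,13$, of Table \ref{t:e7stillleft}.

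First I would dispose of most pairs $(H,X)$ by order considerations: $|H|$ must divide $|X|$. The relevant orders are $|M_{12}|=95040$, $|M_{22}|=443520$, $|HS|=44352000$, $|Ru|=145926144000$, $|\PSU_3(3)|=6048$, $|\PSU_3(8)|=5515776$, $|\PSL_2(19)|=3420$, $|\PSL_2(27)|=9828$, $|\PSL_2(29)|=12180$, $|\PSL_2(37)|=25308$, and for the leftover groups $|\PSL_2(7)|=168$, $|\PSL_2(8)|=504$, $|\PSL_2(9)|=360$, $|\PSL_2(13)|=1092$. A quick divisibility check kills, for instance, any containment $X\supseteq M_{12}$ with $X$ a $\PSL_2(r)$ or $\PSU_3(3)$, any containment with $X=\PSU_3(8)$ except possibly among the small $\PSL_2$'s, and all containments of the larger sporadics in anything. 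For the surviving candidates — essentially $M_{22}\le HS$, and small groups inside $Ru$, $HS$, $\PSU_3(8)$, $\PSU_3(3)$ — I would then appeal to the ATLAS list of maximal subgroups: $M_{22}$ is a maximal subgroup of $HS$ (so that pair stands), while $M_{22}.2\not\le HS$ is recorded in the proof of Proposition \ref{prop:m22}; $\PSL_2(29)$ and $\PSU_3(3)$ both appear as (sections of) maximal subgroups of $Ru$ (indeed $\PSU_3(3).2\le Ru$ and $\PSL_2(29)\le Ru$), giving the last row. To rule out the remaining would-be containments — e.g. $\PSL_2(37)$, $\PSL_2(19)$, $\PSL_2(27)$, $M_{12}$, $HS$, $Ru$, $\PSU_3(8)$ inside any of the listed groups, and the Table \ref{t:e7stillleft} groups inside these — I would use the classification of subgroups of $\PSL_2(q)$ (Dickson's theorem: the subgroups are cyclic, dihedral, $A_4$, $S_4$, $A_5$, and subfield/Borel subgroups), which immediately shows $\PSL_2(r)$ for $r=7,8,9,13$ cannot sit inside $\PSL_2(19),\PSL_2(27),\PSL_2(29),\PSL_2(37)$ except when forced by a subfield relation (none of $7,8,9,13$ is a prime power giving a subfield subgroup here, since $\PSL_2(27)\supset\PSL_2(3)$ only), and likewise $\PSL_2(13)\not\le\PSU_3(8)$ etc. by order or by the fact that $\PSU_3(8)$ has no element of the requisite orders.

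The one genuinely delicate point, and the step I expect to be the main obstacle, is verifying the \emph{absence} of the borderline containments where orders are compatible — most notably ruling out groups like $\PSL_2(29)$, $\PSL_2(27)$ or $\PSU_3(3)$ inside $HS$ or inside $\PSU_3(8)$, and ruling out $\PSL_2(13)$, $\PSL_2(9)$ and $\PSL_2(7)$ inside $Ru$, $\PSU_3(8)$ or $\PSU_3(3)$ where a naive divisibility test does not suffice. For these I would argue case by case: for subgroups of $\PSU_3(q)$ one has Hartley--Mitchell's classification of subgroups of $\PSU_3(q)$, which lists the maximal subgroups (the Borel, the $\GU_2$-type, maximal tori, and a short list of small exceptional subgroups) and from which one reads off exactly which $\PSL_2$'s and sporadic-related groups occur; for subgroups of $HS$ and $Ru$ one walks down the maximal-subgroup lattice from the ATLAS, using element orders (e.g.\ $\PSL_2(29)$ contains an element of order $29$, which does not divide $|HS|$, so $\PSL_2(29)\not\le HS$; $\PSL_2(27)$ contains elements of order $13$ and $14$, constraining where it can lie) to prune the search. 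Once each of these finitely many borderline pairs is settled, the table is complete, and the proposition follows. I would present the argument as: (1) the divisibility table; (2) the positive containments from the ATLAS/Hartley--Mitchell; (3) the element-order and Dickson/Hartley--Mitchell eliminations for the borderline cases; (4) conclude.
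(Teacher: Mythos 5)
Your proposal is correct and takes essentially the same approach as the paper: the paper's proof is simply to walk down the ATLAS maximal-subgroup lists for each candidate $X$ (using transitivity) and then invert the resulting list of containments, which is exactly your step (2)--(3). The order-divisibility, element-order, and Dickson/Hartley--Mitchell pruning you describe is sound but is left implicit in the paper's one-line argument.
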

\begin{proof} The easiest way to prove this is to look through the Atlas \cite{atlas} for each $X$ checking if there exists $\bar H$ (or a subgroup with $\bar H$ as a subgroup) as a maximal subgroup, and then use transitivity. One finds the following:
\begin{itemize}
\item If $X\cong \Alt(6),M_{12},M_{22},\PSL_2(r),\PSU_3(3),\PSU_3(8)$ then there is no possible $\bar H$;
\item If $X\cong HS$ then $\bar H\cong M_{22}$;
\item If $X\cong Ru$ then $\bar H\cong \PSL_2(13),\PSL_2(29),\PSU_3(3)$.
\end{itemize}
To obtain the proposition one simply `inverts' this list.
\end{proof}

Thus we must exclude the options for $\bar H$ and $X$ above. If $\bar H\cong M_{22}$ then we have already dealt with the option $X\cong HS$ in Proposition \ref{prop:m22}. If $\bar H\cong\PSL_2(13)$, so $X\cong Ru$, then in fact the preimage of $\bar H$ in $2\cdot Ru$ is $2\times \PSL_2(13)$, and so this is not the Lie primitive example encountered in Section \ref{sec:psl213}. (One can also check that $\PSL_2(13)$ stabilizes a line on the $133$-dimensional module for $Ru$ by a character calculation.) If $\bar H\cong \PSL_2(29)$, so $X\cong Ru$, then $p=5$ and this was already in Proposition \ref{prop:psl229}. If $\bar H\cong \PSU_3(3)$ and $X\cong 2\cdot Ru$ then the restriction of $28$ to $\bar H$ is $7\oplus 21$, not irreducible. Thus it is not an obstruction to the copy of $\bar H$ we have claimed is maximal. This completes the proof that no element of $\mathcal S$ is an obstruction to $\bar H\in \mathcal S$ being maximal beyond the situation of $\PSL_2(29)<Ru$ already mentioned in Section \ref{sec:sl229}.

It remains to consider as possible obstructions the exotic $r$-local subgroups, and the members of $\mathscr X$. The only exotic $r$-local subgroup has connected component $D_4$, and this is contained in $A_7$, so we may ignore it, and so we need only consider members of $\mathscr X$. Thus assume that $\mb X\in \mathscr X$ contains $\bar H$. By Lemma \ref{lem:inconnected}, unless $\bar H\cong \PSU_3(3)$, we may assume that $\mb X$ is connected.

If $\mb X$ is an $E_6$-parabolic subgroup then $\mb X$ stabilizes a line on $M(E_7)$. Thus $\bar H$ cannot lie in $\mb X$ as we do not have such actions. If $\mb X$ is $F_4A_1$ then either some cover of $\bar H$ has a representation of dimension $2$ (which it does not) or $H\leq F_4\leq E_6$. Thus we may assume that the reductive part of $\mb X\in \mathscr X$ is a classical group. In particular, some cover of $\bar H$ must have a faithful representation of dimension at most $12$ (there is no $D_7$ subgroup), and at most $8$ if it is not self-dual.

If $\bar H$ is one of $M_{22}$, $HS$ and $Ru$, so $p=5$, then the minimal faithful dimension of any cover of $\bar H$ is greater than $12$, so $\bar H$ cannot lie in a member of $\mathscr X$. If $\bar H\cong \PSU_3(8)$, so $p$ is odd, then the minimal dimension is $56$, so this certainly does not lie in a member of $\mathscr X$. Similarly, if $\bar H$ is $\PSL_2(r)$ for $r=19,27,29,37$ then the minimal faithful dimension is $(r-1)/2$, but these simple modules, and those of dimension $(r+1)/2$, are not self-dual. The smallest self-dual module has dimension $r-1$, and so there is no such $\mb X$ for these groups either.

If $\bar H\cong M_{12}$ then $\bar H$ acts on $L(E_7)$ with factors of dimension $55$ and $78$. The only maximal positive-dimensional subgroups of $\mbG$ acting with two composition factors are $A_7$ and $A_2$, and neither of these acts with the correct dimensions, so $H$ is Lie primitive.

If $\bar H\cong \PSL_2(13)$ then $H=\SL_2(13)$ acts on $M(E_7)$ with composition factors all of dimension $14$. This forces $\mb X$ to have composition factors of dimension $14$, $28$ or $56$ on $M(E_7)$, yielding:
\begin{enumerate}
\item $G_2C_3$ (which was already considered in Section \ref{sec:psl213});
\item $A_1G_2$ (which means $H$ embeds in $G_2$, but this acts on $M(E_7)$ with factors of dimensions $14$ and $7$);
\item $A_2$ (into which $\bar H$ does not projectively embed);
\item $A_7$ (and $\SL_2(13)$ cannot embed into $\SL_8(q)/2$ with central involutions coinciding since the Schur multiplier of $\PSL_2(13)$ is $2$).
\end{enumerate}
\medskip

This leaves $H\cong\bar H\cong \PSU_3(3)$, so $p\neq 2,3$. In this case $H$ acts on $M(E_7)$ with two factors of dimension $28$, so if $H\leq \mb X^\circ$ then $\mb X^\circ$ either acts irreducibly or with two dual factors. Thus $H$ lies in $A_7$ or $A_2$, with the latter clearly impossible. If $H$ lies in $A_7$ though then $H$ stabilizes a line on $M(A_7)$ (as $H$ has only a $7$-dimensional simple module), so acts on $\Lambda^2(M(A_7))$ with factors of dimension $7$ and $21$, not $28$.

Thus $H\not\leq \mb X^\circ$, in which case $\mb X$ must be the normalizer of a maximal torus. However, in this case, $\mb X$ has an indecomposable summand of dimension at most $7$ on $L(E_7)$ (a subspace of the $0$-weight space) but $H$ has no such summand (as we can see from the supplementary materials, where $L(E_7){\downarrow_H}$ is constructed). Thus $H$ cannot lie in $\mb X$, and so $H$ is Lie primitive.

\section{Intrinsically imprimitive subgroups}
\label{sec:intrinsicimp}
In order to apply induction to results about Lie primitivity to classify subgroups of reductive groups, one needs to know, if a subgroup $H$ is Lie imprimitive, whether it is contained in a \emph{connected} subgroup, or just in any subgroup. Following \cite[Definition 4.9]{griessryba2002a}, call a Lie imprimitive subgroup of a reductive group $\mbG$ \emph{intrinsically imprimitive} if it is contained in a proper, closed, connected subgroup of $\mbG$. There are clearly Lie imprimitive subgroups that are not intrinsically imprimitive, for example a finite normalizer of a maximally split torus, $(q-1)^n\cdot W(\mbG)\leq G^\sigma\leq \mbG$ if $p\neq 0$ and $q\neq 2$. In this case it is certainly Lie imprimitive, since it is contained in $N_\mbG(\mb T)$, but there will be no \emph{connected} proper subgroup containing it.

If $H$ is taken to be a simple group, however, then this rules out this sort of example. The question is, is every simple subgroup of a reductive group $\mbG$ either Lie primitive or intrinsically imprimitive? For exceptional groups we provide an almost complete answer now, extending the known results in the literature.

To begin with though, we make the following observation. Let $H$ be a simple subgroup of $W(\mbG)$, where $\mbG$ has adjoint type for simplicity. Suppose that $T$ is the set of all elements of a maximal torus $\mb T$ of order dividing a given integer $n$ (e.g., $n=2$) and that there is a non-split extension $T\cdot \bar H$, where $\bar H$ has image $H$ in $W(\mbG)$. In this case, there is no \emph{split} extension $\mb T\rtimes \hat H$ (where $\hat H$ also has image $H$ in $W(\mbG)$, and therefore the class $H$ of $W(\mbG)$ cannot yield a subgroup of $\mbG$. This is clear: if there were such an extension, since $T\cdot \bar H$ exists, $H$ must normalize $T$, and so we may form the group $T\rtimes \hat H$. But this must contain the group $T\cdot \bar H$, a clear contradiction.

\begin{prop}\label{prop:maximaltorus} Let $H$ be a finite simple subgroup of an exceptional simple algebraic group $\mbG$ of adjoint type. If $H$ is contained in the normalizer of a maximal torus of $\mbG$ then $H$ is intrinsically imprimitive, except possibly if $p=2$, $\mbG=E_7$, $H=\PSL_2(8)$, and $M(E_7){\downarrow_H}$ acts as the open case left in Section \ref{sec:psl28},
\[ P(4_{1,2})\oplus P(4_{1,3})\oplus P(4_{2,3})\oplus 8.\]
\end{prop}
\begin{proof} Let $\mb T$ be a maximal torus of $\mbG$. By \cite[Lemma 3.10]{litterickmemoir} if $H\leq N_\mbG(\mb T)$ then $H$ is contained in a subsystem subgroup unless possibly $\mbG=E_6$ and $H\cong \PSU_4(2)$, or $\mbG=E_7$ and $H\cong \PSL_2(8),\PSU_3(3),\PSp_6(2)$. We deal with each of these in turn.

\medskip

\noindent \textbf{The case $\mbG=E_6$}: Suppose first that $\mbG=E_6$ and $H\cong \PSU_4(2)$. If $p=2$, by \cite[Proposition 10.5]{craven2019un}, $H$ stabilizes a line on $M(E_6)$. Thus $H$ is contained in a line stabilizer, and these are $F_4$, a $D_5T_1$-parabolic, or a subgroup of a $D_5T_1$-parabolic subgroup. Regardless of which, $H$ is contained in a connected subgroup, as needed. Thus we assume that $p$ is odd or zero. We simply now construct a non-split subgroup in the normalizer of a torus, using the observation just before this result. This is done in the supplementary materials, and therefore there is no subgroup $\PSU_4(2)$ of $N_\mbG(\mb T)$.


\medskip

\noindent \textbf{The case $\mbG=E_7$, $H\cong \PSp_6(2)$}: The same argument as above works for $\mbG=E_7$ and $H=\PSp_6(2)$ if $p$ is odd or $0$. We have constructed a subgroup $2^6\cdot \PSp_6(2)$ in $N_\mbG(\mb T)$, proving that $\PSp_6(2)$ is not a subgroup for these primes.
%

If $p=2$ then $N_\mbG(\mb T)$ splits (consider the case $E_7(2)$ for example). We check that there is a unique class of complements to $\mb T$ in $N_\mbG(\mb T)$. To check this we check via computer (in the supplementary materials) that $\PSp_6(2)$ has zero $1$-cohomology on the $7$-dimensional module for its action on $\mb T$ in characteristics $3$, $5$ and $7$. (Since $|\PSp_6(2)|$ is divisible by primes $2,3,5,7$, and $p=2$, these are the only cases that need to be checked.) Thus $H$ is unique up to conjugacy in $N_\mbG(\mb T)$ for $p=2$. But then $W(E_7)=2\times \Sp_6(2)$, so a copy of $H$ in $N_\mbG(\mb T)$ centralizes an involution. Thus $H$ lies in a maximal parabolic subgroup of $\mbG$, and the result holds.

\medskip

\noindent \textbf{The case $\mbG=E_7$, $H\cong \PSU_3(3)\cong G_2(2)'$}: There is clearly a unique subgroup of $\Sp_6(2)$ isomorphic to $G_2(2)'$ (although we check this in the supplementary materials). We claim that if $H$ centralizes a line on either $M(E_7)$ or $L(E_7)^\circ$ then it lies in a proper connected subgroup, as needed. The line stabilizers on $M(E_7)$ are given in Proposition \ref{prop:1spacestabs} and so this is clear. If $H$ centralizes a $1$-space $W$ on $L(E_7)^\circ$ then it lies in a parabolic subgroup or a maximal-rank subgroup by Lemma \ref{lem:paramaxl}. In the former case we are done, and in the latter case we let $\mb X$ denote the centralizer of $W$, which is positive-dimensional. If $\mb X$ is contained in any maximal-rank subgroup other than the normalizer of a maximal torus then we are done, so $\mb X^0$ must be toral, and $\mb X/\mb X^0$ must contain $\PSU_3(3)$. Certainly $H$ cannot centralize $\mb X^0$ for then it would lie in the centralizer of an involution or an element of order $3$ in $\mb X^0$ (depending on $p$), so $H$ acts on $\mb X^0$. Thus $\mb X^0$ has rank at least $7$ (as this is the dimension of a minimal representation of $\mb X$ in arbitrary characteristic), and so $\mb X^0$ is a maximal torus $\mb T$.

But this means that $\mb X^0$ centralizes $W$, and maximal tori in $E_7$ centralize a $7$-space, and $\PSU_3(3)\leq W(E_7)$ acts non-trivially on this $7$-space. This contradiction means that $\mb X^0$ in fact is not $\mb T$, and so $H$ is intrinsically imprimitive.

This means that $H$ is intrinsically imprimitive if $p=2$ by \cite[Proposition 9.4]{craven2019un}. If $p=7$ then $H$ is intrinsically imprimitive, unless $H$ is in one of the three rows labelled `\textbf{P}' in \cite[Table 6.220]{litterickmemoir}. Row 4 is eliminated in Section \ref{sec:psu33}, and in that section the other two rows were shown to yield a unique $\mbG$-class of subgroups. The former lies in $A_6$ and the latter was proved to be Lie primitive, and hence not inside $N_\mbG(\mb T)$. If $p\neq 2,3,7$ then a similar argument works. In this case $H$ is intrinsically imprimitive, unless $H$ is in one of the two rows labelled `\textbf{P}' in \cite[Table 6.219]{litterickmemoir}. Row 1 was proved to be unique and Lie primitive, and Row 2 was proved to stabilize a $\mathfrak g_2$ subalgebra of $L(E_7)$ in Section \ref{sec:psu33}. Thus this case cannot only lie in $N_\mbG(\mb T)$ either.

It remains to consider $p=3$. Here we claim that there is a unique conjugacy class of complements to $\mb T$ in $\gen{\mb T,H}$. To see this, notice that we need only consider the $1$-cohomology of $H$ on the Sylow $p$-subgroups of $\mb T$ for $p=2,7$. We have already noted that it is zero for $p=7$. If $p=2$ then the composition factors of the $\F_2H$-module consisting of the involutions and identity of $\mb T$ are $1$ and $6$. These either assemble to have a fixed point (i.e., $H$ centralizes an involution in $\mbG$) or they form a module $1/6$, which has zero $1$-cohomology. The former immediately leads to $H$ being intrinsically imprimitive, but we rule it out nevertheless with a computer calculation in the supplementary materials.

Since there is zero $1$-cohomology we obtain that all complements are conjugate. Finally, the central involution from the Weyl group commutes with $H$, so $H$ does centralize an involution, and is intrinsically imprimitive.

\medskip

\noindent \textbf{The case $\mbG=E_7$, $H\cong \PSL_2(8)$}: The non-split extension $2^6\cdot\PSp_6(2)$ restricts to a non-split extension $2^6\cdot \PSL_2(8)$, so this shows that there is no subgroup $H$ of $N_\mbG(\mb T)$ for $p$ odd.

If $p=2$ then there are multiple conjugacy classes of subgroups $H$ inside $N_\mbG(\mb T)$. We show in the supplementary materials that there are two conjugacy classes of subgroups $H$ in $N_G(T)$ for $G=E_7(4)$ and $T$ a maximally split torus. The subgroup $H\leq \PSp_6(2)$ is of course intrinsically imprimitive, but the other subgroup is not. It acts on $M(E_7)$ as the open case given in Section \ref{sec:psl28}, and need not lie in another positive-dimensional subgroup.
\end{proof}

There is a conjugacy class of subgroups $\PSL_2(8)$ in $A_1D_6$ which might be conjugate to the one in the normalizer of the torus, so we cannot say more at this time.

There are other subgroups than the normalizer of a torus. The maximal ones with insoluble component group are $A_1^7\cdot \PSL_3(2)$ in $E_7$ and $A_1^8\cdot \AGL_3(2)$ in $E_8$. These are not quite wreath products, but are sandwiched between $\SL_2(k)\wr K$ and $\PGL_2(k)\wr K$. The conjugacy classes of complements of wreath products $L\wr K$ (with $K$ transitive) are well-understood (see \cite{houghton1975}, and especially \cite[p.\ 201]{hassanabadi1978}) and are in bijection with conjugacy classes of homomorphisms from the point stabilizer of $K$ to $L$. If $K$ acts on $n$ points and the image of a given homomorphism is $\bar L$ then the corresponding subgroup is contained in the group $\bar L^n\cdot K$.

The point stabilizer of $\PSL_3(2)$ acting on seven points is $\Sym(4)$, and acting on eight points is the Frobenius group $7\rtimes 3$, so we need to understand homomorphisms from these groups into $\PGL_2(k)$.

We first deal with the subgroup of $E_8$.

\begin{prop}\label{prop:E8A1} If $H\cong \PSL_3(2)$ is contained in the subgroup $A_1^8\cdot \AGL_3(2)$ of $E_8$ then $H$ is intrinsically imprimitive.
\end{prop}
\begin{proof} The subgroup $\mb X=A_1^8\cdot \AGL_3(2)$ can be seen from \cite[Table 5.1]{liebecksaxlseitz1992} to have structure $\SL_2(k)\wr \AGL_3(2)$ in characteristic $2$, and $2^4\cdot (\PGL_2(k)\wr \AGL_3(2))$ if $p$ is odd. The case $p=2$ is clearly less complicated, and we do this now.

Inside $\AGL_3(2)$ there are two conjugacy classes of subgroups $\PSL_3(2)$: one fixes a point and the other is transitive on eight points. Inside $\mb X$, we see that any subgroup $H$ of $\mb X$ with image the $\PSL_3(2)$ fixing a point must centralize an $A_1$ subgroup, and actually lie in $A_1E_7$ (and inside the subgroup $A_1\circ (A_1^7\cdot \PSL_3(2))$). Thus such a subgroup is intrinsically imprimitive, and we may assume that $H$ acts transitively on the eight $A_1$s.

In this case there are two classes of homomorphisms from $7\rtimes 3$ to $\SL_2(k)$, with images $1$ and $3$. Thus we find all classes of subgroups $H$ in the group $3^8\rtimes \PSL_3(2)$. This is clearly contained in the normalizer of a maximal torus, and so $H$ is intrinsically imprimitive by Proposition \ref{prop:maximaltorus}.

If $p$ is odd then we first need to enumerate the complements $\bar H$ in $\PGL_2(k)^8\rtimes \PSL_3(2)$, and then count the number of subgroups $H$ in $\mb X$ with image $\bar H$ modulo the central subgroup of order $2^4$. Again, there are two classes of homomorphisms $7\rtimes 3\to\PGL_2(k)$, with images $1$ and $3$, except if $p=7$, where there is a subgroup $7\rtimes 3$ as well. Hence all subgroups are in a group $2^4\cdot 3^8\cdot \PSL_3(2)$, or in the extra case $2^4\cdot 7^8\cdot 3^8\cdot \PSL_3(2)$. The $2^4$ subgroup is central, and so the groups $2^4\cdot 3^8$ and $2^4\cdot 7^8$ are actually direct products. Hence our subgroup $H$ normalizes a subgroup $3^8$ or a subgroup $7^8$. If we are in the first case and $p=3$, or we are in the second case (so $p=7$) then this places $H$ inside a parabolic subgroup, hence is intrinsically imprimitive, and if $p\neq 3$ in the first case then $H$ lies inside the normalizer of a maximal torus again, so again intrinsically imprimitive.
\end{proof}

If we try to follow this method for the subgroup of $E_7$ then it works, and we can classify the complements, but we cannot yet prove the same result though. This corrects the result of \cite[Lemma 1.12]{craven2017}.

\begin{prop} If $H\cong \PSL_3(2)$ is contained in the subgroup $A_1^7\cdot \PSL_3(2)$ of $E_7$ then $H$ is intrinsically imprimitive, or $p\neq 2$, the preimage of $H$ in simply connected $E_7$ is $\SL_2(7)$ and possibly $H$ is not intrinsically imprimitive.
\end{prop}
\begin{proof} We follow the proof of Proposition \ref{prop:E8A1}. The subgroup $\mb X=A_1^7\cdot \PSL_3(2)$ can be seen from \cite[Table 5.1]{liebecksaxlseitz1992} to have structure $\SL_2(k)\wr \PSL_3(2)$ in characteristic $2$, and $2^3\cdot (\PGL_2(k)\wr \PSL_3(2))$ if $p$ is odd (in the adjoint group). We start with $p=2$, as before.

The point stabilizer of $\PSL_3(2)$ acting on seven points is $\Sym(4)$. There are three classes of homomorphism $\Sym(4)\to \SL_2(k)$ for $p=2$, with images $1$, $C_2$ and $\Sym(3)$, yielding three complements, $H_1$, $H_2$ and $H_3$. The group $H_2$ lies in a group $2^7\rtimes \PSL_3(2)$, and so $H_2$ normalizes a $p$-subgroup of $\mbG$. This places $H_2$ inside a maximal parabolic subgroup of $\mbG$, and hence $H_2$ is intrinsically imprimitive. In the first case, $H_1$ normalizes a maximal torus $\mb T$ of the subgroup $A_1^7$, and so $H_1\leq N_\mbG(\mb T)$, and is intrinsically imprimitive by Proposition \ref{prop:maximaltorus}. In the third case, $H_3$ lies in a group $\Sym(3)\wr \PSL_2(7)$, so $H_3$ normalizes a subgroup $3^7$ of $\mbG$. This again places $H_3$ inside $N_\mbG(\mb T)$ for some torus $\mb T$, so is again intrinsically imprimitive.

We now turn to $p$ odd. There are four classes of homomorphisms $\phi:\Sym(4)\to\PGL_2(k)$, with images $1$, $2$, $\Sym(3)$ and $\Sym(4)$, yielding four classes of complements modulo the central $2^3$. These yield subgroups $2^3\cdot \PSL_3(2)$ of $\mb X$ which might or might not be split extensions. To determine which, it seems easiest to just check on a computer.

Doing so in the supplementary materials, we find that two of them yield split extensions (with images $1$ and $S_4$) and two do not. One of these comes from the normalizer of a torus and so is intrinsically imprimitive. The other is proved in the supplementary materials to lift to $\SL_2(7)$ in the simply connected group (as opposed to the class in $N_\mbG(\mb T)$, which lifts to $2\times \PSL_2(7)$).
\end{proof}

We cannot determine whether this class is intrinsically imprimitive because we do not know how many classes of such subgroups there are in $E_7$. In characteristic not $2,3,7$, the action of the $\SL_2(7)$s on $M(E_7)$ is the unresolved case in Section \ref{sec:psl27} (confirmed in the supplementary materials), and such an action does exist inside connected subgroups as well. So at the moment it is not clear whether these subgroups are intrinsically imprimitive or not. (It \emph{is} true that the subgroup $2^3\rtimes \SL_2(7)$ is not intrinsically imprimitive, but this is not enough for us.)

\begin{cor}\label{cor:intrinsicimp} Let $H$ be a quasisimple, Lie imprimitive subgroup of an exceptional algebraic group $\mbG$ of simply connected type, with $Z(H)\leq Z(\mbG)$. Either $H$ is intrinsically imprimitive or one of the following occurs:
\begin{enumerate}
\item $p=2$, $\mbG=E_7$, $H\cong \PSL_2(8)$;
\item $p\neq 2$, $\mbG=E_7$, $H\cong \SL_2(7)$.
\end{enumerate}
\end{cor}

As we have stated above, neither of the remaining cases is known not to be intrinsically imprimitive.

\appendix
\section{Detailed descriptions of the methods}

As we have mentioned before, the subalgebra method is \emph{not} an algorithm. Every time it is used is different, depending on the specific group, and even the specific embedding in $\Sp_{56}(k)$.

All of the files start similarly. First the group is defined, then $56$- and $133$-dimensional modules for it called \texttt{MG} and \texttt{LG} respectively. The tensor square \texttt{MG2} of \texttt{MG} is constructed and inside that the symmetric square \texttt{SMG2} is defined. A function \texttt{LieProd(v,w)} takes two vectors in \texttt{MG2} and returns a third, but crucially parameters are allowed in this function, so one may take linear combinations of basis elements whose coefficients are parameters. The set $\Hom_{kH}(L(E_7),S^2(M(E_7)))$ is constructed, the sum of the images of these maps being defined as \texttt{U}, and the Hom-space is split according to the summands of $L(E_7){\downarrow_H}$. Individual (simple) submodules of $L(E_7){\downarrow_H}$ are defined, called $W_1,\dots,W_r$, and $\Hom_{kH}(W_i,U)$ is constructed. A basis of \texttt{MG2} is constructed by first placing the copies of the $W_i$, then the rest of \texttt{U}, then the rest of \texttt{MG2}.

We then use $\Hom_{kH}(W_1,U)$ to define a point \texttt{a} in a generic copy of $W_1$ as follows: if the dimension of $\Hom_{kH}(W_1,U)$ is \texttt{dh1}, take variables \texttt{R.1},\dots,\texttt{R.dh1} from a polynomial ring \texttt{R}. Given $v\in W_1$, form a linear combination of \texttt{R.i} times the image of $v$ under \texttt{homs1.i}, where \texttt{homs1} is $\Hom_{kH}(W_1,U)$. Letting \texttt{b} denote another such point, \texttt{LieProd(a,b)} constructs the Lie product of \texttt{a} and \texttt{b}, with the coordinates being quadratics in the \texttt{R.i}. Rewriting this `vector' in terms of our new basis allows us to read off the coordinates of the product that lie outside of \texttt{U}.

The broad outline is to define some points of elements of the $W_i$, take the Lie products of these, force the products to lie in \texttt{U}, and solve the resulting equations. Solutions that yield $133$-dimensional subalgebras of type $\mathfrak e_7$ are kept, and ones that provably cannot yield such subalgebras are discarded.

\paragraph{$2\cdot \Alt(7)$, $p=5$} Let $W_1$ denote the $8$, $W_2$ and $W_3$ the $10$ and $10^*$, and $W_4$ one of the $35$-dimensional submodules. The module $U$ is $1052$-dimensional. Let $a,b,c$ lie in $W_1$, $d,e$ in $W_2$, $f,g$ in $W_3$ and $p,q,r$ in $W_4$. We compute 
\[ [a,b],\;[[a,b],c],\;[d,e],\;[a,d],\;[a,g],\;[d,g],\;[e,g],\;[[a,d],g],\;[p,q],\;[[p,q],r],\;[a,q],\;[d,q],\]
and require each of them to lie in $U$. This yields many equations in the parameters governing $W_1$, $W_2$, $W_3$, $W_4$. Since there are three copies of $35$ in $L(E_7){\downarrow_H}$, we must have at least three free parameters for the location of $W_4$ at all times.

The fact that $L(E_7)$ is a summand of $L(C_{28}){\downarrow_{E_7}}$ and $8$ is a summand of the $H$-action on $L(E_7)$, the $8$ is a summand of the $H$-action on $L(C_{28})$. This already proves that a specific one of the four parameters describing $W_1$ is non-zero, so set it to be $1$. Then we force two others to be $0$, and the parameters describing $W_1$ are $(0,1,0,r_4)$. The centralizer of $H$ in $\Sp_{56}(k)$ scales $r_4$, so without loss of generality $r_4=1$ or $r_4=0$.

We then prove that the former of these yields too few possibilities for $W_4$, so we must have $r_4=0$. This fixes $W_1$. We now eliminate a few variables for $W_2$ until there are two remaining. The centralizer again acts to yield three orbits, and we construct the subalgebras generated by these three submodules, $L_1$, $L_2$ and $L_3$. One of these has dimension $133$, and the other two have a $35$-dimensional abelian subalgebra, contradicting Proposition \ref{prop:abeliansubalgebra}. Thus $H$ is unique up to $\mbG$-conjugacy, and we even find an element of order $3$ centralizing $H$ and stabilizing the $E_7$-subalgebra, suggesting $H$ lay in $A_2A_5$.

\paragraph{$\Alt(7)$, $p=5$} Let $W_1$ denote the $8$-dimensional summand, $W_2$ and $W_3$ the $10$ and $10^*$. The module $U$ is $863$-dimensional. The aim is to show that $H$ always stabilizes a $28$-dimensional simple subalgebra of $M(C_{28})$. The centralizer of $H$ in $\Sp_{56}(k)$ is fairly complicated, and we use a three $1$-dimensional subgroups (one toral, two unipotent) of the centralizer, which is enough to understand the situation.

The space $\Hom_{kH}(W_1,U)$ has dimension $6$, so there are parameters $r_1,\dots,r_6$. The centralizing elements mentioned above move these parameters, allowing us to specify more than one at a time. We first prove that $r_3\neq 0$ yields a $\mathfrak d_4$-subalgebra. If $r_3=0$ and $r_1\neq 0$ then we end up with a $1$-parameter family of $28$-dimensional subalgebras, even after applying our centralizing elements, and if $r_3=r_1=0$ then we derive a contradiction.

This is the first place where we use the possible Jordan normal forms of nilpotent elements of $\mathfrak e_7$ on $M(E_7)$ and $M(E_7)\otimes M(E_7)$. If we obtain a nilpotent element of $\mathfrak c_{28}$ that has the wrong Jordan normal form on $M(E_7)\otimes M(E_7)$ then it be an element of an $\mathfrak e_7$-subalgebra.

\paragraph{$\PSU_3(3)$, $p=7$} We start with the case where $H$ acts on $M(E_7)$ with factors of dimensions $7$ and $21$. Let $W_1$ be the $7$, $W_2$ be the $14$, $W_3$ be the $21$ and $W_4$ be the $26$ in the socle of $L(E_7){\downarrow_H}$. We first calculate lots of Lie products between basis elements of these, too many to reproduce here.

The centralizer in $\Sp_{56}(k)$ of $H$ is a $2$-dimensional torus, and this acts on the five parameters $r_1,\dots,r_5$ governing $W_1$ and the four governing $W_2$. Since $\Lambda^2(W_1)\cong W_1\oplus W_2$, either $[W_1,W_1]$ has image containing $W_1$ or it does not. The former case yields a contradiction fairly quickly, but the latter is more difficult. It yields $r_3=0$, and the $2$-torus allows us to independently scale $r_1$, $r_2$ and $r_4$. Thus there are eight options for these, as each could be $0$ or $1$. If $r_1=0$ then we may scale $r_5$ independently of the others, and altogether there are ten possibilities for $W_1$. Three of these yield inconsistent equations for the other parameters for the $W_i$, and the others yield a $7$-dimensional abelian subalgebra of $L(A_{55})$ consisting of nilpotent elements. The Jordan normal form of an element of these is not consistent with coming from $E_7$ in all but one case. This last case yields a $133$-dimensional subalgebra once the other $W_i$, which are uniquely determined, are included in the generating subspace. This is $L(E_7)$.

\medskip Now suppose that $H$ acts on $M(E_7)$ as $28\oplus 28^*$. Let $W_1$ be $21$ and $W_2$ be the $26$ in the socle of $L(E_7){\downarrow_H}$. Let $a,b\in W_1$, $c,d\in W_2$, and compute $[a,b]$, $[c,d]$ and $[a,c]$. There is one specific parameter, $r_{13}$, that we use, which labels a copy of $W_2$. If $r_{13}=0$ then $[a,b]=[c,d][a,c]=0$, actually, and this is enough to prove that $W_1+W_2$ is a $47$-dimensional abelian subalgebra, contradicting Proposition \ref{prop:abeliansubalgebra}. Thus $r_{13}=1$ without loss of generality, and this is enough to determine the set of solutions. They fall into two orbits under the centralizer, swapped by a normalizing element, and the field automorphism centralizes the two orbits.

\paragraph{$\PSU_3(3)$, $p\neq 7$} These all follow the same plan as the $28\oplus 28^*$ case for $p=7$, but with very minor tweaks.

\paragraph{$\PSL_2(37)$, $p=3$} Let $W_1$ be the $19$-dimensional submodule, and let $a,b\in W_1$. Note that $\Hom(W_1,U)$ is $2$-dimensional. The Lie product $[a,b]$, when forced to lie inside $U$, yields a single equation $r_1^2-r_2^2=0$, so $r_1=\pm r_2$, and this yields two $133$-dimensional subalgebras. There is a non-central involution in $\Sp_{56}(k)$ centralizing $H$ and swapping the two subalgebras. Thus $H$ is unique up to conjugacy.

\paragraph{$\PSL_2(29)$, $p\neq 2,3,5$} Let $W_1$ denote the $15$-dimensional submodule, and let $a,b,c$ be distinct points from $W_1$. The product $[[a,b],c]$ yields a few cubic equations in the three parameters $r_1,r_2,r_3$ governing $W_1$. If $r_1=0$ then $r_2=r_3=0$, so without loss of generality $r_1=1$. This forces the other equations to yield two solutions, which are swapped by the centralizer of $H$ in $\Sp_{56}(k)$. Thus $H$ is unique up to conjugacy.

\paragraph{$\PSL_2(29)$, $p=2$} We cannot use the $15$-dimensional submodule as in the previous cases because it decomposes and we prefer simple submodules. In this case let $W_1$ denote the $28$-dimensional submodule, and let $a,b$ be distinct points from $W_1$. The product $[a,b]$ yields a few quadratic equations in the four parameters governing $W_1$, and these factorize to yield two solutions. These yield $133$-dimensional subalgebras that are non-isomorphic, and are swapped by an element that normalizes $H$. Thus $H$ is unique up to conjugacy (with a given module action on $L(E_7)$).

\paragraph{$\PSL_2(29)$, $p=3,5$} Let $W_1$ denote the $15$-dimensional submodule, and let $W_2$ denote the $28$-dimensional submodule. Let $a,b,c$ be distinct points from $W_1$ and $d,e$ denote distinct points of $W_2$. We compute $[[a,b],c]$, $[a,d]$ and $[d,e]$ to give us equations in the three parameters for $W_1$ and four for $W_2$. Just $[[a,b],c]$ lying in $U$ gives us two options for $W_1$. One generates a $133$-dimensional $\mathfrak e_7$-subalgebra, the other yields an abelian $15$-space. This latter space, when combined with the equations for $W_2$, yields a unique option for $W_2$. The sum $W_1+W_2$ is a $43$-dimensional abelian subalgebra of $L(A_{55})$, and so cannot be a subalgebra of $\mathfrak e_7$ by Proposition \ref{prop:abeliansubalgebra}.

\paragraph{$\PSL_2(27)$, $p\neq 13$} Let $W_1$ denote the $28$-dimensional submodule, and let $a,b$ be distinct points from $W_1$. The product $[a,b]$ yields a few quadratic equations in the six parameters $r_1,\dots,r_6$ governing $W_1$. It is very easy to find the four solutions. A non-central involution of $\Sp_{56}(k)$ centralizing $H$ swaps these in pairs, and an element normalizing $H$ makes the action transitive.

\paragraph{$\PSL_2(27)$, $p=13$} Let $W_1$ denote a $26$-dimensional summand and $W_2$ denote the $27$-dimensional submodule in the socle of $L(E_7){\downarrow_H}$. Letting $a,b\in W_1$ and $d,e\in W_2$, we compute $[a,b]$, $[d,e]$ and $[a,d]$. Since $\Lambda^2(W_1)$ has a summand $W_1$, either $[a,b]$ projects non-trivially onto $W_1$ or it does not. In the former case we quickly arrive at a unique solution for $W_1$, and by taking the subalgebra generated by $W_1$, all of $L(E_7)$. If $[a,b]$ misses $W_1$ then we choose a single variable, $r_5$, which is either $0$ or $1$. In both cases we easily find that $W_1\oplus W_2$ is an abelian subalgebra of dimension $53$, contradicting Proposition \ref{prop:abeliansubalgebra}.

\paragraph{$\PSL_2(25)$, $p=5$} Let $W_1$ be the $3$-dimensional summand and $W_2$ be the other $3$-dimensional submodule. The modules $W_3$ and $W_4$ are the two $15$-dimensional submodules. We compute many Lie products in this case. The dimensions of Hom-spaces are $3$, $3$, $4$ and $4$ for the $W_i$ respectively. We start by showing that $r_{10}$, the last variable parametrizing $W_3$, cannot be $0$, by showing that this yields a subalgebra whose socle has too large a dimension as a $kH$-module. Thus $r_{10}=1$ by scaling. This yields a quadratic for $r_9$, one root of which easily yields a $133$-dimensional subalgebra of type $\mathfrak e_7$, so we have to exclude the other root, $r_9=0$. Next, if $r_7=r_{11}=0$ then we find a $30$-dimensional abelian subalgebra $W_3\oplus W_4$, so this cannot occur, and so at least one of $r_7$ and $r_{11}$ is non-zero. Finally, we show that $r_7=0$, so $r_{11}=1$, and this yields a $35$-dimensional abelian subalgebra. Using Proposition \ref{prop:abeliansubalgebra} we therefore conclude that $H$ is unique up to $\mbG$-conjugacy.

\paragraph{$\PSL_2(19)$, $p=0$} Let $W_1$ be the $18$-dimensional submodule that appears with multiplicity $1$, and $W_2$ be one of the $20$-dimensional submodules. We let $a,b,c$ be distinct points from $W_1$ and $d,e,f$ be distinct points from $W_2$. We construct Lie products $[[a,b],c]$, $[[d,e],f]$, $[a,d]$, $[[a,b],d]$ and $[[a,d],e]$, and use these to obtain a number of quadratics and cubic in the $21$ variables $r_1,\dots,r_{21}$. If $r_{21}=0$ then all parameters for $W_2$ are $0$, which is not allowed. Hence $r_{21}=1$ without loss of generality, and we immediately obtain eight solutions. These are permuted transitively by the normalizer of $H$ in $\Sp_{56}(k)$, and $H$ is unique up to $\mbG$-conjugacy.

\paragraph{$\PSL_2(19)$, $p=5$} Let $W_1$ be an $18$-dimensional submodule not in the radical of the module, and $W_2$ be $18$-dimensional submodule that is in the radical. We let $a,b,c$ be distinct points from $W_1$ and $d,e,f$ be distinct points from $W_2$. We construct Lie products $[[a,b],c]$, $[[d,e],f]$, $[a,d]$, $[[a,b],d]$ and $[[a,d],e]$, and use these to obtain a number of quadratics and cubic in the $23$ variables $r_1,\dots,r_{23}$. Since $W_1$ is parametrized by 12 elements and $W_2$ by 11, we see that there is a single parameter corresponding to a homomorphism $W_1\to U$ whose image is \emph{not} in the radical of \texttt{U}. This parameter must be non-zero, hence set to be 1. This immediately yields four options for the parameters for $W_2$, and the normalizer acts transitively on the solutions. Hence $H$ is unique up to $\mbG$-conjugacy.

\paragraph{$\PSL_2(19)$, $p=2$} Here the form is not unique up to conjugacy in $\Sp_{56}(k)$, so we have two options: work inside $L(A_{55})$, in which case we compute homomorphisms into the module \texttt{MG2} rather than \texttt{SMG2} described above; work inside the symplectic group again, but allow the form to have a parameter. In the supplementary materials we prove the uniqueness of $H$ via both methods.

In both cases let $W_1$ be the $18$-dimensional submodule that appears with multiplicity $1$, and $W_2$ be an $18$-dimensional module that appears with multiplicity $2$. If we are working in $L(A_{55})$ then there are 16 parameters governing the location of $W_1$, otherwise there are ten. In both cases, setting the last of these to $1$ immediately yields the $\mathfrak e_7$-subalgebra up to the action of the centralizer in the ambient algebraic group (either $A_{55}$ or $C_{28}$). Setting this equal to $0$ eventually leads to $W_1\oplus W_2$ being abelian, but here we require the ambiguity of $W_2$ to allow us to set one of the parameters labelling a generic subspace $W_2$ to be $0$. The main step here to help us reduce things is to note that $\Lambda^2(W_1)$ has two copies of $W_1$ in it, and so $[W_1,W_1]$ might have $W_1$ in its image. This is, in fact inconsistent with the last parameter governing $W_1$ being set to $0$, and so $[W_1,W_1]$ has no image on $W_1$. This show actually that $W_1$ is abelian. We bring in $W_2$ and it is not difficult to prove that $[W_1,W_2]=0$, and then $[W_2,W_2]=0$.

\paragraph{$\PGL_2(19)$, $p\neq 5$} Let $W_1$ be the $18$-dimensional submodule that appears with multiplicity $1$, and let $W_2$ denote the $19$-dimensional submodule. We let $a,b,c$ be distinct points from $W_1$ and $d,e,f$ be distinct points from $W_2$. We construct Lie products $[[a,b],c]$, $[[d,e],f]$, $[a,d]$, $[[a,b],d]$ and $[[a,d],e]$, and use these to obtain a number of quadratics and cubic in the 22 variables (20 if $p=3$). Assuming $r_5$ and $r_{10}$ are both non-zero, we scale them to be $1$ using the centralizer and quickly obtain the unique solution.

Alternatively, $r_5r_{10}=0$, and we show that this implies that $W_1\oplus W_2$, or a different subspace of dimension greater than $27$, is abelian. Note that $\Lambda^2(W_1)$ has an image $W_1$ and $\Lambda^2(W_2)$ has an image $W_2$. Thus we can force $[a,b]$ to either project onto $W_1$ (which yields a contradiction to $r_5r_{10}=0$) or not, and the same with $[d,e]$ and $W_2$. This yields exactly two options for the subspace $W_1$, both of which yield a $36$- or $37$-dimensional abelian subalgebra of $L(E_7)$. Thus $H$ is unique up to conjugacy in $\mbG$.

\paragraph{$\PGL_2(19)$, $p=5$} Let $W_1$ be an $18$-dimensional submodule not in the radical of the module, and $W_2$ be $18$-dimensional submodule that is in the radical. We let $a,b,c$ be distinct points from $W_1$ and $d,e,f$ be distinct points from $W_2$. We construct Lie products $[[a,b],c]$, $[[d,e],f]$, $[a,d]$, $[[a,b],d]$ and $[[a,d],e]$, and use these to obtain a number of quadratics and cubic in the $23$ variables $r_1,\dots,r_{23}$. Just imposing the conditions that these Lie products lie inside \texttt{U} is enough to determine a unique solution.


\paragraph{$\PSL_2(13)$, $p\neq 7,13$} For most of the possibilities, $L(E_7){}\downarrow_H$ contains a sum of three distinct $12$-dimensional modules. The exception is $p=3$ and the situation from Proposition \ref{prop:sl213primitive}. We consider these first.

Let $W_1$, $W_2$ and $W_3$ denote the three $12$-dimensional simple submodules. Let $a,b,c$ lie in $W_1$. From the structure of $\Lambda^2(W_1)$, the Lie product $[a,b]$ can hit $W_1$ and $W_2$ (or $W_3$, depending on labelling). We can derive a contradiction in the case where $[a,b]$ misses $W_1$ entirely. Usually this is fairly easy, but it just involves deductions using polynomials in the standard way.

If $[a,b]$ meets $W_1$ non-trivially then it is fairly easy to find an $E_7$ Lie subalgebra in this case, and standard case distinctions with $r_i=0$ or $r_i\neq 0$ for various variables is enough to prove a unique class of Lie subalgebras under the action of the normalizer of $H$.

\paragraph{$\PSL_2(13)$, $p=7$} The case $p=7$ deviates substantially. This is because the $12_i$ are all isomorphic when reducing modulo $7$, and we actually have a summand $P(12)$ in the Lie algebra $L(E_7){}\downarrow_H$. Here there is a map $\Lambda^2(12)\to 12/12$ which we use. We construct a point in $\Lambda^2(12)$ whose image under this homomorphism lies in the submodule $12$. Let $W_1$ be this $12$ and let $a,b$ lie in $W_1$.

As with the case $p\neq 7,13$, if $[a,b]$ misses $P(12)$ entirely then we can derive a contradiction by solving the system of quadratics and cubics, as usual. If $[a,b]$ meets $P(12)$ then there are two cases: $[W_1,W_1]\cap P(W_1)=W_1$ and $[W_1,W_1]\cap P(W_1)$ has dimension $24$, and is $12/12$. The first case again yields a contradiction as before. If the intersection has dimension $24$ then we use the point in $\Lambda^2(12)$ mentioned above to find a commutator $[u,v]$ for $u,v\in W_1$ whose image lies in $W_1$ itself (and some point outside of $P(W_1)$, which can be ignored). This gives us an equation because the coefficients $r_1,\dots,r_{16}$ that determine the copy of $W_1$ appear in both $u$ and $[u,v]$. Thus we obtain equations of the form $r_i=f(r_1,\dots,r_{16})$, where $f$ is homogeneous of degree $2$. These can finally be solved to obtain the unique orbit of $E_7$ Lie subalgebras.

\paragraph{$\PSL_2(11)$, $p=3$} This is by far the most long-winded of the proofs we give here. We start with $W_1=5$, $W_2=5^*$, $W_3=12_1$, $W_4=12_2$ and $W_5=10$, and let $a,b,c\in W_1$, and $d,e,f\in W_2$. The tensor product $5\otimes 5^*$ decomposes as $12_1\oplus 12_2\oplus 1$, so the Lie product $[W_1,W_2]$ has a (possibly zero) projection onto $W_3$ and $W_4$. This allows us to split up $[a,d]$ into its $W_3$- and $W_4$-components, and take the product of each of these with $b$ and $e$. These have maps onto $5$ and $5^*$, which are either $0$ or the exact subspaces $W_1$ and $W_2$ (as they appear with multiplicity $1$). Many, many calculations are needed with the resulting equations to reach a conclusion.

\section{An alternative proof of Korhonen's result on the exotic local subgroup}
\label{sec:altkorhonenproof}

In this short appendix we sketch an alternative proof of the main result of \cite{korhonen2025}, namely that if $q\equiv \pm 1\bmod 8$ then the intersection of the subgroup $(D_4 \times 2^2).\Sym(3)$ with the simple group $E_7(q)$ contains the full $\Sym(3)$ subgroup, and if $q\equiv \pm 3\bmod 8$ then only the $3$ is contained in the simple group. This section, being logically independent of the rest of the paper, will use $G$ for a different group. As usual though, let $E_7(q)$ denote the simple group.

\medskip

First let $q\equiv 1\bmod 4$. Let $H$ be the group $2\cdot E_7(q)$ and $G$ be the group $2\cdot E_7(q)\cdot 2\leq 2\cdot E_7(q^2)$. Let $Z$ be the centre of $H$, with non-trivial element $z$.

Let $T$ denote a split torus of $H$, so of order $(q-1)^7$, with normalizer $N$. There is an element $e$ of order $4$ in $T$ whose centralizer is of type $A_7.2$, squaring to $z$ (so it has order $2$ in the adjoint group). Let $f$ be the longest element of the Weyl group in the adjoint group, viewed as an element of order $4$ in $N$ (so $f^2=z$). The group $V=\langle e,f\rangle$ is a Klein four group modulo $Z$, so a $Q_8$ subgroup in $H$, with normalizer $X=(2^2\times (D_4)_{\mathrm{ad}}).\Sym(3)$ in the algebraic group, so of the type we are looking for.

The largest soluble quotient of $X\cap G=N_G(V)$ is $2^4\rtimes \Sym(3)$, thought of as a subdirect product of two copies of $\Sym(4)$. It is easy to see, either with the presentation or a computer, that this has derived subgroup of index $2$. Hence $N_G(V)$ has a unique subgroup of index $2$. Thus either $N_H(V)=N_G(V)$ or $N_H(V)$ is this particular group. Notice that in this subgroup of index $2$, which has shape $(\langle e,f\rangle)\times \SO_8^+(q).2^2).3$, the elements $f$ and $ef$ are not conjugate modulo $Z$, whereas in $N_G(V)$ they are.

Note that if $g\in N_G(V)$ conjugates $ef$ to $f$ then $g\in C_G(e)$, and if $g\in C_G(e)$ conjugates $f$ to $ef$ then $(ef)^g=e^gf^g=e^2g\in V$. Thus $N_G(V)=N_H(V)$ if and only if $f$ and $ef$ are conjugate in $C_H(e)$. If we find some element $g$ of $C_G(e)$ that conjugates $f$ to $ef$, then $N_G(V)=N_H(V)$ if and only if $g\in H$.

We now find such an element. Since $e$ lies in a split torus of the algebraic group, there exists some element $g$ in the algebraic torus that squares to $e$, so has order $8$ in the algebraic simply connected group and $4$ in the adjoint group. Notice that $f$, as the longest element of the Weyl group, acts like inversion on the algebraic torus, so $g^f=g^{-1}$. Thus $f^{(g^{-1})}=gfg^{-1}=g^2f=ef$. Also, as $g^2=e$, certainly $g$ centralizes $e$. So this element works.

If $q\equiv 1\bmod 8$ then $T$ contains every element of the algebraic torus of order $8$, so $g\in C_H(e)$. If $q\equiv 5\bmod 8$ then $T$ contains no elements of order $8$, so $g\notin H$. However, $g^4=z$, so $g$ has order $4$ in the adjoint group, and so it \emph{does} lie in the adjoint version of $E_7(q)$. Hence $g\in C_G(e)$. Thus $N_G(V)=N_H(V)$ if $q\equiv 1\bmod 8$ and $|N_G(V):N_H(V)|=2$ if $q\equiv 5\bmod 8$.

Now suppose that $q\equiv 3\bmod 4$, let $H$ and $G$ be as before, let $X$ denote a copy of $E_7(q^2)$, and let $\sigma$ denote the standard Frobenius. As usual in groups of Lie type, there exists a maximal split torus $T_1$ of $X$ such that $T_1\cap H$ is a `$\Phi_2$-torus', i.e., there is a subgroup $(q+1)^7.W(E_7)$, analogous to the standard subgroup $(q-1)^7.W(E_7)$. The corresponding subgroup $V_1\leq T_1$ lies in $H$, and $N_G(V_1)\cong N_G(V)$ (since all such subgroups are conjugate in $G$). But now the analogous element $g_1$ still has order $8$, so $g_1\in C_H(e_1)$ if and only if $q\equiv -1\bmod 8$, but still lies in $C_G(e_1)$ in all cases.

This $N_G(V)=N_H(V)$ if and only if $q\equiv \pm1 \bmod 8$.

\bibliographystyle{amsplain}
\providecommand{\bysame}{\leavevmode\hbox to3em{\hrulefill}\thinspace}
\providecommand{\MR}{\relax\ifhmode\unskip\space\fi MR }
\providecommand{\MRhref}[2]{%
  \href{http://www.ams.org/mathscinet-getitem?mr=#1}{#2}
}
\providecommand{\href}[2]{#2}

\end{document}